\newtheorem{theorem}{Theorem}[section]
\newtheorem{lemma}[theorem]{Lemma}
\newtheorem{prop}[theorem]{Proposition}
\newtheorem{cor}[theorem]{Corollary}
\newtheorem{conj}[theorem]{Conjecture}
\newtheorem{definition}[theorem]{Definition}
\newtheorem{example}[theorem]{Example}
\newtheorem{remark}[theorem]{Remark}
\numberwithin{equation}{section}
\tikzstyle{circledvertex} = [draw, black, shape=circle, minimum size=8pt, inner sep=1pt]
\tikzstyle{invisivertex} = [black, shape=rectangle, minimum size=0pt, inner sep=2pt]
\tikzstyle{point}=[draw, black, fill,shape=circle, minimum size=4pt, inner sep=0pt]
\tikzstyle{label} = [draw, violet, shape=circle, minimum size=8pt, inner sep=1pt]
\tikzstyle{glabel} = [draw, teal, shape=circle, minimum size=4pt, inner sep=1pt]
\tikzstyle{graylabel} = [draw, gray, shape=circle, minimum size=4pt, inner sep=1pt]
\tikzstyle{BVertex}  = [draw, black, fill, shape=circle, minimum size=3pt, inner sep=0pt]
\newcommand{\symm}[1]{\mathfrak{S}_{#1}}
\newcommand{\symmB}[1]{\mathfrak{S}^{\pm}_{#1}}
\newcommand{\Conf}{\mathrm{Conf}}
\newcommand{\TypeAConf}[1]{{X}_{#1}}
\newcommand{\TypeACohomology}[1]{{\mathcal X}_{#1}}
\newcommand{\TypeAEulerianIdempotent}[2]{{E^{\mathfrak{S}_{#1}}_{#2}}}
\newcommand{\TypeBConf}[1]{{Y}_{#1}}
\newcommand{\TypeBCohomology}[1]{{\mathcal Y}_{#1}}
\newcommand{\TypeBEulerianIdempotent}[2]{{E^{\mathfrak{S}^{\pm}_{#1}}_{#2}}}
\newcommand{\PeakConf}[1]{{Z}_{#1}}
\newcommand{\PeakIdempotent}[2]{{E^{\mathcal{P}_{#1}}_{#2}}}
\newcommand{\quadgens}{\mathcal{Q}}
\newcommand{\tvbasis}{\vbasis \cap \prod \quadgens}
\newcommand{\Sol}[1]{\mathrm{Sol}(#1)}
\newcommand{\PeakAlgebra}[1]{{\mathcal P}_{#1}}
\newcommand{\Lie}[1]{{\mathrm{Lie}_{#1}}}
\DeclareMathOperator{\peak}{Peak}
\newcommand{\origin}{{\mathbf{0}}}
\DeclareMathOperator{\Z}{{\mathbb{Z}}}
\DeclareMathOperator{\R}{{\mathbb{R}}}
\DeclareMathOperator{\C}{{\mathbb{C}}}
\DeclareMathOperator{\PP}{{\mathbb{P}}}
\DeclareMathOperator{\SSS}{{\mathbb{S}}}
\DeclareMathOperator{\RP}{{\mathbb{RP}}}
\DeclareMathOperator{\rad}{rad}
\DeclareMathOperator{\kk}{{\mathbf{k}}}
\DeclareMathOperator{\xx}{\mathbf{x}}
\DeclareMathOperator{\bihilb}{{\mathsf{H}}}
\DeclareMathOperator{\symmbihilb}{{\mathcal H}}
\DeclareMathOperator{\B}{\mathcal{B}}
\DeclareMathOperator{\lie}{Lie}
\DeclareMathOperator{\init}{in}
\DeclareMathOperator{\spn}{span}
\DeclareMathOperator{\fieldchar}{char}
\DeclareMathOperator{\A}{\mathcal{A}}
\DeclareMathOperator{\N}{\mathcal{N}}
\DeclareMathOperator{\PPP}{\mathcal{P}}
\DeclareMathOperator{\lat}{\mathcal{L}}
\DeclareMathOperator{\I}{\mathcal{I}}
\DeclareMathOperator{\T}{\mathcal{T}}
\DeclareMathOperator{\U}{\mathcal{U}}
\DeclareMathOperator{\uu}{\mathfrak{u}}
\DeclareMathOperator{\odd}{odd}
\DeclareMathOperator{\oddparts}{\mathrm{Odd}}
\DeclareMathOperator{\even}{even}
\DeclareMathOperator{\evenparts}{\mathrm{Even}}
\DeclareMathOperator{\cyc}{cyc}
\DeclareMathOperator{\VG}{\mathcal{VG}}
\newcommand{\J}{\mathcal{J}}
\newcommand{\gr}{\mathfrak{gr}}
\newcommand{\cU}{\mathcal{U}}
\newcommand{\cLfree}{\mathfrak{L}}
\DeclareMathOperator{\V}{\mathcal{V}}
\DeclareMathOperator{\sol}{Sol}
\DeclareMathOperator{\Des}{Des}
\DeclareMathOperator{\des}{des}
\DeclareMathOperator{\Sym}{Sym}
\newcommand{\slambda}{(\lambda^{+}, \lambda^{-})}
\newcommand{\basismap}{\mathcal{B}}
\DeclareMathOperator{\GGG}{\mathcal{G}}
\DeclareMathOperator{\ubasis}{\mathcal{U}}
\DeclareMathOperator{\vbasis}{\mathcal{V}}
\DeclareMathOperator{\typeabasis}{\mathcal{T}}
\DeclareMathOperator{\fchar}{ch}
\DeclareMathOperator{\ind}{\uparrow}
\DeclareMathOperator{\res}{\downarrow}
\newcommand{\sh}[1]{[#1]}
\keywords{Peak algebra, configuration spaces, Solomon's descent algebra, higher Lie characters, hyperplane arrangements, Varchenko-Gelfand ring, Type $A$, Type $B$, simple Jordan elements}
\subjclass{
05E10,
14N20,
52C35,
55R80
}
\title[Configuration spaces and peak representations]{Configuration spaces and peak representations}
\author{Marcelo Aguiar}
\email{maguiar@math.cornell.edu}
\address{Department of Mathematics, Cornell University, Ithaca NY 14853}
\author{Sarah Brauner}
\email{sarahbrauner@gmail.com}
\address{Brown University, Providence, RI, 02912}
\author{Victor Reiner}
\email{reiner@umn.edu}
\address{School of Mathematics, University of Minnesota, Minneapolis MN 55455}
\begin{document}

\begin{abstract}
Within the group algebras of the symmetric and hyperoctahedral groups, one has their descent algebras and families of 
{\it Eulerian} idempotents.  These idempotents are known to generate
group representations with topological interpretations, as the cohomology of configuration spaces of types $A$ and $B$. We provide an analogous cohomological interpretation for the representations generated by idempotents in the {\it peak algebra}, called the {\it peak representations}. We describe the peak representations as sums of {\it Thrall's higher Lie characters}, give Hilbert series and branching rule recursions for them, and discuss a connection to Jordan brackets.
\end{abstract}

\maketitle

\section{Introduction}
\label{intro-section}
This paper develops connections between combinatorial algebras arising in the theory of reflection groups and configuration spaces, an important class of topological space. Previous work of the second author relates Types $A$ and $B$ configuration spaces (defined in equation \eqref{typeAB-configuration-space-definitions} below) to \emph{Solomon's descent algebra} and the famed \emph{Eulerian idempotents} therein \cite{brauner2022eulerian}. Here, we expand the scope of such connections by (1) linking the \emph{peak algebra} of the symmetric group to a configuration space $\PeakConf{n}:= \Conf_n(\R \PP^2 \times \R)$ and (2) describing the cohomology ring of $H^*\PeakConf{n}$ equivariantly in terms of the well-known \emph{higher Lie representations} of Thrall.  

\subsection*{Overview}
Our work studies and relates the cohomology $H^* X=H^*(X,\kk)$ with coefficients in a field $\kk$ for three different topological configuration spaces $X=X_n, Y_n, Z_n$ having large symmetry groups $W$.  For each, the (ungraded) cohomology carries the regular representation of $W$, that is,
$
H^* X \cong \kk W.
$

At the heart of the story is the fact that, for $\kk$ of characteristic zero, the decomposition into $H^i X$ turns out to match a combinatorial direct sum decomposition 
\begin{equation} \label{eq:generalcorrespondance}
H^* X =\bigoplus_i H^i X, \quad \quad \cong \quad \quad
\kk W =\displaystyle \bigoplus_i (\kk W) E_i
\end{equation}
for certain interesting complete families $\{ E_i \}$ 
of {\it orthogonal idempotents} in $\kk W$, meaning  $E_iE_j=\delta_{ij} E_i$ and $1=\sum_i E_i$. Precise statements of the correspondence for $X = \TypeAConf{n}, \TypeBConf{n}$ are given in equations \eqref{type-A-Eulerian-idempotents-give-cohomology-reps} and \eqref{type-B-Eulerian-idempotents-give-cohomology-reps} below; Theorem \ref{peak-idempotents-give-cohomology-reps} then gives our new correspondence for $\PeakConf{n}$.

The first two spaces $X_n, Y_n$ are well-studied: $X_n$ is the {\it ordered configuration space} of $n$ points in $\R^d$ 
with $d \geq 3$ and odd, while $Y_n$ is the {\it $\Z_2$-orbit configuration spaces} for the $\Z_2$-action via $\xx \mapsto -\xx$ on the same $\R^d$:
\begin{equation}
\label{typeAB-configuration-space-definitions}
\begin{aligned}
\TypeAConf{n}&:=\Conf_n\R^d=\{\xx \in (\R^d)^n: x_i \neq x_j \text{ for }1 \leq i < j \leq n\},\\
\TypeBConf{n}&:=\Conf^{\Z_2}_n\R^d=\{\xx \in (\R^d )^n: x_i \neq \pm x_j \text{ for }1 \leq i <j \leq n, \text{ and }x_i \neq 0 \text{ for }1 \leq i \leq n\}.
\end{aligned}
\end{equation}
These spaces have appeared in a wide range of mathematics; perhaps most notably, $\Conf_n(\R^2)$ is the classifying space of the pure Artin braid group \cite{arnold}. 
In the case that $d$ is odd, $H^*\TypeAConf{n}$ and $H^*\TypeBConf{n}$ are isomorphic to the \emph{associated graded Varchenko-Gelfand ring}, reviewed in Section \ref{sec:VGrings}.

Note that $X_n$ has an action of the {\it symmetric group} $W=\symm{n}$ that permutes the coordinates of $\xx=(x_1,\ldots,x_n)$, while $Y_n$ carries an action of the {\it hyperoctahedral group} $W=\symmB{n}$ that permutes and negates coordinates. This implies that their cohomology rings carry representations of $\symm{n}$ and $\symmB{n}$, which will be critical in obtaining the decomposition in equation \eqref{eq:generalcorrespondance}.

One can view $X_n$ and $ Y_n$ as complements of
certain {\it subspace arrangements} derived from the {\it reflection hyperplane arrangements} $\A_W$ associated to the reflection groups $W=\symm{n}, \symmB{n}$: 
\begin{itemize}
    \item $W=\symm{n}$ acts on $V=\R^n$, with action generated by
    reflections through the hyperplanes 
$$
\A_{\symm{n}}=\{x_i = x_j\}_{1 \leq i < j \leq n}.
$$
\item $W=\symmB{n}$ acts on $V=\R^n$, with action generated by reflections
through the hyperplanes 
$$
\A_{\symmB{n}}=\{x_i=0\}_{1 \leq i \leq n} \sqcup  \{x_i = \pm x_j\}_{1 \leq i<j \leq n}.
$$
\end{itemize}
From this viewpoint, letting $V=\R^n$, one can re-express
\begin{equation}
\label{configuration-spaces-reexpressed}
\begin{aligned}
\TypeAConf{n} &=\left( V \otimes_{\R} \R^d \right) \setminus \left( \A_{\symm{n}} \otimes_{\R} \R^d \right),\\
\TypeBConf{n} &=\left( V \otimes_{\R} \R^d \right) \setminus \left( \A_{\symmB{n}} \otimes_{\R} \R^d \right).
\end{aligned}
\end{equation}
General results on {\it graded Varchenko-Gelfand rings}, reviewed in Section \ref{sec:VGrings}, explain why this implies that, for odd $d \geq 3$, the cohomology rings $H^* X_n, H^* Y_n$ are concentrated in cohomological degrees divisible by $d-1$, and as ungraded representations, carry the regular representations $\kk W$ for the associated reflection groups $W=\symm{n},\symmB{n}$ of types $A$ and $B$.  For simplicity and concreteness, we choose $d=3$ from here onward.

The idempotent decompositions of $\kk \symm{n}$ and $\kk \symmB{n}$ come from the well-known {\it type $A$ and $B$ Eulerian idempotents} 
\[ \{ \TypeAEulerianIdempotent{n}{k} \}_{k=0,1,\ldots,n-1} \textrm{ in } \kk \symm{n}, \quad \textrm{ and } \quad 
\{ \TypeBEulerianIdempotent{n}{k} \}_{k=0,1,\ldots,n} \textrm{ in } \kk \symmB{n},\]
defined in work of Reutenauer \cite{reutenauer}, Gerstenhaber--Schack \cite{gerstenhaberschack},  and  Bergeron--Bergeron \cite{bergeronbergeron}. 

The Eulerian idempotents lie within the subalgebra of the group algebra $\kk W$ known as {\it Solomon's descent algebra} $\Sol{W}$ \cite{solomon}, meaning that
when expressed as $\sum_{w \in W} c_w w$, their coefficients $c_w$ depend only upon the Coxeter group {\it descent set} of $w$. Solomon's descent algebra is a fundamental object in algebraic combinatorics, with important connections to 
the theory of the free Lie algebra, representation theory of Coxeter groups,  quassisymmetric functions, hyperplane arrangements, Markov chains, poset homology and Hopf algebras, to name a few. See \cite{aguiar2017topics} and the references therein for a summary.

Work of Hanlon \cite{hanlon}, Sundaram-Welker \cite{sundaramwelker} and Brauner \cite{brauner2022eulerian} gives a precise correspondence between these objects, as the following isomorphisms of group representations: 
\begin{align}
\label{type-A-Eulerian-idempotents-give-cohomology-reps}
H^{2k} X_n&\cong \left(\kk \symm{n} \right) \TypeAEulerianIdempotent{n}{n-1-k} \text{ for }k=0,1,\ldots,n-1,\\
\label{type-B-Eulerian-idempotents-give-cohomology-reps}
H^{2k} Y_n&\cong \left(\kk \symmB{n}\right) \TypeBEulerianIdempotent{n}{n-k} \,\,\,\, \text{ for }k=0,1,\ldots,n.
\end{align}

\subsection*{The peak story} In this paper, we use \eqref{type-A-Eulerian-idempotents-give-cohomology-reps} and \eqref{type-B-Eulerian-idempotents-give-cohomology-reps} as the starting point to give a new decomposition of $\kk \symm{n}$ of the form \eqref{eq:generalcorrespondance}, this time using the space $Z_n \cong \Conf_n(\R \PP^2 \times \R)$ and idempotents in the peak algebra, $\PeakAlgebra{n}$. The peak algebra
is another nested subalgebra 
$$
\PeakAlgebra{n} \subset \Sol{\symm{n}} \subset \kk\symm{n}
$$
whose elements $\sum_{w \in W} c_w w$ have coefficients $c_w$ depending only upon the {\it peak set} of $w=(w_1,\ldots,w_n)$
$$
\peak(w):= \{i : 1\leq i \leq n-1\text{ and } w_{i-1} < w_i > w_{i+1} \} ,
$$
with convention $w_0:=0$.  

The peak set of a permutation was first studied by Stembridge \cite{stembridge1997enriched}, who approached it from the perspective of enriched $P$-partitions. Peaks have since been studied in connection to diverse areas of mathematics such as the generalized Dehn-Sommerville equations 
\cite{aguiar2004peak, aguiar2006peak,bergeron2000noncommutative, billera2003peak} and the
Schubert calculus of isotropic flag manifolds \cite{billey1995schubert, garsiareutenauer, hsiao2002structure}. Algebraic and representation theoretic properties of the peak algebra, as well as connections to quasisymmetric functions, Hopf algebras and Hecke-Clifford algebras have been investigated by many authors, see for example \cite{aguiar2004peak, aguiar2006peak, bergeron2006coloured, li2016representation, novelli2010representation, nyman2003peak, Schocker-peaks}.

Our investigations began from an observation of Aguiar, Bergeron and Nyman \cite{aguiar2004peak} relating the descent algebras $\Sol{\symm{n}}, \Sol{\symmB{n}}$ to $\PeakAlgebra{n}$. Recall that one can express the hyperoctahedral group of all signed permutations as 
$
\symmB{n} = \symm{n} \ltimes \Z_2^n
$
where $\Z_2^n$ is the normal subgroup performing arbitrary sign changes in the coordinates.
The quotient map $\symmB{n} \twoheadrightarrow \symmB{n}/\Z_2^n \cong \symm{n}$
of groups, which forgets the signs in a signed permutation, gives a surjective $\kk$-algebra map 
$
\varphi: \kk \symmB{n} \twoheadrightarrow \kk \symm{n}.
$
In \cite{aguiar2004peak}, it was shown that the peak subalgebra $\PeakAlgebra{n}$ is the image under $\varphi$ of $\Sol{\symmB{n}}$. In other words, $\varphi$ restricts to an algebra surjection
$
\Sol{\symmB{n}} \overset{\varphi}{\twoheadrightarrow} \PeakAlgebra{n}.
$

As a consequence, one can define a family of {\it peak idempotents} $\{ \PeakIdempotent{n}{k}\}_{k=0,1,\ldots,n}$ inside $\PeakAlgebra{n} \subset \kk \symm{n}$ via
$$
\PeakIdempotent{n}{k}:=
\varphi(\TypeBEulerianIdempotent{n}{k}).
$$
The $\{\PeakIdempotent{n}{k}\}$ 
inherit from $\{\TypeBEulerianIdempotent{n}{k}\}$
the property of being a complete system of orthogonal idempotents in $\kk\symm{n}$.
Our first result, proven in Section \ref{sec:invariantring}, relates these peak idempotents to the cohomology of the quotient space $\TypeBConf{n}/\Z_2^n$, where $\Z_2^n$ is the normal subgroup of $\symmB{n}$ consisting of all
sign changes.  That is, we consider
$$
\PeakConf{n}:=\TypeBConf{n}/\Z_2^n = \Conf_n(\left( \R^3 \setminus \{\origin\} \right)/\Z_2)
\cong \Conf_n(\RP^2 \times \R),
$$
the configuration space 
of $n$ ordered points in the quotient of $\R^3 \setminus \{\origin\}$ under the $\Z_2$-action via $\xx \mapsto -\xx$:
$$
\left( \R^3 \setminus \{\origin\} \right)/\Z_2
\,\, \cong \,\, \RP^2 \times \R.
$$

\begin{theorem}
\label{peak-idempotents-give-cohomology-reps}
Let $H^* Z_n$ be the cohomology with coefficients
in a field $\kk$ of 
$$
Z_n=\TypeBConf{n}/\Z_2^n\cong\Conf_n(\RP^2 \times \R) .
$$
\begin{itemize}
\item[(i)] When $\fieldchar(\kk)>2$, the $\kk \symm{n}$-module on the total cohomology is the regular representation:
$$
H^* \PeakConf{n} \cong \kk \symm{n}.
$$
\item[(ii)] When $\fieldchar(\kk)>n$, for  $0 \leq k \leq n$ one has a $\kk \symm{n}$-module isomorphism 
$$
H^{2k} \PeakConf{n} \cong \left( \kk \symm{n} \right) \PeakIdempotent{n}{n-k}.
$$
\item[(iii)] If $\fieldchar(\kk)>2$ then  $H^i\PeakConf{n}=0$
unless $i \equiv 0 \bmod 4$.
If $\fieldchar(\kk)>n$ then $\PeakIdempotent{n}{k}=0$ 
unless $k \equiv n \bmod 2$. 
\end{itemize}
\end{theorem}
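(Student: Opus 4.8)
The plan is to exploit the fact that the normal subgroup $\Z_2^n\trianglelefteq\symmB{n}$ of sign changes acts \emph{freely} on $\TypeBConf{n}$ — a nontrivial sign change has no fixed point because every $x_i\neq\origin$ — so that $\pi\colon\TypeBConf{n}\twoheadrightarrow\PeakConf{n}$ is a regular covering with deck group $\Z_2^n$, with $\symm{n}=\symmB{n}/\Z_2^n$ acting on $\PeakConf{n}$. When $\fieldchar(\kk)>2$ the order $2^n$ of the deck group is a unit in $\kk$, so the transfer gives a degree-preserving isomorphism $\pi^*\colon H^*\PeakConf{n}\xrightarrow{\ \sim\ }\bigl(H^*\TypeBConf{n}\bigr)^{\Z_2^n}$, which is moreover $\kk\symm{n}$-linear once one notes that $\symm{n}=\symmB{n}/\Z_2^n$ acts on the right-hand side through the $\symmB{n}$-action on $H^*\TypeBConf{n}$ and that $\pi$ is equivariant. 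The whole theorem then reduces to computing the $\Z_2^n$-invariants of the $\symmB{n}$-modules $H^*\TypeBConf{n}$, whose structure we already know: as ungraded modules $H^*\TypeBConf{n}\cong\kk\symmB{n}$ (Section~\ref{sec:VGrings}), and degree by degree via \eqref{type-B-Eulerian-idempotents-give-cohomology-reps}.

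To take invariants I would first record an algebraic lemma about the sign-forgetting algebra surjection $\varphi\colon\kk\symmB{n}\twoheadrightarrow\kk\symm{n}$. With $e:=\sum_{t\in\Z_2^n}t$, the subspace $e\cdot\kk\symmB{n}$ is precisely the left-$\Z_2^n$-invariant part of the left regular module $\kk\symmB{n}$, and for any idempotent $E\in\kk\symmB{n}$ one checks $\bigl((\kk\symmB{n})E\bigr)^{\Z_2^n}=e\,(\kk\symmB{n})\,E$. Since $\varphi(e)=2^n$ is a unit and $\symm{n}$ centralizes $e$, the rescaled map $\tfrac{1}{2^n}\varphi$ restricts to an isomorphism of left $\symm{n}$-modules $e\,(\kk\symmB{n})\,E\xrightarrow{\ \sim\ }(\kk\symm{n})\,\varphi(E)$. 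Taking $E=1$ and combining with $H^*\TypeBConf{n}\cong\kk\symmB{n}$ proves part~(i); taking $E=\TypeBEulerianIdempotent{n}{n-k}$, so that $\varphi(E)=\PeakIdempotent{n}{n-k}$ by the definition of the peak idempotents, and combining with \eqref{type-B-Eulerian-idempotents-give-cohomology-reps} (available once $\fieldchar(\kk)>n$), proves part~(ii).

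For part~(iii), recall that with $d=3$ the ring $H^*\TypeBConf{n}$ is generated in degree $2$ by the classes $\omega_i$ associated to the hyperplanes $x_i=0$ and $\omega^{\pm}_{ij}$ associated to $x_i=\pm x_j$. Since $\Z_2^n$ is abelian and $2$ is invertible, I would decompose $H^*\TypeBConf{n}$ into $\Z_2^n$-weight spaces and determine the weights of these generators. The generator $\omega_i$ visibly has weight the $i$th sign character $\chi_i$. For a pair $\{i,j\}$, the element $t_it_j$ fixes each hyperplane $x_i=\pm x_j$ and acts by $-1$ on its normal line, hence by the antipodal map, of degree $(-1)^3=-1$, on the corresponding $2$-sphere link, so $t_it_j$ acts by $-1$ on both $\omega^+_{ij}$ and $\omega^-_{ij}$; as $t_i$ interchanges these two classes, the span $\kk\omega^+_{ij}\oplus\kk\omega^-_{ij}$ decomposes under $\Z_2^n$ as $\chi_i\oplus\chi_j$. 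Hence \emph{every} degree-$2$ generator has weight $\chi_\ell$ for a single index $\ell$, and therefore any degree-$2j$ monomial in these generators is a $\Z_2^n$-weight vector of weight $\sum_\ell m_\ell\chi_\ell$ with $\sum_\ell m_\ell=j$; this weight is trivial exactly when every $m_\ell$ is even, which forces $j$ to be even. Since such monomials span $H^{2j}\TypeBConf{n}$, its $\Z_2^n$-invariant subspace vanishes for odd $j$, and applying $\pi^*$ yields $H^i\PeakConf{n}=0$ unless $4\mid i$. The vanishing of $\PeakIdempotent{n}{k}$ when $k\not\equiv n\bmod 2$ now follows from part~(ii): for $\fieldchar(\kk)>n$ the left ideal $(\kk\symm{n})\PeakIdempotent{n}{n-k}\cong H^{2k}\PeakConf{n}$ is zero when $k$ is odd, and an idempotent generating the zero left ideal is $0$; the conclusion for an arbitrary field of characteristic $>n$ follows since $\PeakIdempotent{n}{k}$ is given by a fixed formula with coefficients in $\Z[1/n!]$.

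The main difficulty will be the bookkeeping in part~(iii): pinning down the $\Z_2^n$-action on the classes $\omega^{\pm}_{ij}$, especially the signs, which come from how reflections affect the co-orientations of the $2$-sphere links and rely on $d$ being odd, and confirming from the Varchenko--Gelfand description of Section~\ref{sec:VGrings} that $H^*\TypeBConf{n}$ is generated in degree $2$ by exactly these classes. By contrast the transfer isomorphism and the algebraic lemma about $\varphi$ are routine.
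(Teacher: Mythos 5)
Your treatment of parts (i) and (ii) is essentially the paper's own: the free $\Z_2^n$-action plus transfer gives $H^*\PeakConf{n}\cong(H^*\TypeBConf{n})^{\Z_2^n}$, and your lemma about $e=\sum_{t\in\Z_2^n}t$ and the rescaled surjection $\tfrac{1}{2^n}\varphi$ carrying $e\,(\kk\symmB{n})\,E$ isomorphically onto $(\kk\symm{n})\varphi(E)$ is the same computation as Lemma~\ref{normal-subgroup-idempotent-lemma}, there phrased via the central idempotent $e_\Gamma$ and the inverse pair $\alpha,\beta$; applying it with $E=1$ and $E=\TypeBEulerianIdempotent{n}{n-k}$, together with Theorem~\ref{thm:Moseley} and \eqref{type-B-Eulerian-idempotents-give-cohomology-reps}, is exactly the chain of isomorphisms \eqref{isomorphism-strings}.

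Part (iii) is where you genuinely diverge, and your route is correct. The paper gets the mod-$4$ vanishing from Theorem~\ref{thm:tvbasis}, namely that $H^*\PeakConf{n}$ has the $\kk$-basis $\tvbasis$ of products of the invariant quadratics $\quadgens$; proving that spanning statement requires the combinatorial Pairing Lemma~\ref{lem:pairing-bijection} plus a dimension count against part (i). You instead diagonalize the $\Z_2^n$-action on the degree-two generators --- your observation that $\kk u^+_{ij}\oplus\kk u^-_{ij}$ decomposes as $\chi_i\oplus\chi_j$ (with the sign coming from the degree of the antipodal map on $S^2$, $d=3$ odd) is precisely the change of variables $v_{ij}=u^+_{ij}+u^-_{ij}$, $w_{ij}=u^+_{ij}-u^-_{ij}$ of Section~\ref{eigengbasis-subsection} and Table~\ref{table:generatoraction} --- and then note that a degree-$2j$ monomial in weight-homogeneous generators has trivial $\Z_2^n$-weight only if every multiplicity $m_\ell$ is even, forcing $j$ even; since such monomials span $H^{2j}\TypeBConf{n}$ and the averaging idempotent (defined because $\fieldchar(\kk)>2$) kills nontrivial weight vectors, the invariants vanish for $j$ odd. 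This is more elementary and completely bypasses the Pairing Lemma. What the paper's heavier route buys is the explicit basis $\tvbasis$ and the bijection $\phi$ with $\typeabasis$, which are reused in Proposition~\ref{prop:bihomogeneous-gr(Z)-basis}, Corollary~\ref{Sn-equivariant-surjection-and-iso} and ultimately Theorem~\ref{decomposition-of-peak-idempotent-reps-theorem}; your argument yields only the vanishing, which is all that (iii) needs. The deduction that $\PeakIdempotent{n}{k}=0$ for $k\not\equiv n\bmod 2$ from part (ii) coincides with the paper's; your closing remark about coefficients lying in $\Z[1/n!]$ is superfluous, since the argument already applies verbatim over every field with $\fieldchar(\kk)>n$.
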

\noindent
In the case that $k=0$, Theorem \ref{peak-idempotents-give-cohomology-reps} (iii) recovers work of Aguiar--Nyman--Orellana \cite[Thm 6.2]{aguiar2006peak}, who give formulas for $\PeakIdempotent{n}{0}$ and show that $\varphi(\TypeBEulerianIdempotent{n}{0}) = 0$ when $n$ is odd.

Our proof employs the fact that for $\symmB{n}$-representations $U$, the $\Z_2^n$-fixed space $U^{\Z_2^n}$ is a representation of the quotient group $\symm{n} \cong \symmB{n}/\Z_2^n$. 
Much of 
Theorem~\ref{peak-idempotents-give-cohomology-reps} stems
from analyzing how this applies to the two sides in the $\symmB{n}$-representation isomorphism \eqref{type-B-Eulerian-idempotents-give-cohomology-reps} above.  
Starting from the $\symm{n}$-representation isomorphism
$$
H^* \PeakConf{n} \cong \left( H^* \TypeBConf{n} \right)^{\Z_2^n},
$$
we use a known presentation of the cohomology ring 
$H^* \TypeBConf{n}$ due to
Xicot{\'e}ncatl (explained in Section \ref{sec:VGrings} below), followed by 
a change-of-variables in Section \ref{eigengbasis-subsection} that diagonalizes
the $\Z_2^n$-action on this ring.  

\subsection*{Connection to Thrall's higher Lie characters} In Section \ref{sec:filtration-and-gradings}, we embark upon a finer study of $H^* \PeakConf{n}$ that takes advantage of the fact that generators for the cohomology $H^* \TypeBConf{n}$
segregate into two $\symmB{n}$-orbits:  those indexed by
reflecting hyperplanes of the form $x_i=0$ versus hyperplanes of the form $x_i = \pm x_j$.  This leads to a helpful {\it filtration} on the cohomology ring $H^* \TypeBConf{n}$, whose
associated graded ring is a {\it bigraded} $\symmB{n}$-representation. This filtration and bigrading persists when one takes $\Z_2$-fixed spaces, giving a bigraded $\symm{n}$-representation on an associated
graded ring for $H^* \PeakConf{n}$.  

One surprising outcome of our analysis is an (ungraded) isomorphism (Corollary~\ref{Sn-equivariant-surjection-and-iso}) of $\kk \symm{n}$-modules
\begin{equation}
\label{singly-graded-isomorphism}
H^* \PeakConf{n} \cong H^* \TypeAConf{n}.
\end{equation}
Using the bigrading on $H^* \PeakConf{n}$, we apply this
isomorphism \eqref{singly-graded-isomorphism}  to give a description of each $H^i \PeakConf{n}$ in terms of the {\it higher Lie representations} $\{ \Lie{\lambda}\}$ of Thrall \cite{thrall}. The $\lambda$ indexing the higher Lie characters are integer partitions of $n$, written $\lambda \vdash n$, meaning $|\lambda|:=\sum_{i} \lambda_i= n$. 

The $\Lie{\lambda}$ are representations of the symmetric group  introduced by Thrall in connection to the Poincar\'{e}-Birkhoff-Witt theorem; it is a long-standing open problem (known as \emph{Thrall's problem}) to decompose $\Lie{\lambda}$ into irreducible $\symm{n}$-representations. Nonetheless, the higher Lie characters have appeared in a variety of interesting contexts such as in relation to the free Lie algebra \cite{reutenauer}, the homology of the partition lattice \cite{Sundaram}, the Tsetlin library and derangement representation \cite{brauner2023invariant}, as well as the invariant theory of Tits's face semigroup algebra for the braid arrangement \cite{BHR,Bidigare-thesis,commins2024invariant,Uyemura-Reyes}.

Section \ref{sec:backgroundonidempotents} reviews classical results showing
how the $\symm{n}$-representations $\Lie{\lambda}$ decompose the regular representation,
and refine the cohomological decomposition in \eqref{type-A-Eulerian-idempotents-give-cohomology-reps} as follows, via the
{\it length} or {\it number of parts} $\ell(\lambda)$:
\begin{equation}
\label{type-A-higher-Lie-decomposition}
\begin{aligned}
\kk \symm{n} &\cong \,\,\,\,\, \bigoplus_{|\lambda| = n} \Lie{\lambda}\\
 \left(\kk \symm{n} \right) 
 \TypeAEulerianIdempotent{n}{n-1-k} & \cong
\bigoplus_{\substack{|\lambda|=n:\\ \ell(\lambda)=n-k}} \Lie{\lambda} \cong H^{2k} X_n.
\end{aligned}
\end{equation}
In particular, since $\dim_{\kk} \Lie{\lambda}$ is the number of permutations in $\symm{n}$ of cycle type $\lambda$, this means that the Betti number
$H^{2k} \TypeAConf{n}$ counts permutations in $\symm{n}$ with $n-k$ cycles, a {\it (signless) Stirling number of the 1st kind $c(n,n-k)$}.

Our next main result, proven in Section \ref{sec:proofofpeakreps}, shows the higher Lie representations decompose $H^* \PeakConf{n} \cong H^* \TypeAConf{n}$ more finely than \eqref{type-A-higher-Lie-decomposition}.  To state it, let $\odd(\lambda)$ be the number of odd parts in a partition $\lambda$.

\begin{theorem}
\label{decomposition-of-peak-idempotent-reps-theorem}
When $\fieldchar(\kk) >n$, for  $0 \leq k \leq n$ with $k$ even, one has a $\kk \symm{n}$-module  isomorphism 
$$
\left( \kk \symm{n} \right) \PeakIdempotent{n}{n-k} 
\cong \bigoplus_{\substack{|\lambda|=n:\\ \odd(\lambda)=n-k}} \Lie{\lambda}  \cong
H^{2k} \PeakConf{n}.
$$    
Hence the Betti number $\dim_{\kk} H^{2k} \PeakConf{n}$ counts permutations in $\symm{n}$ with $n-k$ odd cycles.

Furthermore, in the associated graded ring
for $H^* \PeakConf{n}$,  consider the bigraded component of $H^{2k} \PeakConf{n}$ that corresponds to filtration degree $\ell$ in the variables indexed by the $x_i=\pm x_j$ hyperplanes.  Then this component
is isomorphic to the following $\kk \symm{n}$-module:
$$
\bigoplus_{\substack{|\lambda|=n:\\ \ell(\lambda)=n-\ell\\ \odd(\lambda)=n-k }}
\Lie{\lambda}.
$$
\end{theorem}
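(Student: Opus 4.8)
The first isomorphism $(\kk\symm{n})\PeakIdempotent{n}{n-k}\cong H^{2k}\PeakConf{n}$ is Theorem~\ref{peak-idempotents-give-cohomology-reps}(ii), and the ungraded decomposition $H^{2k}\PeakConf{n}\cong\bigoplus_{\odd(\lambda)=n-k}\Lie{\lambda}$ follows from the ``Furthermore'' bigraded assertion by summing over the filtration degree $\ell$, since $\{\lambda\vdash n:\odd(\lambda)=n-k\}=\bigsqcup_{\ell\ge0}\{\lambda\vdash n:\ell(\lambda)=n-\ell,\ \odd(\lambda)=n-k\}$. (Also $\odd(\lambda)\equiv|\lambda|=n\pmod2$, so the implied value $k=n-\odd(\lambda)$ is automatically even, matching Theorem~\ref{peak-idempotents-give-cohomology-reps}(iii).) So it suffices to prove the bigraded statement.

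Since $\Z_2^n$ acts freely on $\TypeBConf{n}$ and $\fieldchar(\kk)>2$, one has $H^{*}\PeakConf{n}\cong(H^{*}\TypeBConf{n})^{\Z_2^n}$ degreewise; by Section~\ref{sec:filtration-and-gradings} the filtration of $H^{*}\TypeBConf{n}$ by the two $\symmB{n}$-orbits of reflecting hyperplanes descends to a $\Z_2^n$-stable bigrading $\gr^{2a,2b}$ of its associated graded ring, where $2a$ is the $x_i=0$-degree, $2b$ the $x_i=\pm x_j$-degree (so $\ell=b$ in the statement), and $2(a+b)=2k$ the total cohomological degree. The plan is: (i) decompose $\gr H^{*}\TypeBConf{n}$ as a bigraded $\symmB{n}$-module into the type $B$ higher Lie characters $\Lie{\slambda}$, indexed by the signed cycle types $(\lambda^{+},\lambda^{-})$ (pairs of partitions of $n$); (ii) take $\Z_2^n$-fixed points; (iii) assemble.

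For step (i), refining \eqref{type-B-Eulerian-idempotents-give-cohomology-reps} exactly as \eqref{type-A-higher-Lie-decomposition} refines \eqref{type-A-Eulerian-idempotents-give-cohomology-reps} gives $\gr H^{*}\TypeBConf{n}\cong\bigoplus_{(\lambda^{+},\lambda^{-})}\Lie{\slambda}$, with $\Lie{\slambda}$ occupying the bidegree $(2a,2b)$ determined by
\[
a=\even(\lambda^{+})+\odd(\lambda^{-}),\qquad b=\bigl(|\lambda^{+}|-\ell(\lambda^{+})\bigr)+\sum_{m\in\lambda^{-}}\lfloor m/2\rfloor .
\]
(Consistency can be checked via Hilbert series: $\sum_{(\lambda^{+},\lambda^{-})}q^{2a}t^{2b}\dim\Lie{\slambda}$ reproduces $\prod_{i=1}^{n}(1+q^{2}+2(i-1)t^{2})$, the bigraded Poincaré polynomial of $\TypeBConf{n}$ coming from the forgetful fibration $\TypeBConf{n}\to\TypeBConf{n-1}$.) For steps (ii)--(iii), write $\Lie{\slambda}=\mathrm{Ind}_{C}^{\symmB{n}}\chi$ with $C$ the centralizer of an element of signed cycle type $(\lambda^{+},\lambda^{-})$ and $\chi$ a box product of cyclic (higher-)Lie characters on the blocks. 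Frobenius reciprocity yields
\[
\dim\bigl(\Lie{\slambda}\bigr)^{\Z_2^n}=\bigl\langle\chi,\ \mathrm{Res}_{C}\,\mathrm{Ind}_{\Z_2^n}^{\symmB{n}}\triv\bigr\rangle_{C},
\]
where $\mathrm{Ind}_{\Z_2^n}^{\symmB{n}}\triv=\varphi^{*}(\kk\symm{n})$ is trivial on $\Z_2^n$, while $\chi$ restricted to $C\cap\Z_2^n=\prod_{\text{cycles }c}\langle\text{flip all letters of }c\rangle$ is trivial on positive cycles and equals the sign character on each negative cycle. Hence this pairing vanishes when $\lambda^{-}\neq\emptyset$, and for $\lambda^{-}=\emptyset$ a Mackey computation descends the induction along $\symmB{n}\twoheadrightarrow\symm{n}$ to identify $(\Lie{\slambda})^{\Z_2^n}$ with $\mathrm{Ind}_{C_{\symm{n}}(\overline{w_0})}^{\symm{n}}\overline{\chi}=\Lie{\lambda^{+}}$. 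Thus $(\Lie{(\lambda^{+},\lambda^{-})})^{\Z_2^n}\cong\Lie{\lambda^{+}}$ if $\lambda^{-}=\emptyset$ and $0$ otherwise. (The vanishing for $\lambda^{-}=(m)$ with $m$ odd recovers the Aguiar--Nyman--Orellana fact $\varphi(\TypeBEulerianIdempotent{n}{0})=0$ for $n$ odd.) Combining, only $\lambda=\lambda^{+}$ survives, and since for $(\lambda,\emptyset)$ the rule reads $a=\even(\lambda)$ and $b=|\lambda|-\ell(\lambda)=n-\ell(\lambda)$, we get
\[
\gr^{2a,2b}H^{*}\PeakConf{n}\cong\bigoplus_{\substack{\lambda\vdash n:\ \even(\lambda)=a,\\ \ell(\lambda)=n-b}}\Lie{\lambda}.
\]
With $\ell:=b$ and $2k:=2(a+b)$ this says $\ell(\lambda)=n-\ell$ and $\odd(\lambda)=\ell(\lambda)-\even(\lambda)=(n-\ell)-(k-\ell)=n-k$, exactly the bigraded assertion; the ungraded statement follows as in the first paragraph.

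The main obstacle is step (i): establishing the bigraded type $B$ higher Lie decomposition with its precise bidegree-placement rule. It is the ``even part'' indicator in the $\lambda^{+}$-contribution to $a$ --- surviving $\Z_2^n$-invariants as $\even(\lambda)$ --- that turns the ``$\ell(\lambda)$'' of the type $A$ story into the ``$\odd(\lambda)$'' appearing here. Its proof needs Xicoténcatl's presentation of $H^{*}\TypeBConf{n}$, the $\Z_2^n$-diagonalizing change of variables of Section~\ref{eigengbasis-subsection}, and careful bookkeeping of how the two orbits of generators spread over the Lie blocks; alternatively one can reduce to the single-cycle spaces $\PeakConf{m}$ and bootstrap using the operadic/plethystic structure of the higher Lie characters together with multiplicativity along the fibrations $\TypeBConf{m}\to\TypeBConf{m-1}$. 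Steps (ii)--(iii) are a routine application of Frobenius reciprocity and Mackey theory once the type $B$ characters are in hand, and the $b$-grading can also be cross-checked against Corollary~\ref{Sn-equivariant-surjection-and-iso}: the isomorphism \eqref{singly-graded-isomorphism} can be taken to carry filtration degree $\ell$ of $\gr H^{*}\PeakConf{n}$ onto $H^{2\ell}\TypeAConf{n}\cong\bigoplus_{\ell(\lambda)=n-\ell}\Lie{\lambda}$.
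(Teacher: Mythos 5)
Your reduction to the bigraded (``Furthermore'') assertion is fine, and your final bookkeeping (a surviving piece $(\lambda,\emptyset)$ with $a=\even(\lambda)$, $b=n-\ell(\lambda)$ gives $\odd(\lambda)=n-k$ and $\ell(\lambda)=n-\ell$) is consistent with the theorem. But there is a genuine gap at your step (i), and you acknowledge it yourself: the claimed bigraded $\symmB{n}$-equivariant decomposition of $\gr H^{*}\TypeBConf{n}$ into ``type $B$ higher Lie characters'' $\Lie{(\lambda^{+},\lambda^{-})}$ indexed by signed cycle types, placed in the specific bidegrees $a=\even(\lambda^{+})+\odd(\lambda^{-})$, $b=(|\lambda^{+}|-\ell(\lambda^{+}))+\sum_{m\in\lambda^{-}}\lfloor m/2\rfloor$, is never proved; a Hilbert-series consistency check is not a construction. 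Moreover this claim is strictly stronger than anything available from \eqref{type-B-Eulerian-idempotents-give-cohomology-reps} and Theorem~\ref{thm:eulerian-reps-as-cohomology}: in type $B$ the flat-orbit (BBHT) decomposition is indexed by single partitions $\mu$ with $|\mu|\le n$, one piece per flat orbit, which is coarser than a conjugacy-class-indexed decomposition, so ``refining \eqref{type-B-Eulerian-idempotents-give-cohomology-reps} exactly as \eqref{type-A-higher-Lie-decomposition} refines \eqref{type-A-Eulerian-idempotents-give-cohomology-reps}'' has no direct analogue: in type $A$ flat orbits and cycle types coincide, in type $B$ they do not. Steps (ii)--(iii) inherit the problem: the Mackey/Frobenius computation of $(\Lie{(\lambda^{+},\lambda^{-})})^{\Z_2^n}$ needs a precise definition of the linear character $\chi$ of the centralizer being induced (in particular that its restriction to $C\cap\Z_2^n$ is the sign on each negative cycle), and you never supply one. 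So as written the proposal is a plan whose key lemma is exactly the hard content, not a proof.

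For comparison, the paper never produces a signed-cycle-type decomposition of $H^{*}\TypeBConf{n}$. It works with the $\Z_2^n$-diagonalized presentation, decomposes $\gr(\TypeBCohomology{n})$ combinatorially by double partitions recording looped versus unlooped components of the multigraph of a monomial (Proposition~\ref{double-partition-decomposition-of-grY}), relates that decomposition to the bigrading (Proposition~\ref{prop:double-partition-to-bigrading}) and to the flat-orbit pieces (Proposition~\ref{prop:compatiblefiltrations}), and then uses the Pairing Lemma~\ref{lem:pairing-bijection} to show that $\gamma$ (sending $u_i\mapsto 1$, $v_{ij},w_{ij}\mapsto t_{ij}$) restricts to an $\symm{n}$-equivariant isomorphism $\gr(\TypeBCohomology{n})^{\Z_2^n}\cong\TypeACohomology{n}$ carrying the $(\oddparts(\lambda),\evenparts(\lambda))$-component onto $\TypeACohomology{\lambda}\cong\Lie{\lambda}$ (Corollaries~\ref{Sn-equivariant-surjection-and-iso} and \ref{cor:representation-alignment}). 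Note also that your closing cross-check via \eqref{singly-graded-isomorphism} only tracks the $\ell$-degree, hence recovers $\ell(\lambda)=n-\ell$ but not $\odd(\lambda)=n-k$; obtaining the latter is precisely the component-alignment content of Corollary~\ref{cor:representation-alignment}, which your outline does not replace.
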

In fact, we refine Theorem \ref{decomposition-of-peak-idempotent-reps-theorem} by describing the $\symm{n}$-representations generated by \emph{primitive} idempotents in the peak algebra $\PeakAlgebra{n}$, both in terms of a certain decomposition of  $H^*\PeakConf{n}$ and Thrall's higher Lie characters; see Corollary \ref{cor:peakidempotentrep_primitive}.

Theorem~\ref{decomposition-of-peak-idempotent-reps-theorem} has several consequences. First, it lets us
compute a Hilbert series (Corollary~\ref{cor:nonequivariant-bihilb-facts}) and even an $\symm{n}$-equivariant Hilbert series for $H^* \PeakConf{n}$, with recursions lifting to a $\kk\symm{n}$-module branching rule (Theorem~\ref{restriction-rep-recursion}).

Second, Theorem~\ref{decomposition-of-peak-idempotent-reps-theorem} suggests a connection, discussed in Section \ref{sec:jordanalgebra}, between
the filtration on $H^* \PeakConf{n}$, peak idempotents, and work of Robbins \cite{robbins1971jordan} and Calderbank, Hanlon and Sundaram \cite{calderbank1994representations} on the multilinear part of the {\it space of simple Jordan elements} within a free associative algebra on $n$ letters. They showed that the latter space carries the 
$\kk\symm{n}$-module
$$
\bigoplus_{\substack{|\lambda|=n:\\ \odd(\lambda)=\ell(\lambda) }} \Lie{\lambda}.
$$
As explained in Section \ref{sec:jordanalgebra}, this coincides with a certain piece of the associated graded ring of $H^* \PeakConf{n}$.

\medskip

Our analysis has several benefits: 
\begin{itemize}
    \item We offer a novel approach to obtaining structural statements about the peak algebra, avoiding computations in the algebra itself and instead developing and utilizing concrete combinatorial descriptions of the cohomology rings $H^*X_n$, $H^*Y_n$, and $H^*Z_n$. 
    
\item Our work extends to the peak algebra $\PeakAlgebra{n}$ the known connection between the descent algebra $\sol(\symm{n})$ and Thrall's higher Lie characters.
Particularly notable here are Corollary~\ref{cor:peakidempotentrep_primitive} and Conjecture~\ref{conj:peak-idempotent-as-sum-of-Lies}. 
\item Our work contributes toward understanding the cohomology of configuration spaces $\symm{n}$-equivariantly. In general, this is a difficult  problem. Our study of $H^*\PeakConf{n}$ gives an interesting example---notably, one \emph{not} arising directly from a hyperplane arrangement---of a solution to this problem that could offer insight into studying similar spaces. 

\end{itemize}

\subsection*{Organization} The paper is structured as follows:
\begin{itemize}
    \item Section \ref{sec:background} gives background on the    cohomology rings $H^* \TypeAConf{n}, H^* \TypeBConf{n}$, presenting them combinatorially as the graded Varchenko-Gelfand rings for
    the reflection hyperplane arrangements of types $A_{n-1}$ and $B_n$ (Section \ref{sec:VGrings}). To decompose these rings, we discuss the lattices of flats
    for the hyperplane arrangements (Section \ref{sec:flats}), the Eulerian idempotents, Thrall's higher Lie characters, and the connections between these objects (Section \ref{sec:backgroundonidempotents}). \medskip
    
    \item Section~\ref{sec:invariantring} uses topology of quotients by free actions of
    finite groups to prove Theorem~\ref{peak-idempotents-give-cohomology-reps}(i),(ii), identifying $H^* \PeakConf{n}$ as the $\Z_2^n$-fixed subalgebra $(H^* \TypeBConf{n})^{\Z_2^n}$.  \medskip
    
    \item Section~\ref{sec:presentations-decompositions} 
    makes a change-of-variable in the ring presentation of $H^* \TypeBConf{n}$, diagonalizing the $\Z_2^n$-action, to faciliate the analysis of  $H^* \PeakConf{n}=(H^* \TypeBConf{n})^{\Z_2^n}$.  \medskip

    \item Section~\ref{sec:ringZn} uses an interesting
    combinatorial Pairing Lemma~\ref{lem:pairing-bijection} to prove Theorem~\ref{peak-idempotents-give-cohomology-reps}(iii).  The same lemma reappears later, at a crucial step in the proof of
    Theorem~\ref{decomposition-of-peak-idempotent-reps-theorem}.  \medskip
    
    \item Section~\ref{sec:filtration-and-gradings} considers the $\uu$-adic filtration of $H^* \TypeBConf{n}$ where $\uu$ is the ideal generated by
    variables corresponding to coordinate hyperplanes.  The
    associated graded ring for this filtration leads to a further decomposition of $H^*\TypeBConf{n}$ as a $\kk$-vector space indexed by {\it double partitions} $\slambda$ of $n$ in Section \ref{sec:signedpartitionsdecomposition}, which is shown to be compatible with other decompositions.  Section \ref{sec:VSdescription} then gives a quotient $\kk$-vector space presentation of $H^*\TypeBConf{n}$ which will be important for Section \ref{sec:proofofpeakreps}.  \medskip
    
    \item Section~\ref{sec:proofofpeakreps} then studies the $\symm{n}$-representations on $H^* \PeakConf{n}$ via its associated graded ring
    for its inherited $\uu$-adic filtration. The key tool is an $\symm{n}$-equivariant surjection $H^* \TypeBConf{n} \rightarrow H^* \TypeAConf{n}$.  This turns out to restrict to a $\symm{n}$-equivariant vector space isomorphism $H^* \PeakConf{n} \cong H^* \TypeAConf{n}$, leading to the proof of Theorem \ref{decomposition-of-peak-idempotent-reps-theorem},
    and more precise vanishing statements.  \medskip
    
    \item Section \ref{sec:connectiontopeakreps} considers family of peak idempotents which are finer than
    the $\{\PeakIdempotent{n}{k}\}$.
    It shows that they are primitive idempotents for $\PeakAlgebra{n}$, and describes their associated representations in terms of $\lie_{\lambda}$.  \medskip
    
    \item Section \ref{sec:recursions} produces generating functions and recursions for the Hilbert series of $H^* \PeakConf{n}$, and strengthens these to $\symm{n}$-equivariant results. In particular,
    the recursion lifts to a branching rule relating the representations on $H^* \PeakConf{m}$ for $m=n,n-1,n-2$.  \medskip 
    
    \item Section \ref{sec:jordanalgebra} discusses
    the connection to the 
    simple Jordan elements in a free associative algebra.  \medskip
    
    \item Appendix \ref{appendix} proves the branching rules from Section \ref{sec:recursions} (Section  \ref{sec:branching-rule-proof}) and Section \ref{sec:jordanalgebra} (Section   \ref{sec:CHS_generalization-proof})  via symmetric function computations.

\end{itemize}

\section*{Acknowledgments}
The authors are very grateful to Sheila Sundaram for pointing out a  connection to Jordan brackets.  The second author is supported by NSF Mathematical Sciences Postdoctoral Fellowship DMS-2303060, and the third author is partially supported by NSF grant  DMS-2053288.

\section{Background}\label{sec:background}
This section reviews properties of real hyperplane arrangements.  This includes the reflection arrangements of types $A$ and $B$, along with the graded Varchenko-Gelfand ring presentation of the cohomology 
of the spaces in \eqref{configuration-spaces-reexpressed},
and connections to Eulerian idempotents and Thrall's higher Lie characters. 

\subsection{Hyperplane arrangements, reflection groups, and Types $A$ and $B$}
\label{sec:flats}

For background on hyperplane arrangements,
see Aguiar and Mahajan \cite{aguiar2017topics},
Orlik and Terao \cite{OrlikTerao},
and Stanley \cite{Stanley-hyperplanes}.

Let $\A=\{H_1,H_2,\ldots,H_t\}$
be a real (central) hyperplane arrangement
in $V=\R^n$; in other words,
each $H_i$ is a codimension one $\R$-linear subspace of $V$, containing the origin.
 A {\it flat} is an intersection subspace $X=H_{i_1} \cap \cdots \cap H_{i_k}$ of subsets of the hyperplanes in $\A$.
 The collection of all flats, ordered by inclusion, forms a poset, and actually a lattice, denoted $\lat(\A)$.  Note that in the literature, one often sees the opposite order on $\lat(\A)$, by {\it reverse} inclusion which makes it a {\it geometric lattice}, i.e. one which is atomic and upper semimodular.  With our choice, $\lat(\A)$ will be poset dual to a geometric lattice; in other words, it is a coatomic and lower semimodular lattice. This means that $\lat(\A)$ will be ranked, and the flat $X$ will have rank $\dim_{\R}(X)$.

 An important player in our story is the arrangement {\it complement} 
 \begin{equation}
 \label{arrangement-complement}
 V \setminus \A =\R^n \setminus \bigcup_{i=1}^t H_i
 \end{equation}
 which decomposes into connected components called {\it chambers}.

When a finite subgroup $W$ of $GL(V)$ stabilizes $\A$, meaning $w(H)$ lies in $\A$ for all $w$ in $W$ and all $H$ in $\A$, then $W$ acts by poset automorphisms on $\lat(\A)$. We will write $\sh{X}$ to denote the $W$-orbit of $X \in \lat(\A)$. The set of $W$-orbits $\lat(\A)$ will be denoted $\lat(\A)/W$.  Note that in this setting, $W$ also acts via a permutation representation on the set of
chambers in the arrangement complement \eqref{arrangement-complement}.

Of particular interest for us will be the special case where $W$ is a {\it finite reflection group}, that is,
a finite subgroup of $GL(V)$ for $V=\R^n$ which is generated by the reflections $s$ in $W$.  Each reflection $s$ is determined by its reflecting hyperplane $H=V^s=\{v \in V: s(v)=v\}$, since
$s$ fixes $H$ pointwise and acts as the scalar $-1$ on the orthogonal line $H^\perp$.  The {\it reflection arrangement} $\A_W$ is the set of all reflecting hyperplanes for all reflections in $W$.
An interesting feature here is that the permutation action of $W$ on the set of chambers
of the complement \eqref{arrangement-complement}
is {\it simply transitive}.  Hence once may identify this $W$-permutation representation with the {\it (left-)regular representation} $\kk W$, in which $W$ acts on itself via left-translation.

The next two subsections provide explicit descriptions of $\lat(\A_W)$ and $\lat(\A_W)/W$ when $W=\symm{n}, \symmB{n}$.

\subsubsection{The symmetric group $W=\symm{n}$ as a reflection group}
\label{subsec:braidarrangement}
See \cite[Ch. 6.3]{aguiar2017topics} for more
background here.
We consider $W=\symm{n}$ acting on $V=\R^n$ by permuting the coordinates $\{x_i\}_{i=1}^n$. Then $W$ is a reflection group, generated by the transpositions
$s_{ij}=(i,j)$ which are reflections in the hyperplanes $x_i=x_j$.  This gives rise to the type $A_{n-1}$ reflection arrangement in $V=\R^n$: 
$$
\A_{\symm{n}}=\{x_i = x_j\}_{1 \leq i < j \leq n}.
$$
Its lattice of flats $\lat( \A_{\symm{n}})$ is isomorphic to the lattice of set partitions of $[n]:=\{1,2,\ldots,n\}$. This isomorphism identifies a flat $X \in \lat(\A_{\symm{n}})$ with the set partition $\pi_X = \{ B_1, \cdots, B_k \}$ whose blocks $B_j = \{ i_1, \cdots, i_\ell \}$ correspond to subspaces 
$x_{i_1} = x_{i_2} = \cdots =x_{i_{\ell}}
$
whose intersection is $X$;  that is,
one has $x_i=x_j$ 
for all $x$ in $X$ if and only if $i,j$ lie in the same block of $\pi_X$. The order relation on flats corresponds to refinement: $X \leq Y$ if $\pi_Y$ is a refinement of $\pi_{X}$. The dimension of $X$ is $\dim(X)=\ell(\pi_X)$, the number of blocks of $\pi_X$.

The group $\symm{n}$ acts on $\lat(\A_{\symm{n}})$. The $\symm{n}$-orbit of a flat $X$
may be identified with the (integer) partition $\sh{X}=(\lambda_1 \geq \cdots \geq \lambda_k)$ with $|\lambda|=n$ whose parts are the sizes $|B_1|,\ldots,|B_k|$ of the blocks in the set partition $\pi_X$, but written in weakly decreasing order.
Thus $\lambda$ has length $\ell(\lambda)=k=\dim(X)$ parts.

\begin{example}\rm
When $W=\symm{8}$, the following flat in  $\lat( \A_{\symm{8}} )$
$$
X=\{ x\in V=\R^8:  
x_3=x_4 , \ \   x_1= x_2=x_5 , \ \  x_7 =x_8\}.
$$
corresponds to the set partition
$$
\pi_X=\{B_1,B_2,B_3,B_4\}=\{\{3,4\},\{6\},\{1,2,5\},\{7,8\}\},
$$
whose $W$-orbit $\sh{X}$ in $\lat(\A_{\symm{8}})/\symm{8}$ corresponds to the number partition $\sh{X}=(3,2,2,1) \vdash 8=n$.
\end{example}

\subsubsection{The hyperoctahedral group $W=\symmB{n}$ as a reflection group}
\label{subsec:typebarrangement}
See \cite[\S 6.7]{aguiar2017topics} for more background here.
We consider the hyperoctahedral group
$W=\symmB{n}$, acting on $V=\R^n$
via the $n \times n$ signed permutation matrices
within $GL(V)$. In other words, $\symmB{n}$ acts by permuting and negating
the coordinates in $V$.  Then $W$ is a reflection group, generated by 
\begin{itemize}
    \item the transpositions that reflect through the hyperplanes $x_i=x_j$, 
    \item the
``signed transpositions" that reflect through the hyperplanes $x_i=-x_j$, 
\item and the coordinate negations $\tau_i$ that reflect through the hyperplanes $x_i=0$. 
\end{itemize}
 This gives rise to the type $B_n$ (or $C_n$) reflection arrangement in $V=\R^n$ 
$$
\A_{\symmB{n}}=\{x_i=0\}_{1 \leq i \leq n} \sqcup  \{x_i = \pm x_j\}_{1 \leq i<j \leq n}. 
$$
In order to describe the lattice of flats $\lat(\A_{\symmB{n}})$, we first establish
certain conventions.. We consider the set $[n]^{\pm}:= \{ \overline{1}, \overline{2}, \cdots, \overline{n}, 1, 2, \cdots n \}$
with the negation involution $i \leftrightarrow \bar{i}$ (so that $\bar{\bar{i}}=i$),
and adopt the convention for the coordinates of $x=(x_1,\ldots,x_n)$ in $\R^n$ that $x_{\bar{i}}:=-x_i$.  With this convention, 
a flat $X \in \lat(\A_{\symmB{n}})$ can be identified with a set partition  \[
\pi_X=\{ \bar{C}_k,\bar{C}_{k-1},\ldots,\bar{C}_1,\bar{C}_0=C_0,C_1,C_2,\ldots,C_k\}
\]
of the set $[n]^{\pm}$ satisfying two properties:
\begin{itemize}
    \item $\pi_X$ is stable under $i \leftrightarrow \bar{i}$, meaning $C_i$ appears in $\pi_X$ if and only if
$\bar{C}_i:=\{\bar{j}\}_{j \in C_i}$ also appears, and
\item $\pi_X$ has at most one block $C_0=\bar{C}_0$, called the {\it zero block} $C_0$ (if present).
\end{itemize}
Points $x$ in the flat $X$ satisfy $x_i=x_j$
if and only if $i,j$ in $[n]^\pm$ lie in the same block of $\pi_X$, bearing in mind that $x_{\bar{i}}=-x_i$; this means that points $x$ in $X$ satisfy $x_i=0$ if
and only if $i$ lies in the zero block $C_0$.
Denote the cardinality of the zero block as $z(X):= |C_0|/2$.
One can check that the dimension of the flat $X$ is $k$, or half the number of nonzero blocks.

The hyperoctahedral group $\symmB{n}$ acts on flats in $\lat(\A_{\symmB{n}})$, and we again identify the $\symmB{n}$-orbit of $X$ with a partition $\sh{X}=\mu=(\mu_1 \geq \cdots \geq \mu_k)$ listing the sizes of (half of) the
nonzero blocks $|C_1|,\ldots,|C_k|$ in weakly decreasing order.  Thus $\mu$ has length $\ell(\mu)=k=\dim(X)$ parts and weight $|\mu|=\sum_i \mu_i=n-z(X) \leq n$.

\begin{example}\label{ex:Bflat_indexing} \rm
When $W=\symmB{8}$, the following flat in $\lat(\A_{\symmB{8}})$
$$
X=\{ x\in V=\R^8:  
x_2=-x_5=x_6, \ \ x_1=-x_8, \ \ x_3=x_7=0 \},
$$
corresponds to the unordered set partition 
$$
\pi_X =
\{ \bar{C}_3,\bar{C}_2,\bar{C}_1,C_0,C_1,C_2,C_3\}=
\{
\{2,\bar{5},6\},
\{4\},
\{\bar{1},8\},
\{3,7,\bar{3},\bar{7}\},
\{1,\bar{8}\},
\{\bar{4}\},
\{\bar{2},5,\bar{6}\}
\}.
$$
The $W$-orbit $\sh{X}$ in $\lat(\A_{\symmB{8}})/\symmB{8}$ 
corresponds to the number partition
$\sh{X} = (3,2,1) \vdash 6=8-z(X)$.
 \end{example}
 
\subsection{Graded Varchenko-Gelfand rings}\label{sec:VGrings}
The configuration space cohomology rings $H^* X_n$ and $H^* Y_n$ from the introduction have presentations which are special cases of a more general ring called the (associated graded) Varchenko-Gelfand ring of a real hyperplane arrangement.

\begin{definition} \rm
Let $\A=\{H_1,H_2,\ldots,H_t\}$
be a real hyperplane arrangement
in $V=\R^n$.  After arbitrarily choosing linear forms $\{ \alpha_i\}_{i=1,2,\ldots,t}$ in $V^*$ such that $\ker \alpha_i=H_i$, define the {\it associated graded Varchenko-Gelfand ring} $\VG(\A)$ as the commutative quotient  
\begin{equation}
\label{VG-presentation}
\VG(\A):=\Z[u_1,u_2,\ldots,u_t]/\J_{\A}.
\end{equation}
The ideal $\J_{\A}$ is
generated by the squares $u_1^2,u_2^2,\ldots,u_t^2$
together with one relation $\partial(C)$ for
each minimal $\R$-linear dependence 
$
\sum_{j=1}^s c_j \cdot \alpha_{i_j}=\bf{0},
$ 
called a {\it (matroid) circuit} $C=\{i_1,i_2,\ldots,i_s\}$
:
    \begin{equation}
    \label{VG-circuit-relation}
      \partial(C):=  \sum_{j=1}^{s} \mathrm{sgn}(c_j) \cdot u_{i_1} u_{i_2} 
\cdots \widehat{u_{i_j}} \cdots u_{i_{c-1}} u_{i_c}.
    \end{equation}
\end{definition}

\begin{remark} \rm
\label{VG-grobner-basis-remark}
    In fact, it was shown by Cordovil \cite[Cor.~2.8]{Cordovil} (see also Dorpalen-Barry \cite[Thm.~1]{Dorpalen-Barry}) that the generators $u_i^2$ and \eqref{VG-circuit-relation} form a {\it Gr\"obner basis} with respect to any monomial ordering $\prec$ on $\kk[u_1,\ldots,u_t]$.  Furthermore,
    if one re-indexes so that $u_1 \prec u_2 \prec \cdots \prec u_t$, then
    the $\prec$-initial term of the relation \eqref{VG-circuit-relation} is the {\it broken-circuit monomial} $u_{i_2} u_{i_3} \cdots u_{i_s}$
    where $i_1=\min\{i_1,i_2,\ldots,i_s\}$.  Consequently, $\VG(\A)$ has an {\it nbc-monomial basis} of
    $\prec$-standard monomials, that is, those
    squarefree monomials in the $\{u_i\}$ which are divisible by none of these broken-circuit monomials.
    In particular, this means that the {\it Hilbert series}
    of $\VG(\A)$ has the following expression:
    $$
    \mathrm{Hilb}(\VG(\A),t)
    :=\sum_{d \geq 0} t^d \cdot \dim_{\kk} \VG(\A)_d
    =\sum_{\text{nbc-monomials }m} t^{\deg(m)}.
    $$
\end{remark}

Work of de Longueville and Schultz \cite[Cor. 5.6]{DelonguevilleSchultz} and later 
Moseley \cite{moseley2017equivariant} identifies $\VG(\A)$ as a cohomology algebra, namely $
\VG(\A) \cong H^*(X_{\R^3},\Z)
$
where
\begin{equation}
\label{R3-thickening}
X_{\R^3}:=\left( V \otimes_{\R} \R^3 \right) \setminus \left( \A \otimes_{\R} \R^3 \right),
\end{equation}
is the ``$\R^3$-thickening" of the hyperplane arrangement complement $V \setminus \A$
from \eqref{arrangement-complement}.
Note that the unthickened hyperplane arrangement complement $V \setminus \A$ has its cohomology ring $H^*(V \setminus \A,\Z)=H^0(V \setminus \A,\Z)$
concentrated in dimension $0$, and 
is the ring of locally constant $\Z$-valued functions from the chambers (=connected components of $V\setminus \A$) under pointwise addition and multiplication.  The latter ring carries an interesting filtration studied  originally by Varchenko and Gelfand \cite{varchenkogelfand}, for which the associated graded ring is isomorphic to $\VG(\A)$.

\subsection{Symmetry}
We are interested in the situation where the
arrangement $\A$ carries the symmetry of a finite subgroup $W$ of $GL(V)$.  In this case, the ring $\VG(\A)$ carries 
a $W$-action induced by the following signed permutation action on the generators $u_i$ corresponding to the hyperplanes $H_i=\ker(\alpha_i)$ and
choice of linear forms $\{\alpha_i\}$: 
\begin{align}
 \notag
\text{ if }w\in W\text{ has }w(H_i)&=H_j,\\
\notag
\text{ so that }w(\alpha_i)&=\pm \alpha_j,\\
   \label{W-action-on-VG-variables}
   \text{ then }w(u_i) &:= \pm u_j.
\end{align}
We abuse notation slightly, allowing $\VG(\A)$ to be defined
as a quotient of $\kk[u_1,\ldots,u_t]$ for any coefficient ring $\kk$
rather than just $\Z$. Moseley's work then implies the following.

\begin{theorem}[Moseley, \cite{moseley2017equivariant}]
\label{thm:Moseley}
   Consider a real hyperplane arrangement $\A \subset V$ with a finite subgroup  $W \subset GL(V)$ of symmetries.
 Then as graded $\kk$-algebras and $\kk W$-modules, one has a grade-doubling isomorphism
 \begin{equation}
 \label{Moseley-cohomology-ring-isomorphism}
 \VG(\A) \cong H^*(X_{\R^3},\kk).
 \end{equation}
   Furthermore, for fields $\kk$ and $|W| \in \kk^\times$, so $\kk W$ is semisimple, one has
    (ungraded) $\kk W$-module isomorphisms
    $$
    \VG(\A) \cong H^*(X_{\R^3},\kk) \cong
    H^0(  V \setminus \A,\kk),
    $$
    where $H^0(X,\kk)$ is the $\kk W$-permutation module on the chambers of $\A$.
    
    In particular, for $\A_W$ the reflection arrangement 
    of a finite reflection group $W$ acting on $V$, the total (ungraded) cohomology $H^*(X_{\R^3},\kk)$ carries the (left-)regular $\kk W$-module:
    $$
    \VG(\A_W) \cong H^*(X_{\R^3},\kk) \cong H^0(V \setminus \A_W,\kk) \cong \kk W.
    $$
\end{theorem}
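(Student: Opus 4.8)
The plan is to obtain all three displayed isomorphisms from a single nontrivial input — Moseley's equivariant identification of $\VG(\A)$ with the cohomology of the $\R^3$-thickening — together with two soft facts: the $\Z$-freeness of $\VG(\A)$, and the fact that over a semisimple group algebra a finitely-filtered module is isomorphic to its associated graded. The first task is the grade-doubling isomorphism \eqref{Moseley-cohomology-ring-isomorphism} of graded $\kk W$-algebras. Its non-equivariant $\Z$-coefficient version $\VG(\A)\cong H^*(X_{\R^3},\Z)$ of graded rings is recorded in the text above (de Longueville--Schultz and Moseley), and Moseley's argument is $W$-equivariant because every structure in sight is: on the topological side, the subspace arrangement $\A\otimes_{\R}\R^3$ inside $V\otimes_{\R}\R^3$ is preserved by the action of $W$ on the first tensor factor, so $W$ acts on $X_{\R^3}$ and on its cohomology; on the algebraic side, $W$ acts on $\Z[u_1,\ldots,u_t]$ by the signed permutation rule \eqref{W-action-on-VG-variables}, and this preserves the ideal $\J_{\A}$ of \eqref{VG-presentation}, since $W$ permutes the circuits of $\A$ and carries each relation $\partial(C)$ to $\pm\partial(wC)$. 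The isomorphism sends $u_i$ to the linking class of the codimension-$3$ stratum $H_i\otimes_{\R}\R^3$, hence commutes with $W$. Finally, since $\VG(\A)$ is $\Z$-free (the nbc-monomials of Remark \ref{VG-grobner-basis-remark} form a basis), so is $H^*(X_{\R^3},\Z)$, and the universal coefficient theorem upgrades the isomorphism to arbitrary coefficients $\kk$, compatibly with $W$, giving $\VG(\A)\cong H^*(X_{\R^3},\kk)$.

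Next I would treat the ungraded isomorphisms when $\kk$ is a field with $|W|\in\kk^\times$. By Maschke's theorem $\kk W$ is semisimple, so every short exact sequence of $\kk W$-modules splits; consequently any $\kk W$-module carrying a finite $W$-stable filtration is isomorphic, as a $\kk W$-module, to its associated graded. Apply this to $H^0(V\setminus\A,\kk)$ — the $\kk W$-permutation module on the chambers of $\A$ — equipped with the Varchenko--Gelfand filtration, which is $W$-stable and has associated graded ring $\VG(\A)$. This gives $H^0(V\setminus\A,\kk)\cong\VG(\A)$ as $\kk W$-modules, and combining with the previous paragraph yields the chain $\VG(\A)\cong H^*(X_{\R^3},\kk)\cong H^0(V\setminus\A,\kk)$.

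Finally, for the reflection arrangement $\A=\A_W$ of a finite reflection group $W$ acting on $V=\R^n$: as recalled in Section \ref{sec:flats}, $W$ acts simply transitively on the chambers of $V\setminus\A_W$, so the chamber set is a $W$-torsor and $H^0(V\setminus\A_W,\kk)$ is the left-regular module $\kk W$. Substituting into the chain above gives $\VG(\A_W)\cong H^*(X_{\R^3},\kk)\cong H^0(V\setminus\A_W,\kk)\cong\kk W$. The only genuinely hard step is the graded isomorphism \eqref{Moseley-cohomology-ring-isomorphism} itself — pinning down the ring structure on the cohomology of a codimension-$3$ subspace-arrangement complement and matching it with the combinatorial presentation of $\VG(\A)$, while carrying the $W$-action through; everything else is Maschke's theorem, $\Z$-freeness, and the transitive action of a reflection group on its chambers.
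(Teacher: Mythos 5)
Your proposal is correct and follows essentially the route the paper itself takes: the paper does not reprove the graded isomorphism but cites Moseley's equivariant result, and your remaining steps — passing to field coefficients via $\Z$-freeness of $\VG(\A)$, identifying $\VG(\A)$ with the associated graded of the $W$-stable Varchenko--Gelfand filtration on the chamber functions and invoking semisimplicity ($|W|\in\kk^\times$) to replace the filtered module by its associated graded, then using simple transitivity of $W$ on chambers to get the regular representation — are exactly the ingredients the paper assembles in its background Sections on Varchenko--Gelfand rings and semisimplicity. The only slightly breezy point is your claim that equivariance of the graded isomorphism is automatic (the signs in \eqref{W-action-on-VG-variables} must be matched against orientation data for the linking classes), but since that verification is precisely the content of Moseley's cited theorem, nothing essential is missing.
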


\subsection{Decompositions by flat and flat-orbits}

The natural grading of the ring $\VG(\A)$
corresponds to the cohomological grading on $H^*(X_{\R^3})$
under the isomorphism \eqref{Moseley-cohomology-ring-isomorphism} (after dividing the cohomological grading in half).  In \cite{brauner2022eulerian}, the second author proved that for any central real arrangement $\A$, the presentation of the associated graded Varchenko-Gefland ring $\VG(\A)\cong H^*(X_{\R^3})$ endows it with a finer
$\Z$-module direct sum decomposition 
indexed by flats 
\begin{equation}
\label{VG-flat-decomposiiton}
\VG(\A)=\bigoplus_{X \in \lat_{\A}} \VG(\A)_X,
\end{equation}
where here the $X$-summand is the $\Z$-span of all monomials 
$\{ u_{i_1} \cdots u_{i_\ell} \}$
for which $H_{i_1} \cap \cdots \cap H_{i_\ell}=X$.
When a group $W$ acts on $\A$, this coarsens
to a $\Z W$-submodule direct sum decomposition indexed by \emph{flat orbits}:
\begin{equation}
\label{flat-orbit-decomposition}
\VG(\A) 
=\bigoplus_{\sh{X} \in \lat(\A)/W} 
\underbrace{\left( \bigoplus_{Y \in \sh{X}} \VG(\A)_Y \right)}_{\text{ call this }
\VG(\A)_{\sh{X}}}
=\bigoplus_{\sh{X} \in \lat(\A)/W} 
\VG(\A)_{\sh{X}}.
\end{equation}

In fact, the work in \cite[Ex. 3.3, Thm. 5.9]{brauner2022eulerian} uses Theorem~\ref{Moseley-cohomology-ring-isomorphism}
to identify the flat-orbit summands $\VG(\A)_{\sh{X}}$ topologically,
corresponding under the
isomorphism  $\VG(\A) \cong H^*(X_{\R^3})$ to 
$\kk W$-summands
inherent in Sundaram and Welker's {\it equivariant Goresky-MacPherson formula} \cite[Thm. 2.5(ii)]{sundaramwelker} for $H^*(X_{\R^3})$.  The next subsection reviews how these
$\kk W$-submodules $\VG_{\sh{X}}$ also arise as
summands $(\kk W) E^W_{\sh{X}}$ of $\kk W$
generated by certain idempotents $\{ E^W_{\sh{X}} \}$ within Solomon's descent algebra for $W$.

\subsection{Solomon's Descent algebra, Eulerian idempotents, and Thrall's Higher Lie characters}\label{sec:backgroundonidempotents}
Here we review how the theory of descents in reflection groups led to certain primitive orthogonal idempotents in Solomon's descent
algebra, and to the isomorphisms
\eqref{type-A-Eulerian-idempotents-give-cohomology-reps}
and \eqref{type-B-Eulerian-idempotents-give-cohomology-reps} from the Introduction.

Throughout this subsection, $W$ is a finite real reflection group with reflection arrangement $\A_W \subset V=\R^n$.  Recall from Section~\ref{sec:flats}
that $W$ acts transitively on the chambers which are the
connected components of the complement $V \setminus \A_W$.
Picking one such chamber $C_0$ arbitrarily, and assuming that $W$ acts irreducibly on $V$, it turns
out that there will be exactly $n$ hyperplanes $H_1,\ldots,H_n$ {\it adjacent} to the chamber $C_0$. (``Adjacent" here means
$H_i$ is spanned by its intersection with the closed chamber $\bar{C}_0$).  Furthermore, the reflections $S=\{s_1,\ldots,s_n\}$ through the hyperplanes $H_1,\ldots,H_n$ are known to give
a {\it Coxeter presentation}
$$
W \cong \langle S: s_i^2=1=(s_i s_j)^{m_{ij}} \rangle.
$$   
From this one defines for $w$ in $W$ its
{\it Coxeter length function} $\ell(w)$, and descent set $\Des(w)$, {\it descent number} $\des(w)$:
\begin{align*}
\ell(w)&:=\min\{\ell: w = s_1 s_2 \cdots s_\ell\text{ for }s_i \in S\},\\
\Des(w)&:= \{ s \in S: \ell(ws) < \ell(w) \},\\
\des(w)&:= \# \Des(w).
\end{align*}

\begin{example} \rm
\label{descent-sets-in-type-A-example}
    When $W = \symm{n}$ permutes coordinates in $\R^n$, it acts irreducibly as a reflection group on the subspace $V \cong \R^{n-1}$ where $x_1+\cdots+x_n=0$. Choosing the chamber $C_0$ to be the open cone in $V$ where $x_1 < x_2 < \cdots < x_n$, then its adjacent hyperplanes are $\{ x_1=x_2, \  x_2=x_3,\ \ldots, \ x_{n-1}=x_n\}$, and the reflections $S=\{s_1,s_2,\ldots,s_{n-1}\}$ are the adjacent transpositions $s_i=(i,i+1)$.  A permutation $w=(w_1,w_2,\ldots,w_n)$ in $\symm{n}$ has descent set $\Des(w)=\{s_i: w_i > w_{i+1}\}$. We identify this set with the classical descent set 
$$
\Des(w):=\{i: w_i > w_{i+1}\} \subset \{1,2,\ldots,n-1\}.
$$
via the bijection $S  \leftrightarrow \{1,2,\ldots,n-1\}$ sending $s_i \mapsto i$.
The descent number of $w$ is then $\des(w)=\#\{i:w_i > w_{i+1}\}$.
\end{example}

As mentioned in the Introduction, Solomon \cite{solomon} discovered a surprising subalgebra of the group algebra $\kk W$, now called \emph{Solomon's descent algebra}, defined as follows: 
$$
\sol(W):=\left\{ \sum_{w \in W} c_w \cdot w:  c_w \in \kk \text{ and }c_w\text{ depends only upon }\Des(w)\right\} \subset \kk \symm{n}.
$$

When $W=\symm{n}$ as in Example~\ref{descent-sets-in-type-A-example}, the descent algebra $\sol(\symm{n})$ contains an even smaller subalgebra of $\kk\symm{n}$, called the \emph{peak algebra} $\PeakAlgebra{n}$, first investigated by Nyman \cite{nyman2003peak} and Aguiar, Bergeron and Nyman \cite{aguiar2004peak}. To define it, introduce the \emph{Peak set}
$$
\peak(\sigma):= \{i : \sigma_{i-1} < \sigma_i > \sigma_{i+1} \} \subset \{ 1, 2, \cdots, n-1 \},
$$
with the convention that $\sigma_0:= 0$. One defines the peak algebra as
$$
\PeakAlgebra{n}:=
\left\{ 
\sum_{w \in \symm{n}} c_w \cdot w:  c_w \in \kk \text{ and }c_w\text{ depends only upon }\peak(w)
\right\} \subset \sol(\symm{n}) \subset \kk \symm{n}.
$$
Importantly, the work of Aguiar--Bergeron--Nyman \cite{aguiar2004peak} realizes $\PeakAlgebra{n}$ as the image of $\sol(\symm{n})$ under the map on group algebras induced by the group quotient $\varphi:\symmB{n} \twoheadrightarrow \symm{n}$ which forgets the signs of a signed permutation:
\begin{align}
\label{sign-forgetting-map}
\varphi: \kk\symmB{n} &\twoheadrightarrow \kk\symm{n}\\
\text{restricts to }\varphi: \sol(\symmB{n}) &\twoheadrightarrow \sol(\symm{n}).
\end{align} 
\medskip

One can understand the connection between $\sol(W)$ and $\VG(\A_W)$ by studying certain complete families of orthogonal idempotents inside $\sol(W)$, called the \emph{Eulerian idempotents}, defined next in Section \ref{sec:eulerianidempotents}. One goal of this paper will be to show that the same relationship holds between $\PeakAlgebra{n}$ and $H^* \PeakConf{n}$. 

\subsubsection{The Eulerian idempotents}\label{sec:eulerianidempotents}
The Eulerian idempotents have been extensively studied, with their origins in Type $A$. 
In \cite{reutenauer1986theorem}, Reutenauer introduced a complete family of primitive orthogonal idempotents 
$\{\TypeAEulerianIdempotent{n}{\lambda}\}$ in $\sol(\symm{n})$, indexed by partitions $|\lambda|=n$, as part of his investigation of the Campbell-Baker-Hausdorff formula \cite{reutenauer1986theorem}. He and Garsia studied these further in \cite{garsiareutenauer}, introducing  the \emph{Eulerian idempotents} 
$\{ \TypeAEulerianIdempotent{n}{k}\}$ where
$$
\TypeAEulerianIdempotent{n}{k}=
\sum_{\substack{\lambda\vdash n:\\ \ell(\lambda)=k}}\ \TypeAEulerianIdempotent{n}{\lambda}.
$$
Garsia--Reutenauer showed the Eulerian idempotents have a generating formula expression: 
\begin{equation}\label{eq:typeaeul} \sum_{k=0}^{n-1} t^{k+1}\TypeAEulerianIdempotent{n}{k} = \sum_{\sigma \in \symm{n}} \binom{t-1+n-\des(\sigma)}{n}\sigma.\end{equation}
Independently, but around the same time, Gerstenhaber and Schack in \cite{gerstenhaberschack} introduced a family of idempotents in $\kk \symm{n}$ which gave a Hodge-type decomposition of Hochschild homology. Though not obvious, work of Loday in \cite{lodayoperations} shows that these idempotents are the same $\{\TypeAEulerianIdempotent{n}{k} \} $ as in \eqref{eq:typeaeul}.

So began a flurry of activity in studying and generalizing the Eulerian idempotents. Bergeron and Bergeron in \cite{bergeronbergeron} constructed a complete family of primitive orthogonal idempotents, 
$\{ \TypeBEulerianIdempotent{n}{\mu}\}_{|\mu| \leq n}$ for $\sol(\symmB{n})$
along with type $B_n$ Eulerian idempotents 
$\{\TypeBEulerianIdempotent{n}{k}\}$ having a generating function analogous to \eqref{eq:typeaeul}:
\begin{equation}\label{eq:typebeul}
    \sum_{k=0}^{n}t^{k}\TypeBEulerianIdempotent{n}{k} = \sum_{w \in \symmB{n}} \binom{\frac{t-1}{2} + n - \des(w)}{n} w.
\end{equation}

Bergeron, Bergeron, Howlett and Taylor \cite{BBHT} generalized this work, constructing complete families of primitive, orthogonal idempotents for the descent algebra $\sol(W)$ of any finite Coxeter group $W$, generalizing the constructions from \cite{bergeronbergeron,garsiareutenauer, reutenauer1986theorem}. 
These {\it BBHT idempotents} $\{E^W_{\sh{X}}\}_{\sh{X} \in \lat(\A_W)/W}$ are indexed by the $W$-orbits $\sh{X}$ of the flats $X$ in $\lat(\A_W)$ for the reflection arrangement $\A_W$.  We will not need formulas for $E^W_{\sh{X}}$ in general, but note that one can use them to define {\it Eulerian idempotents} for $W$, by summing them over $W$-orbits
 of flats with a fixed dimension:
\begin{equation}
\label{general-BBHT-to-Eulerian-refinement}
E^W_k:=\sum_{\substack{\sh{X} \in \lat(\A_W)/W:\\ \dim(X)=k}} E^W_{\sh{X}}.
\end{equation}
For $W=\symm{n}, \symmB{n}$, the $\{E^W_k\}$
recover the type $A, B$ Eulerian idempotents
$\{ \TypeAEulerianIdempotent{n}{k}\}, \{ \TypeBEulerianIdempotent{n}{k}\}$ from \eqref{eq:typeaeul}, \eqref{eq:typebeul}.

\subsubsection{The Eulerian representations}\label{sec:eulerianreps}
Work of the second author \cite[Thms. 1.1, 5.9]{brauner2022eulerian} related the $\kk W$-modules generated by BBHT idempotents\footnote{Actually, \cite{brauner2022eulerian} used slightly different primitive idempotents for $\sol(W)$, but any two such families in $\sol(W)$ are $W$-conjugate \cite[Corollary 1.7.4 ]{benson1998representations}, with isomorphic representations $(\kk W) E^W_{\sh{X}}$.  The second author thanks Franco Saliola for this observation. } $(\kk W) E^W_{\sh{X}}$ to the cohomology ring $H^*(X_{\R^3})$ and the flat-orbit summands $\VG(\A_W)_{\sh{X}}$ in 
\eqref{flat-orbit-decomposition}.

\begin{theorem}
\label{thm:eulerian-reps-as-cohomology}
For 
$W$ a finite reflection group of rank $r$, 
with $|W| \in \kk^\times$,
one has $\kk W$-module isomorphisms
$$
(\kk W) E^W_{\sh{X}} \cong 
\VG(\A_W) _{\sh{X}}.
$$
Summing this over $W$-orbits $\sh{X}$ where $\dim(X)=k$, one therefore also has
$$
(\kk W) E^W_k   \cong \VG(\A_W)_{r-k}  \cong 
     H^{2(r-k)}(X_{\R^3},\kk).
$$

In particular, taking $W=\symm{n}$ one has for $0 \leq k \leq n-1$ and $|\lambda|=n$ that there are $\kk \symm{n}$-module isomorphisms
\begin{align}
\label{eq:typeaiso}
H^{2(n-1-k)}\TypeAConf{n} 
\cong \VG(\A_{\symm{n}})_k
\cong (\kk \symm{n} )\TypeAEulerianIdempotent{n}{k},\\
\label{eq:typea-flatorbit-iso} 
\VG(\A_{\symm{n}})_\lambda
\cong (\kk \symm{n}) \TypeAEulerianIdempotent{n}{\lambda}. 
\end{align} 

Likewise, taking $W=\symmB{n}$, for $0 \leq k \leq n$ and partitions $\mu$ with $|\mu| \leq n$, one has $\kk \symmB{n}$-module isomorphisms
\begin{align}
    \label{eq:typebiso} H^{2(n-k)}\TypeBConf{n}
    \cong \VG(\A_{\symmB{n}})_k
    \cong (\kk \symmB{n} )\TypeBEulerianIdempotent{n}{k}\\
    \label{eq:typeb-flat-orbit-iso}
\VG(\A_{\symmB{n}})_\mu
    \cong (\kk \symmB{n} )\TypeBEulerianIdempotent{n}{\mu}.
\end{align}
\end{theorem}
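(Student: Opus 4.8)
The displays in Theorem~\ref{thm:eulerian-reps-as-cohomology} are, in their first two lines, exactly Theorems~1.1 and~5.9 of \cite{brauner2022eulerian}, so my plan is to recall the structure of that proof and then read off the four remaining displays from it. Everything rests on $\kk W$ being semisimple (as $|W|\in\kk^\times$), so it suffices to produce the $\kk W$-module isomorphisms through a common third object, which will be an induced module. First, by Moseley's Theorem~\ref{thm:Moseley}, $\VG(\A_W)\cong H^*(X_{\R^3},\kk)$ as graded $\kk W$-modules, and $X_{\R^3}$ is the complement of the subspace arrangement $\A_W\otimes_\R\R^3$; I would feed this into Sundaram--Welker's equivariant Goresky--MacPherson formula \cite[Thm.~2.5(ii)]{sundaramwelker}, which writes $H^*(X_{\R^3})$ as a direct sum over flats $X\in\lat(\A_W)$ of (degree-shifted, sign-twisted) reduced homologies $\tilde H_*\big((\hat 0,X)\big)$ of the open intervals of $\lat(\A_W)$. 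Collecting the terms over a $W$-orbit $\sh X$ turns that piece into $\mathrm{Ind}_{W_X}^{W}$ of such a module, where $W_X\le W$ is the stabilizer of a chosen representative flat $X$. The key compatibility --- carried out in \cite[Ex.~3.3, Thm.~5.9]{brauner2022eulerian} --- is that, under Moseley's isomorphism, this orbit summand is exactly the combinatorially defined $\VG(\A_W)_{\sh X}$ of \eqref{flat-orbit-decomposition}: the ${\prec}$-standard nbc-monomial basis of Remark~\ref{VG-grobner-basis-remark} is adapted to the flat decomposition, which is what makes this identification possible and also locates $\VG(\A_W)_{\sh X}$ in a single degree governed by the rank of $X$ in the (dual, lower-semimodular) lattice $\lat(\A_W)$.

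Next, I would identify the idempotent side with the same induced module. The BBHT idempotents $\{E^W_{\sh X}\}$ form a complete family of primitive orthogonal idempotents of $\sol(W)$ \cite{BBHT}, and from the BBHT construction the module $(\kk W)E^W_{\sh X}$ is induced: $(\kk W)E^W_{\sh X}\cong\mathrm{Ind}_{W_X}^{W}(M_X)$, where $M_X$ is the ``top'' higher-Lie representation of the parabolic reflection group $W_X$, obtained by inducing up, over the irreducible factors of $W_X$, the representations generated by the top Eulerian idempotents of those factors --- in types $A$ and $B$ this is precisely the content of Garsia--Reutenauer \cite{garsiareutenauer} and Bergeron--Bergeron \cite{bergeronbergeron}. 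To close the loop I would invoke the classical identification of the top-degree Orlik--Solomon / Whitney-homology module of a reflection arrangement with a higher-Lie character --- the $\symm n$-action on the homology of the partition lattice computed by Hanlon and Sundaram \cite{hanlon,Sundaram}, together with its type-$B$ counterpart --- which shows that $M_X$ is isomorphic, as a $W_X$-module, to the shifted and sign-twisted $\tilde H_*\big((\hat 0,X)\big)$ appearing in the Goresky--MacPherson sum. Semisimplicity then yields $(\kk W)E^W_{\sh X}\cong\VG(\A_W)_{\sh X}$, and summing over the flat-orbits of a fixed dimension $k$ via \eqref{general-BBHT-to-Eulerian-refinement} gives $(\kk W)E^W_k\cong\VG(\A_W)_{r-k}\cong H^{2(r-k)}(X_{\R^3},\kk)$, once the indexing (flat dimension versus cohomological degree) is matched through the rank function of $\lat(\A_W)$.

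Finally, the four remaining displays are the $W=\symm n$ and $W=\symmB n$ instances of the general statement. Using the set-partition model of $\lat(\A_{\symm n})$ and the signed-set-partition model of $\lat(\A_{\symmB n})$ from Sections~\ref{subsec:braidarrangement}--\ref{subsec:typebarrangement}, a flat-orbit $\sh X$ is recorded by an integer partition $\lambda\vdash n$ (respectively a partition $\mu$ with $|\mu|\le n$) with $\ell(\lambda)=\dim X$ (respectively $\ell(\mu)=\dim X$); the spaces $X_n=\Conf_n\R^3$ and $Y_n=\Conf^{\Z_2}_n\R^3$ are the $\R^3$-thickenings \eqref{R3-thickening} of $\A_{\symm n}$ and $\A_{\symmB n}$ by \eqref{configuration-spaces-reexpressed}; and $E^{\symm n}_k$, $E^{\symmB n}_k$ are the flat-dimension-$k$ sums of the corresponding BBHT idempotents, which recover the $E^{\symm n}_\lambda$, $E^{\symmB n}_\mu$ of Section~\ref{sec:eulerianidempotents}. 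Substituting these into the general statement produces \eqref{eq:typeaiso}--\eqref{eq:typeb-flat-orbit-iso}.

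The main obstacle is the comparison at the heart of the second step: $(\kk W)E^W_{\sh X}$ is manufactured purely inside the group algebra through the intricate BBHT recursion, whereas $\VG(\A_W)_{\sh X}$ is a span of monomials with a prescribed intersection, and the two are bridged only by identifying both with $\mathrm{Ind}_{W_X}^{W}$ of one and the same higher-Lie / Whitney-homology module of the parabolic subgroup $W_X$. Extracting that description of the BBHT idempotents --- or, in types $A$ and $B$, combining Garsia--Reutenauer and Bergeron--Bergeron with the identification of higher-Lie characters as Whitney homology --- is the technical core. A secondary but genuine nuisance is keeping the bookkeeping straight between the (dual) rank function of $\lat(\A_W)$, the grading on $\VG(\A_W)$, and the grade-doubling in Moseley's isomorphism, so that the exponents in the type $A$ and $B$ displays come out correctly.
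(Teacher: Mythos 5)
Be aware that the paper does not actually prove this statement: it is quoted as background, with the proof delegated entirely to the citations --- the general assertions to \cite[Thms.~1.1, 5.9]{brauner2022eulerian}, and the type $A$ isomorphisms \eqref{eq:typeaiso}, \eqref{eq:typea-flatorbit-iso} to the earlier character comparisons of Sundaram--Welker and Hanlon. Your sketch is a faithful reconstruction of the argument living in those references (Moseley's identification $\VG(\A_W)\cong H^*(X_{\R^3},\kk)$, the equivariant Goresky--MacPherson formula writing each flat-orbit summand as $\mathrm{Ind}_{W_X}^{W}$ of interval homology, and the matching of the idempotent modules with the same induced module), so in spirit it follows the same route the paper relies on rather than a genuinely different one. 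The one place where your wording overstates what is available off the shelf is the claim that ``from the BBHT construction'' one has $(\kk W)E^W_{\sh X}\cong\mathrm{Ind}_{W_X}^{W}(M_X)$: BBHT only construct the idempotents and prove completeness/orthogonality/primitivity, and the induced-module (higher-Lie/Whitney-homology) description of $(\kk W)E^W_{\sh X}$ is itself the substance of Garsia--Reutenauer in type $A$, Bergeron--Bergeron in type $B$, and of Brauner's Theorems 1.1 and 5.9 for general reflection groups --- which you do acknowledge as the technical core, so this is a matter of attribution rather than a logical gap. With that caveat, and keeping the rank/grading bookkeeping straight as you note, your plan would assemble a correct proof, but it is a proof by assembling the cited results rather than an independent argument, which is exactly how the paper itself treats the theorem.
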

Theorem~\ref{thm:eulerian-reps-as-cohomology} is the inspiration and theoretical backbone for the results in this paper. The $\kk \symm{n}$-module isomorphisms \eqref{eq:typeaiso},\eqref{eq:typea-flatorbit-iso} were known earlier via comparison of character computations of Sundaram and Welker \cite{sundaramwelker} and Hanlon \cite{hanlon};
its remaining assertions were proven
for reflection groups in \cite{brauner2022eulerian}.

\subsubsection{Thrall's higher Lie Characters}
\label{Thrall-section}
We conclude Section \ref{sec:background} by
giving a third incarnation, when $W=\symm{n}$,
for the representations appearing in \eqref{eq:typea-flatorbit-iso}, namely the
 \emph{higher Lie characters} $\lie_{\lambda}$ of
 Thrall \cite{thrall}. 

 Letting $V=\C^n$, one can consider the tensor powers $T^d(V):=\C^{\otimes d}$ and the {\it tensor algebra}
 $$
 T(V)=\bigoplus_{d=0}^\infty T^d(V)
 = \C \oplus V \oplus (V \otimes V) \oplus (V\otimes V \otimes V) \oplus \cdots
 $$
 as a {\it free associative $\C$-algebra} on the generators $x_1,\ldots,x_n$ which are a $\C$-basis for $V$.  It is also a
 Lie algebra under the usual bracket operation
 $[x,y]:=x \otimes y - y \otimes x$.  The Jacobi identity
 shows that the Lie subalgebra $\cLfree(V)$ generated by $V$ is the same as the $\C$-span of left-bracketings 
 of elements of $V$
 $$
 \cLfree(V) := \bigoplus_{d=1}^\infty \cLfree_d(V) =V \oplus [V,V] \oplus [[V,V],V] \oplus \cdots
 $$
 that is, the span of all elements such as 
 $$
 v_1, \,\, 
 [v_1,v_2], \,\, 
 [[[v_1,v_2],v_3], \,\, 
 [[[[v_1,v_2],v_3],v_4],\ldots
 $$
 with $v_i \in V$.
 This space $\cLfree(V)$ turns out to be isomorphic to the {\it free Lie algebra} on the $n$ generators $x_1,\ldots,x_n$,
 with $T(V)$ as its universal enveloping algebra
 $\cU(\cLfree(V))$; see Reutenauer \cite{reutenauer} for a thorough treatement.
 
 \begin{remark} \rm
 Section~\ref{sec:jordanalgebra} considers the $\C$-subspace of $T(V)$, called
 the space of {\it simple Jordan elements} analogous
to $\cLfree(V)$, replacing the left-bracketings above by their {\it Jordan bracket} 
variant $[x,y]:=x \otimes y + y \otimes x$.
\end{remark}

 Thrall examined this {\it Poincar\'e-Birkhoff-Witt 
 isomorphism} of representations of $GL(V) \cong GL_n(\C)$:
 \begin{align*}
 T(V) 
 = \cU(\cLfree(V)) 
 \cong \Sym(\cLfree(V))
&=\Sym\left(\bigoplus_{d=1}^\infty \cLfree_d(V) \right)\\
&=\bigoplus_{ \lambda=1^{m_1} 2^{m_2} \cdots \ell^{m_\ell}} 
 \underbrace{
 \Sym^{m_1}\cLfree(V)_1 \otimes 
  \Sym^{m_2}\cLfree(V)_2\otimes \cdots \otimes
   \Sym^{m_\ell}\cLfree(V)_\ell}_{\cLfree_\lambda(V):=} 
 \end{align*}
 where the index of summation in the last direct sum runs over all partitions $\lambda$, and where $\lambda=1^{m_1} 2^{m_2} \cdots$,
 means that $m_i$ the number of parts of size $i$ in $\lambda$.

 Thrall asked for the $GL(V)$-irreducible decomposition of each summand $\cLfree_\lambda(V)$,
 or equivalently via {\it Schur-Weyl duality}, the $\symm{n}$-irreducible decomposition on its {\it multilinear part}
 $\Lie{\lambda}$, now called a {\it higher Lie character}.  The work of Reutenauer \cite{Reutenauer-Stirling} and Garsia and Reutenauer \cite[\S 2]{garsiareutenauer} (see also Garsia \cite{Garsia-freeLie}) identifies these higher Lie characters with the $\kk \symm{n}$-modules generated by the idempotents $\TypeAEulerianIdempotent{n}{\lambda}$. That is,
 when $\kk$ has characteristic zero, there are
 $\kk \symm{n}$-module isomorphisms:
 \begin{equation}
 \label{Lie-lambda-isos}
 \lie_\lambda \cong (\kk \symm{n}) \TypeAEulerianIdempotent{n}{\lambda}
 \cong \VG(\A_{\symm{n}})_\lambda. 
 \end{equation}

Combining this with Theorem~\ref{thm:eulerian-reps-as-cohomology}, for $\kk$ of characteristic zero,
one has $\kk\symm{n}$-module isomorphims
\begin{equation}
\label{typeA-cohomology-as-higher-Lie}
H^{2(n-1-k)}\TypeAConf{n} \cong \bigoplus_{\substack{|\lambda|=n\\ \ell(\lambda) = k}} \lie_{\lambda}.
\end{equation}
This isomorphism plays in an essential role in our later analysis of the peak representations $(\kk \symm{n}) \PeakIdempotent{n}{k}$.

\subsection{Semisimplicity assumptions}
\label{sec:semisimplicity}

Note that for finite groups $G$
and nested $\kk G$-modules $U \subset V$,
generally $V \not\cong U \oplus V/U$ as $\kk G$-modules
and the $G$-invariant spaces $(V/U)^G \not\cong V^G/U^G$
as $\kk$-vector spaces.  However,
whenever $\kk G$ is semisimple, that is, $|G|$ in $\kk^\times$, then there will be a
$\kk G$-submodule complement $U'$, so that one has both
$$
\begin{array}{rcccll}
V &\cong& U \oplus U' &\cong& U \oplus V/U &\text{ as }\kk G\text{-modules, and }\\
(V/U)^G &\cong& (U')^G &\cong& V^G/U^G
&\text{ as }\kk\text{-vector spaces.}
\end{array}
$$
In particular, when one has a $\kk G$-module filtration
$$
(V :=) \,\, V_0 \supset V_1 \supset V_2 \supset \cdots \supset V_\ell \supset 0 \,\,(=:V_{\ell+1}),
$$
then whenever $\kk G$ is semisimple, one has these isomorphisms with the associated graded spaces:
$$
\begin{array}{rcll}
V &\cong&\displaystyle\bigoplus_{d=0}^\ell V_d/V_{d+1} &\text{ as }\kk G\text{-modules, and }\\
V^G&\cong&\displaystyle\bigoplus_{d=0}^\ell V_d^G/V^G_{d+1}
&\text{ as }\kk\text{-vector spaces.}
\end{array}
$$
In this paper, the groups acting are usually $G=\symmB{n},\symm{n},\Z_2^n$,
of cardinalities $2^n n!, n!, 2^n$.  Consequently, we will sometimes impose hypotheses on the field characteristic $\fieldchar(\kk)$ of the form $\fieldchar(\kk) >2$
so $\kk \Z_2^n$ is semisimple, or
$\fieldchar(\kk)>n\geq 2$ so both $\kk \symm{n}, \kk\symmB{n}$ are semisimple.  When writing $\fieldchar(\kk) > m$, we will always regard fields of characteristic zero as satisfying this condition.
Also, in discussing $\kk \symm{n}$-modules, 
we will always assume $n\ge 2$, even if it is not explicitly stated, since all of our results trivialize for $n=1$.

\section{Proof of Theorem~\ref{peak-idempotents-give-cohomology-reps}(i),(ii): Peak representations as cohomology}
\label{sec:invariantring}

We now use $H^*\TypeBConf{n}$  
to understand $H^{*}\PeakConf{n}$, as a way of proving Theorem \ref{peak-idempotents-give-cohomology-reps} parts (i) and (ii). 

Recall that one has a $\Z_2$-action on $\R^3 \setminus \{\origin\}$ via $\xx \mapsto -\xx$.
Under the homeomorphism $\R^3 \setminus \{\origin\} \rightarrow \SSS^2 \times \R$
that sends $\xx \mapsto (\mathbf{u},r)=\left(\frac{\xx}{\|\xx\|},\log \|\xx\|\right)$, this corresponds to the $\Z_2$-action $(\mathbf{u},r) \mapsto (-\mathbf{u},r)$, leading to these homeomorphisms:
$$
\left( \R^3 \setminus \{\origin\} \right)/\Z_2
\,\, \cong \,\, 
\left( \SSS^2 / \Z_2 \right) \times \R
\,\, \cong \,\, \RP^2 \times \R.
$$
Consequently, one can view the space $Z_n$
as the following configuration space:
$$
\PeakConf{n}:=\TypeBConf{n}/\Z_2^n = \Conf_n(\left( \R^3 \setminus \{\origin\} \right)/\Z_2)
\cong \Conf_n(\RP^2 \times \R).
$$
We recall the assertions (i),(ii) of the theorem here. 

\vskip.1in
\noindent
{\bf{Theorem~\ref{peak-idempotents-give-cohomology-reps}}}
{\it
Let $H^* Z_n$ be the cohomology with coefficients
in a field $\kk$ of 
$$
Z_n=\TypeBConf{n}/\Z_2^n=\Conf_n(\RP^2 \times \R) .
$$
\begin{itemize}
\item[(i)] When $\fieldchar(\kk)>2$, the $\kk \symm{n}$-module on the total cohomology is the regular representation:
$$
H^* \PeakConf{n} \cong \kk \symm{n}.
$$
\item[(ii)] When $\fieldchar(\kk)>n$, for  $0 \leq k \leq n$ one has a $\kk \symm{n}$-module isomorphism 
$$
H^{2k} \PeakConf{n} \cong \left( \kk \symm{n} \right) \PeakIdempotent{n}{n-k}.
$$
\end{itemize}
}
\vskip.1in

\begin{proof}
Take all cohomology with $\kk$ coefficients. The two assertions (i), (ii) then come from the following two sequences of $\kk\symm{n}$-module isomorphisms, which will be justfied below:
\begin{equation}
\label{isomorphism-strings}
\begin{array}{rccccccl}
\left( H^*\PeakConf{n} :=\right)
& H^{*}\left( \TypeBConf{n}/\Z_2^n\right) 
&\overset{(a)}{\cong}& \left( H^{*}\TypeBConf{n} \right)^{\Z_2^n}
&\overset{(b)}{\cong}& \left( \kk \symmB{n} \right)^{\Z_2^n}
&\overset{(c)}{\cong}& \kk \symm{n} \\
\left( H^{2k}\PeakConf{n} :=\right)
& H^{2k}\left( \TypeBConf{n}/\Z_2^n\right) 
&\overset{(d)}{\cong}& \left( H^{2k}\TypeBConf{n} \right)^{\Z_2^n}
&\overset{(e)}{\cong}& \left( \left(\kk \symmB{n} \right)  \TypeBEulerianIdempotent{n}{n-k} \right)^{\Z_2^n}
&\overset{(f)}{\cong}& \left( \kk \symm{n} \right) \PeakIdempotent{n}{n-k}
\end{array}
\end{equation}

\vskip.1in
\noindent
{\sf Isomorphism (a):} 
Apply to $\Gamma:=\Z_2^n$ and $Y:=Y_n$ a general fact (see Hatcher \cite[Prop.~3G.1, p.~321]{Hatcher}):
when a finite group $\Gamma$ acts freely on a space $Y$, the surjection $Y \twoheadrightarrow Y/\Gamma $ induces a $\kk$-algebra map $H^*(Y/\Gamma) \rightarrow (H^* Y)^\Gamma$,
which is an isomorphism (with inverse the {\it transfer map}, up to a scalar) as long $|\Gamma|$ lies in $\kk^\times$.

For the remainder of the proof, note that since $\Gamma=\Z_2^n$ is a {\it normal} subgroup of $G=\symmB{n}$, one still has a well-defined action of the quotient group $G/\Gamma=\symmB{n}/\Z_2^n \cong \symm{n}$, both on
 \begin{itemize}
 \item the fixed $\kk$-subspace $U^\Gamma \subset U$ for any $\kk G$-module $U$, defined as $(g\Gamma)(u):= g(u)$, and
 \item the (topological) quotient space $Y/G$ of $G$-orbits $\{\Gamma y\}$, defined as $(g\Gamma) (\Gamma y):=\Gamma g(y)$.
\end{itemize}

\vskip.1in
\noindent
{\sf Isomorphism (b):}
Apply $(-)^{\Z_2^n}$ to the $\kk\symmB{n}$-module isomorphism $H^*Y_n \cong \kk \symmB{n}$
in Theorem~\ref{thm:Moseley} for $W=\symmB{n}$.

\vskip.1in
\noindent
{\sf Isomorphism (c):}
For any subgroup $\Gamma$ of a finite group $G$, one has an isomorphism
of (left-)$\kk G$-modules
$$
\begin{array}{rcl}
\kk[G/\Gamma] &\overset{\alpha}{\longrightarrow}& (\kk G)^\Gamma,\\
g\Gamma &\longmapsto &\displaystyle \sum_{h \in g\Gamma} h.
\end{array}
$$
Whenever $\Gamma$ is a normal subgroup, since $\Gamma$ lies in the kernel of both actions, this induces an isomorphism of (left-)$\kk G/\Gamma$-modules, and the action on $\kk[G/\Gamma]$ is the left-regular action for the group $G/\Gamma$.

\vskip.1in
\noindent
{\sf Isomorphism (d):}  The isomorphism from (a) respects
the cohomological grading.

\vskip.1in
\noindent
{\sf Isomorphism (e):} 
Apply $(-)^{\Z_2^n}$ to the $\kk\symmB{n}$-module isomorphism
$
H^{2k} Y_n \cong \left(\kk \symmB{n}\right) \TypeBEulerianIdempotent{n}{n-k}
$
in \eqref{type-B-Eulerian-idempotents-give-cohomology-reps}.

\vskip.1in
\noindent
{\sf Isomorphism (f):} 
We prove something slightly more general, under the following hypotheses.  Assume $\Gamma$ is a normal subgroup of a finite group $G$.  Let 
$\kk$ be a field with $|\Gamma| \in \kk^\times$,
and let $\varphi: \kk G \twoheadrightarrow \kk[G/\Gamma]$ be the $\kk$-algebra surjection 
induced by the quotient map $G \twoheadrightarrow G/\Gamma$.  Let $e_1,e_2,\ldots,e_\ell$ in $\kk G$
be complete orthogonal idempotents ($e_i^2=e_i, e_i e_j =0$ for $i \neq j$, and $1=\sum_{i=1}^\ell e_i$), so that their images $\hat{e}_i:=\varphi(e_i)$ are orthogonal idempotents in $\kk[G/\Gamma]$.
Then applying the following lemma, with $G=\symmB{n}, \Gamma=\Z_2^n, G/\Gamma=\symm{n}$ and $\{e_i\}=\{\TypeBEulerianIdempotent{n}{k}\}$
$\{\hat{e}_i\}=\{\PeakIdempotent{n}{k}\}$, gives the desired isomorphism (f).
\begin{lemma} 
\label{normal-subgroup-idempotent-lemma}
In the above setting, 
\begin{enumerate}
\item[(i)]  restricting the domain of $\beta:=\frac{1}{|\Gamma|} \varphi: \kk G \rightarrow \kk[G/\Gamma]$
to the $\Gamma$-fixed space $(\kk G)^\Gamma$ gives the inverse isomorphism to the isomorphism $\alpha: \kk[G/\Gamma] \rightarrow (\kk G)^\Gamma$ in (c), and
\item[(ii)] the isomorphsim $\beta: (\kk G)^\Gamma \rightarrow\kk [G/\Gamma]$ restricts, for each $i=1,2,\ldots,\ell$,
to an isomorphism
$$
\left((\kk G)e_i\right)^\Gamma \rightarrow
\kk[G/\Gamma]\hat{e}_i.
$$
\end{enumerate} 
\end{lemma}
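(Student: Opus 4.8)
The plan is to prove (i) by a direct computation with coset sums, and then deduce (ii) from (i) together with the two Peirce decompositions attached to $\{e_i\}$ and $\{\hat{e}_i\}$. Throughout I would use that $\beta=\frac{1}{|\Gamma|}\varphi$ is a morphism of left $\kk[G/\Gamma]$-modules, where $\kk G$ is regarded as a $\kk[G/\Gamma]$-module via $\varphi$: this is immediate, since $\varphi$ is an algebra map and the $G$-action on $\kk G$ is left translation, so $\varphi$ intertwines left translation by $g$ with left multiplication by $\varphi(g)$.

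For (i), I would compute $\beta\circ\alpha$ on the $\kk$-basis $\{g\Gamma\}$ of $\kk[G/\Gamma]$. Every element of the coset $g\Gamma$ is sent by $\varphi$ to the single class $g\Gamma\in G/\Gamma$, and $\alpha(g\Gamma)=\sum_{h\in g\Gamma}h$ is a sum of $|\Gamma|$ such elements, so $\varphi(\alpha(g\Gamma))=|\Gamma|\cdot g\Gamma$, hence $\beta(\alpha(g\Gamma))=g\Gamma$; that is, $\beta\circ\alpha=\mathrm{id}_{\kk[G/\Gamma]}$. Since $\alpha\colon\kk[G/\Gamma]\to(\kk G)^\Gamma$ is already known to be an isomorphism by (c), this identity forces $\beta|_{(\kk G)^\Gamma}=\alpha^{-1}$, which proves (i). Here the normality of $\Gamma$ is what makes left cosets coincide with right cosets, so that the coset sums $\alpha(g\Gamma)$ are exactly the vectors fixed by left translation by $\Gamma$, i.e.\ span $(\kk G)^\Gamma$ --- this is the content of (c).

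For (ii), fix $i$. First note $(\kk G)e_i$ is a left ideal of $\kk G$, hence stable under left translation by $\Gamma$, so $\big((\kk G)e_i\big)^\Gamma=(\kk G)e_i\cap(\kk G)^\Gamma$ and $\beta$ does restrict to it; moreover $\beta\big((\kk G)e_i\big)=\frac{1}{|\Gamma|}\varphi(\kk G)\varphi(e_i)=\kk[G/\Gamma]\hat{e}_i$, so this restriction lands in $\kk[G/\Gamma]\hat{e}_i$, and it is injective because $\beta|_{(\kk G)^\Gamma}$ is injective by (i). It remains to check surjectivity. For that I would take $\Gamma$-fixed points in the Peirce decomposition $\kk G=\bigoplus_i (\kk G)e_i$ of left $\kk G$-modules --- fixed points commute with finite direct sums --- obtaining $(\kk G)^\Gamma=\bigoplus_i\big((\kk G)e_i\big)^\Gamma$, and similarly $\kk[G/\Gamma]=\bigoplus_i\kk[G/\Gamma]\hat{e}_i$ (the $\hat{e}_i$ being complete orthogonal idempotents, as $\sum_i\hat{e}_i=\varphi(1)=1$ and $\hat{e}_i\hat{e}_j=\varphi(e_ie_j)=\delta_{ij}\hat{e}_i$). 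Then $\beta|_{(\kk G)^\Gamma}$ is an isomorphism of the total spaces carrying the $i$-th summand into the $i$-th summand, and a short diagram chase --- lift $b\in\kk[G/\Gamma]\hat{e}_i$ to some $a\in(\kk G)^\Gamma$, decompose $a=\sum_j a_j$ along $\bigoplus_j\big((\kk G)e_j\big)^\Gamma$, and use directness of $\bigoplus_j\kk[G/\Gamma]\hat{e}_j$ to conclude $b=\beta(a_i)$ --- shows the restriction is onto. This also covers the degenerate case $\hat{e}_i=0$, where the argument forces $\big((\kk G)e_i\big)^\Gamma=0$.

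I do not expect a genuine obstacle: the whole argument is bookkeeping. The one point deserving care is the interaction between taking $\Gamma$-fixed points and the idempotent decomposition, together with the repeated appeals to normality of $\Gamma$ --- needed to make $(\kk G)^\Gamma$ a $\kk[G/\Gamma]$-module in the first place, to identify it with the span of coset sums, and to ensure $(\kk G)e_i$ interacts correctly with $(\kk G)^\Gamma$. None of this is hard, but these are the structural facts the proof genuinely rests on.
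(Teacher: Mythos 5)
Your proposal is correct and follows essentially the same strategy as the paper: verify that $\beta$ inverts $\alpha$ by a direct coset computation, then compare the two complete-orthogonal-idempotent decompositions of $(\kk G)^\Gamma$ and $\kk[G/\Gamma]$ and use that $\beta$ carries the $i$-th summand into the $i$-th summand. The only real difference is in how you obtain $(\kk G)^\Gamma=\bigoplus_i\bigl((\kk G)e_i\bigr)^\Gamma$: the paper routes this through the central averaging idempotent $e_\Gamma=\frac{1}{|\Gamma|}\sum_{\gamma\in\Gamma}\gamma$ (centrality coming from normality of $\Gamma$), whereas you invoke the elementary fact that taking $\Gamma$-fixed points commutes with finite direct sums of $\Gamma$-stable submodules; both are valid and lead to the same conclusion.
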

\begin{proof}[Proof of Lemma~\ref{normal-subgroup-idempotent-lemma}.]
For (i), one checks directly that $\beta \circ \alpha=1_{\kk[G/\Gamma}$, and
and that $\alpha \circ \beta=1_{(\kk G)^\Gamma}$.

For (ii), consider the $\kk[G/\Gamma]$-module direct sum decomposition coming from the $\{\hat{e}_i\}$:
$$
\kk[G/\Gamma]=\bigoplus_{i=1}^\ell
\kk[G/\Gamma] \hat{e}_i.
$$
To compare this with $(\kk G)^\Gamma$,
introduce the idempotent $e_\Gamma:=\frac{1}{|\Gamma|} \sum_{\gamma \in \Gamma}$ in $\kk G$
that projects onto $\Gamma$-fixed spaces.
Note that since $\Gamma$ was assumed to be normal,
 this idempotent $e_\Gamma$ lies in the center of $\kk G$, commuting with all $\{e_i\}$.  This means that $\{e_i e_\Gamma\}_{i=1}^\ell$ are
 again orthogonal idempotents, summing to $e_\Gamma$, and giving this $\kk[G/\Gamma]$-module direct sum decomposition:
$$
(\kk G)^\Gamma
=\left( \bigoplus_{i=1}^\ell \kk G e_i\right)^\Gamma
=\bigoplus_{i=1}^\ell \left( \kk G e_i\right)^\Gamma
=\bigoplus_{i=1}^\ell (\kk G) e_i e_\Gamma
=\bigoplus_{i=1}^\ell (\kk G) e_\Gamma e_i.
$$
It then suffices to check for each $i$ that the isomorphism $\beta$ maps
$(\kk G) e_\Gamma e_i$ into $\kk[G/\Gamma]\hat{e}_i$.  
Since $\beta=\frac{1}{|\Gamma|}$ is a scaling of the ring homomorphism 
$\varphi$, this calculation shows just that:
$
\varphi(e_\Gamma e_i)= 
\varphi(e_\Gamma) \cdot \varphi(e_i)=
\varphi(e_\Gamma) \cdot \hat{e}_i. \qedhere
$
\end{proof}
With Lemma~\ref{normal-subgroup-idempotent-lemma} proven, the proof
of Theorem~\ref{peak-idempotents-give-cohomology-reps} $(i),(ii)$ is also complete.
\end{proof}

\section{Presentations of $H^*\TypeAConf{n}$ and $H^*\TypeBConf{n}$}
\label{sec:presentations-decompositions}

Our goal in the next few sections is to give
explicit presentations of $H^*\TypeAConf{n}$ and $H^*\TypeBConf{n}$. We  start with their presentations as the Varchenko-Gelfand rings $\VG(\A_{\symm{n}}),\VG(\B_{\symm{n}})$,
where we describe known quadratic Gr\"obner basis presentations.
We then show that in the case of $H^*\TypeBConf{n}$ there is a change-of-basis, valid when $2$ lies in $\kk^\times$, that diagonalizes the action of $\Z_2^n$ inside $\symmB{n}$.

\subsection{Quadratic Gr\"obner basis presentations
for $H^*\TypeAConf{n}$ and $H^*\TypeBConf{n}$}

We first specialize the presentation of the graded Varchenko-Gelfand ring $\VG(\A)$ given in 
\eqref{VG-presentation} and \eqref{VG-circuit-relation} to the case of the reflection arrangements
$\A_{\symm{n}}, \A_{\symmB{n}}$
of types $A$ and $B$. Recall by our discussion in Section \ref{sec:VGrings} that this will give a presentation of the cohomology of two of the configuration
spaces $X_n, Y_n$ defined in \eqref{typeAB-configuration-space-definitions} of the introduction, taking
into account their reformulations in \eqref{configuration-spaces-reexpressed}. The quadratic presentations for
$H^* \TypeAConf{n}, H^* \TypeBConf{n}$ are originally due to Cohen \cite{cohen} 
and Xicot{\'e}ncatl  \cite{xico}, respectively.

We synthesize these known presentations below in Theorem \ref{thm:AB_VGpresentation}. The general presentation of $\VG(\A)$ simplifies significantly in this context, leading to quadratic Gr\"obner bases for the defining ideals of $\VG(\A_{\symm{n}}), \VG(\A_{\symmB{n}})$, and simple structure
for their standard/nbc-monomial $\kk$-bases and resulting Hilbert series.

\begin{theorem}
\label{thm:AB_VGpresentation}
Let $\kk$ be any field or $\Z$.
\begin{itemize}
\item[(i)]
One has graded $\kk$-algebra isomorphisms,
doubling the grading for the left isomorphism on each line,
\begin{align*} 
H^*(X_n,\kk)  &\cong \VG(\A_{\symm{n}}) \cong \kk [t_{ij}]/\J_{\symm{n}},\\
H^*(Y_n,\kk) &\cong \VG(\A_{\symmB{n}}) \cong\kk[ u_{ij}^+, u_{ij}^-, u_i]/\J_{\symmB{n}},
\end{align*}
where $\J_{\symm{n}}$ and $\J_{\symmB{n}}$ are generated by the quadratic relations listed in Table \ref{table:AB_VG}.
\item[(ii)]
For fields $\kk$, these relations form quadratic Gr\"obner bases, with initial terms shown underlined in Table \ref{table:AB_VG}, if one uses any monomial order $\prec$ that orders their variables so as to extend the partial orders shown below:
\begin{align*}
  &\underbrace{\{ t_{12} \}}_{T_1} 
  \prec  \underbrace{\{ t_{13}, \ t_{23}  \}}_{T_2} 
  \prec \underbrace{\{ t_{14}, \ t_{24} , \  t_{34} \}}_{T_3}
  \prec \cdots 
  \prec \underbrace{\{  t_{1n}, \ t_{2n}, \ \cdots, t_{(n-1)n} \}}_{T_{n-1}},\\
 &\underbrace{\{ u_1 \}}_{U_1}
 \prec \ \underbrace{\{ u_2, \  u_{12}^+, \ u_{12}^- \}}_{U_2} 
 \prec \cdots \prec \underbrace{\{ u_n, \ u_{1n}^+, \ u_{1n}^-, \cdots, u_{(n-1)n}^+, \ u_{(n-1)n}^- \}}_{U_n}.
 \end{align*}
\item[(iii)]
With respect to monomial orders as in (ii),
the standard (nbc-)monomial $\kk$-bases for $H^* \TypeAConf{n}, H^*\TypeBConf{n}$ are the
squarefree monomial products obtained by choosing
at most one variable from each set $T_i$ (respectively,
from each set $U_i$);  call these
the $\typeabasis$-basis of $H^* \TypeAConf{n}$ and $\ubasis$-basis of $H^* \TypeBConf{n}$. 
\item[(iv)] In particular, these $\kk$-bases lead to these Hilbert series expressions:
\begin{align*}
\mathrm{Hilb}(\VG(\A_{\symm{n}}),t)&=
\prod_{i=1}^{n-1} (1+|T_i|t)
= (1+t)(1+2t)(1+3t)\cdots (1+(n-1)t),\\
\mathrm{Hilb}(\VG(\A_{\symmB{n}}),t)&=\prod_{i=1}^{n} (1+|U_i|t) =(1+t)(1+3t)(1+5t) \cdots (1+(2n-1)t).
\end{align*}
\end{itemize}
\end{theorem}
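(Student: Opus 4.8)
The plan is to treat the first isomorphism on each line of (i) as formal, and to concentrate on parts (ii)--(iv), from which the remaining (second) isomorphism in (i) will also drop out. Indeed $H^*(X_n,\kk)\cong\VG(\A_{\symm{n}})$ and $H^*(Y_n,\kk)\cong\VG(\A_{\symmB{n}})$ are instances of Moseley's Theorem~\ref{thm:Moseley} applied to the reflection arrangement $\A_W\subset V=\R^n$ for $W=\symm{n}$ and $W=\symmB{n}$, once one rewrites $X_n,Y_n$ as the $\R^3$-thickenings of \eqref{R3-thickening} via \eqref{configuration-spaces-reexpressed}. So the task is to show that, with the stated linear forms, the quadratic relations of Table~\ref{table:AB_VG} generate the Varchenko--Gelfand ideal from \eqref{VG-presentation}, that they form a $\prec$-Gr\"obner basis with the indicated initial terms, and that the standard monomials are as described. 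My approach is to derive everything from Cordovil's theorem (Remark~\ref{VG-grobner-basis-remark}): that theorem already tells us that \emph{all} squares $u_i^2$ together with \emph{all} circuit relations $\partial(C)$ form a $\prec$-Gr\"obner basis, with initial ideal the ideal of squares and broken-circuit monomials and with standard monomials the nbc-monomials. So it suffices to prove the purely combinatorial statement that, for the orders in (ii), every broken-circuit monomial of a circuit of size $\ge 4$ is divisible by the broken-circuit monomial of some size-$3$ circuit. Granting this, the ideal generated by the squares and by the $\prec$-initial terms of the size-$3$ circuit relations already equals $\init_\prec$ of the defining ideal; hence the squares and size-$3$ relations form a $\prec$-Gr\"obner basis and generate it, proving (i) and (ii); their standard monomials are exactly the squarefree monomials divisible by none of the size-$3$ broken-circuit monomials, which matches the $T_i$, $U_i$ description and gives (iii); and counting these monomials gives (iv), since $|T_i|=i$ and $|U_i|=2i-1$. (Over $\Z$ the same argument works because every relation has leading coefficient $\pm1$.)

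For type $A$, take $\alpha_{ij}=x_i-x_j$ with $t_{ij}\leftrightarrow\alpha_{ij}$; the matroid of $\A_{\symm{n}}$ is the graphic matroid of $K_n$, whose circuits are edge sets of cycles and whose size-$3$ circuits are the triangles $\{i,j,k\}$, giving precisely the quadratic relations of Table~\ref{table:AB_VG}. Order the variables by any $\prec$ extending the displayed partial order, so $t_{ij}$ ($i<j$) lies in the block $T_{j-1}$. For a cycle $C$ with largest vertex $k$, exactly two edges $t_{ik},t_{jk}$ of $C$ meet $k$ and both lie in $T_{k-1}$, while every other edge joins two vertices $<k$ and hence lies in some $T_m$ with $m<k-1$. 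When $|C|\ge 4$ there are at least two such lower edges, so the $\prec$-minimum edge of $C$ is one of them, both $t_{ik}$ and $t_{jk}$ survive into the broken circuit, and the broken-circuit monomial of $C$ is divisible by $t_{ik}t_{jk}$, the broken-circuit monomial of the triangle $\{i,j,k\}$. This is the combinatorial statement in type $A$; note too that a squarefree monomial is $\prec$-standard iff it contains no two variables from a common $T_j$, which is the description in (iii).

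For type $B$ the plan is identical with more circuit types to track. Identify $u_i\leftrightarrow x_i$, $u_{ij}^{-}\leftrightarrow x_i-x_j$, $u_{ij}^{+}\leftrightarrow x_i+x_j$; the matroid of $\A_{\symmB{n}}$ is the signed-graphic matroid of the complete signed graph on $[n]$ with a negative loop at each vertex. Its size-$\le 3$ circuits are: $\{u_{ij}^{+},u_{ij}^{-},u_i\}$ and $\{u_{ij}^{+},u_{ij}^{-},u_j\}$ (from $(x_i+x_j)+(x_i-x_j)=2x_i$ and $(x_i+x_j)-(x_i-x_j)=2x_j$); the all-negative triangles $\{u_{ij}^{-},u_{jk}^{-},u_{ik}^{-}\}$; and the mixed triangles $\{u_{ij}^{+},u_{jk}^{-},u_{ik}^{+}\}$ and $\{u_{ij}^{-},u_{ik}^{+},u_{jk}^{+}\}$ --- exactly the relations of Table~\ref{table:AB_VG}. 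Order the variables by any $\prec$ extending the displayed partial order, so $u_j$ and all $u_{ij}^{\pm}$ with $i<j$ lie in the block $U_j$. For a circuit $C$ of size $\ge 4$, let $k$ be the largest index occurring, so every element of $C$ involving $k$ lies in $U_k$ and every other element lies in some $U_m$ with $m<k$. A short case analysis on the shape of $C$ as a signed-graph circuit --- a balanced cycle, a pair of unbalanced cycles meeting at a vertex or joined by a path, or a single cycle together with a coordinate hyperplane (a negative loop) --- shows that $C$ has at least two elements in lower blocks and at least two in $U_k$ whose product is a listed size-$3$ broken-circuit monomial once the $\prec$-minimum element (which lies in a lower block) is deleted. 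As in type $A$, parts (i)--(iv) follow, with standard monomials the squarefree products using at most one variable from each $U_j$ and Hilbert series $\prod_{j=1}^{n}(1+(2j-1)t)$.

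The only step that is not bookkeeping is this last reduction of long-circuit broken-circuit monomials to size-$3$ ones; in type $A$ it is the one-line observation about the largest vertex of a cycle, and in type $B$ it is the case analysis above, which is where I expect to have to be careful, since the signed-graphic matroid has several circuit families and one must find, in each, two surviving top-block elements forming a listed broken-circuit monomial. (Conceptually this is governed by supersolvability of $\A_{\symm{n}}$ and $\A_{\symmB{n}}$, which is precisely what makes the flags $T_1\prec T_2\prec\cdots$ and $U_1\prec U_2\prec\cdots$ available.) As a fallback one may simply quote the quadratic presentations of $H^*(X_n)$ and $H^*(Y_n)$ from Cohen~\cite{cohen} and Xicot{\'e}ncatl~\cite{xico} for part~(i), and then verify the Gr\"obner basis claim in (ii) by a direct Buchberger check that every $S$-polynomial of two listed quadratic relations reduces to zero modulo the list, after which (iii) and (iv) are immediate from the standard-monomial theorem. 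I would nonetheless present the broken-circuit route as the main argument, since it establishes generation of the ideal and the Gr\"obner basis property at once and runs uniformly across types $A$ and $B$.
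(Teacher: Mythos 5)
Your overall route is essentially the paper's own: start from Cordovil's theorem (Remark~\ref{VG-grobner-basis-remark}), so that all squares and circuit relations already form a $\prec$-Gr\"obner basis, and reduce parts (i)--(ii) to the purely combinatorial claim that for orders extending the flags $T_1\prec T_2\prec\cdots$ and $U_1\prec U_2\prec\cdots$ every broken-circuit monomial of a circuit of size at least $4$ is divisible by a quadratic broken-circuit monomial of a listed size-$3$ circuit (the supersolvability phenomenon, which the paper also invokes via Bj\"orner--Ziegler), with (iii) and (iv) then read off from the standard monomials. Your type-$A$ largest-vertex argument is exactly the check the paper leaves to the reader.

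The one place where your write-up would fail as stated is the type-$B$ bookkeeping, which is precisely the part you flag as delicate. First, your dictionary $u^+_{ij}\leftrightarrow x_i+x_j$, $u^-_{ij}\leftrightarrow x_i-x_j$ is the reverse of the paper's choice in Remark~\ref{rmk:choice-of-roots} ($u^+_{ij}\leftrightarrow x_j-x_i$, $u^-_{ij}\leftrightarrow x_j+x_i$), and Table~\ref{table:AB_VG} is not invariant under swapping $+$ and $-$, so the circuit relations you compute will not literally be the table's relations; to prove the theorem as stated you must use the paper's linear forms (or track the sign discrepancies). Second, your list of size-$\le 3$ circuits is incomplete even in your own convention: you omit the circuits $\{u_i,u_j,u^{+}_{ij}\}$ and $\{u_i,u_j,u^{-}_{ij}\}$, which account for two rows of the table, and one of the three mixed triangles (in your convention $\{u^+_{ij},u^+_{jk},u^-_{ik}\}$), while including $\{u_j,u^+_{ij},u^-_{ij}\}$, which the table does not use. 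Taken literally, your listed quadratics would not cover all quadratic broken-circuit monomials --- for instance $u_j u^-_{ij}$, the broken circuit of $\{u_i,u_j,u^{-}_{ij}\}$, is divisible by none of your initial terms --- so neither the Gr\"obner basis claim nor the identification with Table~\ref{table:AB_VG} goes through until the enumeration is completed and the conventions aligned. Once those repairs are made (and the promised case analysis for long signed-graphic circuits is carried out, or replaced by the Buchberger fallback you mention), your argument coincides with the paper's proof.
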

\noindent
\begin{center}
\begin{table}[!h]
\centering
\setlength{\tabcolsep}{10pt} 
\renewcommand{\arraystretch}{1.5} 
\begin{tabular}{|l|l|}  \hline
\rm
$\J_{\symm{n}}$ generating relations & \rm $\J_{\symmB{n}}$  generating relations \\ \hline\hline 
$\underline{t_{ij}^2}$ &  $\underline{(u_{ij}^{+})^2}, \ \underline{(u_{ij}^{-})^2}, \ \underline{u_i^2} $ \\
$t_{ij}t_{jk} - t_{ik}t_{ij} - \underline{t_{ik}t_{jk}}$ & $u_i u_{ij}^+ - u_i u_{ij}^{-} - \underline{u_{ij}^+ u_{ij}^-}$ \\
 & $u_i u_{ij}^+ - u_i u_j - \underline{u_{ij}^+ u_j}$ \\ 
 &$u_i u_j - u_i u_{ij}^- - \underline{u_j u_{ij}^-}$ \\
 &$u_{ij}^{+}u_{jk}^{+} - u_{ij}^{+}u_{ik}^{+} - \underline{u_{ik}^{+}u_{jk}^{+}} $ \\
 &$u_{ij}^{-}u_{jk}^{+} - u_{ij}^{-}u_{ik}^{-} - \underline{u_{ik}^{-}u_{jk}^{+}}$\\
 &$-u_{ij}^{-}u_{jk}^{-} + u_{ij}^{-}u_{ik}^{+} - \underline{u_{ik}^{+}u_{jk}^{-}} $  \\
 &$-u_{ij}^{+}u_{jk}^{-} + u_{ij}^{+}u_{ik}^{-} - \underline{u_{ik}^{-}u_{jk}^{-}} $ \\
 &\\
 \hline 
\end{tabular}
\caption{Relations in the Type A and B graded Varchenko-Gelfand rings.  Here $i,j,k$ are always assumed to be ordered with $i<j<k$, and we have underlined the $\prec$-initial terms in each relation with respect to the
monomial orderings $\prec$ in Theorem~\ref{thm:AB_VGpresentation}.}
\label{table:AB_VG}
\end{table}
\end{center}

\begin{proof}
    For assertions (i), (ii), first pick a monomial ordering $\prec$ as in (ii). One already has Gr\"obner bases as in Remark~\ref{VG-grobner-basis-remark} given by all of the relations $\partial(C)$ from \eqref{VG-circuit-relation}
    indexed by circuits $C=\{i_1,i_2,\ldots,i_s\}$. Hence their initial terms
    $\init_\prec(\partial(C))$, which are the broken circuit monomials $u_{i_2} u_{i_3} \cdots u_{i_s}$ with $i_1=\min_{\prec} C$,
     will generate the monomial initial
    ideal $(\init_\prec(f): f \in \J)$.  However,
    for the arrangements $\A_{\symm{n}}, \A_{\symmB{n}}$, one can check that every circuit $C$ has its broken circuit divisible by
    a {\it quadratic} broken circuit, that is, coming from a circuit $C$ of size $3$.  In fact, this property (minimal broken circuits under divisiblity are all quadratic) characterizes the hyperplane arrangements which are {\it supersolvable} by a result of Bj\"orner and Ziegler \cite{BjornerZiegler}; see also Peeva \cite[\S 4]{Peeva} and Dorpalen-Barry \cite[\S 4]{Dorpalen-Barry}.  One can then easily check that the three element circuits $C$ for $\A_{\symm{n}}, \A_{\symmB{n}}$ give exactly the circuit relations $\partial(C)$ shown in Table~\ref{table:AB_VG}, with their $\prec$-initial terms as underlined there.

    Assertion (iii) follows by checking that the standard monomials, that is, those divisible by none of the initial terms underlined in Table~\ref{table:AB_VG}, must be squarefree and avoid having two variables from any $\T_i$ or $\U_i$.

    Assertion (iv) is then a consequence of (iii).
    \end{proof}

\begin{remark} \rm
\label{rmk:choice-of-roots}
To define the action of $W=\symm{n}, \symmB{n}$ on the rings
$\VG(\A_{\symm{n}}), \VG(\A_{\symmB{n}})$,
the rule \eqref{W-action-on-VG-variables}
dictates that we specify our choice of linear forms
$\alpha_H$ to define the hyperplanes $H=\ker \alpha_H$ that correspond to
each of the variables in $\{ t_{ij} \}$ in $\VG(\A_{\symm{n}})$, and the variables
in $\{ u_i,u^+_{ij},u^-_{ij} \}$ in $\VG(\A_{\symmB{n}})$ for $1 \leq i < j \leq n$:
$$
\begin{array}{rcll}
t_{ij} &\leftrightarrow& H=\ker(+x_j-x_i),\\
& \\
u_i &\leftrightarrow& H=\ker(+x_i), \\
u^+_{ij} &\leftrightarrow& H=\ker(+x_j-x_i),\\
u^-_{ij} &\leftrightarrow& H=\ker(+x_j+x_i).
\end{array}
$$
One can check this implies that $\sigma \in \symm{n}$ acts via 
$
\sigma(t_{ij})=t_{\sigma(i) \sigma(j)},
$
with convention $t_{ji}:=-t_{ij}$ for $i < j$. 

For $\symmB{n}$, the above choices of linear forms imply the following actions of the sign changes $\tau_i,\tau_j$ in $\symmB{n}$ for $1 \leq i < j \leq n$ on $u_i,u+_{ij},u^-_{ij}$ in $\VG(\A_{\symmB{n}})$:

\begin{center}
\centering
\setlength{\tabcolsep}{10pt} 
\renewcommand{\arraystretch}{1.25} 
\begin{tabular}{|l|l|l|l|}\hline
Variable  &Image       & Image    & Image under \\
generator &under $\tau_i$ & under $\tau_j$  &under $t_k$ for $ k \neq i,j$ \\\hline \hline
  $u_{i}$&$-u_{i}$&$u_{i}$& $u_{i}$ \\ \hline 
  $u_{ij}^+$ & $u_{ij}^-$ & $-u_{ij}^-$& $u_{ij}^+$ \\ \hline
 $u_{ij}^-$ & $u_{ij}^+$ & $-u_{ij}^+$&$u_{ij}^-$ \\ \hline
\end{tabular}
\end{center}
For other signed permutations $\sigma$ in $\symmB{n}$,
the rules are a bit more complicated
to write down in all cases.  However,
if one lets $\hat{\sigma}=\varphi(\sigma)$ denote the (unsigned) permutation in $\symm{n}$ which forgets the
signs in $\sigma$, as in \eqref{sign-forgetting-map}, then
any variable $u_i$ will have 
\begin{equation}
\label{signed-perms-on-u-variables}
\sigma(u_i) = \pm u_{\hat{\sigma}(i)}.
\end{equation}
Furthermore, the variables $u^+_{ij}, u^-_{ij}$ 
will have $\sigma(u^+_{ij}), \sigma(u^-_{ij})$
taking one of the forms
$\pm u^+_{\hat{\sigma}(i)\hat{\sigma}(j)}$
or $\pm u^-_{\hat{\sigma}(i)\hat{\sigma}(j)}$.

As an example, for the signed permutation 
$$
\sigma=(+4,+3,-5,-1,+2)=\left( 
\begin{smallmatrix} 1&2&3&4&5\\
+4,&+3,&-5,&-1,&+2 \end{smallmatrix} \right),
$$
one has $\sigma(x_4-x_2)=-x_3-x_1=-(x_3+x_1)$,
and thus $\sigma(u^+_{24})=-u^-_{13}$.
\end{remark}

From here on, we abbreviate the notation for
these rings as follows, to emphasize the above presentations:
\begin{align*}
    \TypeACohomology{n}&:= H^* \TypeAConf{n} \cong \VG(\A_{\symm{n}}),\\
    \TypeBCohomology{n}&:= H^* \TypeBConf{n} \cong VG(\A_{\symmB{n}}). 
\end{align*}
Recall the discussion from Section \ref{sec:flats}
showed that flat orbits $\lat(\A_W)/W$ for $W=\symm{n}$ are parametrzied by partitions $\lambda$ with $|\lambda|=n$, while those for
$W=\symmB{n}$ are parametrzied by partitions $\mu$ with $|\mu|\leq n$.  We will therefore similarly abbreviate the flat-orbit decompositions for $\TypeACohomology{n}, \TypeBCohomology{n}$ as follows:
\begin{equation}
\label{eq:typeABflatorbitdecomposition} 
\begin{aligned}
\TypeACohomology{n} &= \bigoplus_{|\lambda|=n} \TypeACohomology{\lambda}
 &\text{ where } \quad
\TypeACohomology{\lambda} := \bigoplus_{\substack{X\in \lat(\A_{\symm{n}})\\ \sh{X} = \lambda}} \VG(\A_{\symm{n}})_{X},\\
\TypeBCohomology{n} &= \bigoplus_{|\mu| \leq n} (\TypeBCohomology{n})_\mu
 &\text{ where } \quad 
(\TypeBCohomology{n})_\mu := \bigoplus_{\substack{X\in \lat(\A_{\symmB{n}})\\ \sh{X} = \mu}} \VG(\A_{\symmB{n}})_{X}.
\end{aligned}
\end{equation}

\subsection{Diagonalizing the $\Z_2^n$-action on $H^* \TypeBConf{n}$} 
\label{eigengbasis-subsection}

 Our next step in analyzing 
 the ring $\TypeBCohomology{n}=H^* \TypeBConf{n}$ 
 is to make an invertible change-of-variables,
 diagonalizing the action of the normal subgroup $\Z_2^n=\langle \tau_1,\ldots,\tau_n\rangle$ within $\symmB{n}$. Here $\tau_i$ denotes reflection that performs a sign change in the 
 $i^{th}$ coordinate.
  It will be important for the remainder of this section to assume that {\bf the field $\kk$ does not have characteristic two}, that is, $2 \in \kk^\times$.   We will introduce a new basis for $\TypeBCohomology{n}$, a filtration using that basis, and a corresponding associated graded ring. Along the way, we will see several useful decompositions of $\TypeBCohomology{n}$. 

\begin{definition}\label{def:basismap}\rm
Define an isomorphism of graded $\kk$-algebras $\basismap$ by
   \begin{align*}
       \basismap: \kk[v_{ij}, w_{ij}, u_i] &\longrightarrow \kk[u_{ij}^+, u_{ij}^-, u_i]\\
       u_i & \longmapsto u_i\\
       v_{ij} &\longmapsto   u^+_{ij} + u_{ij}^-\\
      w_{ij} & \longmapsto u_{ij}^+ - u_{ij}^-
   \end{align*} 
   where $1 \leq i < j \leq n$, and 
whose inverse $\basismap^{-1}$ sends $\basismap^{-1}(u_{ij}^+) = \frac{1}{2}(v_{ij} + w_{ij}) $ and $\basismap^{-1}(u_{ij}^-) = \frac{1}{2}(v_{ij} - w_{ij})$.
\end{definition}

The point of introducing the generators $v_{ij}$ and $w_{ij}$ is that, after defining the action of $\symmB{n}$ on $\kk[u_i,v_{ij},w_{ij}]$ so as to make the isomorphism $\B$ equivariant, the reflections $\tau_i$ generating  $\Z_2^n$ inside $\symmB{n}$ act on the variables $v_{ij}, w_{ij}$ diagonally via $\pm 1$ as shown in Table \ref{table:generatoraction}, which extends the table from  Remark~\ref{rmk:choice-of-roots}. This will be crucial when considering the $\Z_2^n$-invariant subalgebra of $\TypeBCohomology{n}$ in the next Section \ref{sec:ringZn}. 

\begin{center}
\centering
\setlength{\tabcolsep}{10pt} 
\renewcommand{\arraystretch}{1.25} 
\begin{table}[!h]
\begin{tabular}{|l|l|l|}\hline
Variable  &Image       & Image    \\
generator &under $\tau_i$ & under $\tau_j$ 
\\\hline \hline
  $u_{i}$&$-u_{i}$&$u_{i}$
\\ \hline \hline
 $v_{ij}$ &$v_{ij}$ & $-v_{ij}$ 
 \\ \hline
 $w_{ij}$ & $-w_{ij}$ & $w_{ij}$ 
 \\ \hline \hline
  $u_{ij}^+$ & $u_{ij}^-$ & $-u_{ij}^-$
  \\ \hline
 $u_{ij}^-$ & $u_{ij}^+$ & $-u_{ij}^+$
 \\ \hline
\end{tabular}
\caption{Action of $\tau_i,\tau_j$ on
the diagonalized variables $u_i,v_{ij},w_{ij}$.}
\label{table:generatoraction}
\end{table}
\end{center}
\noindent

\begin{remark} \rm
\label{rmk:signed-perms-on-uvw}
It is again somewhat complicated to write down the action of an arbitrary $\sigma$ in $\symmB{n}$ on all variables $v_{ij},w_{ij}$.  But similar to Remark~\ref{rmk:choice-of-roots}, if
$\hat{\sigma}$ in $\symm{n}$ is the image of
$\sigma$ under $\varphi:\symmB{n} \twoheadrightarrow \symm{n}$, then
the variables $v_{ij}, w_{ij}$ 
will have $\sigma(v_{ij}), \sigma(w_{ij})$
taking one of the forms
$\pm v_{\hat{\sigma}(i)\hat{\sigma}(j)}$
or $\pm w_{\hat{\sigma}(i)\hat{\sigma}(j)}$.
\end{remark}

We wish to rewrite the presentation 
$\TypeBCohomology{n}:=
\kk[ u_{ij}^+, u_{ij}^-, u_i]/\J_{\symmB{n}}$
in terms of these new variables $v_{ij}, w_{ij}$, using a Gr\"obner basis argument.  Pick any monomial ordering $\prec$ on $\kk[u_i,v_{ij},w_{ij}]$ which orders the variables in a way that extends the following partial order: 
\begin{equation}
\label{uvw-variable-partial-order}
 \underbrace{\{ u_1 \}}_{V_1:=} 
 \prec  \underbrace{\{ u_2, \  v_{12}, \ w_{12} \}}_{V_2:=}
 \prec \cdots  
 \prec \underbrace{\{ u_n, \ v_{1n}, \ w_{1n}, \cdots, v_{(n-1)n}, \ w_{(n-1)n} \}}_{V_n:=}. 
\end{equation}

\begin{theorem}
\label{theorem:newpresentation}
The isomorphism $\basismap: \kk[v_{ij}, w_{ij}, u_i] \longrightarrow \kk[u_{ij}^+, u_{ij}^-, u_i]$
induces a graded
$\kk$-algebra isomorphism
\[ 
\kk [v_{ij},w_{ij}, u_{i} ] / \I 
\overset{\sim}{\longrightarrow}
\kk[ u_{ij}^+, u_{ij}^-, u_i]/\J_{\symmB{n}}  =:\TypeBCohomology{n},
\]
where $\I$ is generated by the relations $\GGG$ listed in Table \ref{table:Irelations}. 

Moreover, $\GGG$ gives a quadratic Gr\"obner basis 
for the ideal $\I$ with respect to $\prec$. Here the standard monomial $\kk$-basis for the quotient
$\kk [v_{ij},w_{ij}, u_{i} ] / \I$
is the set of  squarefree monomials $\vbasis$
obtained 
from taking products with at most one element from each of these sets $V_1,V_2,\ldots,V_n$
defined in \eqref{uvw-variable-partial-order}.
\end{theorem}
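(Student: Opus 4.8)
The plan is to derive everything from two facts: that $\basismap$ is a grading-preserving isomorphism of polynomial rings, and that we already know the Hilbert series of $\TypeBCohomology{n}$. A direct Buchberger computation is possible but unnecessary.

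First I would set $\I := \basismap^{-1}(\J_{\symmB{n}})$. Since $\basismap$ carries each generator to a homogeneous element of the same degree, it carries the homogeneous ideal $\J_{\symmB{n}}$ onto the homogeneous ideal $\I$, and hence descends to a graded $\kk$-algebra isomorphism $\kk[v_{ij},w_{ij},u_i]/\I \overset{\sim}{\longrightarrow} \kk[u_{ij}^+,u_{ij}^-,u_i]/\J_{\symmB{n}} = \TypeBCohomology{n}$; this is the first claim. It then remains to check that the set $\GGG$ of Table~\ref{table:Irelations} generates $\I$, which one does by applying $\basismap^{-1}$ to each generating relation of $\J_{\symmB{n}}$ from Table~\ref{table:AB_VG} and taking $\kk$-linear combinations (legitimate since $2\in\kk^\times$): for instance the two square relations $(u_{ij}^{\pm})^2$ yield, via half-sum and half-difference, the relations $v_{ij}^2+w_{ij}^2$ and $v_{ij}w_{ij}$; the trinomial $u_iu_{ij}^+-u_iu_{ij}^--u_{ij}^+u_{ij}^-$ yields a relation of the form $u_iw_{ij}-\tfrac14(v_{ij}^2-w_{ij}^2)$; and similarly for the remaining families. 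A short check identifies the $\kk$-span of these with the $\kk$-span of $\GGG$. In particular all relations in $\GGG$ are quadratic.

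For the Gr\"obner basis assertion, fix any monomial order $\prec$ refining the partial order in \eqref{uvw-variable-partial-order}. One reads off from Table~\ref{table:Irelations} the $\prec$-leading term of each relation (the underlined monomial) and checks that, across all the relations, these leading monomials are precisely, for each block $V_i$, the products $xy$ with $x,y\in V_i$ (squares included). The point here is that the relations of Table~\ref{table:Irelations} are the \emph{correct} linear combinations of the naive images $\basismap^{-1}(\cdot)$ of the type-$B$ relations --- e.g.\ within a block $V_i$ one must separate out relations with leading terms $v_{ji}^2$, $v_{ji}w_{ji}$, $w_{ji}^2$, $u_i v_{ji}$, $u_i w_{ji}$ even though several of the raw images share a leading term. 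Granting this, a monomial is divisible by none of the $\prec$-leading terms of $\GGG$ exactly when it uses at most one variable from each $V_i$, i.e.\ the standard monomials for $\langle\init_\prec(\GGG)\rangle$ are exactly $\vbasis$. Since standard monomials always span the quotient, we get $\dim_\kk\big(\kk[v,w,u]/\I\big)_d \le \#\{m\in\vbasis : \deg m = d\}$ for every $d$.

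Finally I would close the gap with a Hilbert-series squeeze. By the isomorphism of the second paragraph, $\kk[v,w,u]/\I \cong \TypeBCohomology{n}$ as graded vector spaces, and $\mathrm{Hilb}(\TypeBCohomology{n},t)=\prod_{i=1}^n(1+(2i-1)t)$ by Theorem~\ref{thm:AB_VGpresentation}(iv); on the other hand $|V_i|=2i-1$, so the generating function counting $\vbasis$ by degree is the same product $\prod_{i=1}^n(1+|V_i|t)$. Hence the inequality above is an equality in every degree, which forces $\vbasis$ to be a $\kk$-basis of $\kk[v,w,u]/\I$ and forces the standard monomials of $\langle\init_\prec(\GGG)\rangle$ to agree with those of $\init_\prec(\I)$ --- that is, $\GGG$ is a Gr\"obner basis for $\I$ with respect to $\prec$, with standard monomial basis $\vbasis$. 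The only step that is more than bookkeeping is the middle one: verifying that $\basismap^{-1}$ of the type-$B$ relations of Table~\ref{table:AB_VG} can be recombined into exactly the relations of Table~\ref{table:Irelations}, with leading monomials tiling all the within-block quadratics; everything else is formal once that is in hand.
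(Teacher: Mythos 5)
Your proposal is correct, and it reorganizes the argument in a way that differs from the paper's proof at two points. The paper never identifies $\I$ with $\basismap^{-1}(\J_{\symmB{n}})$ up front: it only verifies the one-sided containment $\basismap(\GGG)\subseteq \J_{\symmB{n}}$ (two sample computations, the rest being similar), which yields a surjection $\kk[v_{ij},w_{ij},u_i]/\I \twoheadrightarrow \TypeBCohomology{n}$; since $\dim_\kk \TypeBCohomology{n}=|\symmB{n}|=2^n\, n!=|\vbasis|$ by Theorem~\ref{thm:Moseley}, and since $\vbasis$ is exactly the set of $\prec$-standard monomials for $\init_\prec(\GGG)$ so that the division algorithm makes the images of $\vbasis$ span the source, a single dimension count forces the surjection to be an isomorphism, $\vbasis$ to be a basis, and $\GGG$ to be a Gr\"obner basis all at once. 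You instead take $\I:=\basismap^{-1}(\J_{\symmB{n}})$, so the isomorphism is free, but you then owe the two-sided verification that the images under $\basismap^{-1}$ of the generators in Table~\ref{table:AB_VG} have the same $\kk$-span as $\GGG$: per pair $(i,j)$ this is the half-sum/half-difference bookkeeping you describe, and per triple $(i,j,k)$ it amounts to checking that the four relations of Table~\ref{table:Irelations} map to four linearly independent $\pm 1$-combinations of the four type-$B$ circuit relations (true, and legitimate since $2\in\kk^\times$). This is genuinely more verification than the paper performs, but it buys an explicit dictionary between the two sets of relations; your degree-by-degree Hilbert-series squeeze via Theorem~\ref{thm:AB_VGpresentation}(iv) then plays exactly the role of the paper's total-dimension count, and your identification of the standard monomials with $\vbasis$ is the same as the paper's. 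One caveat you share with the paper: the underlined monomials are the $\prec$-initial terms only for suitably chosen $\prec$ --- for instance pure lex with $w_{12}\succ u_2\succ v_{12}\succ u_1$ extends the partial order on the variables yet makes $u_1w_{12}$ beat $u_2v_{12}$ --- so both arguments should fix an order refining the weighting that assigns each variable of $V_m$ the weight $m$, after which every step goes through as written.
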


\noindent We call the presentation of $\TypeBCohomology{n}$ in Theorem~\ref{thm:AB_VGpresentation} its {\it $\ubasis$-presentation} and the presentation in Theorem~\ref{theorem:newpresentation} its {\it $\vbasis$-presentation}.

\begin{center}
\begin{table}[!h]
\centering
\setlength{\tabcolsep}{10pt} 
\renewcommand{\arraystretch}{1.5} 
\begin{tabular}{|l|}  \hline
\rm
\rm Generating relations $\GGG$ for $\I$  \\ \hline\hline 
    $\underline{u_{i}^2}  $\\
 $\underline{v_{ij}w_{ij}}$\\
  $u_{i}w_{ij} -\underline{u_{j} v_{ij}}$\\ 
  $ - 2u_{i} w_{ij} + \underline{v_{ij}^2}$\\
  $ + 2 u_{i} w_{ij} + \underline{w_{ij}^{2}}$\\
 $u_{i} v_{ij} - 2 u_{i} u_{j} - \underline{u_{j} w_{ij}}$\\
  $v_{ij}w_{jk} - w_{ij}w_{ik} - \underline{v_{ik}v_{jk}}$\\
  $w_{ij}w_{jk} - v_{ij}w_{ik} - \underline{w_{ik}w_{jk}}$\\
 $v_{ij}v_{jk} - v_{ij}v_{ik} - \underline{v_{ik}w_{jk}}$ \\
  $w_{ij}v_{jk} - w_{ij}v_{ik} - \underline{w_{ik}v_{jk}}$ \\
  \\
 \hline 
\end{tabular}
\caption{Generating relations $\GGG$ for $\I$,
with indices $i,j,k$ assumed to satisfy $i<j<k$.  Initial terms with respect to any monomial order $\prec$ extending \eqref{uvw-variable-partial-order} are underlined.}\label{table:Irelations}
\end{table}
\end{center}
\begin{proof}[Proof of Theorem~\ref{theorem:newpresentation}]
Consider the composite surjection
\[ \tilde{\basismap}: \kk[u_i,v_{ij}, w_{ij}] \longrightarrow \kk[u_i,u_{ij}^+, u_{ij}^-] \longrightarrow \kk[u_i,u_{ij}^+, u_{ij}^-]/\J_{\symmB{n}}\]
which applies the isomorphism $\basismap$ followed by the surjection $\kk[u_i,u_{ij}^+, u_{ij}^-]/\J_{\symmB{n}} \cong \TypeBCohomology{n}$. 
It is straightforward (though slightly tedious) to check that each generator $g$ of the ideal $\I$ listed in Table~\ref{table:Irelations} is in fact in $\ker(\tilde{\basismap})$, by checking that $\basismap(g)$ lies in $\J_{\symmB{n}}$. 
For example, here is the check for the generator  
$u_{i}w_{ij} - u_{j} v_{ij}$ of $\I$:
\begin{align*}
     \basismap( u_{i}w_{ij} - u_{j} v_{ij}) &= u_i (u_{ij}^+ - u_{ij}^-) -  u_j (u_{ij}^+ + u_{ij}^-)\\ 
     &= u_i u_{ij}^+ - u_j u_{ij}^+ - u_i u_{ij}^- - u_j u_{ij}^- \\
     &= u_i u_j - u_i u_j = 0 \quad ( \in \J_{\symmB{n}}).
\end{align*}
As another example, here is the check for the generator $v_{ij}w_{jk}- w_{ij}w_{ik} - v_{ik}v_{jk}$ of $\I$:
\begin{align}
\notag
    \basismap(v_{ij}w_{jk}- w_{ij}w_{ik} - v_{ik}v_{jk})=& \ (u_{ij}^+ + u_{ij}^-)(u_{jk}^+ - u_{jk}^-) - (u_{ij}^+ - u_{ij}^-)(u_{ik}-u_{ik}^-) - (u_{ik}^+ + u_{ik}^-)(u_{jk}^+ + u_{jk}^-)\\
    \notag
    =& \ u_{ij}^+ u_{jk}^+ + u_{ij}^- u_{jk}^+ - u_{ij}^+ u_{jk}^- - u_{ij}^- u_{jk}^- -u_{ij}^+ u_{ik}^+ + u_{ij}^- u_{ik}^+\\
    \notag
    &+ u_{ij}^+u_{ik}^- - u_{ij}^- u_{ik}^- - u_{ik}^+ u_{jk}^+ - u_{ik}^- u_{jk}^+ - u_{ik}^+ u_{jk}^- - u_{ik}^- u_{jk}^- \\
    \notag
    =& \ \big( u_{ij}^+ u_{jk}^+  -u_{ij}^+ u_{ik}^+ -  - u_{ik}^+ u_{jk}^+   \big) + \big( u_{ij}^- u_{jk}^+ - u_{ij}^- u_{ik}^- - u_{ik}^- u_{jk}^+ \big) \\
    \label{fourth-and-fith-lines}
    & + \big( -u_{ij}^+u_{jk}^- + u_{ij}^+ u_{ik}^- - u_{ik}^- u_{jk}^- \big) + \big( -u_{ij}^- u_{jk}^- + u_{ij}^- u_{ik}^+ - u_{ik}^+ u_{jk}^- \big)
\end{align}
which lies in $\J_{\symmB{n}}$ since the summands
on the right side of equality \eqref{fourth-and-fith-lines} all appear in $\J_{\symmB{n}}$. The remaining checks follow from very similar computations.   

Thus the ideal $\I$ generated by $\GGG$ has an induced $\kk$-algebra surjection 
$$
\kk [v_{ij},w_{ij}, u_{i} ] / \I 
\twoheadrightarrow
\kk[ u_{ij}^+, u_{ij}^-, u_i]/\J_{\symmB{n}}.
$$
Note that Theorem \ref{thm:Moseley} implies the target $\kk[ u_{ij}^+, u_{ij}^-, u_i]/\J_{\symmB{n}} \cong \VG(\A_{\symmB{n}})$ has $\kk$-dimension 
$$
|\symmB{n}|=2^n \cdot n! = 2 \cdot 4 \cdots (2n-2) \cdot 2n=|\V|.
$$
Thus by dimension-counting, it suffices to check that
the images of $\vbasis$ give a $\kk$-spanning set for the source 
$\kk [v_{ij},w_{ij}, u_{i} ] / \I$.
However, note from inspection of Table~\ref{table:Irelations} that 
$\vbasis$ is exactly the set of {\it $\prec$-standard monomials} with respect to
$\GGG$, meaning the monomials that
are divisible by none of the $\prec$-initial terms of any
of the elements of $\GGG$.  Therefore the {\it multivariate division algorithm} with respect to $\GGG$ using $\prec$ (see, e.g., Cox, Little and O'Shea \cite[\S 2.3]{CoxLittleOShea}) shows that the images of $\vbasis$ will $\kk$-span $\kk [v_{ij},w_{ij}, u_{i} ] / \I$.
This also shows that every $\prec$-initial term of an element of $I$ is divisible by some $\prec$-initial term of an element of $\GGG$, that is, $\GGG$ gives a Gr\"obner basis for $\I$ with respect to $\prec$.
\end{proof}

\section{Proof of Theorem~\ref{peak-idempotents-give-cohomology-reps}(iii): vanishing of cohomology and peak idempotents}
\label{sec:ringZn} 

We recall the statement of 
 Theorem~\ref{peak-idempotents-give-cohomology-reps} here.

\vskip.1in
\noindent
{\bf Theorem~\ref{peak-idempotents-give-cohomology-reps}.}
{\it
Let $H^* Z_n$ be the cohomology with coefficients
in a field $\kk$ of 
$
Z_n=\TypeBConf{n}/\Z_2^n=\Conf_n(\RP^2 \times \R) .
$
\begin{itemize}
\item[(i)] When $\fieldchar(\kk)>2$, the $\kk \symm{n}$-module on the total cohomology is
$
H^* \PeakConf{n} \cong \kk \symm{n}.
$
\item[(ii)] When $\fieldchar(\kk)>n$, for  $0 \leq k \leq n$ one has a $\kk \symm{n}$-module isomorphism 
$
H^{2k} \PeakConf{n} \cong \left( \kk \symm{n} \right) \PeakIdempotent{n}{n-k}.
$
\item[(iii)] If $\fieldchar(\kk)>2$ then  $H^i\PeakConf{n}=0$
unless $i \equiv 0 \bmod 4$.
If $\fieldchar(\kk)>n$ then $\PeakIdempotent{n}{k}=0$ 
unless $k \equiv n \bmod 2$. 
\end{itemize}
}
\vskip.1in

The proof strategy for (iii) is fairly simple.
Recall the proof of Theorem~\ref{peak-idempotents-give-cohomology-reps}(i),
in \eqref{isomorphism-strings},
identified 
$$
H^*\PeakConf{n} \cong \left( H^{*}\TypeBConf{n} \right)^{\Z_2^n} = (\TypeBCohomology{n})^{\Z_2^n}.
$$
We examine the action of $\Z_2^n=\langle \tau_1,\ldots,\tau_n\rangle$ in $\symmB{n}$
on $\TypeBCohomology{n}$, and
construct  $\Z_2^n$-invariants.
Recall from Theorem~\ref{theorem:newpresentation} the
$\kk$-basis $\vbasis$ for $\TypeBCohomology{n}$, the set of
squarefree products of at most one variable
from each of $V_1,\ldots,V_n$:

\begin{equation*}
 \underbrace{\{ u_1 \}}_{V_1:=} 
 \prec  \underbrace{\{ u_2, \  v_{12}, \ w_{12} \}}_{V_2:=}
 \prec \cdots  
 \prec \underbrace{\{ u_n, \ v_{1n}, \ w_{1n}, \cdots, v_{(n-1)n}, \ w_{(n-1)n} \}}_{V_n:=}. 
\end{equation*}
Table~\ref{table:generatoraction} 
showed how the generators $\tau_1,\ldots,\tau_n$
of $\Z_2^n$ scale all variables $u_i, v_{ij}, w_{ij}$ by $\pm 1$ signs: 
\begin{align*}
\tau_k(u_i)
&=\begin{cases} 
    -u_i&\text{ if }k = i\\
    u_i&\text{ in all other cases},\end{cases}\\
    \tau_k(v_{ij})&=\begin{cases} 
    -v_{ij}&\text{ if }k = j (>i)\\
    v_{ij}&\text{ in all other cases},\end{cases}\\
     \tau_k(w_{ij})&=\begin{cases} 
    -w_{ij}&\text{ if }k = i (<j)\\
    w_{ij}&\text{ in all other cases}.\end{cases}
\end{align*}
From this, one can check that
none of the variables are $\Z_2^n$-invariant.  However, certain squarefree {\it quadratic} monomials
in $\vbasis$ will be $\Z_2^n$-invariant, namely
\begin{equation}
\label{Z-generators}
\quadgens:= \{ u_{i} w_{ij} \}_{1 \leq i < j \leq n}
\quad \sqcup \quad 
\{ w_{ij}w_{ik} \}_{1 \leq i < j < k \leq n}
\quad \sqcup \quad
\{ v_{ij}w_{jk} \}_{1 \leq i < j < k \leq n}.
\end{equation}
This can be verified with the following easy checks (bearing in mind that $\tau_k$ fixes $u_i, v_{ij}, w_{ij}$ if $k \neq i,j$):
$$
\begin{array}{lcl}
        \tau_i ( u_{i} w_{ij} )= (-u_i) (-w_{ij}) = u_i w_{ij}& &\\
        \tau_j ( u_i w_{ij} )= u_i w_{ij} & &\\
        & &\\
        \tau_i ( w_{ij}w_{ik} ) = (-w_{ij})(-w_{ik}) = w_{ij}w_{ik}& &\tau_i ( v_{ij}w_{jk} )= v_{ij}w_{jk}\\
        \tau_j ( w_{ij}w_{ik} )= w_{ij}w_{ik} & &\tau_j ( v_{ij}w_{jk} )= (-v_{ij})(-w_{jk}= v_{ij})w_{jk})\\
        \tau_k (  w_{ij}w_{ik} ) =  w_{ij}w_{ik}& &\tau_k (  v_{ij}w_{jk} )= v_{ij}w_{jk}.\\
\end{array}
$$
Consequently, any product $q_1 q_2 \cdots q_k$ of quadratics $q_i$ in $\quadgens$
is $\Z_2^n$-invariant.
Letting $\prod \quadgens$ denote the set of
all such products $q_1 q_2 \cdots q_k$ from $\quadgens$,
we will show the following.

\begin{theorem}
\label{thm:tvbasis}
The monomials in $\tvbasis$ have their images
in $H^* \PeakConf{n}=(\TypeBCohomology{n})^{\Z_2^n}$ forming a $\kk$-basis.
\end{theorem}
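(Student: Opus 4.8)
The plan is to exploit Theorem~\ref{theorem:newpresentation}, which endows $\TypeBCohomology{n}$ with the explicit $\kk$-basis $\vbasis$ consisting of the squarefree monomials in the variables $u_i,v_{ij},w_{ij}$ that use at most one variable from each block $V_k$. By Table~\ref{table:generatoraction} (recalled at the start of this section), each generator $\tau_1,\dots,\tau_n$ of $\Z_2^n$ scales every one of these variables by $\pm1$; hence each monomial in $\vbasis$ is a simultaneous $\Z_2^n$-eigenvector, with eigenvalue in $\{\pm1\}$ for each $\tau_k$. Since $\fieldchar(\kk)\neq 2$ (as already assumed for the $\vbasis$-presentation), the standard argument for an abelian group acting diagonally on a basis shows that $(\TypeBCohomology{n})^{\Z_2^n}=H^*\PeakConf{n}$ has $\kk$-basis exactly the set of $m\in\vbasis$ fixed by every $\tau_k$. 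Thus the theorem reduces to the equality
\[
\{\,m\in\vbasis : \tau_k(m)=m \text{ for all } k\,\}\;=\;\vbasis\cap\prod\quadgens\;=\;\tvbasis .
\]

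One inclusion is free: each quadratic in $\quadgens$ was checked above to be $\Z_2^n$-invariant, so any product of such quadratics is invariant, whence every element of $\tvbasis$ is an invariant monomial in $\vbasis$. The reverse inclusion is the combinatorial heart of the proof. The key observation is that each variable is negated by exactly one $\tau_k$: $u_i$ only by $\tau_i$, $v_{ij}$ (for $i<j$) only by $\tau_j$, and $w_{ij}$ (for $i<j$) only by $\tau_i$. Writing $m\in\vbasis$ as a product of distinct variables and sorting those variables into $n$ groups according to which $\tau_k$ negates them, $m$ is $\Z_2^n$-invariant precisely when each group has even size. Now I would fix $k$ and examine the variables of $m$ in the $k$-th group. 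Since $m$ uses at most one variable from $V_k$, this group contains at most one ``special'' variable --- namely $u_k$ or some $v_{ik}$ with $i<k$ --- together with some variables $w_{kj}$ with $j>k$ (at most one for each $j$). Using that the group has even size, one pairs up its variables so that each pair is a quadratic in $\quadgens$ centered at $k$: if there is no special variable, pair the $w_{kj}$'s two at a time in increasing order of $j$, producing relations $w_{kj}w_{k\ell}$ with $k<j<\ell$; if the special variable is present, the number of $w$'s is odd, so pair the special variable with the smallest $w_{kj_0}$, obtaining $u_kw_{kj_0}$ or $v_{ik}w_{kj_0}$ (both in $\quadgens$ since $i<k<j_0$), and then pair the remaining $w$'s as before. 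Running this over all $k$ consumes each variable of $m$ exactly once, exhibiting $m$ as a product of elements of $\quadgens$, so $m\in\vbasis\cap\prod\quadgens=\tvbasis$.

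Combining the two inclusions identifies $\tvbasis$ with the set of $\Z_2^n$-invariant monomials of the eigenbasis $\vbasis$, which by the first paragraph is a $\kk$-basis of $(\TypeBCohomology{n})^{\Z_2^n}=H^*\PeakConf{n}$, proving the theorem. The one step needing genuine care is the pairing: one must verify that the three families making up $\quadgens$, with their precise index constraints, are exactly rich enough to pair the variables at every vertex. It is natural to isolate this as a stand-alone combinatorial pairing lemma, both because of the case analysis it involves and because the resulting bijection between $\Z_2^n$-invariant standard monomials and products of quadratics from $\quadgens$ is reused later in the proof of Theorem~\ref{decomposition-of-peak-idempotent-reps-theorem}.
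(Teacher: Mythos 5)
Your argument is correct, and it takes a genuinely different route from the paper. You diagonalize the $\Z_2^n$-action on the standard monomial basis $\vbasis$, observe that the fixed space therefore has as basis exactly the invariant monomials, and then prove by a local parity-and-pairing argument (at each index $k$, grouping the variables of $m$ by the unique $\tau_k$ that negates them) that the invariant monomials of $\vbasis$ are precisely $\tvbasis$; your case analysis of the possible group contents (at most one of $u_k$ or $v_{ik}$ from $V_k$, plus some $w_{kj}$'s with $j>k$) is sound and every pair you form does lie in $\quadgens$. The paper argues differently: linear independence of $\tvbasis$ is immediate, the dimension of $H^*\PeakConf{n}$ is $n!$ by Theorem~\ref{peak-idempotents-give-cohomology-reps}(i), and the inequality $|\tvbasis|\geq n!$ is supplied by the Pairing Lemma~\ref{lem:pairing-bijection}, a recursive injection $\phi:\typeabasis\hookrightarrow\tvbasis$ satisfying $\gamma\circ\phi=1_{\typeabasis}$. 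Your proof is more self-contained — it needs neither the dimension count from Theorem~\ref{peak-idempotents-give-cohomology-reps}(i) nor the type-$A$ basis $\typeabasis$ — and it yields as a byproduct the clean characterization of $\tvbasis$ as exactly the $\Z_2^n$-invariant standard monomials, which the paper obtains only implicitly. What the paper's route buys is the explicit section $\phi$ of the map $\gamma$, which is reused at a crucial step later (the isomorphism $\gr(\TypeBCohomology{n})^{\Z_2^n}\cong\TypeACohomology{n}$ of Corollary~\ref{Sn-equivariant-surjection-and-iso}, feeding into Theorem~\ref{decomposition-of-peak-idempotent-reps-theorem}); your pairing lemma by itself does not produce that compatibility of $\tvbasis$ with $\gamma$, so if one adopted your proof here, one would still need an argument like the paper's Lemma~\ref{lem:pairing-bijection} (or a direct proof that $\gamma$ restricts to a bijection $\tvbasis\to\typeabasis$) for the later results.
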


Assuming Thorem \ref{thm:tvbasis} for the moment, we can easily prove  Theorem~\ref{peak-idempotents-give-cohomology-reps}(iii).
\begin{proof}[Proof of Theorem~\ref{peak-idempotents-give-cohomology-reps}(iii),
assuming Theorem~\ref{thm:tvbasis}]
Since $u_i, v_{ij}, w_{ij}$ lie in degree $1$ of $\TypeBCohomology{n}$, they lie in $H^2 \TypeBConf{n}$,
and the quadratics $\quadgens$ lie in 
$H^4 \PeakConf{n} \cong \left( H^4\TypeBConf{n} \right)^{\Z_2^n}$.
Theorem~\ref{thm:tvbasis} then forces
$H^i \PeakConf{n}=0$ unless $i \equiv 0 \bmod 4$.  

For the assertion on when $\PeakIdempotent{n}{k}$ vanishes,
replace $k$ by $n-k$ in  
Theorem~\ref{peak-idempotents-give-cohomology-reps}(ii) to give
$$
H^{2(n-k)} \PeakConf{n} \cong \left( \kk \symm{n} \right) \PeakIdempotent{n}{k}.
$$
This implies $\PeakIdempotent{n}{k}=0$ unless $2(n-k) \equiv 0 \bmod{4}$, that is,
unless $n \equiv k \bmod{2}$.
\end{proof}

We will now prove Theorem~\ref{thm:tvbasis}, which takes a bit more work.
\begin{proof}[Proof of Theorem~\ref{thm:tvbasis}]

Note the images of $\tvbasis$
are $\kk$-linearly independent in $H^* \PeakConf{n}=(\TypeBCohomology{n})^{\Z_2^n}$, being a subset of the $\kk$-basis $\vbasis$ for
the larger space $\TypeBCohomology{n}$. As Theorem~\ref{peak-idempotents-give-cohomology-reps}(i) implies 
$\dim_{\kk}  H^*(\PeakConf{n})=n!$, 
one concludes 
$$
| \tvbasis | \leq n!.
$$
By dimension-counting, it suffices to
prove the opposite inequality 
$
|\tvbasis| \geq n!,
$
which would then necessarily be an equality,
and also imply Theorem~\ref{thm:tvbasis}.
We deduce $|\tvbasis|\geq n!$ from something with more payoff in Section~\ref{sec:filtrations}:  
the Pairing Lemma~\ref{lem:pairing-bijection} below, giving an injection $\phi: \typeabasis \hookrightarrow \tvbasis$, where $\typeabasis$
was the $\kk$-basis of
cardinality $n!$ for $\TypeACohomology{n}=  \VG(\A_{\symm{n}}) \cong \kk [t_{ij}]/\J_{\symm{n}}$
from Theorem~\ref{thm:AB_VGpresentation}.
That is, $\typeabasis$ consists of squarefree products
of at most variable from each set $T_1,\ldots,T_{n-1}$ here:
$$
\underbrace{\{ t_{12} \}}_{T_1} 
  \prec  \underbrace{\{ t_{13}, \ t_{23}  \}}_{T_2} 
  \prec \underbrace{\{ t_{14}, \ t_{24} , \  t_{34} \}}_{T_3}
  \prec \cdots 
  \prec \underbrace{\{  t_{1n}, \ t_{2n}, \ \cdots, t_{(n-1)n} \}}_{T_{n-1}}
$$
This injection $\phi$ will also be well-behaved with respect to this map of $\kk$-algebras (and $\kk \symm{n}$-modules):
\begin{equation}
\label{def:gamma-surjection}
\begin{array}{rcl}
\kk[u_i,v_{ij},w_{ij}] &\overset{\gamma}{\longrightarrow}& \kk[t_{ij}]\\
u_i &\longmapsto&1,\\
v_{ij} & \longmapsto & t_{ij}\\
w_{ij} & \longmapsto & t_{ij}.
\end{array}
\end{equation}

\begin{lemma}
(Pairing Lemma) 
\label{lem:pairing-bijection}
There exists an injection (and hence bijection) 
    $$
    \phi: \typeabasis \hookrightarrow \tvbasis
    $$
    with the property that $\gamma(\phi(m))=m$ for all $m$ in $\typeabasis$, that is, $\gamma \circ \phi = 1_{\typeabasis}$.
\end{lemma}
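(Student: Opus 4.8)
The plan is to read an $\typeabasis$-monomial as a forest and to build $\phi(m)$ by cutting that forest into small pieces, each spelling out a generator in $\quadgens$. Write $m\in\typeabasis$ uniquely as $m=\prod_{j\in J}t_{f(j),j}$ with $J\subseteq\{2,\dots,n\}$ and $f(j)<j$; this is possible because $m$ uses at most one variable from each $T_r$, so the larger indices of its factors are distinct. View the pairs $\{f(j),j\}$ as the edges of a forest $F$ on $[n]$: every edge joins a vertex to a strictly smaller one, and each $j\in J$ is the larger endpoint of a unique edge, so $F$ is a disjoint union of trees rooted at the vertices \emph{not} in $J$, with $f(j)$ the parent of $j$. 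Since $\gamma$ sends each $v_{ij},w_{ij}$ to $t_{ij}$ and each $u_i$ to $1$, while $v_{ij},w_{ij}\in V_j$, any product of $\quadgens$-generators that happens to lie in $\vbasis$ automatically has $\gamma$-image a squarefree product using at most one variable from each $T_r$, hence an element of $\typeabasis$; so it only remains to make that image equal $m$ on the nose.

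The three generator families of $\quadgens$ match, compatibly with $\gamma$, three edge-configurations in $F$: $u_iw_{ij}$ ($i<j$) has $\gamma$-image the single edge $t_{ij}$; $w_{ij}w_{ik}$ ($i<j<k$) has $\gamma$-image the ``cherry'' $t_{ij}t_{ik}$ of two sibling edges; and $v_{ij}w_{jk}$ ($i<j<k$) has $\gamma$-image the descending $2$-path $t_{ij}t_{jk}$. Moreover an edge $\{i,j\}$ of $F$ contributes exactly one variable, lying in slot $V_j$, whereas a single-edge piece on $\{i,j\}$ contributes an extra $u_i$ in slot $V_i$, which is legal in $\vbasis$ precisely when $i$ is a root of $F$ (otherwise $V_i$ already holds the variable of $i$'s parent-edge). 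So it suffices to partition the edge set of each tree of $F$ into single edges whose bottom vertex is a root, sibling cherries, and parent-with-one-child paths; taking $\phi(m)$ to be the product of the corresponding generators then yields $\gamma(\phi(m))=m$ at once, $\phi(m)\in\tvbasis$ from the slot count, and injectivity of $\phi$ from $\gamma\circ\phi=\mathrm{id}_{\typeabasis}$ (whence bijectivity, since $|\typeabasis|=n!\ge|\tvbasis|$).

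I would obtain the partition by a post-order sweep of $F$. At a vertex $v$, once all child-subtrees are processed, each such subtree is either \emph{balanced} (all edges already placed) or \emph{unbalanced} with exactly one child-edge of its root still unplaced; for each unbalanced child $c$ I place the parent-edge $\{v,c\}$ together with that unplaced edge $\{c,g\}$ into a single path piece $v_{vc}w_{cg}$ (valid since $v<c<g$), then pair off the edges $\{v,c\}$ to the balanced children into cherries, and if one such edge remains I make it a single-edge piece $u_vw_{vc}$ when $v$ is a root, and otherwise keep it as the lone unplaced edge of $T_v$, to be resolved when $v$'s parent is processed. \textbf{The crux is that this sweep never stalls and leaves nothing unplaced}: a parity mismatch at a non-root vertex is always absorbed by promoting its parent-edge to the upper edge of a path, while at a root the single-edge move clears the odd one --- which is exactly why precisely these three generator types are what $\quadgens$ must contain. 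The remaining points are routine: every triple occurring in a piece satisfies $i<j<k$ since edges of $F$ descend, distinct edges of $F$ have distinct larger endpoints, and the extra $u$-variables live only in root-slots, so no slot $V_r$ is used twice and $\phi(m)\in\vbasis\cap\prod\quadgens$.
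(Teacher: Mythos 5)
Your construction is correct, and it arrives at the same kind of object as the paper — a tiling of the nbc-forest of $m$ by the three patterns matching $\quadgens$ (a pendant edge at a root giving $u_iw_{ij}$, a sibling cherry giving $w_{ij}w_{ik}$, a descending two-step path giving $v_{ij}w_{jk}$) — but you build and certify the tiling by a genuinely different route. The paper defines $\phi$ greedily: it repeatedly extracts the $<$-largest variable $t_{i_0j_0}$, chooses one of three cases from what remains, and then must verify membership in $\vbasis$ after the fact by a contradiction argument that traces back through the factorization history to rule out a clash between a $u_{i}$ introduced in its third case and the variable of $i$'s parent edge. You instead make the rooted-forest structure explicit (each $j$ has a unique smaller parent $f(j)$, so the edges form a forest whose roots are the vertices with no parent), run a post-order sweep maintaining the parity invariant that each processed subtree is either balanced or has exactly one unplaced edge incident to its root, and attach $u$-variables only at roots by design; membership in $\vbasis$ is then immediate from the slot count (each edge's variable lies in the $V$-slot of its child, distinct edges have distinct children, and the $u$'s occupy otherwise-unused root slots, at most one per root). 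This yields a shorter, structurally transparent correctness proof, while the paper's greedy order gives in exchange an explicit closed-form recursion matching its worked examples. One small point: your final ``hence bijection'' step quietly uses $|\tvbasis|\le n!$, which is not internal to the lemma but comes from the surrounding argument (linear independence of $\tvbasis$ inside the $n!$-dimensional space $H^*\PeakConf{n}$), exactly as in the paper — it would be worth saying so explicitly.
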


\begin{example}\rm
\label{ex:pairing-lemma-examples-small-n}
Here is the bijection 
$\phi: \typeabasis \hookrightarrow \tvbasis$ for $n=2,3,4$. 
For $n=2$, little happens as $\phi(1)=1$.
For $n=3, 4$,
 here are $(m,\phi(m))$, segregated in boxes by $(\deg(m),\deg(\phi(m))$:
\begin{center}
$n=3$:\,\,
\begin{tabular}{|c|c|}\hline
    $m$ & $\phi(m)$ \\\hline\hline
    $1$ & $1$\\\hline
    $t_{12}$& $u_1 w_{12}$\\
    $t_{13}$& $u_1 w_{13}$\\
    $t_{23}$& $u_2 w_{23}$\\\hline
    $t_{12}t_{13}$& $w_{12} w_{13}$\\
    $t_{12}t_{23}$& $v_{12} w_{23}$\\\hline
\end{tabular}
\qquad \qquad
$n=4$:\,\,
\begin{tabular}{|c|c||c|c|}\hline
    $m$ & $\phi(m)$ &$m$ & $\phi(m)$  \\\hline\hline
    $1$ & $1$ & & \\\hline
    $t_{12}$ & $u_1 w_{12}$ & $t_{12}t_{13}$ &$w_{12}w_{13}$\\
    $t_{13}$ & $u_1 w_{13}$ &$t_{13}t_{14}$ & $w_{13}w_{14}$\\
    $t_{14}$ & $u_1 w_{14}$ &$t_{12}t_{14}$ & $w_{12}w_{14}$\\
    $t_{23}$ & $u_2 w_{23}$ &$t_{23}t_{24}$ & $w_{23}w_{24}$\\
    $t_{24}$ & $u_2 w_{24}$ &$t_{12}t_{23}$ & $v_{12}w_{23}$\\ 
    $t_{34}$ & $u_3 w_{34}$ &$t_{13}t_{14}$ & $v_{13}w_{34}$\\ 
     & &$t_{23}t_{34}$ & $v_{23}w_{34}$\\
     & &$t_{12}t_{24}$ & $v_{12}w_{24}$\\\hline
     $t_{12} t_{34}$ &  $(u_1 w_{12})(u_3 w_{34})$ & $t_{12} t_{13} t_{34}$ & $(u_1 w_{12})(w_{13}w_{34})$\\ 
     $t_{13} t_{24}$ &  $(u_1 w_{13})(u_2 w_{24})$ & $t_{12}t_{23} t_{34}$ & $(u_1 w_{12})(w_{23}w_{24})$\\ 
     $t_{23}t_{14}$ &  $(u_1 w_{14})(u_2 w_{23})$ & $t_{12}t_{13} t_{34}$ & $(u_1 w_{12})(v_{13}w_{34})$\\ 
      &  & $t_{12}t_{23}t_{34}$ & $(u_1 w_{12})(v_{23}w_{34})$\\ 
       &  & $t_{12}t_{13} t_{24}$ & $(u_1 w_{13})(v_{12}w_{24})$\\ 
        &  & $t_{12}t_{23} t_{14}$ & $(u_1 w_{14})(v_{12}w_{23})$\\ 
\hline
\end{tabular}
\end{center}
\end{example}

\begin{proof}[Proof of Pairing Lemma~\ref{lem:pairing-bijection}].
We recursively define for each monomial $m$ in $\typeabasis$ a
particular factorization $\phi(m)$ as a product of
quadratics in the set $\quadgens$
from \eqref{Z-generators}
$$
\quadgens:= \{ u_{i} w_{ij} \}_{1 \leq i < j \leq n}
\quad \sqcup \quad 
\{ w_{ij}w_{ik} \}_{1 \leq i < j < k \leq n}
\quad \sqcup \quad
\{ v_{ij}w_{jk} \}_{1 \leq i < j < k \leq n}
$$
so that the product also lies in $\vbasis$, that is, $\phi(m)$ lies in $\tvbasis$.
By construction, it will have $\gamma(\phi(m))=m$.

To define $\phi(m)$, induct on $\deg(m)$, with base case $m=1$ having $\phi(1):=1$.  In the inductive step,
define 
$$
t_{i_0,j_0}:=\max_{<}\{ t_{ij} \text{ divides }m\}
$$
where $<$ is this total order\footnote{{\bf Warning}: the total order $<$ is {\it not} a linear extension of the partial order $\prec$ from Theorem~\ref{thm:AB_VGpresentation}(ii). } on the variables
$\{t_{ij}: 1 \leq i < j \leq n\}:$
\begin{align*}
t_{12} < t_{13} < t_{14} < \cdots < t_{1,n-1} &< t_{1,n}\\ 
<t_{23} < t_{24} < \cdots < t_{2,n-1} &< t_{2,n}\\
\vdots & \\
<t_{n-2,n-1}&< t_{n-2,n}\\
&< t_{n-1,n}. 
\end{align*}
Now recursively define
the factorization $\phi(m)$ in three cases, based on
whether these sets are empty or not:
\begin{align}
\label{W-set-definition}
T_{i_0,*}&:=\{ t_{i_0,k} \text{ dividing }m/t_{i_0,j_0}\},\\
\label{V-set-definition}
T_{*,i_0}&:=\{ t_{h,i_0}  \text{ dividing }m/t_{i_0,j_0}\}.
\end{align}

\vskip.1in
\noindent
{\sf Case 1.} $T_{i_0,*} \neq \varnothing$.  In this case,
define $t_{i_0 ,k_0}:=\max_< (T_{i_0,*})$, and then define $\hat{m}$ and $\phi(m)$ by
\begin{align*}
m&= \hat{m} \cdot t_{i_0 ,k_0} t_{i_0,j_0},\\
\phi(m)&:= \phi(\hat{m}) \cdot 
w_{i_0 ,k_0} w_{i_0,j_0},
\end{align*}
where we note that $w_{i_0 ,k_0} w_{i_0,j_0}$ lies in $\quadgens$, since $k_0 < j_0$ from \eqref{W-set-definition} and the definition of $<$.

\vskip.1in
\noindent
{\sf Case 2.} $T_{i_0,*} = \varnothing$,
but $T_{*,i_0} \neq \varnothing$.  
In this case, define $t_{h_0,i_0}:=\max_< T_{*,i_0}$,
and then define $\hat{m}$ and $\phi(m)$ by
\begin{align*}
m&= \hat{m} \cdot t_{h_0,i_0} t_{i_0,j_0},\\
\phi(m)&:= \phi(\hat{m}) \cdot 
v_{h_0 ,i_0} w_{i_0,j_0},
\end{align*}
where we note that $v_{h_0 ,i_0} w_{i_0,j_0}$ lies in $\quadgens$, since $h_0 < i_0$ from \eqref{V-set-definition} and the definition of $<$.

\vskip.1in
\noindent
{\sf Case 3.} Both $T_{i_0,*} = \varnothing=T_{*,i_0}$.
In this case, define $\hat{m}$ and $\phi(m)$ by
\begin{align*}
m&= \hat{m} \cdot t_{i_0,j},\\
\phi(m)&:= \phi(\hat{m}) \cdot 
u_{i_0} w_{i_0,j}.
\end{align*}
where we note that $u_{i_0} w_{i_0,j_0}$ lies in $\quadgens$.

\begin{example} \rm
Taking $n=10$, we illustrate
the computation of $\phi(m)$ for this monomial $m$ in $\typeabasis$:
\begin{align*}
m &:=t_{12} t_{24} t_{35}t_{16} t_{5,10} t_{78} t_{89} t_{7,10} & \\
& &\\
\phi(m)
&=\phi(t_{12} t_{24} t_{35}t_{16} t_{5,10} t_{78} t_{89} t_{7,10})&
\\
&=\phi(t_{12} t_{24} t_{35}t_{16} t_{5,10} t_{7,10}) \cdot v_{78} w_{89}
&[t_{i_0,j_0}=t_{89},\,\,\text{\sf Case 2}]\\
&=\phi(t_{12} t_{24} t_{35}t_{16}) \cdot w_{5,10} w_{7,10} \cdot v_{78} w_{89}
&[t_{i_0,j_0}=t_{7,10},\,\,\text{\sf Case 1}]\\
&=\phi(t_{12} t_{24} t_{16})\cdot u_3 w_{35} \cdot w_{5,10} w_{7,10} \cdot v_{78} w_{89}
&[t_{i_0,j_0}=t_{35},\,\,\text{\sf Case 3}]\\
&=\phi(t_{16})\cdot v_{12} w_{24} \cdot u_3 w_{35}  \cdot w_{5,10} w_{7,10} \cdot v_{78} w_{89}
&[t_{i_0,j_0}=t_{24},\,\,\text{\sf Case 2}]\\
&=u_1 w_{16}\cdot v_{12} w_{24} \cdot u_3 w_{35}  \cdot w_{5,10} w_{7,10} \cdot v_{78} w_{89}
&[t_{i_0,j_0}=t_{16},\,\,\text{\sf Case 3}]\\
\end{align*}
\end{example}

In each {\sf Case 1,2,3}, induction shows $\gamma(\phi(m))=m$.  Thus $\phi$ is injective.  The construction also shows that $\phi(m)$ lies in $\prod \quadgens$.  It only remains to show $\phi(m)$ lies in $\vbasis$, and hence in $\tvbasis$, as claimed.  That is, we must show that
for each $k=1,2,\ldots,n$, the monomial $\phi(m)$
is not divisible by any pair from this set:
$$
V_k:=\{u_k, \,\, v_{1k},v_{2k}, \ldots,v_{k-1,k},
\,\, w_{1k},w_{2k}, \ldots,w_{k-1,k}\}.
$$
Since $\gamma(\phi(m))=m$, if a pair from $V_k$ of any of the forms $v_{ik} v_{jk}$ or $v_{ik} w_{jk}$
or $w_{ik} w_{jk}$ with $1 \leq i,j \leq k-1$
divides $\phi(m)$, then $t_{ik} t_{jk}$
from the set $T_k=\{t_{1k}, t_{2k}, \ldots t_{k-1,k}\}$ would divide $m$, contradicting $m \in \typeabasis$.

It thus remains to reason a contradiction
if any pairs from $V_k$ of the form $u_i w_{ij}$ or $u_i v_{ik}$ divide $\phi(m)$. 

If $u_i w_{ij}$ or $v_i w_{ij}$ divides $\phi(m)$, then at some
point in the factorization process, $u_i$ was introduced during an instance of {\sf Case 3}.  This means there was a monomial $m'$ dividing $m$, and the  $<$-maximum variable $t_{i_0,j_0}$ dividing $m'$ had the form $t_{k,j_0}$, with
$T_{k,*}=T_{*,k}=\varnothing$ for $m'$.  Note that
$t_{ik}$ divides $m$, since either $v_{ik}$ or $w_{ik}$ divides $\phi(m)$.  However, since $m'$ had  $T_{*,k}=\varnothing$, it must be that
$t_{ik}$ had been factored out of some $m''$ earlier than $m'$.
Since $t_{k,j_0} > t_{ik}$, and $t_{k,j_0}$ divides $m'$ and hence also $m''$, it cannot be that $t_{ik}$ was
factored out as the $<$-maximum variable in $m''$.
Rather $t_{ik}$ it must have been the $<$-smaller element factored out in a {\sf Case 1} or {\sf Case 2} pair, depending upon whether we assumed $u_i w_{ij}$ or $u_i v_{ij}$
divided $\phi(m)$.

If it was a {\sf Case 1} pair, say  
$t_{ik} t_{ip}$ with $p >k$, this would mean
$t_{ip}$ was the $<$-maximum variable dividing $m''$.  But this contradicts the fact that $t_{k,j_0} > t_{ik}$ and $t_{k,j_0}$ also divides $m''$.

If it was a {\sf Case 2} pair, say  
$t_{ik} t_{kp}$ with $p >k$, then $m''$ being in {\sf Case 2} 
implies $T_{k,*}=\varnothing$, contradicting
the fact that $t_{k,j_0}$ divides $m''$.
\end{proof}

Having proven Lemma~\ref{lem:pairing-bijection}, this completes the proof of
Theorem~\ref{thm:tvbasis} and 
the proof of Theorem~\ref{peak-idempotents-give-cohomology-reps}(iii).
\end{proof}

\section{Filtration, bigrading, and further decompositions}
\label{sec:filtration-and-gradings}

The goal in this section is to prove several $\kk$-vector space and $\kk \symmB{n}$-module decompositions on $\TypeBCohomology{n}=H^* \TypeBConf{n}$ that refine its cohomological grading.
Some of these arise from a natural filtration and associated graded ring with a bigrading,
derived from the two $\symmB{n}$-orbits of hyperplanes in $$
\A_{\symmB{n}}=\{x_i=0\}_{i=1,2,\ldots,n} \sqcup \{x_i=\pm x_j\}_{1 \leq i<j\leq n}.
$$
The inherited decompositions and bigrading on the $\Z_2^n$-fixed space $H^* \PeakConf{n}=(\TypeBCohomology{n})^{\Z_2^n}$ turn out to be crucial for analyzing its $\kk\symm{n}$-module structure, and for the proof of Theorem~\ref{decomposition-of-peak-idempotent-reps-theorem} in Section~\ref{sec:proofofpeakreps}.

\subsection{A filtration of $\TypeBCohomology{n}$} 
\label{sec:filtrations}

Using the $\Z_2^n$-diagonalized  $\vbasis$-presentation
$$
\TypeBCohomology{n} := H^* \TypeBConf{n} \cong 
\kk [v_{ij},w_{ij}, u_{i} ] /\I,
$$
it will be useful to consider a certain  filtration on $\TypeBCohomology{n}$. As noted in \eqref{signed-perms-on-u-variables}, a
signed permutation $\sigma$ has $\sigma(u_i)=\pm u_{\hat{\sigma}(i)}$.
Consequently, the ideal of $\TypeBCohomology{n}$
$$
\uu:=(u_1,u_2,\ldots,u_n)
$$
generated by the images of $\{u_1,\ldots,u_n\}$ is $\symmB{n}$-stable.
One then has a $\symmB{n}$-stable
{\it $\uu$-adic filtration}
$$
\TypeBCohomology{n}=\uu^0 \supseteq \uu^1 \supseteq \uu^2 \supseteq \cdots \supseteq \uu^n \supseteq \uu^{n+1}=0,
$$
and its {\it associated graded ring}
$$
\gr(\TypeBCohomology{n}):=
\TypeBCohomology{n}/\uu
\,\, \oplus \,\, \uu/\uu^2 
\,\,\oplus \,\, \uu^2/\uu^3  
\,\, \oplus \,\, \cdots
=\bigoplus_{i=0}^n \uu^i/\uu^{i+1}.
$$

\begin{prop}
\label{prop:ufiltration}
The ring $\gr(\TypeBCohomology{n})$ has 
a quadratic Gr\"obner basis presentation
$
\kk [v_{ij},w_{ij}, u_{i} ] / \I_\gr
$
for any monomial ordering $\prec$ as in Theorem~\ref{theorem:newpresentation} that respects
the partial order as in \eqref{uvw-variable-partial-order}, with generators as in Table~\ref{table:Igrrelations} below. Their
$\prec$-initial terms are underlined, and coincide with the $\prec$-initial terms for corresponding generator of $\I$ shown in the same row.  Consequently,
$\gr(\TypeBCohomology{n})$ has a standard monomial $\kk$-basis which is the image of the squarefree monomials $\vbasis$.
\end{prop}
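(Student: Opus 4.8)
The plan is to realize $\gr(\TypeBCohomology{n})$ as the quotient of the polynomial ring $S:=\kk[v_{ij},w_{ij},u_i]$ by the \emph{ideal of $\uu$-initial forms} of $\I$, and then to pin that ideal down by a dimension count. For nonzero $g\in S$, write $g=g^{*}+g'$, where $g^{*}$ is the homogeneous component of $g$ of lowest degree in the variables $u_1,\dots,u_n$ (the ``$u$-degree'') and $g'$ collects the strictly higher $u$-degree terms; set $\I^{\sharp}:=(g^{*}:g\in\I)$. There is a natural surjection of bigraded $\kk$-algebras $\pi\colon S\twoheadrightarrow\gr(\TypeBCohomology{n})$ sending $u_i$ to its class in $\uu/\uu^{2}$ and $v_{ij},w_{ij}$ to their classes in $\TypeBCohomology{n}/\uu$. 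A routine check — the same one showing $\gr$ of a quotient by a filtration-compatible ideal is the quotient by the initial-form ideal — gives $\ker\pi=\I^{\sharp}$: if $g\in\I$ then $\overline{g^{*}}=-\overline{g'}\in\uu^{d+1}$ (where $d$ is the $u$-degree of $g^{*}$), so $\pi(g^{*})=0$; conversely, if $f\in\ker\pi$ is $u$-homogeneous of degree $d$, then $\overline f\in\uu^{d+1}$, so $\overline f=\overline h$ for some $h\in(u_1,\dots,u_n)^{d+1}$, whence $f-h\in\I$ and $f=(f-h)^{*}\in\I^{\sharp}$. Thus $\gr(\TypeBCohomology{n})\cong S/\I^{\sharp}$.

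Next I would check that the relations in Table~\ref{table:Igrrelations} are precisely the initial forms $g^{*}$ of the generators $g\in\GGG$ of $\I$ from Table~\ref{table:Irelations}, so that $\I_\gr:=(g^{*}:g\in\GGG)\subseteq\I^{\sharp}$. The key observation, verified row by row, is that for each $g\in\GGG$ the $\prec$-leading monomial of $g$ already occurs in $g^{*}$: for the $u$-homogeneous relations ($u_i^{2}$, $v_{ij}w_{ij}$, $u_iw_{ij}-u_jv_{ij}$, and the four relations among the $v$'s and $w$'s) one has $g^{*}=g$; for $v_{ij}^{2}-2u_iw_{ij}$ and $w_{ij}^{2}+2u_iw_{ij}$ the leading terms $v_{ij}^{2}$, $w_{ij}^{2}$ lie in the $u$-degree-$0$ part $g^{*}$; and for $u_iv_{ij}-2u_iu_j-u_jw_{ij}$ the leading term $u_jw_{ij}$ lies in the $u$-degree-$1$ part $u_iv_{ij}-u_jw_{ij}=g^{*}$. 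Consequently $\init_\prec(g^{*})=\init_\prec(g)$ for every $g\in\GGG$, so the underlined terms in Table~\ref{table:Igrrelations} coincide with those in Table~\ref{table:Irelations}, and the set of $\prec$-standard monomials for the generating set $\GGG^{*}:=\{g^{*}:g\in\GGG\}$ is literally the same as for $\GGG$ — which by Theorem~\ref{theorem:newpresentation} is $\vbasis$, of cardinality $|\symmB{n}|=2^{n}n!$.

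Finally I would close with dimension counting along the surjections $S/\I_\gr\twoheadrightarrow S/\I^{\sharp}\cong\gr(\TypeBCohomology{n})$. The right-hand side has $\kk$-dimension equal to $\dim_\kk\TypeBCohomology{n}=|\vbasis|=2^{n}n!$, since the associated graded of a finite-dimensional filtered vector space preserves dimension (the filtration is finite because $\uu^{n+1}=0$, using commutativity together with the relations $u_i^{2}=0$). The left-hand side is spanned by its $\prec$-standard monomials, i.e.\ by the images of $\vbasis$, hence has dimension at most $2^{n}n!$. So both maps are isomorphisms, the images of $\vbasis$ form a $\kk$-basis of $S/\I_\gr\cong\gr(\TypeBCohomology{n})$, and — these standard monomials forming a basis — the $\prec$-leading-term ideal of $\I_\gr$ is generated by $\{\init_\prec(g^{*})\}$, i.e.\ $\GGG^{*}$ is a quadratic Gröbner basis for $\I_\gr=\I^{\sharp}$ with respect to $\prec$. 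The only genuinely non-formal point — the main obstacle — is that the full initial-form ideal $\I^{\sharp}$ need not a priori be generated by the initial forms of a chosen generating set of $\I$; this is exactly where the hypothesis that $\GGG$ is a $\prec$-Gröbner basis of $\I$ enters, through the tightness of the dimension count. (Throughout we use $2\in\kk^{\times}$, so that the $\vbasis$-presentation of Theorem~\ref{theorem:newpresentation} is available.)
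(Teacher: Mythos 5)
Your argument is correct and takes essentially the same route as the paper's proof: both define the graded surjection $\kk[u_i,v_{ij},w_{ij}]\to\gr(\TypeBCohomology{n})$, observe that each relation in Table~\ref{table:Igrrelations} is the lowest $u$-degree part of its counterpart in Table~\ref{table:Irelations} (hence maps to zero in the appropriate $\uu^j/\uu^{j+1}$), note the $\prec$-initial terms coincide so the standard monomials are again $\vbasis$, and close with the dimension count $\dim_\kk\gr(\TypeBCohomology{n})=2^n n!=|\vbasis|$ to get the isomorphism and the Gr\"obner basis property. Your extra identification of the full kernel with the initial-form ideal $\I^{\sharp}$ is a correct formalization that the paper leaves implicit, but it does not change the substance of the argument.
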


\begin{center}
\begin{table}[!h]
\centering
\setlength{\tabcolsep}{10pt} 
\renewcommand{\arraystretch}{1.5} 
\begin{tabular}{|l|l|}  \hline
\rm
\rm Generator of $\I$ &  Corresponding generator of $\I_\gr$ \\ \hline\hline 
    $\underline{u_{i}^2}  $ &    $\underline{u_{i}^2}  $\\
  $- 2u_{i} w_{ij}+\underline{v_{ij}^2}$ &  $\underline{v_{ij}^2}$\\
  $2 u_{i} w_{ij} + \underline{w_{ij}^{2}}$ &  $\underline{w_{ij}^{2}}$\\
   $\underline{v_{ij}w_{ij}}$ &  $\underline{v_{ij}w_{ij}}$\\
    $u_{i}w_{ij} -\underline{u_{j} v_{ij}}$ &  $u_{i}w_{ij} -\underline{u_{j} v_{ij}}$\\ 
 $u_{i} v_{ij} - 2 u_{i} u_{j} - \underline{u_{j} w_{ij}}$
 & $u_{i} v_{ij}  - \underline{u_{j} w_{ij}}$\\
  $v_{ij}w_{jk} - w_{ij}w_{ik} - \underline{v_{ik}v_{jk}}$
  & $v_{ij}w_{jk} - w_{ij}w_{ik} - \underline{v_{ik}v_{jk}}$\\
  $w_{ij}w_{jk} - v_{ij}w_{ik} - \underline{w_{ik}w_{jk}}$
  &   $w_{ij}w_{jk} - v_{ij}w_{ik} - \underline{w_{ik}w_{jk}}$\\
 $v_{ij}v_{jk} - v_{ij}v_{ik} - \underline{v_{ik}w_{jk}}$ 
 &  $v_{ij}v_{jk} - v_{ij}v_{ik} - \underline{v_{ik}w_{jk}}$ \\
  $w_{ij}v_{jk} - w_{ij}v_{ik} - \underline{w_{ik}v_{jk}}$ 
  &  $w_{ij}v_{jk} - w_{ij}v_{ik} - \underline{w_{ik}v_{jk}}$ \\
  & \\
 \hline 
\end{tabular}
\caption{Corresponding generators for $\I$ and $\I_\gr$,
with indices $i,j,k$ satisfying $i<j<k$.  Initial terms with respect to any monomial order $\prec$ extending \eqref{uvw-variable-partial-order} are underlined.}
\label{table:Igrrelations}
\end{table}
\end{center}

\begin{proof}
Consider the map of $\kk$-algebras
$
f: \kk [u_i, v_{ij},w_{ij} ] \longrightarrow \gr(\TypeBCohomology{n})
$
defined by sending $\{ v_{ij},w_{ij}\}$ to their images
in $\gr(\TypeBCohomology{n})_0=\TypeBCohomology{n}/\uu$
and sending $\{ u_i\}$ to their images in $\gr(\TypeBCohomology{n})_1=\uu/\uu^2$.  This map surjects since both
$\TypeBCohomology{n}$ and $\gr(\TypeBCohomology{n})$ are generated
as $\kk$-algebras by the images of the variables. 

We claim that $f$ descends to a (surjective) map of 
the quotient ring 
\begin{equation}
\label{Igr-descended-quotient-mapping}
\kk [v_{ij},w_{ij}, u_{i}] /\I_\gr \twoheadrightarrow \gr(\TypeBCohomology{n}),
\end{equation}
where $\I_\gr$ is the ideal generated by the quadratic polynomials $q$ in the right column of Table~\ref{table:Igrrelations}.
To see that each such $q$ from the table lies in $\ker(f)$,
note that in each case, there is an index $j \in \{0,1,2\}$ such that 
every monomial of $q$ has degree exactly $j$ in the variables $\{ u_i\}$, but its corresponding generator $\hat{q}$ of $\I$
in the left column has every monomial of $\hat{q}-q$ of degree $j+1$
in the variables $\{u_i\}$.  This implies that $f(q)=f(\hat{q})$ in $\gr(\TypeBCohomology{n})_j=\uu^j/\uu^{j+1}$. But
$f(\hat{q})$ vanishes in $\uu^j$ since $\hat{q}$ lies in $\I$ and hence
maps to $0$ already in $\TypeBCohomology{n}$.

To see that the quotient mapping \eqref{Igr-descended-quotient-mapping}
is an isomorphism, since 
$$
\dim_{\kk} \gr(\TypeBCohomology{n}) = 
\dim_{\kk} \TypeBCohomology{n} = |\V| \quad (=2^n \cdot n!),
$$
by dimension-counting, it suffices to check that the monomials in
$\vbasis$ give a $\kk$-spanning set for the quotient 
$\kk [v_{ij},w_{ij}, u_{i}]/\I_\gr$.  However, this is argued
just as in the proof of Theorem~\ref{theorem:newpresentation},
as the generators for $\I$ and $\I_\gr$ have the same $\prec$-initial
terms.  Similarly, this also shows the generators of $\I_\gr$ are
a Gr\"obner basis.
\end{proof}

\begin{remark} \rm
    Since the isomorphism $\B: \kk [v_{ij},w_{ij}, u_{i}]
    \rightarrow \kk[u_{ij}^+, u_{ij}^-, u_i]$ of Definition~\ref{def:basismap} (and the induced isomorphism $\tilde{\B}$
    in Theorem~\ref{theorem:newpresentation}) map $u_i \longmapsto u_i$, we could have defined the {\it same} $\uu$-adic filtration on $\TypeBCohomology{n}$
    using its $\ubasis$-presentation rather than its $\vbasis$-presentation.
\end{remark}

\begin{definition} \rm (Bigrading on $\gr(\TypeBCohomology{n})$) 
\label{def:bigrading-on-grY}
Note that the associated graded ring $\gr(\TypeBCohomology{n})$ is {\it bigraded} or {\it $\mathbb{N}^2$-graded}, since it inherits the cohomological grading from $\TypeBCohomology{n}=H^*(\TypeBConf{n})$, as
well as having its $\uu$-adic grading.  We define
$
\gr(\TypeBCohomology{n})_{k,\ell}
$
to be the intersection of $\gr(\TypeBCohomology{n})_k=\uu^k/\uu^{k+1}$ with the $k+\ell$ cohomological graded component,  
that is, 
$
\gr(\TypeBCohomology{n})_{k,\ell}
$ is the
$\kk$-span of the images of monomials in $\kk [v_{ij},w_{ij}, u_{i}]$ 
\begin{itemize} 
\item of total degree $k$, with
\item degree $\ell$ in the variables $\{v_{ij},w_{ij}\}$.
\end{itemize}
\end{definition}

Since the $\uu$-adic filtration is $\symmB{n}$-stable, one has a $\kk\symmB{n}$-module decomposition
\begin{equation}
\label{bigrading-on-gr(Y)}
\gr(\TypeBCohomology{n})=
\bigoplus_{k=0}^n \bigoplus_{\ell=0}^k \gr(\TypeBCohomology{n})_{k,\ell}.
\end{equation}

Although $\TypeBCohomology{n} \not\cong \gr(\TypeBCohomology{n})$ as {\it $\kk$-algebras}, when $\fieldchar(\kk) > n$ the semisimplicity 
of $\kk\symmB{n}$ implies that they are isomorphic as $\kk\symmB{n}$-modules, as discussed in Section~\ref{sec:semisimplicity}. Hence one has 
$\kk\symmB{n}$-module isomorphisms
\begin{align*}
H^* \TypeBConf{n}&=\TypeBCohomology{n} \cong \gr(\TypeBCohomology{n})=
\bigoplus_{k=0}^n \bigoplus_{\ell=0}^k \gr(\TypeBCohomology{n})_{k,\ell}\\
H^{2k}(\TypeBConf{n}) &=(\TypeBCohomology{n})_k \cong \gr(\TypeBCohomology{n})_{k,*}:=\bigoplus_{\ell=0}^k \gr(\TypeBCohomology{n})_{k,\ell}.
\end{align*}
Note that Proposition~\ref{prop:ufiltration} says that the images of the monomials in $\vbasis$ form a $\kk$-basis of $\gr(\TypeBCohomology{n})$, and each of these monomials is homogeneous with respect to the bigrading.
The description of $\vbasis$ as squarefree products of at most one variable from each of $V_1,\ldots,V_n$
leads immediately to this {\it bigraded Hilbert series}
\begin{align*}
\bihilb(\gr(\TypeBCohomology{n}), t,q)
&:=\sum_{k,\ell} t^k q^\ell \dim_{\kk} \gr(\TypeBCohomology{n})_{k,\ell}\\
&= (1+t)\,\,
(1+t(1+2q))\,\,
(1+t(1+4q))\,\,
(1+t(1+6q)) \cdots (1+t(1+2(n-1)q)). 
\end{align*}

Moreover, one can take $(\Z_2^n)$-fixed spaces in
the bigrading and $\kk\symmB{n}$-module decomposition \eqref{bigrading-on-gr(Y)}.
As $(\Z_2)^n$ is normal in $\symmB{n}$, with quotient $\symm{n}=\symmB{n}/\Z_2^n$, one has an induced bigrading and $\kk\symm{n}$-module decomposition 
\begin{equation}
\label{bigrading-on-gr(Z)}
 \gr(\TypeBCohomology{n})^{\Z_2^n} = 
  \bigoplus_{k=0}^n \gr(\TypeBCohomology{n})_k^{\Z_2^n}
  = \bigoplus_{k=0}^n \bigoplus_{\ell=0}^k \gr(\TypeBCohomology{n})^{\Z_2^n}_{k,\ell}.
\end{equation}

\begin{remark} \rm
As noted in Section~\ref{sec:semisimplicity}, when $\fieldchar(\kk)>2$ so that $\kk \Z_2^n$ is semisimple, one can 
consider either 
the $\Z_2^n$-fixed subspace $\gr(\TypeBCohomology{n})^{\Z_2^n} \subset \gr(\TypeBCohomology{n})$, or
the associated grading ring for the $\uu$-adic
filtration on $(\TypeBCohomology{n})^{\Z_2^n}$:
$$
\gr( (\TypeBCohomology{n})^{\Z_2^n}):=
\bigoplus_{k=0}^n \,\,
\uu^k \cap (\TypeBCohomology{n})^{\Z_2^n} \, / \, \uu^{k+1} \cap (\TypeBCohomology{n})^{\Z_2^n}.
$$
One has a $\kk\symm{n}$-module isomorphism
$
\gr(\TypeBCohomology{n})^{\Z_2^n} 
\cong
\gr( (\TypeBCohomology{n})^{\Z_2^n})
$
between the two.

Furthermore, as long as $\fieldchar(\kk)>n$, one has these accompanying $\kk \symm{n}$-module isomorphisms

\begin{align*}
H^* \PeakConf{n} =(H^* \TypeBConf{n})^{\Z_2^n}
&=(\TypeBCohomology{n})^{\Z_2^n} 
\cong \gr(\TypeBCohomology{n})^{\Z_2^n}
=\bigoplus_{k=0}^n \bigoplus_{\ell=0}^k \gr(\TypeBCohomology{n})^{\Z_2^n}_{k,\ell}\\
H^{2k}(\TypeBConf{n})^{\Z_2^n} 
&=(\TypeBCohomology{n})^{\Z_2^n}_k 
\cong \gr(\TypeBCohomology{n})^{\Z_2^n}_{k,*}
:=\bigoplus_{\ell=0}^k \gr(\TypeBCohomology{n})^{\Z_2^n}_{k,\ell}.
\end{align*}

\end{remark}

Just as it did for $\TypeBCohomology{n}$, the set $\vbasis$ again provides a (bihomogeneous) $\kk$-basis for $\gr(\TypeBCohomology{n})$.

\begin{prop}
\label{prop:bihomogeneous-gr(Z)-basis}
The images of $\tvbasis$ in $\gr(\TypeBCohomology{n})^{\Z_2^n}$ 
form a $\kk$-basis. They are
bihomogeneous, with a monomial
$q$ having bidegree $(k,\ell)=(\deg(q), \deg(\gamma(q)))$, where $\gamma$ from \eqref{def:gamma-surjection} maps $u_i \mapsto 1$ and
$v_{ij},w_{ij} \mapsto u_{ij}$.
\end{prop}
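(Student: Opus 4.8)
The plan is to deduce the proposition from Theorem~\ref{thm:tvbasis} --- which already supplies that $\tvbasis$ is a $\kk$-basis of $(\TypeBCohomology{n})^{\Z_2^n}$ --- by observing that $\Z_2^n$ acts \emph{diagonally} on the monomial $\kk$-basis $\vbasis$, and that this diagonal action is literally unchanged when one passes from $\TypeBCohomology{n}$ to its $\uu$-adic associated graded ring.

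First I would record the diagonality. Each generator $\tau_k$ of $\Z_2^n$ sends every variable $u_i, v_{ij}, w_{ij}$ to $\pm$ itself (Table~\ref{table:generatoraction}), so it scales each monomial $m$ in $\kk[u_i,v_{ij},w_{ij}]$ by a sign $\varepsilon_k(m)\in\{\pm1\}$ depending only on which variables divide $m$. Since the monomials $\vbasis$ descend to a $\kk$-basis of $\TypeBCohomology{n}$ (Theorem~\ref{theorem:newpresentation}), the group $\Z_2^n$ acts diagonally on this basis, and a monomial $m\in\vbasis$ is $\Z_2^n$-invariant in $\TypeBCohomology{n}$ exactly when $\varepsilon_k(m)=1$ for all $k$. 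Because the $\uu$-adic filtration is $\symmB{n}$-stable, hence $\Z_2^n$-stable, the $\Z_2^n$-action descends to $\gr(\TypeBCohomology{n})$ and the surjections $\uu^d\twoheadrightarrow\uu^d/\uu^{d+1}$ are $\Z_2^n$-equivariant; by Proposition~\ref{prop:ufiltration} the monomials $\vbasis$ also descend to a $\kk$-basis of $\gr(\TypeBCohomology{n})$, each sitting in a single bidegree, and $\tau_k$ scales the image of $m$ by the \emph{same} sign $\varepsilon_k(m)$. Therefore $\gr(\TypeBCohomology{n})^{\Z_2^n}$ is spanned by the images of precisely the monomial set $\mathcal{V}_0:=\{\,m\in\vbasis:\varepsilon_k(m)=1\text{ for all }k\,\}$ that also spans $(\TypeBCohomology{n})^{\Z_2^n}$.

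Next I would identify $\mathcal{V}_0$ with $\tvbasis$. Each generator in $\quadgens$ --- the monomials $u_iw_{ij}$, $w_{ij}w_{ik}$, $v_{ij}w_{jk}$ --- is fixed by every $\tau_k$ by the sign bookkeeping already carried out just before Theorem~\ref{thm:tvbasis}, so every product of such generators is a $\Z_2^n$-invariant monomial; this gives $\tvbasis\subseteq\mathcal{V}_0$. Conversely, if $m\in\mathcal{V}_0$ then $m\in(\TypeBCohomology{n})^{\Z_2^n}=\spn_\kk\tvbasis$ by Theorem~\ref{thm:tvbasis}, so $m$ is a $\kk$-linear combination of the $\vbasis$-monomials lying in $\tvbasis$; since $m$ is itself one of the linearly independent $\vbasis$-monomials, comparing coordinates in the basis $\vbasis$ of $\TypeBCohomology{n}$ forces $m\in\tvbasis$. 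Hence $\tvbasis=\mathcal{V}_0$, and the images of $\tvbasis$ form a $\kk$-basis of $\gr(\TypeBCohomology{n})^{\Z_2^n}$.

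Finally, bihomogeneity is immediate: any $q\in\tvbasis$ is a monomial, hence bihomogeneous for the bigrading of $\kk[u_i,v_{ij},w_{ij}]$ recording total degree and degree in the $v,w$-variables, and the latter equals $\deg\gamma(q)$ because $\gamma$ kills the $u_i$ and sends each $v_{ij},w_{ij}$ to the degree-one element $t_{ij}$; as the image of $q$ in $\gr(\TypeBCohomology{n})$ is a nonzero basis element, it genuinely occupies bidegree $(\deg q,\deg\gamma(q))$ rather than collapsing into a smaller filtration piece. I expect no serious obstacle; the only point needing care is the claim that the sign $\varepsilon_k(m)$ of each monomial is unaffected by passing to $\gr(\TypeBCohomology{n})$, which rests on $\symmB{n}$-stability of the $\uu$-adic filtration together with Proposition~\ref{prop:ufiltration}'s assertion that $\vbasis$ descends to a monomial basis of the associated graded ring.
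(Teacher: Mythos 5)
Your proof is correct, but it takes a genuinely different route from the paper's. The paper argues by a dimension count: the images of $\tvbasis$ are linearly independent in $\gr(\TypeBCohomology{n})^{\Z_2^n}$ because they sit inside the monomial basis $\vbasis$ of $\gr(\TypeBCohomology{n})$ (Proposition~\ref{prop:ufiltration}), and they have cardinality $n!$ by the Pairing Lemma~\ref{lem:pairing-bijection}, which matches $\dim_\kk \gr(\TypeBCohomology{n})^{\Z_2^n}=\dim_\kk (\TypeBCohomology{n})^{\Z_2^n}=n!$ (the first equality coming from semisimplicity of $\kk\Z_2^n$ as in Section~\ref{sec:semisimplicity}); bihomogeneity is then read off from the definitions, exactly as you do. You instead exploit the fact that $\Z_2^n$ acts diagonally, by signs $\varepsilon_k(m)$, on the monomial basis $\vbasis$, both in $\TypeBCohomology{n}$ and---since the $\uu$-adic filtration is $\Z_2^n$-stable and the monomial images are compatible with the presentation of Proposition~\ref{prop:ufiltration}---in $\gr(\TypeBCohomology{n})$; hence, using $2\in\kk^\times$, each fixed space is spanned by exactly the invariant monomials $\mathcal{V}_0\subseteq\vbasis$, and you identify $\mathcal{V}_0=\tvbasis$ by combining the invariance of the quadratics in $\quadgens$ with Theorem~\ref{thm:tvbasis} and a coordinate comparison in the basis $\vbasis$. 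Your route buys a slightly sharper byproduct---the $\Z_2^n$-invariant monomials of $\vbasis$ are precisely $\tvbasis$---and avoids re-invoking the Pairing Lemma and the semisimplicity dimension count at this stage, though it still rests on Theorem~\ref{thm:tvbasis} (and hence on the Pairing Lemma indirectly); the paper's count is shorter but less structural. Both arguments are sound, and your care about the sign $\varepsilon_k(m)$ being unchanged in the associated graded ring, and about a basis monomial not collapsing into a lower filtration piece, addresses the only genuinely delicate points.
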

\begin{proof}
The proof is similar to that of Theorem~\ref{thm:tvbasis}.  The images of 
$\tvbasis$ in $\gr(\TypeBCohomology{n})^{\Z_2^n}$ 
are $\kk$-linearly independent, as a subset of the $\kk$-basis
$\vbasis$ from Proposition~\ref{prop:ufiltration}.
They also have the correct cardinality 
$$
n!=\dim_{\kk} (\TypeBCohomology{n})^{\Z_2^n}=\dim_{\kk} \gr(\TypeBCohomology{n})^{\Z_2^n}
$$
from the Pairing Lemma~\ref{lem:pairing-bijection}.  Thus by dimension-counting, they are
a basis.  Their bihomogeneity follows from the definition of the
$(k,\ell)$-bidegree, where $k$ is total degree,
and $\ell$ is the degree in variables $\{v_{ij},w_{ij}\}$.
\end{proof}

As a consequence of Proposition~\ref{prop:bihomogeneous-gr(Z)-basis}, one can reinterpret the tables in
Example~\ref{ex:pairing-lemma-examples-small-n}
as grouping elements $q$ in $\tvbasis$ according
to their bidegree within $\gr(\TypeBCohomology{n})^{\Z_2^n}$:  one has $q=\phi(m)$ if and only if $\gamma(m)=q$, hence
$$
(\deg(m),\deg(\phi(m))=(\deg(\gamma(m)),\deg(q))=(\ell,k).
$$
These let one tabulate the bigraded Hilbert series for $\gr(\TypeBCohomology{n})^{\Z_2^n}$
defined by
$$
\bihilb(\gr(\TypeBCohomology{n})^{\Z_2^n}, t,q)
=\sum_{k,\ell} t^k q^\ell \dim_{\kk} \gr(\TypeBCohomology{n})^{\Z_2^n}_{k,\ell}
$$
when $n=2,3,4$ as follows:
\begin{equation}
\label{first-few-bigraded-hilbs}
\begin{tabular}{|c|l|}\hline
$n$ & $\bihilb(\gr(\TypeBCohomology{n})^{\Z_2^n}, t,q)$\\ \hline\hline
$2$ & $1$\\ \hline
$3$ & $1+t^2(3q+2q^2)$\\ \hline
$4$ & $1+t^2(6q+8q^2)+t^4(3q^2+6q^3)$ \\ \hline
\end{tabular}
\end{equation}
These Hilbert series will be discussed further and refined in Section~\ref{sec:recursions}.

\subsection{A grading on $\gr(\TypeBCohomology{n})$ by double partitions}\label{sec:signedpartitionsdecomposition}

Our goal in this section will be to decompose $\gr(\TypeBCohomology{n})$ as a $\kk$-vector space indexed by {\it double partitions} $\slambda$ of  $n$ (written $\slambda \vdash n)$,
which are simply ordered pairs of number partitions having total weight $|\lambda^+|+|\lambda^-|=n$.

\begin{definition} 
\label{def:multigraphs-and-double-partitions}
\rm
  
To each monomial $q$ in $\kk [u_i,v_{ij},w_{ij}]$ or in 
$\kk [u_i,u^+_{ij},u^-_{ij}]$, expressed uniquely 
as
\begin{align*}
q&=\prod_{i=1}^n u_i^{a_i} 
\prod_{1 \leq i < j \leq n} v_{ij}^{b_{ij}} w_{ij}^{c_{ij}}
\quad \text{ in }\kk[u_i,v_{ij},w_{ij}] \\
\text{ or }q&=\prod_{i=1}^n u_i^{a_i} 
\prod_{1 \leq i < j \leq n} (u^+_{ij})^{b_{ij}} (u^-_{ij})^{c_{ij}} 
\quad \text{ in }
\kk [u_i,u^+_{ij},u^-_{ij}],\
\end{align*}
we will associate two pieces of data:
\begin{itemize}
    \item A {\it multigraph} (graph with parallel edges, self-loops) denoted $G(q)$ on the vertex set 
$[n]=\{1,2,\ldots,n\}$, having
$b_{ij}+c_{ij}$ parallel copies of
the edge $\{i,j\}$, and $a_i$ copies of
the self-loop on vertex $i$.
    \item A double partition $\slambda$, where $\lambda^+$ (resp. $\lambda^-$) lists in weakly decreasing order the sizes of connected components of $G(q)$
with {\it no} loop vertex (resp.~{\it at least one} loop vertex).
\end{itemize}
\end{definition}

\begin{example} 
\label{ex:multigraph}
\rm
For $n=9$, either of the monomials 
\begin{align*}
q&= v_{12}^2 \cdot w_{12} \cdot w_{13}^2 \cdot v_{23}^2 \cdot w_{24} \cdot u_{5} \cdot w_{56}^3 \cdot u_7^{2} \qquad \in \kk[u_i,v_{ij},w_{ij}]\\
q'&= (u^+_{12})^2 \cdot u^-_{12} \cdot (u^-_{13})^2 \cdot (u^+_{23})^2 \cdot u^-_{24} \cdot u_{5} \cdot (u^-_{56})^3 \cdot u_7^{2} \qquad \in \kk[u_i,u^+_{ij},u^-_{ij}]
\end{align*}
has the same multigraph $G(q)=G(q')$ shown below, and
double partition $\slambda=((4,1,1),(2,1))$:
\begin{center}
\begin{minipage}{.2\textwidth}
 \hspace*{-0.6\linewidth}
\begin{tikzpicture}
  [scale=.30,auto=left,every node/.style={circle,fill=black!20}]
 \node (v1) at (-10,14) {$1$};
  \node (v3) at (-10,10) {$3$};
  \node (v2) at (-7,12) {$2$};
  \node (v4) at (-4,12) {$4$};
  \node (v8) at (-1,12) {$8$};
  \node (v9) at (2,12) {$9$};
  \node (v5) at (8,14) {$5$};
  \node (v6) at (8,10) {$6$};
    \node (v7) at (11,12) {$7$};
  \node (v8) at (-1,12) {$8$};
 \draw (v1) to [out=210,in=135,looseness=1] (v3);
  \draw (v1) to [out=0,in=90,looseness=1] (v2);
   \draw (v1) to [out=45,in=45,looseness=1] (v2);
     \draw (v2) to [out=-90,in=0,looseness=1] (v3);
    \draw (v5) to [out=210,in=135,looseness=1] (v6);
    \draw (v5) to [out=-45,in=45,looseness=1] (v6);
   \draw (v5) to [out=135,in=90,looseness=4] (v5);
   \draw (v7) to [out=135,in=90,looseness=4] (v7);
      \draw (v7) to [out=135,in=90,looseness=8] (v7);
  \foreach \from/\to in {v1/v2,v1/v3,v2/v3,v2/v4,v5/v6}
    \draw (\from) -- (\to);
\end{tikzpicture}
\end{minipage}
\end{center}
\end{example}

Define a $\kk$-vector space decomposition 
\begin{equation}
\label{double-partition-decomposition-of-V-poluynomials}
\kk[u_i,v_{ij},w_{ij}]=
\bigoplus_{\slambda \vdash n}
\kk[u_i,v_{ij},w_{ij}]_{\slambda}
\end{equation}
where 
$\kk[u_i,v_{ij},w_{ij}]_{\slambda}$ is the $\kk$-span of monomials $q$ having $\slambda$ as their associated double partition.

\begin{prop}
\label{double-partition-decomposition-of-grY}
The decomposition \eqref{double-partition-decomposition-of-V-poluynomials}
of $\kk$-vector spaces
\begin{itemize}
\item[(i)] is also a $\kk\symmB{n}$-module decomposition, and 
\item[(ii)] 
descends to a $\kk \symmB{n}$-module
decomposition
$$
\gr(\TypeBCohomology{n})=
\bigoplus_{\slambda \vdash n}
\gr(\TypeBCohomology{n})_{\slambda}.
$$
\end{itemize}
\end{prop}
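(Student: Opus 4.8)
The plan is to prove (i) first — the $\symmB{n}$-stability of the decomposition at the polynomial-ring level — and then deduce (ii) by checking compatibility with the ideal $\I_\gr$. For (i), the key observation is that the multigraph $G(q)$ associated to a monomial $q$ is a $\symmB{n}$-equivariant invariant, in the following sense. By Remark~\ref{rmk:choice-of-roots} and Remark~\ref{rmk:signed-perms-on-uvw}, if $\sigma\in\symmB{n}$ has image $\hat\sigma\in\symm{n}$ under $\varphi$, then $\sigma$ sends each variable $u_i$ to $\pm u_{\hat\sigma(i)}$ and each variable $v_{ij}$ or $w_{ij}$ to $\pm v_{\hat\sigma(i)\hat\sigma(j)}$ or $\pm w_{\hat\sigma(i)\hat\sigma(j)}$. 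Consequently, if $q$ is a monomial then $\sigma(q)$ is (up to sign) again a monomial $q'$, and the multigraph transforms as $G(q')=\hat\sigma(G(q))$: the self-loop at $i$ goes to a self-loop at $\hat\sigma(i)$, and the $b_{ij}+c_{ij}$ edges between $i$ and $j$ become the same number of edges between $\hat\sigma(i)$ and $\hat\sigma(j)$ (the $v\leftrightarrow w$ swapping caused by sign changes does not change $b_{ij}+c_{ij}$, which is what records the edge multiplicity). Since the connected-component structure and the presence-or-absence of loop vertices in each component are preserved by the graph isomorphism $\hat\sigma$, the associated double partition $\slambda$ is unchanged. Hence $\sigma$ maps $\kk[u_i,v_{ij},w_{ij}]_{\slambda}$ into itself, proving (i).

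For (ii), I would argue that the $\uu$-adic filtration and its associated graded ring $\gr(\TypeBCohomology{n})=\kk[u_i,v_{ij},w_{ij}]/\I_\gr$ are compatible with the double-partition grading, so that it descends to a quotient decomposition. Concretely, one checks that the ideal $\I_\gr$ is \emph{homogeneous} with respect to the double-partition grading, i.e.\ that each generator $g$ in the right column of Table~\ref{table:Igrrelations} is a sum of monomials all lying in the same summand $\kk[u_i,v_{ij},w_{ij}]_{\slambda}$. This is a finite check on the ten families of generators: for a generator like $v_{ij}w_{jk}-w_{ij}w_{ik}-v_{ik}v_{jk}$ (indices $i<j<k$), each of the three monomials has multigraph equal to the path/triangle on $\{i,j,k\}$ with two edges among these three vertices and no loops, so all three lie in the same $\slambda$-summand (here contributing a component of size $3$ to $\lambda^+$). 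For a generator like $u_iv_{ij}-u_jw_{ij}$, both monomials give the multigraph with one edge $\{i,j\}$ and one self-loop (on $i$ in the first case, on $j$ in the second) — both connected components of size $2$ with a loop, so both lie in the $\slambda$-summand with a part of size $2$ in $\lambda^-$. The generators $u_i^2$, $v_{ij}^2$, $w_{ij}^2$, $v_{ij}w_{ij}$ are single monomials and are trivially homogeneous. Once $\I_\gr$ is seen to be double-partition-homogeneous (and, by (i), $\symmB{n}$-stable), the quotient $\gr(\TypeBCohomology{n})$ inherits the grading, and by (i) again this is a $\kk\symmB{n}$-module decomposition.

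A cleaner alternative for (ii), which I might prefer in the write-up, is to invoke the standard monomial basis: Proposition~\ref{prop:ufiltration} says the images of the squarefree monomials $\vbasis$ form a $\kk$-basis of $\gr(\TypeBCohomology{n})$, and each such monomial is double-partition-homogeneous by construction, so setting $\gr(\TypeBCohomology{n})_{\slambda}$ to be the span of the images of the basis monomials with associated double partition $\slambda$ gives a vector-space decomposition; its $\symmB{n}$-stability then follows from part (i) (the natural surjection $\kk[u_i,v_{ij},w_{ij}]\twoheadrightarrow\gr(\TypeBCohomology{n})$ is $\symmB{n}$-equivariant and respects the $\slambda$-grading, so the image of each $\slambda$-summand is $\symmB{n}$-stable). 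The main obstacle — and the only place real care is needed — is the homogeneity check on the defining relations in Table~\ref{table:Igrrelations}: one must verify that no generator mixes monomials belonging to different double partitions, which amounts to checking that the two or three monomials in each quadratic relation always yield the same connected-component-with-loops data on their shared index set. This is routine but must be done for all families, and is the analogue of the observation (familiar from the braid-arrangement case in \cite{brauner2022eulerian}) that the Varchenko–Gelfand circuit relations respect the partition-of-flats grading.
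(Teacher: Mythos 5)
Your part (i) is exactly the paper's argument and is fine. For part (ii) your idea is also the paper's (check that the relations do not mix double partitions), but the written justification has a gap at the decisive step: you verify that each generator $g$ of $\I_\gr$ in Table~\ref{table:Igrrelations} is a sum of monomials sharing one double partition, and then conclude that $\I_\gr$ itself is double-partition-homogeneous. That inference is not automatic, because the decomposition \eqref{double-partition-decomposition-of-V-poluynomials} is \emph{not} a ring grading: a product of double-partition-homogeneous elements need not be homogeneous, so the usual ``homogeneous generators imply homogeneous ideal'' principle does not apply. For instance (take $n\ge 4$), the element $v_{12}u_3-v_{13}u_2$ is homogeneous, since both monomials have $\lambda^+=(2,1^{n-3})$ and $\lambda^-=(1)$; but multiplying by the monomial $v_{34}$ gives $v_{12}v_{34}u_3-v_{13}v_{34}u_2$, whose first monomial has components $\{1,2\}$ (unlooped) and $\{3,4\}$ (looped), hence $\lambda^-=(2)$, while the second has components $\{1,3,4\}$ (unlooped) and $\{2\}$ (looped), hence $\lambda^-=(1)$. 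Since $\I_\gr$ is spanned by \emph{all} products $q\cdot g$ with $q$ an arbitrary monomial, you must argue about these products, not only about the generators $g$ themselves; as written, that step is missing, and it is precisely the nontrivial point of the proof.

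The repair is what your generator-by-generator checks already almost record: each $g$ in Table~\ref{table:Igrrelations} satisfies the \emph{stronger} property that all of its monomials have multigraphs with the same connected components (as a set partition of the vertex set) and the same looped/unlooped status on each component, not merely the same double partition. This stronger property, unlike the weaker one, is stable under multiplication by an arbitrary monomial $q$ (adding the same edges and loops to graphs with identical component data yields identical component data), so every spanning element $q\cdot g$ of $\I_\gr$ is double-partition-homogeneous and the decomposition descends. This is exactly how the paper argues, and the stability-under-multiplication step is what it later formalizes via CL-data in Proposition~\ref{prop:monomial-times-CL-homog-is-CL-homog}. The same caveat applies to your ``cleaner alternative'': defining $\gr(\TypeBCohomology{n})_{\slambda}$ as the span of images of $\vbasis$-monomials with double partition $\slambda$ trivially gives a vector-space decomposition, but identifying it with the image of $\kk[u_i,v_{ij},w_{ij}]_{\slambda}$ --- which is what both $\symmB{n}$-stability and the claim that the decomposition ``descends'' require --- again needs the homogeneity of $\I_\gr$, so it does not circumvent the missing step.
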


\begin{proof}
For (i), recall from Remark~\ref{rmk:signed-perms-on-uvw} that for a signed permutation $\sigma$ in $\symmB{n}$, if $\hat{\sigma}:=\phi(\sigma)$ is its underlying unsigned permutation in $\symm{n}$,
then $\sigma(u_i) = \pm u_{\hat{\sigma}(i)}$,
and variables $v_{ij}, w_{ij}$ 
will have $\sigma(v_{ij}), \sigma(w_{ij})$
taking one of the forms
$\pm v_{\hat{\sigma}(i)\hat{\sigma}(j)}$
or $\pm w_{\hat{\sigma}(i)\hat{\sigma}(j)}$.
Hence for any monomial $q$, one has that
$\sigma(q)=\pm q'$ for some monomial $q'$ where the graphs $G(q), G(q')$ are isomorphic
via the permutation $\hat{\sigma}$. Thus $q,q'$ have the same 
 double partition $\slambda$,
showing $\kk[u_i,v_{ij},w_{ij}]_{\slambda}$
forms a $\kk\symmB{n}$-submodule.

For (ii), since $\gr(\TypeBCohomology{n}) \cong \kk[u_i,v_{ij},w_{ij}]/\I_{\gr}$, it 
suffices to check that defining 
\begin{equation}
\label{Igr-deocomposes-by-double-partitions}
(\I_\gr)_{\slambda}:=
\I_\gr \cap \kk[u_i,v_{ij},w_{ij}]_{\slambda}
\end{equation}
gives a $\kk$-vector space (and hence also a $\kk\symmB{n}$-module) decomposition
$$
\I_\gr=
\bigoplus_{\slambda \vdash n}
(\I_\gr)_{\slambda}. 
$$
This holds because each generator $g$ of $\I_\gr$ in the right column of Table~\ref{table:Igrrelations} has
the following property: given any two monomials
$q, q'$ appearing within $g$, their two graphs $G(q), G(q')$, although not isomorphic, have the same connected components, and the same subset of
loopless versus looped components,
and hence the same double partition $\slambda$.
 For example, if 
 $$
 g=u_i w_{ij} - v_j v_{ij}=q-q'
 $$
 one sees that both $G(q), G(q')$ have the unique edge $\{i,j\}$, and only differ in the location of the loop vertex $i$ or $j$ within that edge component.  Similarly, if 
 $$
 g=v_{ij}w_{jk} - w_{ij}w_{ik} - v_{ik}v_{jk}=q-q'-q'',
 $$
 then each of $G(q),G(q'),G(q'')$ contains two out of three edges in the triangle $\{i,j,k\}$.
This same property remains true when multiplying any of those generators $g$ by a monomial $q$ giving $q\cdot g$. Since $\I_\gr$ is the $\kk$-span of all such products $q \cdot g$, the decomposition
\eqref{Igr-deocomposes-by-double-partitions} follows.
\end{proof}

Note that Theorems~\ref{thm:AB_VGpresentation}, \ref{theorem:newpresentation}
provide explicit (finite, squarefree) monomial 
$\kk$-bases $\ubasis, \vbasis$ for $\TypeBCohomology{n}, \gr(\TypeBCohomology{n})$. However, neither of these
bases is stable under the action of $\symmB{n}$.  For some arguments, 
it is helpful to 
use certain redundant $\kk$-spanning sets
that are stable under the $\symmB{n}$-action, 
described next, in terms of the graphs $G(q)$.

\begin{definition} \rm
Say that a multigraph $G$ on vertex set $[n]$
is a {\it lightly-looped forest} if its non-loop multigraph edges form a forest, and each tree in the forest contains at most one loop. Here is an example with $n=9$:

\begin{center}
\begin{minipage}{.2\textwidth}
 \hspace*{-0.6\linewidth}
\begin{tikzpicture}
  [scale=.30,auto=left,every node/.style={circle,fill=black!20}]
 \node (v1) at (-10,14) {$1$};
  \node (v3) at (-10,10) {$3$};
  \node (v2) at (-7,12) {$2$};
  \node (v4) at (-4,12) {$4$};
  \node (v9) at (4,12) {$9$};
  \node (v5) at (7,14) {$5$};
  \node (v6) at (7,10) {$6$};
    \node (v7) at (10,12) {$7$};
  \node (v8) at (0,12) {$8$};
   \draw (v5) to [out=135,in=90,looseness=4] (v5);
   \draw (v7) to [out=135,in=90,looseness=4] (v7);
  \foreach \from/\to in {v1/v2,v2/v3,v2/v4,v5/v6,v5/v9}
    \draw (\from) -- (\to);
\end{tikzpicture}
\end{minipage}
\end{center}
\end{definition}

\begin{prop}
\label{prop:spanned-by-lightly-looped-forests}
The images $\bar{q}$ within $\TypeBCohomology{n}$ of the following subsets of monomials $q$ in $\kk [u_i,u^+_{ij},v^+_{ij}]$  give
$\symmB{n}$-stable $\kk$-spanning sets for these subspaces:
\begin{itemize}
\item[(i)] $\TypeBCohomology{n}=\spn_{\kk}
    \{\bar{q}: G(q)\text{ is a lightly-looped forest }\}.$
\item[(ii)] $\TypeBCohomology{n} \cap \uu^d =\spn_{\kk}
    \{\bar{q}: G(q)\text{ is a lightly-looped forest with }\slambda\text{ having }\ell(\lambda^-)\geq d\}.$
\item[(iii)] $(\TypeBCohomology{n})_\mu=\spn_{\kk}
    \{\bar{q}: G(q)\text{ is a lightly-looped forest  with }\slambda\text{ having }\lambda^+=\mu\}$ for all $|\mu| \leq n$.
\item[(iv)] $(\TypeBCohomology{n})_\mu \cap \uu^d =\spn_{\kk}
    \{\bar{q}: G(q)\text{ is a lightly-looped forest with }\slambda\text{ having }\lambda^+=\mu\text{ and }\ell(\lambda^-)\geq d\}.$
\end{itemize}

\noindent
Likewise, the images $\bar{q}$ within $\gr(\TypeBCohomology{n})$ of the following subsets of monomials $q$ in $\kk [u_i,v_{ij},w_{ij}]$  give
$\symmB{n}$-stable $\kk$-spanning sets for these subspaces:
\begin{itemize}
\item[(v)] $\gr(\TypeBCohomology{n})=\spn_{\kk}
    \{\bar{q}: G(q)\text{ is a lightly-looped forest }\}.$
\item[(vi)] $\gr(\TypeBCohomology{n})_{\slambda}=\spn_{\kk}
    \{\bar{q}: G(q)\text{ is a lightly-looped forest with }G(q)=\slambda\}.$
\end{itemize} 
\end{prop}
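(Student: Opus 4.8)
\textbf{Proof plan for Proposition~\ref{prop:spanned-by-lightly-looped-forests}.}

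The plan is to establish each spanning claim by combining two ingredients: the Gr\"obner basis presentations already in hand (Theorems~\ref{thm:AB_VGpresentation}, \ref{theorem:newpresentation}, and Proposition~\ref{prop:ufiltration}), which show that the standard monomials $\ubasis$ and $\vbasis$ form $\kk$-bases, together with rewriting arguments using the quadratic relations of Table~\ref{table:AB_VG} (for $\TypeBCohomology{n}$) and Table~\ref{table:Igrrelations} (for $\gr(\TypeBCohomology{n})$). First I would observe that every standard monomial in $\ubasis$ (resp.~$\vbasis$) is automatically squarefree and chooses at most one variable from each $U_j$ (resp.~$V_j$), which forces its multigraph $G(q)$ to have at most one non-loop edge landing on each vertex $j$ from vertices $i<j$, and at most one loop on each vertex. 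A short combinatorial check shows that such a multigraph is always a lightly-looped forest: the ``at most one edge into $j$ from a smaller vertex'' condition is exactly the statement that orienting each edge from smaller to larger endpoint makes every vertex have in-degree $\le 1$, so the edge set is a forest, and the loop condition says each tree has at most one loop. Hence the bases $\ubasis$, $\vbasis$ already consist of lightly-looped-forest monomials, giving one containment in (i) and (v) for free. The reverse containment is trivial, so (i) and (v) follow; $\symmB{n}$-stability of the spanning set is immediate from Remarks~\ref{rmk:choice-of-roots} and \ref{rmk:signed-perms-on-uvw}, since $\sigma$ sends a monomial $q$ to $\pm q'$ with $G(q') \cong G(q)$ via $\hat\sigma$, and the lightly-looped-forest property is graph-isomorphism invariant.

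For the refined statements (ii)--(iv) and (vi), the key point is that the flat-orbit decomposition \eqref{eq:typeABflatorbitdecomposition} and the double-partition decomposition of Proposition~\ref{double-partition-decomposition-of-grY}, together with the $\uu$-adic filtration, are all \emph{compatible} with the monomial gradings: each standard-monomial basis element $\bar q$ lies in a single graded piece, determined by combinatorial data of $G(q)$. Concretely, for $\gr(\TypeBCohomology{n})$ the double-partition grading assigns $\bar q$ to $\gr(\TypeBCohomology{n})_{\slambda}$ where $\slambda$ is read off from the connected components of $G(q)$ and which ones carry a loop; intersecting the spanning set from (v) with this graded piece gives exactly those lightly-looped forests with double partition $\slambda$, proving (vi). For (iii), I would use that under $\TypeBCohomology{n} \cong \VG(\A_{\symmB{n}})$, the flat-orbit summand $(\TypeBCohomology{n})_\mu$ is spanned by monomials whose product of corresponding hyperplanes has intersection in the $\symmB{n}$-orbit indexed by $\mu$; translating through the dictionary of Section~\ref{subsec:typebarrangement}, the nonzero blocks of the associated flat correspond to the non-loop connected components of $G(q)$ (components with no loop), while looped components feed into the zero block. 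So the flat-orbit label of $\bar q$ is precisely $\lambda^+ = \mu$, and intersecting (i) with $(\TypeBCohomology{n})_\mu$ yields (iii). For (ii) and (iv), I would additionally note that $\bar q \in \uu^d$ iff $q$ is divisible by a product of $d$ of the $u_i$'s --- equivalently, after reducing to a standard monomial, $\ell(\lambda^-) \ge d$, since each looped tree in a lightly-looped forest contributes one $u$-variable in the standard form --- and then intersect (iii) (resp.~(i)) with $\uu^d$. Since all these gradings are $\symmB{n}$-stable and the spanning sets from (i), (v) are $\symmB{n}$-stable, so are the refined spanning sets.

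The main obstacle I anticipate is the bookkeeping in (ii) and (iv): the claim ``$\bar q \in \uu^d$ iff $\ell(\lambda^-) \ge d$'' is not literally about the monomial $q$ one starts with, but about what happens after Gr\"obner reduction, since relations like $u_i v_{ij} - 2u_iu_j - u_j w_{ij}$ and $u_i w_{ij} - u_j v_{ij}$ trade one $u$-variable for another and can change which vertex in an edge carries the loop without changing the $\uu$-adic degree (indeed $\uu$ is well-defined on either presentation by the Remark after Proposition~\ref{prop:ufiltration}). I would handle this by arguing that the $\uu$-adic degree of $\bar q$ equals the number of looped components of $G(q)$: reduction never changes connected components (relations only rewrite within a fixed component, as noted in the proof of Proposition~\ref{double-partition-decomposition-of-grY}), the degree-$0$-in-$u$ relations preserve the loop count, and the relation $u_i v_{ij} - 2 u_i u_j - u_j w_{ij}$, which could naively raise the $u$-degree of a looped edge component from $1$ to $2$, produces a term $u_i u_j$ whose graph has two loops in the \emph{same} component --- but that term is not squarefree-standard and gets further reduced (its image in $\gr$ vanishes, and in $\TypeBCohomology{n}$ it reduces using $u_i^2$-type relations on a neighbor), so the stable outcome still has loop count equal to $\ell(\lambda^-)$. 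Making this last step rigorous --- that repeated reduction cannot strand extra loops in a component --- is the one place a careful induction on $\prec$-order is needed; everything else is a translation between the monomial combinatorics of the Gr\"obner bases and the graph-theoretic language.
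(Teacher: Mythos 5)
Your proposal is correct in outline and follows essentially the same route as the paper: (i),(v) by checking that the standard monomial bases $\ubasis,\vbasis$ consist of lightly-looped-forest monomials; $\symmB{n}$-stability because the action sends $q$ to $\pm q'$ with $G(q')\cong G(q)$; and (iii),(vi) by reading off the flat-orbit (resp.\ double-partition) label of a forest monomial---unlooped trees give the nonzero blocks, looped trees feed the zero block, so the label is $\lambda^+$---and using directness of the decompositions \eqref{VG-flat-decomposiiton} and Proposition~\ref{double-partition-decomposition-of-grY}. The only divergence is at the step you yourself flag as the obstacle in (ii),(iv), and there the paper's argument is simpler and complete: it does not prove your stronger claim that the $\uu$-adic degree of $\bar q$ equals the number of looped components of $G(q)$, but only the inequality it needs, via the observation that $\uu^d$ is spanned by images of monomials of $\{u_i\}$-degree at least $d$, and that in every relation of the quadratic Gr\"obner basis the non-initial monomials have $\{u_i\}$-degree at least that of the underlined initial monomial, so each division step preserves or raises $u$-degree; hence any monomial of $u$-degree $\geq d$ rewrites as a combination of standard monomials of $u$-degree $\geq d$, and on standard (lightly-looped-forest) monomials the $u$-degree is exactly $\ell(\lambda^-)$. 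This also explains why ``intersect (i)/(iii) with $\uu^d$'' cannot stand alone: $\uu^d$ is a filtration piece, not a direct summand compatible a priori with your spanning set, so the reduction argument you sketch is genuinely required---but it can be discharged by this one-line degree comparison rather than by tracking loop counts through repeated reductions, which is where your plan would have needed the ``careful induction'' you mention.
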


\begin{proof}
Since the $\symmB{n}$-action on monomials $q$ leaves $G(q)$ unchanged up to graph-isomorphism, also preserving the 
double partition $\slambda$, all 
of the above sets are $\symmB{n}$-stable.

For assertions (i),(v) about the $\kk$-spanning
sets of $\TypeBCohomology{n}$ and $\gr(\TypeBCohomology{n})$, it suffices to check that
they contain the $\kk$-basis $\ubasis$
for $\TypeBCohomology{n}$ and 
the $\kk$-basis $\vbasis$ for $\gr(\TypeBCohomology{n})$.  In
other words, one must check that monomials $q$ in $\ubasis$ or $\vbasis$ always have $G(q)$ a lightly-looped forest.  This is easily proven by induction on $\deg(q)$ from the description of $q$ as a squarefree monomial which is a product of at most one variable from each set $U_1,\ldots,U_n$ in
Theorem~\ref{thm:AB_VGpresentation}(ii), or from each set $V_1,\ldots,V_n$ in \eqref{uvw-variable-partial-order}.

For assertion (ii), note that inside $\kk[u_i,u^+_{ij},u^-_{ij}]$, the ideal $\uu^d$ is the $\kk$-span of all monomials
$q$ having degree at least $d$ in the variables $\{u_i\}$.  The division algorithm in $\kk[u_i,u^+_{ij},u^-_{ij}]$
using the monomial order $\prec$ from
\eqref{uvw-variable-partial-order} lets
one iteratively rewrite
$q$ modulo $\I_{\symmB{n}}$ as a sum of standard monomials $q'$ in $\ubasis$.  At each iteration, the division algorithm makes a replacement $f \rightarrow f'$ using a generator $g$ from the right column of Table~\ref{table:AB_VG} to substitute
the underlined leading term $\init_\prec(g)$
for the sum $g-\init_\prec(g)$ of its non-leading terms.  Inspection of Table~\ref{table:Irelations} shows that
the monomials within $g-\init_\prec(g)$
always have $\{u_i\}$-degree bounded below by the $\{u_i\}$-degree of $\init_\prec(g)$.  Hence by induction on the number of division algorithm steps, since $q$ has $\{u_i\}$-degree at least $d$, it will be rewritten as a sum of monomials $q'$ in $\U$ all with $\{u_i\}$-degree at least $d$.  On the other hand, when $G(q')$ is a
lightly-looped forest with double partition $\slambda$, the $\{u_i\}$-degree of $q'$ is exactly $\ell(\lambda^-)$ since
there is one $u_i$ variable for each (lightly) rooted tree.  Assertion (ii) follows.

For assertions (iii), (iv), consider a monomial $q$ in
$\kk [u_i,u^+_{ij},u^-_{ij}]$ having $G(q)$ a lightly-looped forest and associated double partition 
$\slambda$.  It is not hard to see that the type $B$ 
intersection subspace $X=\cap_j H_j$ in $\lat(\A_{\symmB{n}})$
corresponding to $q$, as defined
in Section~\ref{subsec:typebarrangement},
will be the signed partition $\pi_X$ whose nonzero blocks
correspond to the unlooped trees of $G(q)$, and whose
zero block is the union of all vertices in the looped trees
of $G(q)$.  Consequently, the $\symmB{n}$-orbit $[X]$ in $\lat(\A_{\symmB{n}})/\symmB{n}$ will be indexed by $\mu=\lambda^+$.
Bearing this in mind, assertion (iii) follows from assertion (i), and assertion (iv) follows from assertion (ii).

Assertion (vi) follows from assertion (i) and the definition of
$\gr(\TypeBCohomology{n})_{\slambda}$
implicit in Proposition~\ref{double-partition-decomposition-of-grY}. 
\end{proof}

One can now see how double partitions refine the bigrading on $\gr(\TypeBCohomology{n})$ from
Definition~\ref{def:bigrading-on-grY}.

\begin{prop}
\label{prop:double-partition-to-bigrading}
The decomposition in Proposition~\ref{double-partition-decomposition-of-grY} refines
the bigrading on
$\gr(\TypeBCohomology{n})$ in the sense that
$$
\gr(\TypeBCohomology{n})_{k,\ell}
=\bigoplus_{\substack{\slambda\vdash n\\
\ell(\lambda^+)=n-k\\
\ell(\lambda^+)+\ell(\lambda^-)=n-\ell}}
\gr(\TypeBCohomology{n})_{\slambda}.
$$
\end{prop}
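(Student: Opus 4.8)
The plan is to read the refinement directly off the explicit monomial basis $\vbasis$, exploiting the fact that $\vbasis$ is compatible with \emph{both} the bigrading and the double-partition decomposition at once. First I would recall from Proposition~\ref{prop:ufiltration} that the images of the squarefree monomials in $\vbasis$ form a $\kk$-basis of $\gr(\TypeBCohomology{n})$, and that each such $q$ is \emph{bihomogeneous} of bidegree $(k,\ell)$ with $k=\deg(q)$ its total degree and $\ell$ its degree in the variables $\{v_{ij},w_{ij}\}$, so that $k-\ell$ is its degree in the variables $\{u_i\}$. Second, I would invoke Proposition~\ref{double-partition-decomposition-of-grY}: the decomposition of $\kk[u_i,v_{ij},w_{ij}]$ by double partition descends to $\gr(\TypeBCohomology{n})=\bigoplus_{\slambda\vdash n}\gr(\TypeBCohomology{n})_{\slambda}$, and a monomial $q$ maps into the summand indexed by its own double partition $\slambda$. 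Combining these with the elementary observation that a basis of a direct sum which is a disjoint union of subsets of the summands restricts to a basis of each summand, I conclude that $\vbasis_{\slambda}:=\{q\in\vbasis : q \text{ has double partition } \slambda\}$ is a $\kk$-basis of $\gr(\TypeBCohomology{n})_{\slambda}$ for every $\slambda\vdash n$.

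It then remains to compute the bidegree of a monomial $q\in\vbasis$ in terms of its double partition $\slambda=(\lambda^+,\lambda^-)$. For this I would use that, by (the proof of) Proposition~\ref{prop:spanned-by-lightly-looped-forests}(v), the multigraph $G(q)$ of any $q\in\vbasis$ is a lightly-looped forest on $[n]$, with no parallel edges since $q$ is squarefree. Such a forest has $\ell(\lambda^+)+\ell(\lambda^-)$ connected components, hence $n-\ell(\lambda^+)-\ell(\lambda^-)$ non-loop edges, so $\ell=n-\ell(\lambda^+)-\ell(\lambda^-)$; and it carries exactly one self-loop in each of the $\ell(\lambda^-)$ looped components and none in the unlooped ones, so the $\{u_i\}$-degree of $q$ is $\ell(\lambda^-)$, whence $k=\ell+\ell(\lambda^-)=n-\ell(\lambda^+)$. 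Equivalently, $\ell(\lambda^+)=n-k$ and $\ell(\lambda^+)+\ell(\lambda^-)=n-\ell$, which are precisely the index conditions in the statement. Assembling the pieces: $\gr(\TypeBCohomology{n})_{k,\ell}$ has $\kk$-basis $\{q\in\vbasis : \deg(q)=k,\ \deg_{\{v,w\}}(q)=\ell\}$, and the computation just made identifies this set as the disjoint union of the $\vbasis_{\slambda}$ over all $\slambda\vdash n$ with $\ell(\lambda^+)=n-k$ and $\ell(\lambda^+)+\ell(\lambda^-)=n-\ell$; since each $\vbasis_{\slambda}$ is a basis of $\gr(\TypeBCohomology{n})_{\slambda}$, the claimed direct sum follows.

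I do not anticipate a serious obstacle: the whole argument lives at the level of the monomial basis $\vbasis$ and needs no semisimplicity hypothesis. The only two points requiring a little care are (i) confirming that $\vbasis$ simultaneously refines the bigrading \emph{and} the double-partition decomposition, which is immediate from Propositions~\ref{prop:ufiltration} and~\ref{double-partition-decomposition-of-grY} together with the basis-of-a-direct-sum observation, and (ii) the elementary forest bookkeeping relating the number of components, self-loops and edges of $G(q)$ to the two partition lengths $\ell(\lambda^+)$ and $\ell(\lambda^-)$. If one prefers to bypass the lightly-looped-forest description, one could instead induct on $\deg(q)$ using the structure of $\vbasis$ as products of at most one variable from each $V_j$, but invoking Proposition~\ref{prop:spanned-by-lightly-looped-forests}(v) is the cleaner route.
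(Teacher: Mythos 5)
Your proof is correct and follows essentially the same route as the paper: the core step in both is the lightly-looped-forest edge count identifying the bidegree $(k,\ell)$ of such a monomial with $\ell(\lambda^+)=n-k$ and $\ell(\lambda^+)+\ell(\lambda^-)=n-\ell$, the only difference being that the paper verifies this on the $\symmB{n}$-stable spanning set of lightly-looped-forest monomials from Proposition~\ref{prop:spanned-by-lightly-looped-forests}(v),(vi), while you verify it on the basis $\vbasis$ combined with the elementary basis-splitting observation --- an inessential repackaging. One cosmetic point: squarefreeness alone does not exclude parallel edges (e.g.\ $v_{12}w_{12}$ is squarefree), but this is harmless, since the lightly-looped-forest property --- equivalently, the fact that monomials in $\vbasis$ use at most one variable from each $V_j$ --- already rules them out.
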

\begin{proof}
By Proposition~\ref{prop:spanned-by-lightly-looped-forests}, it suffices to check this:
a monomial $q$ in $\kk[u_i,v_{ij},w_{ij}]$
with $G(q)$ a lightly-looped forest,
of total degree $k$ and degree $\ell$ in the
$\{v_{ij},w_{ij}\}$, has its
associated double partition $\slambda$ satisfying
\begin{align}
\label{desired-bigrading-relation-1}
\ell(\lambda^+)&=n-k \\
\label{desired-bigrading-relation-2}
\ell(\lambda^+)+\ell(\lambda^-)&=n-\ell.
\end{align}
These two relations \eqref{desired-bigrading-relation-1}, \eqref{desired-bigrading-relation-2} follow from the fact that any tree on $m$ vertices has $m-1$ edges: 
\begin{align*}
\ell= \#\{\text{nonloop edges in }G(q)\}
&=\sum_{i=1}^{\ell(\lambda^+)} (\lambda^+_i-1)
+ \sum_{i=1}^{\ell(\lambda^-)} (\lambda^-_i-1)\\
&= (|\lambda^+| - \ell(\lambda^+)) +(|\lambda^-| - \ell(\lambda^-))
=n- (\ell(\lambda^+)+\ell(\lambda^-))\\
& \\
k= \#\{\text{edges in }G(q)\}
&=\sum_{i=1}^{\ell(\lambda^+)} (\lambda^+_i-1)
+ \sum_{i=1}^{\ell(\lambda^-)} \lambda^-_i\\
&= (|\lambda^+| - \ell(\lambda^+)) +|\lambda^-| 
=n- \ell(\lambda^+).\qedhere
\end{align*}
\end{proof}

We next examine how double partitions 
interact with the flat-orbit decomposition 
\eqref{eq:typeABflatorbitdecomposition}
of $\TypeBCohomology{n}$. It helps to be able to pass between the  $\ubasis$- and $\vbasis$-presentations
for $\TypeBCohomology{n}$. This passage is governed by the 
isomorphism $\basismap: \kk[v_{ij}, w_{ij}, u_i] \longrightarrow \kk[u_{ij}^+, u_{ij}^-, u_i]$
and its inverse $\basismap^{-1}$, from Definition~\ref{def:basismap}:
$$
\begin{array}{lcccl}
\basismap(u_i):=u_i& & & &\basismap^{-1}(u_i)=u_i\\
    \basismap(v_{ij}):= u^+_{ij} + u_{ij}^{-}& & & &
   \basismap^{-1}(u_{ij}^+):= 
    (v_{ij} + w_{ij})/2\\ 
    \basismap(w_{ij}):= u_{ij}^+ - u_{ij}^-& & & &\basismap^{-1}(u_{ij}^-):= (v_{ij} - w_{ij})/2
\end{array}
$$
\noindent
These $\basismap, \basismap^{-1}$ 
do not send monomials to monomials,
but {\it do} respect the multigraphs $G(q)$ 
in a certain sense.

\begin{prop}
    \label{prop:isos-preserve-multigraphs}
    Both $\basismap, \basismap^{-1}$ send a monomial $q$ to a $\kk$-linear sum of monomials $q'$
    with $G(q')=G(q)$.
\end{prop}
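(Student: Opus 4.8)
The plan is to observe that $\basismap$ and $\basismap^{-1}$ act ``edge-by-edge'' on the variables, so the only way a monomial can change under these maps is by replacing each factor $v_{ij}$ or $w_{ij}$ (resp.\ $u^+_{ij}$ or $u^-_{ij}$) by a $\kk$-linear combination of $v_{ij},w_{ij}$ (resp.\ $u^+_{ij},u^-_{ij}$) attached to the \emph{same} pair $\{i,j\}$, while each factor $u_i$ is fixed. Since $G(q)$ records only, for each pair $\{i,j\}$, the total multiplicity $b_{ij}+c_{ij}$ of edges between $i$ and $j$ (not the split between the ``$+$'' and ``$-$'', resp.\ ``$v$'' and ``$w$'', types), and the loop multiplicity $a_i$ at each vertex, any monomial appearing in the expansion of $\basismap(q)$ or $\basismap^{-1}(q)$ has the identical multigraph.

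Concretely, I would proceed as follows. Write $q = \prod_{i} u_i^{a_i} \prod_{i<j} v_{ij}^{b_{ij}} w_{ij}^{c_{ij}}$ (for the $\basismap$ case; the $\basismap^{-1}$ case is symmetric with $u^\pm_{ij}$). Apply $\basismap$ factor-wise: $\basismap(u_i) = u_i$, and $\basismap(v_{ij}^{b_{ij}} w_{ij}^{c_{ij}}) = (u^+_{ij}+u^-_{ij})^{b_{ij}}(u^+_{ij}-u^-_{ij})^{c_{ij}}$. Expanding the latter by the binomial theorem, every resulting monomial is of the form $(u^+_{ij})^{b'_{ij}}(u^-_{ij})^{c'_{ij}}$ with $b'_{ij}+c'_{ij} = b_{ij}+c_{ij}$. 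Multiplying these over all pairs $\{i,j\}$ and including the unchanged $\prod_i u_i^{a_i}$, one sees that every monomial $q'$ in the expansion of $\basismap(q)$ satisfies: it has $a_i$ loops at vertex $i$, and total edge multiplicity $b_{ij}+c_{ij}$ between $i$ and $j$. By Definition~\ref{def:multigraphs-and-double-partitions}, this is exactly the statement that $G(q') = G(q)$. The same computation with $\basismap^{-1}(u^+_{ij}) = (v_{ij}+w_{ij})/2$ and $\basismap^{-1}(u^-_{ij}) = (v_{ij}-w_{ij})/2$ handles the other direction; the scalar $1/2$ is harmless as we work over a field $\kk$ with $2 \in \kk^\times$ (or one can note the statement is only about which monomials appear, not their coefficients).

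There is essentially no obstacle here: the key point is simply that $\basismap$ is a ``local'' change of variables that mixes only the two variables sharing a given edge label $\{i,j\}$ and fixes the loop variables, so it cannot create or destroy edges or loops in $G(q)$. The only thing to be careful about is the bookkeeping that $G(q)$ genuinely depends on the $b_{ij},c_{ij}$ only through their sum $b_{ij}+c_{ij}$ (and on the $a_i$), which is immediate from the definition. I would phrase the proof in one short paragraph: expand $\basismap(q)$ by multiplicativity and the binomial theorem as above, note each resulting monomial has the same exponent of $u_i$ for each $i$ and the same combined exponent on the pair $\{i,j\}$, and invoke the definition of $G(\cdot)$; then remark that the argument for $\basismap^{-1}$ is identical.
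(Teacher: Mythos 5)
Your argument is correct and is essentially the same as the paper's: expand $\basismap(q)$ (resp.\ $\basismap^{-1}(q)$) factor-wise via the binomial theorem and observe that every resulting monomial has the same exponent $a_i$ on each $u_i$ and the same combined exponent $b_{ij}+c_{ij}$ on the pair $\{i,j\}$, hence the same multigraph. The paper likewise writes out the binomial expansion for $\basismap$ and notes the analogous computation handles $\basismap^{-1}$.
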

\begin{proof}
We give the argument for $\basismap$; the argument for $\basismap^{-1}$ is similar.  A typical monomial
in $\kk[u_i,v_{ij},w_{ij}]$
\begin{align*}
q&=\prod_{i=1}^n u_i^{a_i} 
\prod_{1 \leq i < j \leq n} v_{ij}^{b_{ij}} w_{ij}^{c_{ij}}\\
\text{ has }\basismap(q)&=\prod_{i=1}^n u_i^{a_i} 
\prod_{1 \leq i < j \leq n} (u^+_{ij}+u^-_{ij})^{b_{ij}} (u^+_{ij}-u^-_{ij})^{c_{ij}}\\
&=\prod_{i=1}^n u_i^{a_i} 
\prod_{1 \leq i < j \leq n} 
\sum_{k,\ell} \binom{b_{ij}}{k}  \binom{c_{ij}}{\ell}
(-1)^{c_{ij}-\ell} \cdot
(u^+_{ij})^k (u^-_{ij})^{b_{ij}-k} 
(u^+_{ij})^\ell (u^-_{ij})^{c_{ij}-\ell}.
\end{align*}
But every monomial $q'$ appearing in this expansion 
has $G(q')=G(q)$, since it has the form:
$$
q'=\prod_{i=1}^n u_i^{a_i} 
\prod_{1 \leq i < j \leq n} (u^+_{ij})^k (u^-_{ij})^{b_{ij}-k} 
(u^+_{ij})^\ell (u^-_{ij})^{c_{ij}-\ell}. \qedhere
$$
\end{proof}

\begin{prop}
\label{prop:compatiblefiltrations}
When $\fieldchar(\kk)>n$, one has a $\kk \symmB{n}$-module isomorphism
\begin{equation}
\label{mu-filtration-isorphism}
(\TypeBCohomology{n})_\mu 
\cong 
\bigoplus_{\substack{\slambda \vdash n:\\ \lambda^{+} = \mu}} 
\gr(\TypeBCohomology{n})_{\slambda}.
\end{equation}
\end{prop}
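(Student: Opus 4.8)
The plan is to relate the $\uu$-adic filtration on the flat-orbit summand $(\TypeBCohomology{n})_\mu$ to the double-partition decomposition of $\gr(\TypeBCohomology{n})$, and then to pass from the associated graded module back to the module itself using semisimplicity of $\kk\symmB{n}$.

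First, since $\uu$ is $\symmB{n}$-stable and $(\TypeBCohomology{n})_\mu$ is a $\kk\symmB{n}$-submodule, the $\uu$-adic filtration restricts to a $\symmB{n}$-stable filtration $(\TypeBCohomology{n})_\mu \supseteq (\TypeBCohomology{n})_\mu \cap \uu \supseteq (\TypeBCohomology{n})_\mu \cap \uu^2 \supseteq \cdots$ of $(\TypeBCohomology{n})_\mu$. This inclusion of filtered modules is strict (the filtration on the submodule is by definition the induced one), so it induces an injective map of $\kk\symmB{n}$-modules $\gr\big((\TypeBCohomology{n})_\mu\big) \hookrightarrow \gr(\TypeBCohomology{n})$ on associated graded modules; injectivity is the elementary fact that $\big((\TypeBCohomology{n})_\mu\cap\uu^d\big)\cap\uu^{d+1} = (\TypeBCohomology{n})_\mu\cap\uu^{d+1}$.

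Second, I would pin down the image in each $\uu$-adic degree $d$, i.e.\ the image of $(\TypeBCohomology{n})_\mu \cap \uu^d$ inside $\uu^d/\uu^{d+1} = \gr(\TypeBCohomology{n})_d$. Proposition~\ref{prop:spanned-by-lightly-looped-forests}(iv) says $(\TypeBCohomology{n})_\mu \cap \uu^d$ is spanned by classes $\bar q$ of monomials $q$ whose multigraph $G(q)$ is a lightly-looped forest with double partition $\slambda$ satisfying $\lambda^+ = \mu$ and $\ell(\lambda^-)\geq d$. The degree bookkeeping inside the proof of Proposition~\ref{prop:double-partition-to-bigrading} shows that such a monomial has $\uu$-adic degree (its degree in the $u_i$'s) exactly $\ell(\lambda^-)$; hence those with $\ell(\lambda^-) > d$ die in $\uu^d/\uu^{d+1}$, while those with $\ell(\lambda^-) = d$ map, via the identification of $\gr(\TypeBCohomology{n})$ with $\kk[u_i,v_{ij},w_{ij}]/\I_\gr$ from Proposition~\ref{prop:ufiltration} together with the multigraph-preserving property of $\basismap^{\pm 1}$ from Proposition~\ref{prop:isos-preserve-multigraphs}, into $\gr(\TypeBCohomology{n})_{\slambda}$. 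Conversely, Propositions~\ref{prop:spanned-by-lightly-looped-forests}(vi) and (iii) show every $\gr(\TypeBCohomology{n})_{\slambda}$ with $\lambda^+ = \mu$ and $\ell(\lambda^-) = d$ is spanned by images of corresponding elements of $(\TypeBCohomology{n})_\mu \cap \uu^d$. Summing over $d$ yields a $\kk\symmB{n}$-module isomorphism $\gr\big((\TypeBCohomology{n})_\mu\big) \cong \bigoplus_{\slambda\vdash n:\,\lambda^+=\mu}\gr(\TypeBCohomology{n})_{\slambda}$.

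Finally, because $\fieldchar(\kk)>n$, the algebra $\kk\symmB{n}$ is semisimple, so the general principle recalled in Section~\ref{sec:semisimplicity} gives a $\kk\symmB{n}$-module isomorphism $(\TypeBCohomology{n})_\mu \cong \gr\big((\TypeBCohomology{n})_\mu\big)$; composing with the previous isomorphism proves \eqref{mu-filtration-isorphism}. The only delicate point is the second step: one must keep careful track of the passage between the $\ubasis$-presentation (in which Proposition~\ref{prop:spanned-by-lightly-looped-forests}(iv) is phrased) and the $\vbasis$-presentation defining the pieces $\gr(\TypeBCohomology{n})_{\slambda}$, and verify that a lightly-looped forest monomial's degree in the $u_i$'s really is $\ell(\lambda^-)$, so that exactly the double partitions with $\ell(\lambda^-) = d$ contribute to $\uu$-adic degree $d$; everything else is formal.
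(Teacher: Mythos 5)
Your proposal is correct and follows essentially the same route as the paper: semisimplicity of $\kk\symmB{n}$ to replace $(\TypeBCohomology{n})_\mu$ by the associated graded of its induced $\uu$-adic filtration, the lightly-looped-forest spanning sets from Proposition~\ref{prop:spanned-by-lightly-looped-forests}(iii),(iv),(vi), and the multigraph-preserving change of variables $\basismap^{\pm 1}$ to pass between the $\ubasis$- and $\vbasis$-presentations; your degree-by-degree injection $\gr\big((\TypeBCohomology{n})_\mu\big)\hookrightarrow\gr(\TypeBCohomology{n})$ is just a mild repackaging of the paper's identification of both sides with the image of the spans of $U_\mu$ and $V_\mu$. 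The only small slip is a citation: the fact that a lightly-looped-forest monomial has $\{u_i\}$-degree exactly $\ell(\lambda^-)$ is established in the proof of Proposition~\ref{prop:spanned-by-lightly-looped-forests}(ii), not in Proposition~\ref{prop:double-partition-to-bigrading}.
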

\begin{proof}
Consider two subsets of monomials $U_\mu \subset \kk[u_i,u^+_{ij},u^-_{ij}]$ and $V_\mu \subset \kk[u_i,v_{ij},w_{ij}]$, both defined by the
condition that they are the monomials $q$ for which 
$G(q)$ is a lightly-rooted forest and the associated double partition $\slambda$ has $\lambda^+=\mu$.
By Proposition~\ref{prop:isos-preserve-multigraphs},
the isomorphism $\basismap: \kk[u_i,v_{ij},w_{ij}]  
\overset{\sim}{\rightarrow}  
\kk[u_i,u^+_{ij},u^-_{ij}]$ maps
the $\kk$-span of $V_\mu$ isomorphically to the $\kk$-span
of $U_\mu$.  

We now use $U_\mu, V_\mu$ to re-interpret the two sides of \eqref{mu-filtration-isorphism}. Proposition~\ref{prop:spanned-by-lightly-looped-forests} (vi) implies
the right side of 
\eqref{mu-filtration-isorphism} is
the image of $\spn_{\kk} V_\mu$ under  $\kk[u_i,v_{ij},w_{ij}] \twoheadrightarrow \gr(\TypeBCohomology{n})$.

For the left side of \eqref{mu-filtration-isorphism},  consider the $\uu$-adic filtration within $\TypeBCohomology{n}$ of the subspace $(\TypeBCohomology{n})_\mu$:
$$
(\TypeBCohomology{n})_\mu
\,\, \supset \,\, \uu \cap (\TypeBCohomology{n})_\mu
\,\, \supset \,\, \uu^2 \cap (\TypeBCohomology{n})_\mu
\,\, \supset \,\, \cdots 
\,\, \supset \,\, \uu^n \cap (\TypeBCohomology{n})_\mu
\,\, \supset  0.
$$
By semisimplicity, this gives rise to a $\kk\symmB{n}$-module isomorphism
with the direct sum of its filtration factors:
\begin{equation}
\label{easy-filtration-iso}
(\TypeBCohomology{n})_\mu
\cong 
\bigoplus_{d=0}^n \quad
\uu^d \cap (\TypeBCohomology{n})_\mu \,\, /\,\,
\uu^{d+1} \cap (\TypeBCohomology{n})_\mu.
\end{equation}

Proposition~\ref{prop:spanned-by-lightly-looped-forests} assertions (iii) and (iv)
imply that the set $U_\mu$ contains
for each $d$ a subset whose images in $\TypeBCohomology{n}$ span $(\TypeBCohomology{n})_\mu \cap \uu^d$.
Hence the right side of \eqref{easy-filtration-iso} is the image of the $\kk$-span of $U_\mu$ under the surjection
$\kk[u_i,u^+_{ij},u^-_{ij}] \twoheadrightarrow \gr(\TypeBCohomology{n})$, considering
$\gr(\TypeBCohomology{n})$ as a quotient of 
$\kk[u_i,u^+_{ij},u^-_{ij}]$.  However, as noted above,
the map $\basismap^{-1}$ sends $\spn_{\kk} U_\mu$ isomorphically onto $\spn_{\kk}V_\mu$ in $\kk[u_i,v_{ij},w_{ij}]$.  Hence the right side of \eqref{easy-filtration-iso} is also the image of the $\kk$-span of $V_\mu$
under $\kk[u_i,v_{ij},w_{ij}] \twoheadrightarrow \gr(\TypeBCohomology{n})$, when considering $\gr(\TypeBCohomology{n})$ as a quotient of 
$\kk[u_i,v_{ij},w_{ij}]$.  That is, it is 
exactly the right side of \eqref{mu-filtration-isorphism}. 

In summary, one has these isomorphisms and
equalities proving \eqref{mu-filtration-isorphism}:
\begin{align*}
(\TypeBCohomology{n})_\mu 
&\cong
\bigoplus_{d=0}^n \quad
\uu^d \cap (\TypeBCohomology{n})_\mu \,\, /\,\,
\uu^{d+1} \cap (\TypeBCohomology{n})_\mu 
& \text{ by eqn. }~\eqref{easy-filtration-iso}
\\
&=\spn_{\kk}\{\bar{q} \in \gr(\TypeBCohomology{n}): q \in U_\mu\}
&\text{ by Proposition~\ref{prop:spanned-by-lightly-looped-forests}(iii),(iv)}
\\
&= \spn_{\kk}\{\bar{q} \in \gr(\TypeBCohomology{n}): q \in V_\mu\}
&\text{ by the isomorphisms }\basismap, \basismap^{-1}
\\
&=\bigoplus_{\substack{\slambda \vdash n:\\ \lambda^{+} = \mu}} 
\gr(\TypeBCohomology{n})_{\slambda}
&\text{ by Proposition~\ref{prop:spanned-by-lightly-looped-forests}(vi)}.\quad \qedhere
\end{align*}
\end{proof}

\subsection{A vector space presentation of $\gr(\TypeBCohomology{n})$}
\label{sec:VSdescription}
Beyond its bigrading, there are other 
advantages to using $\gr(\TypeBCohomology{n})$
versus $\TypeBCohomology{n}$.  When considering $\symm{n}$-actions in Section~\ref{sec:proofofpeakreps}, it turns out to be useful to develop a different presentation of 
$\gr(\TypeBCohomology{n})$ as a quotient $\kk$-vector space, rather than as a ring.

This presentation reflects a different
$\kk$-vector space decomposition
of $\kk[u_i,v_{ij},w_{ij}]$,
according to the following data associated
to the multigraphs $G(q)$ for its monomials $q$.

\begin{definition} \rm
    A multigraph $G$ on vertex set $[n]$
    has {\it cyclomatic-loop (CL-)data}
    the triple $(\pi,\beta,\lambda)$ where
    \begin{itemize}
        \item  $\pi=\{B_1,\ldots,B_k\}$ is the vertex set partition $[n]=B_1 \sqcup \cdots \sqcup B_k$ of $G$ into connected components $B_i$,
        \item $\beta: \pi \rightarrow \{0,1,2,\ldots\}$ 
        sends $B_i$ to
        the {\it cyclomatic number}
        $
        \#\{\text{edges of }G|_{B_i}\} - \#\{\text{vertices of }G|_{B_i}\} + 1,
        $
        \item $\lambda: \pi \rightarrow \{0,1,2,\ldots\}$ sends $B_i$ to
        its {\it number of self-loops} in $G|_{B_i}$.
    \end{itemize}
\end{definition}

\begin{example} \rm
The multigraph $G$ from Example~\ref{ex:multigraph}
\begin{center}
\begin{minipage}{.2\textwidth}
 \hspace*{-0.6\linewidth}
\begin{tikzpicture}
  [scale=.30,auto=left,every node/.style={circle,fill=black!20}]
 \node (v1) at (-10,14) {$1$};
  \node (v3) at (-10,10) {$3$};
  \node (v2) at (-7,12) {$2$};
  \node (v4) at (-4,12) {$4$};
  \node (v8) at (-1,12) {$8$};
  \node (v9) at (2,12) {$9$};
  \node (v5) at (8,14) {$5$};
  \node (v6) at (8,10) {$6$};
    \node (v7) at (11,12) {$7$};
  \node (v8) at (-1,12) {$8$};
 \draw (v1) to [out=210,in=135,looseness=1] (v3);
  \draw (v1) to [out=0,in=90,looseness=1] (v2);
   \draw (v1) to [out=45,in=45,looseness=1] (v2);
     \draw (v2) to [out=-90,in=0,looseness=1] (v3);
    \draw (v5) to [out=210,in=135,looseness=1] (v6);
    \draw (v5) to [out=-45,in=45,looseness=1] (v6);
   \draw (v5) to [out=135,in=90,looseness=4] (v5);
   \draw (v7) to [out=135,in=90,looseness=4] (v7);
      \draw (v7) to [out=135,in=90,looseness=8] (v7);
  \foreach \from/\to in {v1/v2,v1/v3,v2/v3,v2/v4,v5/v6}
    \draw (\from) -- (\to);
\end{tikzpicture}
\end{minipage}
\end{center}
has the following CL-data $(\pi,\beta,\lambda)$:
\begin{align*}
\pi&=\{
\underbrace{\{1,2,3,4\}}_{B_1}, 
\underbrace{\{8\}}_{B_2}, 
\underbrace{\{9\}}_{B_3}, 
\underbrace{\{5,6,7\}}_{B_4}
\underbrace{\{7\}}_{B_5}
\}\\
\beta(B_1)&=5,\\
\beta(B_2)&=\beta(B_3)=0,\\
\beta(B_4)&=3,\\
\beta(B_5)&=2,\\
&\\
\lambda(B_1)&=\lambda(B_2)=\beta(B_3)=0,\\
\lambda(B_4)&=1,\\
\lambda(B_5)&=2.
\end{align*}
\end{example}

Here are a few simple properties of CL-data.

\begin{prop}
\label{prop:CL-data-properties}
Let $G$ be a multigraph with CL-data $(\pi,\beta,\lambda)$.
\begin{itemize}
\item[(i)] One has $\beta(B_i) \geq \lambda(B_i)$ for all $i$.
\item[(ii)] Lightly-looped forests $G$
are characterized by this: 
for all $ i, \,\, \beta(B_i) \leq 1$
and $\beta(B_i)=1$ implies $ \lambda(B_i)=1$.
\item[(iii)] When $G^+$ is obtained from $G$ by adding one more copy of the edge $\{i,j\}$, 
the CL-data $(\pi^+,\beta^+,\lambda^+)$ for $G^+$
can be computed from  $(\pi,\beta,\lambda)$,
without knowledge of $G$ itself.  
\item[(iv)] The same holds if $G^+$ is obtained from $G$ by adding one more copy of a loop on vertex $i$.
\end{itemize}
\end{prop}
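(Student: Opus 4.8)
The plan is to reduce all four assertions to a single counting identity applied component by component. For a connected multigraph $G|_{B_i}$ on $m$ vertices with $e$ edges in total (self-loops counted as edges) and $\lambda(B_i)$ self-loops, one has $\beta(B_i)=e-m+1$ and exactly $e-\lambda(B_i)$ non-loop edges. Part (i) is then immediate: since $G|_{B_i}$ is connected on $m$ vertices, its non-loop edges must contain a spanning tree, so $e-\lambda(B_i)\ge m-1$, and rearranging gives $\beta(B_i)=\bigl(e-\lambda(B_i)\bigr)-m+1+\lambda(B_i)\ge\lambda(B_i)$.

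For part (ii), I would argue both directions from this same identity. If $G$ is a lightly-looped forest, then on each component the non-loop edges form a tree, so the inequality of (i) is an equality, $\beta(B_i)=\lambda(B_i)$, and the condition ``at most one loop per tree'' is exactly $\lambda(B_i)\le 1$; hence $\beta(B_i)\le 1$, with equality forcing $\lambda(B_i)=1$. Conversely, assume the stated conditions and invoke part (i): if $\beta(B_i)=0$ then $0\ge\lambda(B_i)\ge 0$ forces $\lambda(B_i)=0$, so the component has $m-1$ non-loop edges, i.e. is a tree; if $\beta(B_i)=1$ then by hypothesis $\lambda(B_i)=1$, so again $e-\lambda(B_i)=m-1$ non-loop edges form a tree, now carrying a single loop. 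Either way every component is a tree plus at most one loop, so $G$ is a lightly-looped forest.

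Parts (iii) and (iv) are bookkeeping using additivity of $\beta$ and $\lambda$ under disjoint unions. Adding a parallel copy of $\{i,j\}$ with $i,j$ in the same block $B$ of $\pi$ leaves $\pi$ and $\lambda$ unchanged and increases $\beta(B)$ by one. Adding it with $i\in B$, $j\in B'$, $B\ne B'$ merges these blocks into $B\cup B'$; since the merged component has $e_B+e_{B'}+1$ edges on $|B|+|B'|$ vertices, the identity $\beta=e-m+1$ gives $\beta^+(B\cup B')=\beta(B)+\beta(B')$, while plainly $\lambda^+(B\cup B')=\lambda(B)+\lambda(B')$, all other data unchanged. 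Adding a loop at $i\in B$ leaves $\pi$ unchanged and raises both $\beta(B)$ and $\lambda(B)$ by one. In each case the new triple $(\pi^+,\beta^+,\lambda^+)$ is expressed purely in terms of $(\pi,\beta,\lambda)$ together with which block or blocks contain $i$ (and $j$) — information read directly off $\pi$ — so no knowledge of $G$ itself is used.

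I do not anticipate a genuine obstacle. The one point demanding care is that self-loops are counted among the edges in the definition of the cyclomatic number, so that the inequality in (i) becomes an equality precisely when the non-loop edges of a component span a tree; this equivalence is the hinge of (ii), and everything else is elementary graph arithmetic.
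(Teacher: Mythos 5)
Your proposal is correct and follows essentially the same route as the paper: the paper dismisses (i),(ii) as straightforward (your spanning-tree argument via $\beta(B_i)=e-m+1$ is the natural way to fill that in), and your case analysis for (iii),(iv) — same block raises $\beta$ by one, distinct blocks merge with $\beta$ and $\lambda$ adding, a new loop raises both by one — is exactly the paper's computation.
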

\begin{proof}
Assertions (i),(ii) are straightforward.

For (iii), assume $G^+$ adds a copy of the edge $\{i,j\}$ to $G$.  Consider two cases:  either the
two vertices $i,j$ lie in the same component $G|_{B_k}$,
or they lie in different components, say $i \in B_k, j \in B_\ell$.  In the former case, $\pi^+=\pi$ and $\lambda^+=\lambda$, and $\beta^+(B_\ell)=\beta(B_\ell)$ for $\ell \neq k$, but $\beta^+(B_k)=\beta(B_k)+1$. 
In the latter case, $\pi^+$ contains a merged block $B_{k\ell}:=B_k \sqcup B_\ell$ for which 
\begin{align*}
 \beta^+(B_{k\ell})&=\beta(B_k)+\beta(B_\ell),\\
 \lambda^+(B_{k\ell})&=\lambda(B_k)+\lambda(B_\ell),
\end{align*}
with the rest of $(\pi^+,\beta^+,\lambda^+)$
the same as in $(\pi,\beta,\lambda)$.

For (iv), if $G^+$ adds a copy of the loop on vertex $i$ to $G$, then $\pi^+=\pi$. If $i$ lies in the block $B_k$, one has $\beta^+(B_\ell)=\beta(B_\ell)$
and $\lambda^+(B_\ell)=\lambda(B_\ell)$ for 
$\ell \neq k$, with 
\begin{align*}
\beta^+(B_k)&=\beta(B_k)+1,\\
\lambda^+(B_k)&=\lambda(B_k)+1.\qedhere
\end{align*}
\end{proof}

\begin{definition} \rm

Define a $\kk$-vector space decomposition
\begin{equation}
\label{def:CL-data-grading}
\kk[u_i,v_{ij},w_{ij}]
= \bigoplus_{(\pi,\beta,\lambda)}
\kk[u_i,v_{ij},w_{ij}]_{(\pi,\beta,\lambda)}
\end{equation}
where $\kk[u_i,v_{ij},w_{ij}]_{(\pi,\beta,\lambda)}$
is the $\kk$-span of all monomials $q$
whose multigraph $G(q)$ has CL-data $(\pi,\beta,\lambda)$.

Say that a polynomial $f$ is {\it CL-homogeneous} if it lies in some $\kk[u_i,v_{ij},w_{ij}]_{(\pi,\beta,\lambda)}$.
Call a $\kk$-linear subspace $V \subset \kk[u_i,v_{ij},w_{ij}]$ {\it CL-homogeneous}
if every $f$ in $V$ has some expression
$f=\sum_{(\pi,\beta,\lambda)} f_{(\pi,\beta,\lambda)}$
where each $f_{(\pi,\beta,\lambda)}$ is CL-homogeneous.  The direct sum decomposition \eqref{def:CL-data-grading} makes such  expressions unique, so that
\begin{equation}
\label{direct-sum-expression-for-homogeneity}
V=\bigoplus_{(\pi,\beta,\lambda)} V \cap  \kk[u_i,v_{ij},w_{ij}]_{(\pi,\beta,\lambda)}.
\end{equation}
Equivalently,  $V$
is CL-homogeneous if and only if it has a $\kk$-spanning set consisting of homogeneous elements.
\end{definition}

\begin{prop}
\label{prop:monomial-times-CL-homog-is-CL-homog}
Products $q \cdot f$ of monomials $q$ and
CL-homogeneous $f$  in $\kk[u_i,v_{ij},w_{ij}]$ are CL-homogeneous.
\end{prop}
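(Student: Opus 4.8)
The plan is to reduce the statement to a purely combinatorial fact about multigraphs and then invoke Proposition~\ref{prop:CL-data-properties}(iii),(iv). First note the trivial cases $q=1$ or $f=0$, and then assume $f\neq 0$ is CL-homogeneous, so that every monomial $m$ occurring in $f$ has the same CL-datum $(\pi,\beta,\lambda)$; equivalently, the multigraphs $G(m)$ all have the same component partition, the same cyclomatic numbers, and the same loop counts on corresponding blocks. Since multiplication by the fixed monomial $q$ is injective on monomials, $q\cdot f=\sum_m c_m\,(qm)$ is a $\kk$-linear combination of \emph{distinct} monomials $qm$. Thus it suffices to show that all the monomials $qm$ share one common CL-datum, which will automatically place $q\cdot f$ inside a single summand of the decomposition \eqref{def:CL-data-grading}.

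Next, I would translate the operation $m\mapsto qm$ into graph operations: passing from $G(m)$ to $G(qm)$ amounts to adding one copy of the edge $\{i,j\}$ for each factor $v_{ij}$ or $w_{ij}$ of $q$ and one self-loop at vertex $i$ for each factor $u_i$ of $q$, all with multiplicity. Fix, once and for all, an ordering of this finite multiset of elementary additions — the \emph{same} ordering for every $m$ appearing in $f$. Then $G(qm)$ is obtained from $G(m)$ by carrying out this fixed finite sequence of single-edge and single-loop additions.

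The key point is then exactly Proposition~\ref{prop:CL-data-properties}(iii),(iv): each elementary addition of a copy of an edge $\{i,j\}$ (resp.\ of a loop on $i$) transforms the CL-datum by a rule that depends only on the current CL-datum — through the blocks containing $i$ and $j$ and the values of $\beta,\lambda$ there — and not on the underlying multigraph. Consequently, by induction on the length of the fixed sequence of additions, the CL-datum of $G(qm)$ is a function solely of the CL-datum of $G(m)$ and of that fixed sequence. Since all $G(m)$ with $m$ in $f$ share the CL-datum $(\pi,\beta,\lambda)$, all $G(qm)$ share a common CL-datum $(\pi',\beta',\lambda')$, and hence $q\cdot f\in \kk[u_i,v_{ij},w_{ij}]_{(\pi',\beta',\lambda')}$, i.e.\ $q\cdot f$ is CL-homogeneous.

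I do not expect a genuine obstacle here: the only delicate point is making sure the transformation rule in Proposition~\ref{prop:CL-data-properties}(iii),(iv) really is well-defined independently of $G$, but that independence is precisely how that proposition is stated, so the inductive bookkeeping closes without difficulty. (If one wants the analogous statement for CL-homogeneous \emph{subspaces} rather than single polynomials, it then follows immediately by applying the above to a homogeneous spanning set.)
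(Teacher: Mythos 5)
Your proof is correct and follows essentially the same route as the paper: the paper's argument is precisely an induction on $\deg(q)$, peeling off one variable of $q$ at a time and invoking Proposition~\ref{prop:CL-data-properties}(iii),(iv) to see that each single edge or loop addition changes the CL-datum in a way depending only on the current CL-datum, so all monomials $qm$ end up with a common datum. Your write-up just makes the paper's one-line induction explicit; there is no gap.
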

\begin{proof}
Induct on $\deg(q)$, writing $q$ as a product of variables $u_i, v_{ij}, w_{ij}$, and use Proposition~\ref{prop:CL-data-properties}(iii),(iv).
\end{proof}

This leads to our desired presentation of $\gr(\TypeBCohomology{n})$ as a quotient $\kk$-vector space.

\begin{definition}\rm
\label{def:linear-presentation-subspaces}
Define these two nested $\kk$-linear spaces of $I_n \subset F_n$ inside $\kk[u_i,v_{ij},w_{ij}]$:
\begin{itemize}
\item $F_n$ is the $\kk$-span within $\kk[u_i,v_{ij},w_{ij}]$ of monomials $q$
having $G(q)$ a lightly-looped forest,
\item $I_n$ is the subspace
spanned by products $q \cdot g$
in $F_n$, with $q$ a monomial and $g$ in Table~\ref{table:vector-space-relations} below:
\begin{center}
\begin{table}[!h]
\centering
\setlength{\tabcolsep}{10pt} 
\renewcommand{\arraystretch}{1.5} 
\begin{tabular}{|l|l|}  \hline  
 $u_{i}w_{ij} -u_{j} v_{ij}$\\ 
 $u_{i} v_{ij}  - u_{j} w_{ij}$\\
 $v_{ij}w_{jk} - w_{ij}w_{ik} - v_{ik}v_{jk}$\\
 $w_{ij}w_{jk} - v_{ij}w_{ik} - w_{ik}w_{jk}$\\
 $v_{ij}v_{jk} - v_{ij}v_{ik} - v_{ik}w_{jk}$ \\
 $w_{ij}v_{jk} - w_{ij}v_{ik} - w_{ik}v_{jk}$ \\
 \hline 
\end{tabular}
\caption{The generators of $\I_\gr$ from Table~\ref{table:Igrrelations} that also lie in $F_n$.}
\label{table:vector-space-relations}
\end{table}
\end{center}
\end{itemize}
\end{definition}

\begin{prop}
\label{prop:vectorspaceV}
One has an isomorphism of $\kk$-vector spaces
and $\kk\symmB{n}$-modules, 
\[ 
\gr(\TypeBCohomology{n}) \cong F_n/I_n.
\]
\end{prop}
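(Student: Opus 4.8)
The plan is to write down the obvious map and then reduce everything to a dimension count carried out inside $F_n$. First I would form the $\kk$-linear, $\symmB{n}$-equivariant composite $\psi\colon F_n\hookrightarrow\kk[u_i,v_{ij},w_{ij}]\twoheadrightarrow\gr(\TypeBCohomology{n})$. Since the standard monomial basis $\vbasis$ of $\gr(\TypeBCohomology{n})\cong\kk[u_i,v_{ij},w_{ij}]/\I_\gr$ from Proposition~\ref{prop:ufiltration} consists of lightly-looped forest monomials (this is the induction in the proof of Proposition~\ref{prop:spanned-by-lightly-looped-forests}(v)), it lies in $F_n$, so $\psi$ is surjective. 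Also $I_n$ is spanned by elements $q\cdot g$ with $g$ a generator of $\I_\gr$, hence $I_n\subseteq\I_\gr=\ker\psi$, so $\psi$ descends to a surjection $\bar\psi\colon F_n/I_n\twoheadrightarrow\gr(\TypeBCohomology{n})$. The remaining task is to show that the images of $\vbasis$ already span $F_n/I_n$; this gives $\dim_\kk F_n/I_n\le|\vbasis|=\dim_\kk\gr(\TypeBCohomology{n})$, which together with surjectivity of $\bar\psi$ forces $\bar\psi$ to be an isomorphism and $I_n=\ker\psi=F_n\cap\I_\gr$, so that (both $F_n$ and $\I_\gr$ being $\symmB{n}$-stable) $\bar\psi$ is an isomorphism of $\kk\symmB{n}$-modules as well.

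To prove that $\vbasis$ spans $F_n/I_n$, I would first record a combinatorial characterization: a monomial $q$ with $G(q)$ a lightly-looped forest lies in $\vbasis$ if and only if it is divisible by none of the six $\prec$-initial terms of the relations in Table~\ref{table:vector-space-relations}. Indeed, being a lightly-looped forest forbids parallel edges and double loops, hence forbids divisibility by the initial terms $u_i^2,v_{ij}^2,w_{ij}^2,v_{ij}w_{ij}$ of the other four generators of $\I_\gr$; and $\vbasis$ is exactly the set of monomials divisible by none of the ten initial terms. The second, and crucial, point is that each of the six relations $g$ in Table~\ref{table:vector-space-relations} is CL-homogeneous: all of its monomials support the same connected components with the same cyclomatic numbers and the same loop counts (for the first two relations this is the component $\{i,j\}$ with one edge and one loop; for the other four it is the component $\{i,j,k\}$ with two edges and no loop). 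Hence by Proposition~\ref{prop:monomial-times-CL-homog-is-CL-homog}, $q'\cdot g$ is CL-homogeneous for every monomial $q'$, so if even one monomial of $q'\cdot g$ is a lightly-looped forest then, by the CL-data characterization of lightly-looped forests in Proposition~\ref{prop:CL-data-properties}(ii), all of them are, i.e.\ $q'\cdot g\in F_n$ and therefore $q'\cdot g\in I_n$.

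With these in hand the reduction is simply the multivariate division algorithm run inside $F_n$ using only the six relations of Table~\ref{table:vector-space-relations} as divisors. Given a monomial $q\in F_n$ not in $\vbasis$, the characterization gives $q=q'\cdot\init_\prec(g)$ for an applicable $g$; then $q'\cdot g\in I_n$ (it has $q\in F_n$ as one of its monomials, so it lies in $F_n$ by the CL-homogeneity argument), and $q\equiv q'\cdot(\init_\prec(g)-g)\pmod{I_n}$ is a $\kk$-combination of monomials each $\prec q$ and again in $F_n$. Iterating, which terminates because $\prec$ is a monomial order, rewrites $q$ modulo $I_n$ as a $\kk$-combination of lightly-looped forest monomials divisible by none of the six initial terms, hence of elements of $\vbasis$. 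The main obstacle is precisely the second point: the definition of $I_n$ only allows products $q'\cdot g$ that already lie in $F_n$, and one must show that this restriction never blocks the reduction; the CL-homogeneity of the six relations together with the CL-data description of lightly-looped forests is exactly what guarantees that whenever $q'\cdot\init_\prec(g)$ is admissible, the whole polynomial $q'\cdot g$ is admissible too.
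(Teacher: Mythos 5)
Your proof is correct, and while it shares the paper's starting point and its key technical ingredients, it resolves the main issue by a genuinely different mechanism. Both arguments begin with the surjection $F_n\hookrightarrow\kk[u_i,v_{ij},w_{ij}]\twoheadrightarrow\gr(\TypeBCohomology{n})$ supplied by Proposition~\ref{prop:spanned-by-lightly-looped-forests}(v), and both lean on Propositions~\ref{prop:CL-data-properties}(ii) and~\ref{prop:monomial-times-CL-homog-is-CL-homog}. The paper, however, identifies the kernel directly: it checks that \emph{every} generator of $\I_\gr$ in Table~\ref{table:Igrrelations} is CL-homogeneous, so that $\I_\gr$ decomposes along CL-data, and since $F_n$ is a direct sum of CL-components it concludes that $F_n\cap\I_\gr$ is spanned by products $q\cdot g$ lying in $F_n$, which forces $g$ itself into $F_n$, i.e.\ into Table~\ref{table:vector-space-relations}; this gives $F_n\cap\I_\gr=I_n$ and the isomorphism with no dimension count and no second use of the standard-monomial machinery. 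You instead leave the kernel untouched and show that the images of $\vbasis$ span $F_n/I_n$ by running the division algorithm entirely inside $F_n$ with only the six relations of Table~\ref{table:vector-space-relations}, using CL-homogeneity of just those six relations to guarantee that whenever $q'\cdot\init_\prec(g)$ is a lightly-looped forest the whole product $q'\cdot g$ lies in $F_n$, so the restriction built into the definition of $I_n$ never blocks a reduction step; the dimension count against $|\vbasis|=\dim_\kk\gr(\TypeBCohomology{n})$ (Proposition~\ref{prop:ufiltration}) then forces the surjection $F_n/I_n\twoheadrightarrow\gr(\TypeBCohomology{n})$ to be an isomorphism, and the equality $I_n=F_n\cap\I_\gr$ (hence $\symmB{n}$-stability of $I_n$ and equivariance) falls out a posteriori. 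Your route buys an explicit rewriting algorithm putting any lightly-looped-forest monomial modulo $I_n$ into the basis $\vbasis$, and it needs homogeneity of fewer relations; the paper's route buys brevity and a direct structural description of $I_n$ as $F_n\cap\I_\gr$. One small notational slip: you write $I_n\subseteq\I_\gr=\ker\psi$, whereas $\ker\psi=F_n\cap\I_\gr$; since $I_n\subseteq F_n$ by definition, this does not affect the argument.
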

\noindent
{\bf Note:} Though $\gr(\TypeBCohomology{n})$ is an algebra,
the quotient
$F_n/I_n$ is not;
Proposition~\ref{prop:vectorspaceV} is only
about vector spaces.
\begin{proof}[Proof of Proposition~\ref{prop:vectorspaceV}]
Recall the $\vbasis$-presentation, 
$\gr(\TypeBCohomology{n}) \cong \kk[u_i,v_{ij},w_{ij}]/\I_\gr$.
Proposition~\ref{prop:spanned-by-lightly-looped-forests}(v) then tells us that the composite of these two maps is a surjection:
$$
F_n \hookrightarrow \kk[u_i,v_{ij},w_{ij}] \twoheadrightarrow 
\gr(\TypeBCohomology{n}).
$$
The kernel of the composite is $F_n \cap \I_\gr$, so it induces a $\kk$-vector space
and $\kk \symmB{n}$-module isomorphism
$$
F_n/(F_n \cap \I_\gr) \cong
\gr(\TypeBCohomology{n}).
$$
It therefore only remains to show that $I_n=F_n \cap \I_\gr$.
The inclusion $I_n \subseteq F_n \cap \I_\gr$ is easy:  any
$g$ in Table~\ref{table:vector-space-relations} is also a generator of $\I_\gr$ from Table~\ref{table:Igrrelations}, so any of the spanning elements $q \cdot g$ of $I_n$ also lies in $\I_\gr$.

For the reverse inclusion $F_n \cap \I_\gr \subseteq I_n$,
first check that all the generators $g$ for $\I_\gr$ given in Table~\ref{table:Igrrelations} 
are also CL-homogeneous.  Hence for any monomial $q$, the products
$q \cdot g$ are also CL-homogeneous, by Proposition~\ref{prop:monomial-times-CL-homog-is-CL-homog}. Since $\I_\gr$ is $\kk$-spanned by all such products, $\I_\gr$ is a CL-homogeneous subspace of $\kk[u_i,v_{ij},w_{ij}]$, and as in \eqref{direct-sum-expression-for-homogeneity}, one has 
$$
I_\gr =
\bigoplus_{(\pi,\beta,\lambda)} 
I_\gr \cap  \kk[u_i,v_{ij},w_{ij}]_{(\pi,\beta,\lambda)}.
$$
In particular,
$I_\gr \cap  \kk[u_i,v_{ij},w_{ij}]_{(\pi,\beta,\lambda)}$
is spanned by the products $q \cdot g$ in $\kk[u_i,v_{ij},w_{ij}]_{(\pi,\beta,\lambda)}$ where $q$ is a monomial,
and $g$ is a generator of $\I_\gr$ from Table~\ref{table:Igrrelations}.
On the other hand, Proposition~\ref{prop:CL-data-properties}(ii) implies that $F_n$ is the direct sum of certain of the components
$\kk[u_i,v_{ij},w_{ij}]_{(\pi,\beta,\lambda)}$,
namely those for which $\beta(B_i) \leq 1$ and $\beta(B_i)=1$ implies
$\lambda(B_i)=1$.  Consequently, $F_n \cap I_\gr$ is
$\kk$-spanned by those products $q \cdot g$ in $F_n$ where $q$ is a monomial, and
$g$ is a generator of from Table~\ref{table:Igrrelations}.  However, this forces $g$ itself to lie in $F_n$, that is, $g$ comes from the smaller Table~\ref{table:vector-space-relations}, and hence $q \cdot g$ lies in $I_n$. 
Consequently, $F_n \cap \I_\gr \subseteq I_n$.
\end{proof}


\section{Proof of Theorem \ref{decomposition-of-peak-idempotent-reps-theorem}: Peak representations as sums of higher Lie characters}
\label{sec:proofofpeakreps}

Recall the statement, which used $\odd(\lambda)$ to denote the number of odd parts in a partition $\lambda$.

\vskip.1in
\noindent
{\bf Theorem~\ref{decomposition-of-peak-idempotent-reps-theorem}.}
{\it 
When $\fieldchar(\kk) >n$, for  $0 \leq k \leq n$ with $k$ even, one has a $\kk \symm{n}$-module  isomorphism 
$$
\left( \kk \symm{n} \right) \PeakIdempotent{n}{n-k} 
\cong \bigoplus_{\substack{|\lambda|=n:\\ \odd(\lambda)=n-k}} \Lie{\lambda}  \cong
H^{2k} \PeakConf{n}.
$$    
Hence the Betti number $\dim_{\kk} H^{2k} \PeakConf{n}$ counts permutations in $\symm{n}$ with $n-k$ odd cycles.

Furthermore, the associated graded ring
for $H^* \PeakConf{n}$ has the following $\kk \symm{n}$-module description.  Consider 
the bigraded component $\gr(\TypeBCohomology{n})_{k,\ell}^{\Z_2^n}$   of $H^{2k} \PeakConf{n}$ corresponding to filtration degree $\ell$ in variables indexed by the $x_i=\pm x_j$ hyperplanes.  Then this component is $\kk \symm{n}$-module isomorphic to
$$
\bigoplus_{\substack{|\lambda|=n:\\ \ell(\lambda)=n-\ell\\ \odd(\lambda)=n-k }}
\Lie{\lambda}.
$$
}
\vskip.1in

The proof comes from an 
unexpected $\kk \symm{n}$-module isomorphism, mentioned as \eqref{singly-graded-isomorphism}
in the Introduction
$$
(H^*\PeakConf{n} \cong ) \quad  
\gr(\TypeBCohomology{n})^{\Z_2^n} \cong \TypeACohomology{n} \quad (= H^*\TypeAConf{n}),
$$
that will be another application of 
the Pairing Lemma~\ref{lem:pairing-bijection}.
Recall the
$\kk$-linear subspace $F_n \subset
\kk[u_i,v_{ij},w_{ij}]$
from Definition~\ref{def:linear-presentation-subspaces}, which gave rise to the $\kk \symmB{n}$-module presentation
$$
\gr(\TypeBCohomology{n}) \cong F_n/I_n
$$
from Proposition~\ref{prop:vectorspaceV}.
We wish to consider the composite of
these three maps:
\begin{equation}
\label{three-linear-map-composite}
F_n \hookrightarrow 
\kk[u_i,v_{ij},w_{ij}]
\overset{\gamma}{\longrightarrow} \kk[t_{ij}]
\twoheadrightarrow \kk[t_{ij}]/\I_{\symm{n}}
\cong \TypeACohomology{n}.
\end{equation}

\begin{cor}
    \label{Sn-equivariant-surjection-and-iso}
The composite in \eqref{three-linear-map-composite}
descends to a surjection of $\kk \symm{n}$-modules (but not of $\kk$-algebras) 
$$
\gr(\TypeBCohomology{n}) \cong  F_n/I_n
\twoheadrightarrow 
\TypeACohomology{n}.
$$
which then further restricts to an isomorphism of 
$\kk\symm{n}$-modules (but not of $\kk$-algebras)
$$
\gr(\TypeBCohomology{n})^{\Z_2^n}
\overset{\sim}{\longrightarrow} 
\TypeACohomology{n}.
$$
In particular, this last isomorphism gives a $\kk\symm{n}$-module isomorphism
$$
H^* \PeakConf{n} \cong H^* \TypeAConf{n},
$$
but one which does {\bf not} respect the cohomological grading, that is, 
$H^i \PeakConf{n} \not\cong H^i \TypeAConf{n}$.

\end{cor}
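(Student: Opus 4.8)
The plan is to work through the three-step composite in \eqref{three-linear-map-composite} one group at a time and then pass to $\Z_2^n$-invariants. The crucial observation at the outset is that the middle map $\gamma$ is \emph{not} equivariant for all of $\symmB{n}$ --- since $\sigma(u_i)=\pm u_{\hat{\sigma}(i)}$ while $\gamma(u_i)=1$ --- yet it \emph{is} equivariant for the subgroup $\symm{n}\le\symmB{n}$ of unsigned permutations coming from the decomposition $\symmB{n}=\symm{n}\ltimes\Z_2^n$. This is a short check from the action formulas in Remark~\ref{rmk:choice-of-roots}: for $\sigma\in\symm{n}$ one has $\sigma(u_i)=u_{\sigma(i)}$, while $\sigma$ carries $v_{ij}$ (resp.\ $w_{ij}$) to $\pm v$ or $\pm w$ on the pair $\{\sigma(i),\sigma(j)\}$ in exactly the way that, after applying $\gamma$ and using the sign convention $t_{ba}=-t_{ab}$, recovers $t_{\sigma(i)\sigma(j)}=\sigma(\gamma(v_{ij}))$. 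Next I would verify that the composite annihilates $I_n$: since $\gamma$ is an algebra homomorphism and $I_n$ is spanned by products $q\cdot g$ with $q$ a monomial and $g$ one of the six generators in Table~\ref{table:vector-space-relations}, it suffices to see that $\gamma(g)\in\J_{\symm{n}}$ for each such $g$; the first two map to $0$, and the other four each map to the type-$A$ circuit relation $t_{ij}t_{jk}-t_{ij}t_{ik}-t_{ik}t_{jk}$, which lies in $\J_{\symm{n}}$ by Table~\ref{table:AB_VG}. Together with Proposition~\ref{prop:vectorspaceV} this yields an $\symm{n}$-equivariant surjection $\gr(\TypeBCohomology{n})\cong F_n/I_n\twoheadrightarrow\TypeACohomology{n}$ (not a ring map: $\gamma$ does not kill all of $\I_\gr$, e.g.\ $\gamma(u_i^2)=1$, only the sub-collection $I_n$).

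Surjectivity comes for free from the Pairing Lemma~\ref{lem:pairing-bijection}: the $\typeabasis$-monomials form a $\kk$-basis of $\TypeACohomology{n}$, and for each $m\in\typeabasis$ the lemma gives $\phi(m)\in\tvbasis\subseteq\vbasis$ with $\gamma(\phi(m))=m$; since every monomial of $\vbasis$ has lightly-looped-forest support, $\phi(m)\in F_n$, so the composite hits a basis.

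The last step restricts to $\Z_2^n$-invariants. As $\symm{n}$ is a complement to $\Z_2^n$ in $\symmB{n}$, it maps isomorphically onto $\symmB{n}/\Z_2^n$; hence the $\symm{n}$-action defining $\gr(\TypeBCohomology{n})^{\Z_2^n}$ as a $\symmB{n}/\Z_2^n$-module coincides with the action of $\symm{n}\le\symmB{n}$, which stabilizes $\gr(\TypeBCohomology{n})^{\Z_2^n}$. Restricting the surjection above therefore gives an $\symm{n}$-equivariant map $\gr(\TypeBCohomology{n})^{\Z_2^n}\to\TypeACohomology{n}$, still surjective because the witnesses $\phi(m)$ lie in $\tvbasis=\vbasis\cap\prod\quadgens$, whose images lie in $\gr(\TypeBCohomology{n})^{\Z_2^n}$ by Proposition~\ref{prop:bihomogeneous-gr(Z)-basis} (they are products of the $\Z_2^n$-invariant quadratics $\quadgens$). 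A dimension count closes the argument: $\dim_{\kk}\gr(\TypeBCohomology{n})^{\Z_2^n}=|\tvbasis|=n!$ by Theorem~\ref{thm:tvbasis}, which equals $\dim_{\kk}\TypeACohomology{n}=|\symm{n}|$, so the surjection is an isomorphism. Combining with $H^*\PeakConf{n}\cong\gr(\TypeBCohomology{n})^{\Z_2^n}$ from Section~\ref{sec:filtration-and-gradings} (valid since $\fieldchar(\kk)>n$) and $\TypeACohomology{n}=H^*\TypeAConf{n}$ gives the ungraded $\kk\symm{n}$-module isomorphism $H^*\PeakConf{n}\cong H^*\TypeAConf{n}$; it cannot respect cohomological degree since $\gamma$ collapses the $u$-grading and sends the degree-one variables $v_{ij},w_{ij}$ to $t_{ij}$ --- concretely $H^2\PeakConf{n}=0$ by Theorem~\ref{peak-idempotents-give-cohomology-reps}(iii), while $H^2\TypeAConf{n}\neq 0$ for $n\ge 2$.

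The main obstacle is the equivariance bookkeeping: recognizing that $\gamma$ is equivariant only for the copy of $\symm{n}$ sitting inside $\symmB{n}$ (not for $\symmB{n}$ itself), and that this copy is exactly the group acting on the fixed space $\gr(\TypeBCohomology{n})^{\Z_2^n}$. Once that is in place, the remaining ingredients --- killing $I_n$, surjectivity, and the final dimension count --- all follow from the already-proven Pairing Lemma and the structure of $\tvbasis$.
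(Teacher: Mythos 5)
Your proposal is correct and follows essentially the same route as the paper: identify $\gr(\TypeBCohomology{n})\cong F_n/I_n$ via Proposition~\ref{prop:vectorspaceV}, check $\gamma(I_n)\subseteq\J_{\symm{n}}$ generator-by-generator from Table~\ref{table:vector-space-relations}, and use the Pairing Lemma~\ref{lem:pairing-bijection} together with the basis $\tvbasis$ of $\gr(\TypeBCohomology{n})^{\Z_2^n}$ to get surjectivity and the restricted isomorphism. The only differences are cosmetic --- you spell out the $\symm{n}$-equivariance of $\gamma$ (which the paper asserts in one line) and close with a dimension count where the paper notes that $\gamma$ carries the basis $\tvbasis$ bijectively onto $\typeabasis$.
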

\begin{proof}
Note all three
maps composed in \eqref{three-linear-map-composite} are maps of $\kk\symm{n}$-modules,
and hence so is the composite.

To check that the composite descends to the
quotient $F_n/I_n$, one must show $\gamma(I_n) \subset \I_{\symm{n}}$.
Recall that $I_n$ is the subspace
spanned by products $q \cdot g$
in $F_n$ with $q$ a monomial and $g$ listed in Table~\ref{table:vector-space-relations}.
Since in this situation, one has $\gamma(q\cdot g)=\gamma(q) \cdot \gamma(g)$, it will suffice to check that each $g$ in the table has $\gamma(g)$ lying in the ideal $\I_{\symm{n}}$.  Note that
the first two entries in Table~\ref{table:vector-space-relations} have vanishing image under $\gamma$
$$
\gamma(u_{i}w_{ij} -u_{j} v_{ij})=
\gamma(u_{i} v_{ij}  - u_{j} w_{ij})=
1 \cdot t_{ij}-1 \cdot t_{ij}=0
$$
while the last four entries in the table
\begin{align*}
 &v_{ij}w_{jk} - w_{ij}w_{ik} - v_{ik}v_{jk}\\
 &w_{ij}w_{jk} - v_{ij}w_{ik} - w_{ik}w_{jk}\\
 &v_{ij}v_{jk} - v_{ij}v_{ik} - v_{ik}w_{jk} \\
 &w_{ij}v_{jk} - w_{ij}v_{ik} - w_{ik}v_{jk} 
 \end{align*}
 all have the same image $t_{ij}t_{jk} - t_{ij}t_{ik} - t_{ik}t_{jk}$ under $\gamma$,
which is a generator of $\I_{\symm{n}}$.

Hence $\gamma(I_n) \subset \I_{\symm{n}}$,
and the composite maps induces a map
on the quotient $F_n/I_n \rightarrow \kk[t_{ij}]/I_{\symm{n}} =\TypeACohomology{n}$.
To see that it is surjective,
note that Lemma~\ref{lem:pairing-bijection} lets one re-express the $\kk$-basis $\typeabasis$ for
  $\TypeACohomology{n}$ 
as follows: 
$$
\typeabasis = \left\{ \, \gamma(\phi(m))\,\, : m \in \typeabasis \, \right\} = \gamma\left( \tvbasis \,\,\right).
$$
Note the images of $\tvbasis\subset F_n$ give a $\kk$-basis for $\gr(\TypeBCohomology{n})^{\Z_2^n} \subset  \gr(\TypeBCohomology{n}) \cong F_n/I_n.$
Thus this map $F_n/I_n \rightarrow \TypeACohomology{n}$ surjects, 
and restricts to
an isomorphism
$\gr(\TypeBCohomology{n})^{\Z_n} \cong \TypeACohomology{n}$,
sending the $\kk$-basis $\tvbasis$ to the $\kk$-basis $\typeabasis$.
\end{proof}

This isomorphism  
$\gr(\TypeBCohomology{n})^{\Z_2^n}
\overset{\sim}{\longrightarrow} 
\TypeACohomology{n}$
in Corollary~\ref{Sn-equivariant-surjection-and-iso}  
will allow us to align the $\kk\symm{n}$-module decomposition 
$$
\TypeACohomology{n} \cong \bigoplus_{\lambda \vdash n} \TypeACohomology{\lambda}
$$
from \eqref{eq:typeABflatorbitdecomposition}
with the $\kk\symm{n}$-module decomposition 
$$
\gr(\TypeBCohomology{n})^{\Z_2^n}
=\bigoplus_{\slambda\vdash n}
\gr(\TypeBCohomology{n})^{\Z_2^n}_{\slambda}
$$
that comes from taking ${\Z_2^n}$-fixed points in the decomposition 
in Proposition~\ref{double-partition-decomposition-of-grY}
$$
\gr(\TypeBCohomology{n})=
\bigoplus_{\slambda \vdash n}
\gr(\TypeBCohomology{n})_{\slambda}.
$$
Given $\lambda$, define $\oddparts(\lambda)$ 
(resp. $\evenparts(\lambda)$) to be the partition obtained by selecting only the odd (resp. even) parts of $\lambda$.
Also define their  lengths
\begin{align*}
    \odd(\lambda) &:= \ell(\oddparts(\lambda))\\
\even(\lambda)&:=\ell(\evenparts(\lambda)).
\end{align*}
Say $\lambda$ is an \emph{odd  partition} (resp., \emph{even partition}) if all of its parts are odd (resp., even).
 
\begin{cor}
\label{cor:representation-alignment}
To each partition $\lambda \vdash n$
associate the double partition
$$\slambda:=(\oddparts(\lambda),\evenparts(\lambda)) \vdash n.
$$

Then the $\kk\symm{n}$-module isomorphism  
$\gr(\TypeBCohomology{n})^{\Z_2^n}
\overset{\sim}{\longrightarrow} 
\TypeACohomology{n}$
restricts to a $\kk\symm{n}$-module isomorphism
$$
\gr(\TypeBCohomology{n})^{\Z_2^n}_{\slambda}
\overset{\sim}{\longrightarrow} 
\TypeACohomology{\lambda}.
$$
In particular, $\gr(\TypeBCohomology{n})^{\Z_2^n}_{\slambda}=0$ unless one has both that $\lambda^+$ is odd and
that $\lambda^-$ is even.
\end{cor}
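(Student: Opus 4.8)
The plan is to carry the $\kk\symm{n}$-module isomorphism $\Psi\colon \gr(\TypeBCohomology{n})^{\Z_2^n}\overset{\sim}{\longrightarrow}\TypeACohomology{n}$ of Corollary~\ref{Sn-equivariant-surjection-and-iso} across the two direct-sum decompositions in question and match up the summands one at a time. Recall from the proof of that corollary that $\Psi$ is induced by the linear map $\gamma$ of \eqref{def:gamma-surjection} and that it sends the $\kk$-basis $\tvbasis$ of $\gr(\TypeBCohomology{n})^{\Z_2^n}$ (Proposition~\ref{prop:bihomogeneous-gr(Z)-basis}) bijectively onto the $\kk$-basis $\typeabasis$ of $\TypeACohomology{n}$, because $\gamma(\phi(m))=m$ by the Pairing Lemma~\ref{lem:pairing-bijection}. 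Each $q\in\tvbasis$ is a monomial, so its image lies in a single summand $\gr(\TypeBCohomology{n})_{\slambda}$ of Proposition~\ref{double-partition-decomposition-of-grY}, namely the one for which the multigraph $G(q)$ of Definition~\ref{def:multigraphs-and-double-partitions} has double partition $\slambda$; likewise each $m\in\typeabasis$ is a squarefree monomial in the $t_{ij}$ whose image lies in a single summand $\TypeACohomology{\mu}$ of \eqref{eq:typeABflatorbitdecomposition}, namely the one for which $\mu$ lists the sizes of the connected components of the graph $G_t(m)$ on $[n]$ having one edge $\{i,j\}$ for each $t_{ij}\mid m$. So it suffices to prove: for $q\in\tvbasis$ lying in $\gr(\TypeBCohomology{n})_{\slambda}$, the partition $\lambda^{+}$ is odd, the partition $\lambda^{-}$ is even, and $\gamma(q)\in\TypeACohomology{\lambda}$ where $\lambda$ is the partition whose parts are those of $\lambda^{+}$ together with those of $\lambda^{-}$, so that $\oddparts(\lambda)=\lambda^{+}$ and $\evenparts(\lambda)=\lambda^{-}$.

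The combinatorial core, and the step I expect to demand the most care, is the following claim about the multigraph $G(q)$ of a monomial $q\in\tvbasis\subseteq\prod\quadgens$: \emph{a connected component of $G(q)$ has odd size if it carries no loop and even size if it carries a loop}. To see this, write $q=q_{1}\cdots q_{s}$ with each $q_{a}\in\quadgens$. In each of the three shapes $u_{i}w_{ij}$, $w_{ij}w_{ik}$, $v_{ij}w_{jk}$ the two generators of $q_{a}$ contribute edges and loops all incident to one common vertex (respectively $i$, $i$, and $j$), so the two edges/loops of $q_{a}$ lie inside a single connected component of $G(q)$. Grouping the factors $q_{a}$ according to the component they occupy, the submonomial of $q$ supported on a component $C$ has degree $2\cdot\#\{a : q_{a}\subseteq C\}$, which is even. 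On the other hand, since $q\in\vbasis$ the multigraph $G(q)$ is a lightly-looped forest (Proposition~\ref{prop:spanned-by-lightly-looped-forests}), so $G(q)|_{C}$ is a tree on $|C|$ vertices carrying at most one loop; its degree is therefore $|C|-1+\epsilon_{C}$ with $\epsilon_{C}\in\{0,1\}$ equal to $1$ exactly when $C$ contains a loop. Evenness of $|C|-1+\epsilon_{C}$ forces $|C|$ odd when $\epsilon_{C}=0$ and $|C|$ even when $\epsilon_{C}=1$, proving the claim. In particular $\lambda^{+}$ is an odd partition and $\lambda^{-}$ is an even partition; consequently $\tvbasis$ contains no monomial lying in $\gr(\TypeBCohomology{n})_{\slambda}$, whence $\gr(\TypeBCohomology{n})^{\Z_2^n}_{\slambda}=0$, whenever $\slambda$ is not of this form, which already yields the ``in particular'' assertion of the corollary.

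To finish, observe that $\gamma$ sends $v_{ij},w_{ij}\mapsto t_{ij}$ and $u_{i}\mapsto 1$, so $G_t(\gamma(q))$ is obtained from $G(q)$ by deleting all of its loops; in particular $G_t(\gamma(q))$ and $G(q)$ have the same connected components, so $\gamma(q)$ lies in $\TypeACohomology{\lambda}$ for the $\lambda$ of the first paragraph. Since $\lambda\mapsto\slambda:=(\oddparts(\lambda),\evenparts(\lambda))$ is a bijection from the partitions of $n$ onto the double partitions of $n$ with $\lambda^{+}$ odd and $\lambda^{-}$ even, and since $\Psi$ restricted to $\tvbasis$ is a bijection onto $\typeabasis$ carrying the set of $q\in\tvbasis$ whose $G(q)$ has double partition $\slambda$ into the set of $m\in\typeabasis$ lying in $\TypeACohomology{\lambda}$, a counting argument — disjoint blocks mapped into disjoint blocks, with matching index sets and the whole of $\typeabasis$ exhausted — shows that $\Psi$ restricts to a bijection between these two subsets. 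The first is a $\kk$-basis of $\gr(\TypeBCohomology{n})^{\Z_2^n}_{\slambda}$ and the second a $\kk$-basis of $\TypeACohomology{\lambda}$ (using that $\tvbasis$ is a basis of $\gr(\TypeBCohomology{n})^{\Z_2^n}$ compatible with the double-partition decomposition, and that $\typeabasis$ is compatible with the flat-orbit decomposition \eqref{flat-orbit-decomposition}); since $\Psi$ is $\symm{n}$-equivariant, this bijection is precisely the asserted $\kk\symm{n}$-module isomorphism $\gr(\TypeBCohomology{n})^{\Z_2^n}_{\slambda}\overset{\sim}{\longrightarrow}\TypeACohomology{\lambda}$.
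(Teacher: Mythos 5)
Your proposal is correct and follows essentially the same route as the paper: reduce to the monomial basis $\tvbasis$ via Proposition~\ref{prop:bihomogeneous-gr(Z)-basis}, use the lightly-looped forest structure together with the factorization into quadratics from $\quadgens$ to get the parity of component sizes, and observe that $\gamma$ deletes loops without changing connected components. The only (cosmetic) difference is that you derive the parity by counting total degree two per quadratic factor on each component, where the paper instead inspects directly which factor types ($v_{ij}w_{jk}$, $w_{ij}w_{ik}$ versus a single $u_iw_{ij}$) can occur in unlooped versus looped trees.
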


\begin{example} \rm
For example, when $n=4$, Corollary \ref{cor:representation-alignment} gives the following isomorphisms:
\begin{align*}
\gr(\TypeBCohomology{4})^{\Z_2^4}_{(\emptyset, (4))}
&\overset{\sim}{\longrightarrow} 
\TypeACohomology{(4)}\\
\gr(\TypeBCohomology{4})^{\Z_2^4}_{((3,1),\emptyset)}
&\overset{\sim}{\longrightarrow} 
\TypeACohomology{(3,1)}\\
\gr(\TypeBCohomology{4})^{\Z_2^4}_{(\emptyset, (2,2))}
&\overset{\sim}{\longrightarrow} 
\TypeACohomology{(2,2)}\\
\gr(\TypeBCohomology{4})^{\Z_2^4}_{((1,1), (2))}
&\overset{\sim}{\longrightarrow} 
\TypeACohomology{(2,1,1)}\\
\gr(\TypeBCohomology{4})^{\Z_2^4}_{((1,1,1,1), \emptyset)}
&\overset{\sim}{\longrightarrow} 
\TypeACohomology{(1,1,1,1)}.
\end{align*}
\end{example}

\begin{proof}[Proof of Corollary~\ref{cor:representation-alignment}]
Proposition~\ref{prop:bihomogeneous-gr(Z)-basis} showed that the images of
 $\tvbasis$ in $\gr(\TypeBCohomology{n})^{\Z_2^n}$ 
form a $\kk$-basis.   Hence it suffices to check for each monomial $q$ in $\tvbasis$, if its multigraph $G(p)$
has double partition $\slambda$,
then these hold: 
\begin{itemize}
    \item[(i)] $\lambda^+$ is odd, $\lambda^-$ is even, and 
    \item[(ii)] the image monomial $\gamma(q)=\prod_m t_{i_m,j_m}$
in $\kk[t_{ij}]$ has the $\symm{n}$-orbit $[X]$ of $X=\bigcap_m \{x_{i_m}=x_{j_m}\}$ corresponding to the partition $\lambda$ uniquely defined by $\lambda^+=\oddparts(\lambda)$ and $ \lambda^-=\evenparts(\lambda)$.
\end{itemize}

Recall from the proof of Proposition~\ref{prop:spanned-by-lightly-looped-forests} that $G(q)$ is a lightly-looped forest because $q$ lies in $\vbasis$.  Because it also lies in $\prod \quadgens$, inspection of the quadratic monomials $\quadgens$ in \eqref{Z-generators} shows that each of the unlooped trees in $G(q)$ must have evenly many edges, coming from factors all
of the form $v_{ij} w_{jk}$ or  $w_{ij}w_{ik}$.  Thus its unlooped trees
all have an odd number of vertices,
so that $\lambda^+$ is odd. Similarly, each of the looped trees in $G(q)$ must have an odd number of nonloop edges, due to exactly one factor of the form $u_i w_{ij}$. Thus the looped trees in $G(q)$ have an even number of vertices, so that $\lambda^-$ is even.

Since $\gamma$ sends $u_i \mapsto 1$ and $v_{ij},w_{ij} \mapsto t_{ij}$, the image monomial $\gamma(q)=\prod_m t_{i_m,j_m}$
in $\kk[u_{ij}]$ has corresponding type $A$ intersection subspace
$X=\bigcap_m \{x_{i_m}=x_{j_m}\}$ corresponding to the set partition $\pi_X$
of $[n]$ whose blocks are the connected components of $G(q)$.
Hence its $\symm{n}$-orbit $[X]$ in 
$\lat(\A_{\symm{n}})/\symm{n}$ is indexed by the partition
$\lambda$ that merges all the parts in $\lambda^+$ and $\lambda^-$.  Therefore $\oddparts(\lambda)=\lambda^+$ and $\evenparts(\lambda)=\lambda^-$.
\end{proof}

\begin{proof}[Proof of Theorem~\ref{decomposition-of-peak-idempotent-reps-theorem}]

It suffices to prove the second (``Furthermore") assertion, which implies the rest, and describes the $\kk\symm{n}$-module structure of the bigraded component $\gr(\TypeBCohomology{n})^{\Z_2^n}_{k,\ell}$.

Start with $\kk\symm{n}$-module decompositions and isomorphisms discussed in \eqref{bigrading-on-gr(Z)}
$$
\gr(\TypeBCohomology{n})^{\Z_2^n}
=\bigoplus_{k=0}^n \gr(\TypeBCohomology{n})^{\Z_2^n}_k
=\bigoplus_{k=0}^n \,\, 
\bigoplus_{\ell=0}^k \,\, \gr(\TypeBCohomology{n})^{\Z_2^n}_{k,\ell},
$$
which can be refined further by taking $\Z_2^n$-fixed spaces in the decomposition of 
Proposition~\ref{prop:double-partition-to-bigrading}
$$
\gr(\TypeBCohomology{n})^{\Z_2^n}_{k,\ell}\
=
\bigoplus_{\substack{\slambda \vdash n\\ \ell(\lambda^+)=n-k\\ \ell(\lambda^+)+\ell(\lambda^-)=n-\ell}} \,\,
 \gr(\TypeBCohomology{n})^{\Z_2^n}_{\slambda}.
$$
This can be rewritten using Corollary~\ref{cor:representation-alignment} 
and the isomorphism $\TypeACohomology{\lambda} \cong \Lie{\lambda}$ from \eqref{Lie-lambda-isos}
as follows:
\begin{align*}
\gr(\TypeBCohomology{n})^{\Z_2^n}_{k,\ell}
&\cong 
\bigoplus_{\substack{\lambda \vdash n\\ \odd(\lambda)=n-k\\ \ell(\lambda)=n-\ell}}
\TypeACohomology{\lambda}
\cong 
\bigoplus_{\substack{\lambda \vdash n\\ \odd(\lambda)=n-k\\ \ell(\lambda)=n-\ell}}
\Lie{\lambda} \qedhere.
\end{align*}
\end{proof}

\noindent
Note that the above formulas show that
$\gr(\TypeBCohomology{n})^{\Z_2^n}_{k,\ell}=0$
unless $k$ is even, since $n=|\lambda| \equiv \odd(\lambda) \bmod{2}$.  This is consistent
with Theorem~\ref{peak-idempotents-give-cohomology-reps}(iii),
which asserts that
$
\gr(\TypeBCohomology{n})_{k}^{\Z_2^n} 
\cong (\TypeBCohomology{n})^{\Z_2^n}
= H^{2k} \PeakConf{n}=0
$
unless $2k \equiv 0 \bmod{4}$.

\section{Primitive peak representations}
\label{sec:connectiontopeakreps}

Corollary~\ref{cor:representation-alignment} has further consequences for
refinements of the peak idempotents 
$\{\PeakIdempotent{n}{k}\}$.  Recall
that the latter were defined using the
surjective $\kk$-algebra map 
$
\varphi: \kk \symmB{n} \twoheadrightarrow \kk \symm{n}
$
coming from $\symm{n} \cong \symmB{n}/\Z_2^n$, via
$$
\PeakIdempotent{n}{k}:=
\varphi(\TypeBEulerianIdempotent{n}{k}).
$$
On the other hand, recall that the type $B$ Eulerian idempotents $\{ \TypeBEulerianIdempotent{n}{k}\}$ can be written as sums of the more refined complete family of orthogonal BBHT idempotents $\{ \TypeBEulerianIdempotent{n}{\mu} \}_{|\mu| \leq n}$ for $W=\symmB{n}$ from \cite{bergeronbergeron,BBHT}. Importantly, the idempotents $\TypeBEulerianIdempotent{n}{\mu}$ are \emph{primitive} idempotents of $\sol(\symmB{n})$, meaning that they cannot be written as a non-trivial sum of other idempotents in $\sol(\symmB{n})$.

The general
relation \eqref{general-BBHT-to-Eulerian-refinement}
for $W=\symmB{n}$ becomes
\begin{equation}
\label{typeB-Eulerians-as-sums-of-BBHTs}
\TypeBEulerianIdempotent{n}{k}
=\sum_{\substack{|\mu| \leq n\\\ell(\mu)=k}}
\TypeBEulerianIdempotent{n}{\mu}.
\end{equation}
This motivates the definition of peak idempotents 
$\{\PeakIdempotent{n}{\mu} \}_{|\mu \leq n}$ within $\PeakAlgebra{n}$:
$$
\PeakIdempotent{n}{\mu}:= \varphi(\TypeBEulerianIdempotent{n}{\mu}).
$$
Since the 
$\{ \TypeBEulerianIdempotent{n}{\mu} \}_{|\mu| \leq n}$ form a complete family of orthogonal idempotents in $\kk \symmB{n}$, the finer peak idempotents
$\{ \PeakIdempotent{n}{\mu} \}_{|\mu| \leq n}$ form a complete family of orthognal idempotents in $\kk \symm{n}$, and one has a $\kk\symm{n}$-module decomposition
$$
\kk \symm{n} = \bigoplus_{|\mu| \leq n}
(\kk \symm{n})\PeakIdempotent{n}{\mu}.
$$
It will turn out that the non-vanishing elements $\PeakIdempotent{n}{\mu}$ are primitive idempotents of $\PeakAlgebra{n}$.

Here is the description of these finer peak idempotent representations, and their vanishing.

\begin{cor}
\label{cor:peakidempotentrep_primitive}
For each partition $\mu$ with $|\mu| \leq n$, one has a $\kk\symm{n}$-module isomorphism
$$
(\kk \symm{n})\PeakIdempotent{n}{\mu}
\cong
(\TypeBCohomology{n})^{\Z_2^n}_\mu
= \bigoplus_{\substack{\lambda \vdash n\\\oddparts(\lambda)=\mu}} \lie_{\lambda}.
$$
In particular, $\PeakIdempotent{n}{\mu}=0$ unless $\mu$ is an odd partition and $n-|\mu|$ is even.
\end{cor}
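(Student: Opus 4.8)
The plan is to push the type $B$ BBHT idempotents $\{\TypeBEulerianIdempotent{n}{\mu}\}_{|\mu|\le n}$ through the same chain of identifications that produced Theorem~\ref{decomposition-of-peak-idempotent-reps-theorem}, but now keeping track of a single flat orbit $\mu$ rather than summing over $\ell(\mu)=k$. First I would apply Lemma~\ref{normal-subgroup-idempotent-lemma}(ii) with $G=\symmB{n}$, $\Gamma=\Z_2^n$ (so $G/\Gamma\cong\symm{n}$ and $2\in\kk^\times$), taking $\{e_i\}=\{\TypeBEulerianIdempotent{n}{\mu}\}$ and $\{\hat e_i\}=\{\PeakIdempotent{n}{\mu}\}=\{\varphi(\TypeBEulerianIdempotent{n}{\mu})\}$; this gives, for each $\mu$, a $\kk\symm{n}$-module isomorphism $\bigl((\kk\symmB{n})\TypeBEulerianIdempotent{n}{\mu}\bigr)^{\Z_2^n}\cong(\kk\symm{n})\PeakIdempotent{n}{\mu}$. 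Next I would invoke \eqref{eq:typeb-flat-orbit-iso}, which gives a $\kk\symmB{n}$-module isomorphism $(\kk\symmB{n})\TypeBEulerianIdempotent{n}{\mu}\cong(\TypeBCohomology{n})_\mu$; since $\Z_2^n$ is normal in $\symmB{n}$ and $(-)^{\Z_2^n}$ carries $\kk\symmB{n}$-modules to $\kk\symm{n}$-modules, this yields $(\kk\symm{n})\PeakIdempotent{n}{\mu}\cong(\TypeBCohomology{n})_\mu^{\Z_2^n}=(\TypeBCohomology{n})^{\Z_2^n}_\mu$.

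Then I would feed in Proposition~\ref{prop:compatiblefiltrations} (valid since $\fieldchar(\kk)>n$), which identifies $(\TypeBCohomology{n})_\mu$ with $\bigoplus_{\slambda\vdash n,\ \lambda^+=\mu}\gr(\TypeBCohomology{n})_{\slambda}$ as $\kk\symmB{n}$-modules; taking $\Z_2^n$-invariants gives $(\TypeBCohomology{n})^{\Z_2^n}_\mu\cong\bigoplus_{\slambda\vdash n,\ \lambda^+=\mu}\gr(\TypeBCohomology{n})^{\Z_2^n}_{\slambda}$. Finally Corollary~\ref{cor:representation-alignment} evaluates each summand: $\gr(\TypeBCohomology{n})^{\Z_2^n}_{\slambda}=0$ unless $\lambda^+$ is an odd partition and $\lambda^-$ an even partition, in which case $\gr(\TypeBCohomology{n})^{\Z_2^n}_{\slambda}\cong\TypeACohomology{\lambda}\cong\lie_{\lambda}$ (the last step via \eqref{Lie-lambda-isos}), where $\lambda$ is the unique partition with $\oddparts(\lambda)=\lambda^+$ and $\evenparts(\lambda)=\lambda^-$. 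Since the pairs $\slambda$ with $\lambda^+=\mu$ and $\lambda^-$ even are in bijection with partitions $\lambda\vdash n$ satisfying $\oddparts(\lambda)=\mu$, summing yields
\[
(\kk\symm{n})\PeakIdempotent{n}{\mu}\ \cong\ (\TypeBCohomology{n})^{\Z_2^n}_\mu\ \cong\ \bigoplus_{\substack{\lambda\vdash n\\\oddparts(\lambda)=\mu}}\lie_{\lambda},
\]
which is the stated isomorphism together with its middle equality. For the vanishing clause: if $\mu$ is not odd, or if $\mu$ is odd but $n-|\mu|$ is odd (so $n-|\mu|$ admits no partition into even parts), then every summand above is $0$ by Corollary~\ref{cor:representation-alignment}, hence $(\kk\symm{n})\PeakIdempotent{n}{\mu}=0$; since $\PeakIdempotent{n}{\mu}=(\PeakIdempotent{n}{\mu})^2$ lies in that module, $\PeakIdempotent{n}{\mu}=0$.

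I would also record the primitivity statement suggested by the label of the corollary: $\TypeBEulerianIdempotent{n}{\mu}$ is a primitive idempotent of the finite-dimensional algebra $\sol(\symmB{n})$, so $e\,\sol(\symmB{n})\,e$ is local for $e=\TypeBEulerianIdempotent{n}{\mu}$; applying the surjective algebra map $\varphi\colon\sol(\symmB{n})\twoheadrightarrow\PeakAlgebra{n}$ of \eqref{sign-forgetting-map}, the ring $\PeakIdempotent{n}{\mu}\,\PeakAlgebra{n}\,\PeakIdempotent{n}{\mu}=\varphi\bigl(e\,\sol(\symmB{n})\,e\bigr)$ is a homomorphic image of a local ring, hence local or zero, so $\PeakIdempotent{n}{\mu}$ is primitive in $\PeakAlgebra{n}$ whenever it is nonzero. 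The argument is essentially bookkeeping once the earlier results are in place, so the main thing to be careful about is equivariance: each of \eqref{eq:typeb-flat-orbit-iso}, Proposition~\ref{prop:compatiblefiltrations}, and Corollary~\ref{cor:representation-alignment} must be used as a genuine $\symmB{n}$- (respectively $\symm{n}$-)module isomorphism so that $(-)^{\Z_2^n}$ commutes with the various direct sums; this is exactly where the standing hypothesis $\fieldchar(\kk)>n$ (making $\kk\symmB{n}$, $\kk\symm{n}$, and $\kk\Z_2^n$ all semisimple) does the work. Beyond this, the only genuinely new ingredient compared with Theorem~\ref{decomposition-of-peak-idempotent-reps-theorem} is the transparent bijection between pairs $\slambda$ with $\lambda^+=\mu$, $\lambda^-$ even, and partitions $\lambda$ with $\oddparts(\lambda)=\mu$.
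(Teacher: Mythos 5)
Your argument is correct and follows essentially the same route as the paper's proof: Lemma~\ref{normal-subgroup-idempotent-lemma} to pass from $\TypeBEulerianIdempotent{n}{\mu}$ to $\PeakIdempotent{n}{\mu}$, the flat-orbit isomorphism \eqref{eq:typeb-flat-orbit-iso}, Proposition~\ref{prop:compatiblefiltrations}, and Corollary~\ref{cor:representation-alignment}, with the same parity bookkeeping for the vanishing clause. Your added primitivity remark (locality of $\PeakIdempotent{n}{\mu}\,\PeakAlgebra{n}\,\PeakIdempotent{n}{\mu}$ as a homomorphic image of $e\,\Sol{\symmB{n}}\,e$) is sound but is the content of the next corollary, which the paper instead proves by counting almost odd partitions against $\dim(\PeakAlgebra{n}/\rad(\PeakAlgebra{n}))$.
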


\begin{proof}
Calculate as follows:
\begin{align*}
(\kk \symm{n})\PeakIdempotent{n}{\mu}
&\cong
\left(
(\kk \symmB{n}) \TypeBEulerianIdempotent{n}{\mu}
\right)^{\Z_2^n}& \text{ by  Lemma~\ref{normal-subgroup-idempotent-lemma}},\\
&=(\TypeBCohomology{n})^{\Z_2^n}_\mu
& \text{ by Theorem~\ref{thm:eulerian-reps-as-cohomology}},\\
&\cong \bigoplus_{\substack{\slambda \vdash n:\\ \lambda^{+} = \mu}} 
\gr(\TypeBCohomology{n})_{\slambda}^{\Z_2^n}
&\text{ by Proposition~\ref{prop:compatiblefiltrations}},\\
&= \bigoplus_{\substack{\lambda \vdash n\\\oddparts(\lambda)=\mu}} \lie{\lambda}
& \text{ by Corollary~\ref{cor:representation-alignment}}.
\end{align*}
The vanishing assertion follows,
since if $\oddparts(\lambda)=\mu$,
then $n-|\mu|= |\evenparts(\lambda)|$,
an even number.
\end{proof}

We will use Corollary \ref{cor:peakidempotentrep_primitive} to show that the non-vanishing $\PeakIdempotent{n}{\mu}$ are in fact primitive idempotents of $\PeakAlgebra{n}$.

\begin{cor}
    The collection $\{ \PeakIdempotent{n}{\mu} \}_{\mu}$ for $\mu$ an odd partition and $n- |\mu|$ even gives a complete system of primitive, orthogonal idempotents of $\PeakAlgebra{n}$.
\end{cor}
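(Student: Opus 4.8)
The plan is to transport the corresponding statement for the descent algebra $\sol(\symmB{n})$ across the surjective algebra map $\varphi\colon \sol(\symmB{n}) \twoheadrightarrow \PeakAlgebra{n}$ of Aguiar--Bergeron--Nyman, using only that each $\TypeBEulerianIdempotent{n}{\mu}$ is a \emph{primitive} idempotent of $\sol(\symmB{n})$ (as recorded above from \cite{bergeronbergeron,BBHT}) together with the vanishing statement from Corollary~\ref{cor:peakidempotentrep_primitive}.

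First I would dispatch the elementary properties. Since $\varphi$ is a unital algebra homomorphism and $\{\TypeBEulerianIdempotent{n}{\mu}\}_{|\mu|\le n}$ is a complete system of orthogonal idempotents of $\sol(\symmB{n})$, applying $\varphi$ shows directly that $(\PeakIdempotent{n}{\mu})^2=\PeakIdempotent{n}{\mu}$, $\PeakIdempotent{n}{\mu}\PeakIdempotent{n}{\nu}=\delta_{\mu\nu}\PeakIdempotent{n}{\mu}$, and $\sum_{|\mu|\le n}\PeakIdempotent{n}{\mu}=1$ in $\PeakAlgebra{n}$. By Corollary~\ref{cor:peakidempotentrep_primitive} the nonzero terms are exactly those indexed by an odd partition $\mu$ with $n-|\mu|$ even, so deleting the zero terms leaves a complete system of orthogonal idempotents; these survivors are automatically distinct, as two equal nonzero orthogonal idempotents would be zero.

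The substantive step is primitivity. For a finite-dimensional algebra $A$ over a field, an idempotent $e$ is primitive if and only if the corner ring $eAe$ has no idempotents other than $0$ and $e$, which happens if and only if $eAe$ is a local ring (idempotents lift modulo the nilpotent radical, and a finite-dimensional algebra whose semisimple quotient has no nontrivial idempotents must be local); see e.g.\ \cite{benson1998representations}. Given this, since $\TypeBEulerianIdempotent{n}{\mu}$ is primitive in $\sol(\symmB{n})$, the ring $\TypeBEulerianIdempotent{n}{\mu}\,\sol(\symmB{n})\,\TypeBEulerianIdempotent{n}{\mu}$ is local; and using surjectivity of $\varphi$,
\[
\PeakIdempotent{n}{\mu}\,\PeakAlgebra{n}\,\PeakIdempotent{n}{\mu}
=\varphi\bigl(\TypeBEulerianIdempotent{n}{\mu}\,\sol(\symmB{n})\,\TypeBEulerianIdempotent{n}{\mu}\bigr)
\]
is a quotient ring of a local ring, hence local or zero. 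For the surviving $\mu$ this quotient contains $\PeakIdempotent{n}{\mu}\neq 0$, so it is local, and therefore $\PeakIdempotent{n}{\mu}$ is a primitive idempotent of $\PeakAlgebra{n}$. Combined with the previous paragraph, this gives the claim.

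I do not expect a real obstacle, but the point that requires care --- and the reason for phrasing primitivity through corner rings rather than through irreducible modules --- is that $\sol(\symmB{n})$ and $\PeakAlgebra{n}$ are not semisimple. Working with the criterion ``$eAe$ is local'' is exactly what makes primitivity descend along the quotient map $\varphi$, since a quotient of a local ring is local or zero, with no need to understand $\ker\varphi$ or to lift idempotents through it. As usual the standing hypothesis $\fieldchar(\kk)>n$ is in force, entering only via Corollary~\ref{cor:peakidempotentrep_primitive}.
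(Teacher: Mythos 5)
Your proof is correct, but it takes a genuinely different route from the paper. The paper argues by counting: it invokes the general fact that a complete system of primitive orthogonal idempotents of $\PeakAlgebra{n}$ has $\dim\bigl(\PeakAlgebra{n}/\rad(\PeakAlgebra{n})\bigr)$ elements, quotes Aguiar--Nyman--Orellana's computation that this dimension equals the number of almost odd partitions of $n$, matches that count against the nonvanishing $\PeakIdempotent{n}{\mu}$ supplied by Corollary~\ref{cor:peakidempotentrep_primitive} (each indexed by the almost odd partition $\mu\cup\{(n-|\mu|)\}$), and concludes that refining any non-primitive $\PeakIdempotent{n}{\mu}$ would overshoot this count. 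You instead transport primitivity directly along the surjection $\varphi\colon \sol(\symmB{n})\twoheadrightarrow\PeakAlgebra{n}$: since $\TypeBEulerianIdempotent{n}{\mu}$ is primitive in $\sol(\symmB{n})$ (the BBHT input the paper records), its corner ring is local, and $\PeakIdempotent{n}{\mu}\,\PeakAlgebra{n}\,\PeakIdempotent{n}{\mu}=\varphi\bigl(\TypeBEulerianIdempotent{n}{\mu}\,\sol(\symmB{n})\,\TypeBEulerianIdempotent{n}{\mu}\bigr)$ is a nonzero quotient of a local ring, hence local, so $\PeakIdempotent{n}{\mu}$ is primitive; all steps check out, including the reduction of primitivity to locality of the corner ring in a finite-dimensional algebra and the fact that the surviving idempotents are nonzero because their modules are nonzero sums of higher Lie characters. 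Your argument is more self-contained and local (idempotent-by-idempotent, needing no knowledge of $\rad(\PeakAlgebra{n})$ or of the number of almost odd partitions, and working for any surjection of finite-dimensional algebras), whereas the paper's counting argument trades the primitivity of the type $B$ idempotents for the external radical computation and, as a by-product, ties the system of primitive idempotents to the almost odd partition count. One small presentational point: you assert that the nonzero $\PeakIdempotent{n}{\mu}$ are \emph{exactly} those with $\mu$ odd and $n-|\mu|$ even; the vanishing half is the stated part of Corollary~\ref{cor:peakidempotentrep_primitive}, and the converse should be flagged as coming from the displayed isomorphism there, since for such $\mu$ the module $\bigoplus_{\oddparts(\lambda)=\mu}\Lie{\lambda}$ is nonzero.
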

\begin{proof}
Any finite dimensional algebra $A$ has a complete system of primitive, orthogonal idempotents $e_1, \cdots, e_\ell$ where $\ell $ is the dimension of $A$ modulo its radical $\rad(A)$; see \cite[Appendix D]{aguiar2017topics}. 

By Corollary 4.3 of \cite{aguiar2006peak}, the dimension of $\PeakAlgebra{n}/\rad(\PeakAlgebra{n})$ is the number of \emph{almost odd partitions}, which are partitions $\lambda$ of $n$ with exactly one even part (possibly of size zero). By Corollary \ref{cor:peakidempotentrep_primitive} each non-zero $\PeakIdempotent{n}{\mu}$ can be indexed by the almost odd partition $\mu \cup \{ (n-|\mu|) \}$, and so 
\[ |\{ \PeakIdempotent{n}{\mu}: |\mu|\leq n, \ \PeakIdempotent{n}{\mu} \neq 0 \}| = \dim(\PeakAlgebra{n}/\rad(\PeakAlgebra{n})).\]

Suppose some $\PeakIdempotent{n}{\mu}$ were not primitive; then the complete system of  idempotents $\{ \PeakIdempotent{n}{\mu} \}$ could be refined to a primitive system of idempotents. But this would give more idempotents than the number of almost odd partitions, a contradiction. 
 \end{proof}

  \begin{example} \rm The primitive peak representations 
  $(\kk \symm{n})\PeakIdempotent{n}{\mu}$ for $n=4$ have the following descriptions: 
\begin{align*}
    (\kk \symm{4})\PeakIdempotent{4}{\emptyset} &\cong \lie_{(2,2)} \oplus \lie_{(4)} \\  
    (\kk \symm{4})\PeakIdempotent{4}{(1,1)} &\cong \lie_{(2,1,1)}\\
    (\kk \symm{4})\PeakIdempotent{4}{(3,1)} &\cong \lie_{(3,1)} \\
    (\kk \symm{4})\PeakIdempotent{4}{(1,1,1,1)} &\cong \lie_{(1,1,1,1)} .
\end{align*}
    \end{example}

We remark that since $\lie_\lambda \cong (\kk \symm{n})\TypeAEulerianIdempotent{n}{\lambda}$,
the isomorphism in Corollary~\ref{cor:peakidempotentrep_primitive}
suggests a stronger conjecture.

\begin{conj}
\label{conj:peak-idempotent-as-sum-of-Lies}
For any odd partition $\mu$ with $|\mu|\leq n$ one has
\[ 
\PeakIdempotent{n}{\mu} 
= \sum_{\substack{\lambda \vdash n:\\ \oddparts(\lambda) = \mu}} \TypeAEulerianIdempotent{n}{\lambda}.
\]
\end{conj}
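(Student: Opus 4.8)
The statement to prove is Conjecture~\ref{conj:peak-idempotent-as-sum-of-Lies}, which we state again:

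\begin{conj*}
For any odd partition $\mu$ with $|\mu|\leq n$ one has
\[
\PeakIdempotent{n}{\mu}
= \sum_{\substack{\lambda \vdash n:\\ \oddparts(\lambda) = \mu}} \TypeAEulerianIdempotent{n}{\lambda}.
\]
\end{conj*}

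\textbf{Overview of the approach.} The plan is to reduce the identity to a computation inside the (commutative, semisimple) algebra spanned by the central primitive idempotents of $\kk\symm{n}$, or better yet, to leverage the fact that $\sol(\symm{n})$ and $\PeakAlgebra{n}$ are the algebras we are working in and that the families $\{\TypeAEulerianIdempotent{n}{\lambda}\}$ and $\{\PeakIdempotent{n}{\mu}\}$ are both \emph{complete systems of orthogonal idempotents}. The key structural input is that $\varphi:\kk\symmB{n}\twoheadrightarrow\kk\symm{n}$ restricts to $\sol(\symmB{n})\twoheadrightarrow\PeakAlgebra{n}$, that $\PeakIdempotent{n}{\mu}=\varphi(\TypeBEulerianIdempotent{n}{\mu})$, and that $\TypeAEulerianIdempotent{n}{\lambda}\in\sol(\symm{n})$.

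\textbf{Step 1: Reduce to matching idempotents via their representations.} First I would observe that the right-hand side $f_\mu:=\sum_{\oddparts(\lambda)=\mu}\TypeAEulerianIdempotent{n}{\lambda}$ is an idempotent of $\sol(\symm{n})$, being a sum of orthogonal idempotents, and that $\{f_\mu\}_{\mu \text{ odd}}$ together with $\sum_{\oddparts(\lambda)\text{ not achievable as an odd partition}}\TypeAEulerianIdempotent{n}{\lambda}$ — actually every $\lambda$ has $\oddparts(\lambda)$ an odd partition, so the $f_\mu$ already partition unity: $\sum_{\mu\text{ odd},|\mu|\le n}f_\mu = \sum_{\lambda\vdash n}\TypeAEulerianIdempotent{n}{\lambda}=1$. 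So $\{f_\mu\}$ is a complete system of orthogonal idempotents in $\sol(\symm{n})$. By Corollary~\ref{cor:peakidempotentrep_primitive}, the $\kk\symm{n}$-module generated by $f_\mu$ is $\bigoplus_{\oddparts(\lambda)=\mu}\lie_\lambda$, which by Corollary~\ref{cor:peakidempotentrep_primitive} is isomorphic to $(\kk\symm{n})\PeakIdempotent{n}{\mu}$. Thus $\PeakIdempotent{n}{\mu}$ and $f_\mu$ generate isomorphic left $\kk\symm{n}$-modules and both lie in $\PeakAlgebra{n}$ (note $f_\mu\in\PeakAlgebra{n}$ needs justification — see Step 2). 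Having isomorphic modules is \emph{not} enough to force equality of idempotents in a non-commutative algebra, so the delicate part is to upgrade this to an actual equality.

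\textbf{Step 2: Show $f_\mu \in \PeakAlgebra{n}$ and compare in the split semisimple picture.} I would show $f_\mu$ lies in the peak subalgebra by identifying, via the Garsia--Reutenauer description, the $\TypeAEulerianIdempotent{n}{\lambda}$ with the primitive idempotents of $\sol(\symm{n})$ attached to flat-orbits $\lambda$, and then using the Aguiar--Bergeron--Nyman result that $\PeakAlgebra{n}=\varphi(\sol(\symmB{n}))$ together with the explicit combinatorics: one checks that the lift $\sum_{\oddparts(\lambda)=\mu}\TypeBEulerianIdempotent{n}{\lambda'}$ — with $\lambda'$ the double partition $(\oddparts(\lambda),\evenparts(\lambda))$ as in Corollary~\ref{cor:representation-alignment} — maps under $\varphi$ precisely to $f_\mu$, provided $\varphi(\TypeBEulerianIdempotent{n}{\slambda})=\TypeAEulerianIdempotent{n}{\lambda}$ when $\lambda^+=\oddparts(\lambda),\lambda^-=\evenparts(\lambda)$, and $\varphi(\TypeBEulerianIdempotent{n}{\slambda})=0$ otherwise. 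Granting that last claim, summing over the $\slambda$ with $\lambda^+=\mu$ gives $\PeakIdempotent{n}{\mu}=\varphi(\sum_{\ell(\mu)=\ell(\lambda^+)}\cdots)=\sum_{\oddparts(\lambda)=\mu}\TypeAEulerianIdempotent{n}{\lambda}=f_\mu$, which is exactly the conjecture. So the whole conjecture comes down to:

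\textbf{Step 3 (the crux): $\varphi(\TypeBEulerianIdempotent{n}{\slambda})=\TypeAEulerianIdempotent{n}{\lambda}$ if $(\lambda^+,\lambda^-)=(\oddparts(\lambda),\evenparts(\lambda))$, and $=0$ otherwise.} This is the main obstacle, and I expect it to require either (a) a direct generating-function manipulation with the formulas \eqref{eq:typeaeul} and \eqref{eq:typebeul} for the \emph{sums} $\TypeAEulerianIdempotent{n}{k},\TypeBEulerianIdempotent{n}{k}$, refined to the BBHT level using the theory of $\sol(W)$-modules and the fact that $\varphi$ is an algebra map sending the complete orthogonal system $\{\TypeBEulerianIdempotent{n}{\slambda}\}$ to a complete orthogonal system $\{\varphi(\TypeBEulerianIdempotent{n}{\slambda})\}$ (so each image is either $0$ or, by orthogonality and the trace/dimension count from Corollary~\ref{cor:peakidempotentrep_primitive}, a primitive idempotent of $\PeakAlgebra{n}$); or (b) a more conceptual argument: the isomorphism $(\kk\symmB{n})\TypeBEulerianIdempotent{n}{\slambda}\cong \VG(\A_{\symmB{n}})_{\slambda}$ of Theorem~\ref{thm:eulerian-reps-as-cohomology}, pushed through $(-)^{\Z_2^n}$, together with Corollary~\ref{cor:representation-alignment}'s precise identification of $\gr(\TypeBCohomology{n})^{\Z_2^n}_{\slambda}$ with $\TypeACohomology{\lambda}\cong(\kk\symm{n})\TypeAEulerianIdempotent{n}{\lambda}$, pins down which primitive idempotent of $\sol(\symm{n})$ the image $\varphi(\TypeBEulerianIdempotent{n}{\slambda})$ is — but to turn an isomorphism of modules into equality of idempotents one must know the idempotents are \emph{central} in the relevant sense or invoke that two complete orthogonal systems of \emph{primitive} idempotents in $\PeakAlgebra{n}$ with matching module-isomorphism-types are conjugate, which only gives equality up to conjugation. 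The honest resolution is likely (a): prove the generating-function identity $\sum_{k}\varphi(\TypeBEulerianIdempotent{n}{k})t^k$ matches a specialization of $\sum_\lambda\TypeAEulerianIdempotent{n}{\lambda}$, then bootstrap to the individual $\slambda$ by an induction on the number of even parts, using the recursive/branching structure and the orthogonality relations; the base case $\lambda^-=\emptyset$ (all parts odd) should follow from the $k=0$ computation of Aguiar--Nyman--Orellana cited after Theorem~\ref{peak-idempotents-give-cohomology-reps}. I expect Step 3 to be where essentially all the real work lies; Steps 1 and 2 are bookkeeping given the results already established in the paper.
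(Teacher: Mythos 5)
There is a fundamental mismatch here: the statement you were asked about is not proven in the paper at all --- it is stated as Conjecture~\ref{conj:peak-idempotent-as-sum-of-Lies}, explicitly motivated by (but not deduced from) Corollary~\ref{cor:peakidempotentrep_primitive}. Your proposal does not close this gap either: as you yourself admit, everything hinges on your Step~3, and Step~3 is not a lemma you prove but a restatement (indeed a refinement) of the conjecture itself. So what you have written is a reduction of the conjecture to an equivalent unproven claim, not a proof.

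Two concrete problems. First, Step~3 is ill-posed as written: you invoke type~$B$ idempotents $\TypeBEulerianIdempotent{n}{\slambda}$ indexed by double partitions, but no such family exists in the paper's framework --- the BBHT idempotents for $\symmB{n}$ are indexed by single partitions $\mu$ with $|\mu|\leq n$ (one per flat orbit of $\A_{\symmB{n}}$), and the double-partition decomposition $\gr(\TypeBCohomology{n})=\bigoplus_{\slambda}\gr(\TypeBCohomology{n})_{\slambda}$ is only a $\kk$-vector-space (and $\kk\symmB{n}$-module) decomposition, not one induced by a finer system of idempotents. Second, and more seriously, the representation-theoretic inputs you use in Steps~1--2 (Corollary~\ref{cor:peakidempotentrep_primitive} and the isomorphisms $\lie_\lambda\cong(\kk\symm{n})\TypeAEulerianIdempotent{n}{\lambda}$) can in principle only determine an idempotent up to conjugacy: as the paper's own footnote in Section~\ref{sec:eulerianreps} notes, two complete families of primitive orthogonal idempotents in $\sol(W)$ with isomorphic associated representations are $W$-conjugate, not equal. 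Hence no amount of module bookkeeping of the kind you propose can force the on-the-nose identity $\PeakIdempotent{n}{\mu}=\sum_{\oddparts(\lambda)=\mu}\TypeAEulerianIdempotent{n}{\lambda}$ in $\kk\symm{n}$; one would need genuinely coefficientwise information about $\varphi(\TypeBEulerianIdempotent{n}{\mu})$ versus the Garsia--Reutenauer idempotents (e.g.\ via the generating functions \eqref{eq:typeaeul}, \eqref{eq:typebeul} refined to the primitive level), which neither the paper nor your proposal supplies. Your instinct that option (a) in Step~3 is where the real work lies is sound, but that work is precisely the open problem.
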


\section{Generating functions, recursions and branching rules}
\label{sec:recursions}

We explore here what the previous results say
about the Hilbert series of 
$H^* \PeakConf{n}=(H^* \TypeBConf{n})^{\Z_2^n}=(\TypeBCohomology{n})^{\Z_2^n}$, as well
as its bigraded counterpart $\gr(\TypeBCohomology{n})^{\Z_2^n}$.  We then lift these to equivariant versions that keep track of the
$\kk\symm{n}$-module structure when $\kk$ has
characteristic zero.

\subsection{Ordinary Hilbert series and recursions}
\label{sec:hilbertseries}

Recall from Section~\ref{sec:proofofpeakreps}
that $\gr(\TypeBCohomology{n})_{k,\ell}^{\Z_2^n}$ vanishes unless $k$ is even, and define the following bigraded Hilbert series
$$
\bihilb_n:=\bihilb_n(t,q):=\sum_{k,\ell} 
t^k q^\ell \dim_{\kk} \gr(\TypeBCohomology{n})^{\Z_2^n}_{2k,\ell}
$$
whose $q=1$ specialization is (a rescaling of) the
{\it Poincar\'e polynomial} for $H^* \PeakConf{n}$:
\begin{align}
\bihilb_n(t,1)
=\sum_{k} 
t^k \dim_{\kk} \gr(\TypeBCohomology{n})^{\Z_2^n}_{2k,*}
=\sum_{k} 
t^k \dim_{\kk}  (\TypeBCohomology{n})^{\Z_2^n}_{2k}
=\sum_{k} 
t^k \dim_{\kk} H^{4k} \PeakConf{n}.
\end{align}
The rescalings $\bihilb_n(t^2,q)$ for $n=2,3,4$ appeared in
\eqref{first-few-bigraded-hilbs}, and reappear in the second column of the table in Example~\ref{ex:equivariant-data}
below. Theorem~\ref{decomposition-of-peak-idempotent-reps-theorem}
lets one evaluate $\bihilb_n$, via a known expression
for the $\kk\symm{n}$-module $\Lie{\lambda}$: it is induced from a  degree one character of the centralizer for a permutation of cycle type $\lambda$; see, e.g., Reutenauer \cite[Thm. 8.24]{reutenauer}.  Thus, as was mentioned in the Introduction, its dimension is the index of this centralizer subgroup, or the size of the conjugacy class:
\begin{equation}
\label{Lie-character-dimension}
\dim_{\kk} \Lie{\lambda} = |\{ \text{permutations }\sigma\text{ in }\symm{n}\text{ of cycle type }\lambda\}|.
\end{equation}

\begin{cor}
\label{cor:nonequivariant-bihilb-facts}
One has these expressions, recursions, and generating functions for  $\bihilb_n=\bihilb_n(t,q)$:
\begin{align}
\label{nonequivariant-bihilb-expression}
\bihilb_n(t,q)&=\sum_{\sigma \in \symm{n}} t^{\frac{n-\odd(\sigma)}{2}} q^{n-\cyc(\sigma)},\\
\label{nonequivariant-bihilb-recursion}
\bihilb_{n}(t,q) &= \bihilb_{n-1}(t,q) + tq (n-1) \cdot 
\big( 1 + q (n-2) \big) \cdot \bihilb_{n-2}(t,q),\\
\label{nonequivariant-bihilb-gf}
\sum_{n=0}^\infty \frac{x^n}{n!} 
\bihilb_n(t,q)&=
\left[ 
\left( 1-z \right)^{-\frac{a+b}{2}} \cdot \left( 1+z \right)^{\frac{a-b}{2}}
\right]_{\substack{a=q^{-1}\\ b=t^{-1/2}q^{-1} \\z=t^{1/2}qx}}
\end{align}  
where $\cyc(\sigma), \odd(\sigma)$ are the number of cycles and the number of odd-sized cycles in $\sigma$.
\end{cor}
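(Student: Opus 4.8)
# Proof proposal for Corollary~\ref{cor:nonequivariant-bihilb-facts}

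\textbf{Overall strategy.} All three assertions are ultimately bookkeeping consequences of Theorem~\ref{decomposition-of-peak-idempotent-reps-theorem} together with the dimension formula \eqref{Lie-character-dimension}. The plan is to first establish the closed-form expression \eqref{nonequivariant-bihilb-expression} as a sum over permutations, then derive the exponential generating function \eqref{nonequivariant-bihilb-gf} from it by a standard exponential-formula computation, and finally extract the recursion \eqref{nonequivariant-bihilb-recursion} either combinatorially (by deletion of the letter $n$) or by differentiating the generating function.

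\textbf{Step 1: the permutation-sum expression.} Starting from the ``Furthermore'' part of Theorem~\ref{decomposition-of-peak-idempotent-reps-theorem}, the bigraded component $\gr(\TypeBCohomology{n})^{\Z_2^n}_{2k,\ell}$ is $\kk\symm{n}$-isomorphic to $\bigoplus \Lie{\lambda}$ over $\lambda \vdash n$ with $\odd(\lambda) = n - 2k$ and $\ell(\lambda) = n-\ell$. Taking dimensions and using \eqref{Lie-character-dimension}, $\dim_{\kk}\gr(\TypeBCohomology{n})^{\Z_2^n}_{2k,\ell}$ is the number of permutations $\sigma \in \symm{n}$ with $\odd(\sigma) = n-2k$ and $\cyc(\sigma) = n-\ell$. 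Summing $t^k q^\ell$ over all $k,\ell$ then matches each $\sigma$ with the monomial $t^{(n-\odd(\sigma))/2} q^{n-\cyc(\sigma)}$, giving \eqref{nonequivariant-bihilb-expression}. Here one should note that $n-\odd(\sigma)$ is always even, since the number of even-sized cycles contributes an even total and $n \equiv \odd(\sigma) \bmod 2$; this legitimizes the exponent $(n-\odd(\sigma))/2$.

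\textbf{Step 2: the exponential generating function.} I would apply the exponential formula for the EGF of permutation statistics that are ``cycle-multiplicative.'' Writing $\sigma$ as a set of cycles, the weight $t^{(n-\odd(\sigma))/2}q^{n-\cyc(\sigma)}$ factors over cycles: a cycle of odd length $m$ contributes a factor (relative to the baseline $t^{m/2}$-type normalization) that I will track carefully, and a cycle of even length $m$ contributes another. Concretely, assign to a single cycle of length $m$ the weight $w_m = q^{m-1} t^{[m \text{ even}]\cdot (1/2)}\cdot(\text{appropriate power})$; the cleanest route is to note that $\sum_{m \text{ odd}} w_m \frac{x^m}{m}$ and $\sum_{m\text{ even}} w_m\frac{x^m}{m}$ are, after the substitution $z = t^{1/2}qx$, $a = q^{-1}$, $b = t^{-1/2}q^{-1}$, equal to $-\tfrac{a+b}{2}\log(1-z) + \tfrac{a-b}{2}\log(1+z)$ up to checking the split of $\log(1-z) = \sum z^m/m$ into odd and even parts, namely $\sum_{m\text{ odd}} z^m/m = \tfrac12\log\tfrac{1+z}{1-z}$ and $\sum_{m\text{ even}} z^m/m = -\tfrac12\log(1-z^2)$. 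Exponentiating gives exactly the bracketed expression in \eqref{nonequivariant-bihilb-gf}. The one genuinely fiddly point is matching the powers of $t$ and $q$ through the substitution; I would verify it against the small cases $n=2,3,4$ from \eqref{first-few-bigraded-hilbs}.

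\textbf{Step 3: the recursion.} The recursion \eqref{nonequivariant-bihilb-recursion} I would prove combinatorially from \eqref{nonequivariant-bihilb-expression}: given $\sigma \in \symm{n}$, either $n$ is a fixed point (a cycle of odd length $1$), and removing it gives $\sigma' \in \symm{n-1}$ with $\odd(\sigma') = \odd(\sigma)-1$, $\cyc(\sigma') = \cyc(\sigma)-1$, contributing $t^{1/2}\cdot t^{-1/2}q^{1}\cdot$(wait --- here $n-\odd$ drops by $0$ and $n-\cyc$ drops by $0$, so the contribution is simply $\bihilb_{n-1}$, \emph{after} re-indexing $n\mapsto n-1$ which shifts both $n-\odd$ and $n-\cyc$ by... ); more carefully, one splits according to the cycle containing $n$: if that cycle has length $1$, deletion yields $\bihilb_{n-1}$; if it has length $2$, there are $n-1$ choices of partner and deletion of both elements yields a factor $tq$ times $\bihilb_{n-2}$ (length $2$ is even, so $n-\odd$ increases by $2$, contributing $t$, and $n-\cyc$ increases by $1$, contributing $q$); if it has length $\geq 3$, one removes $n$ from its cycle, which keeps $n-1$ elements, decreases the cycle length by one, and the sum over ``where $n$ sat'' together with the parity change of that one cycle produces the factor $tq(n-1)\cdot q(n-2)\cdot\bihilb_{n-2}$ --- this last term needs the observation that deleting $n$ from a cycle of length $\ge 3$ changes its parity (hence a factor of $t$) and decreases $\cyc$-complement appropriately, and the count $(n-1)(n-2)$ arises from choosing two cyclically adjacent neighbors. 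Collecting the three cases gives $\bihilb_n = \bihilb_{n-1} + tq(n-1)\bihilb_{n-2} + tq^2(n-1)(n-2)\bihilb_{n-2} = \bihilb_{n-1} + tq(n-1)(1+q(n-2))\bihilb_{n-2}$. Alternatively, and as a check, the same recursion drops out of applying $\partial/\partial x$ to \eqref{nonequivariant-bihilb-gf} and reading off coefficients.

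\textbf{Expected main obstacle.} The only real subtlety is Step 2: getting the substitution $a = q^{-1}$, $b = t^{-1/2}q^{-1}$, $z = t^{1/2}qx$ to line up exactly, since the exponents of $t$ in \eqref{nonequivariant-bihilb-expression} are half-integers tied to cycle parities and must be distributed between the two logarithm terms without error. I would pin this down by writing the per-cycle weight explicitly as a function of cycle length $m$, separating $m$ odd from $m$ even, summing the two resulting series in closed form, and then confirming the identification of the parameters by matching the $x^2, x^3, x^4$ coefficients against the known polynomials. The combinatorial Steps 1 and 3 are routine once the parity bookkeeping ($n - \odd(\sigma)$ even, cycle-length parity under deletion) is made explicit.
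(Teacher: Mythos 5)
Your proposal is correct and follows essentially the same route as the paper: \eqref{nonequivariant-bihilb-expression} is deduced exactly as you say from Theorem~\ref{decomposition-of-peak-idempotent-reps-theorem} together with \eqref{Lie-character-dimension}, the recursion by conditioning on the cycle of $\sigma$ containing $n$, and the generating function via Touchard's cycle indicator with the odd/even splitting of $\log(1-z)$. Two points to tighten: in the length-$\geq 3$ case the paper deletes both $n$ and $\sigma(n)$ (summing over $\sigma(n)=i$, $\sigma(i)=j$), so the cycle parity is \emph{preserved} and the factor $t$ comes from $n$ dropping by $2$ while $\odd$ is unchanged, not from a parity flip of a single deletion; and when pinning down the substitution, the per-cycle weight is $t^{(m-1)/2}q^{m-1}x^m$ for odd $m$ and $t^{m/2}q^{m-1}x^m$ for even $m$, so with $z=t^{1/2}qx$ the variable attached to odd cycles is $t^{-1/2}q^{-1}$ and the one attached to even cycles is $q^{-1}$ (as in the appendix's convention for $\N(a,b,c)$), which is exactly the check your small-case verification would catch.
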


\begin{proof}
For \eqref{nonequivariant-bihilb-expression}, Theorem~\ref{decomposition-of-peak-idempotent-reps-theorem}    implies
\begin{align*}
\bihilb_n(t^2,q)
&=\sum_{\lambda \vdash n} \dim_{\kk} \Lie{\lambda} 
\cdot t^{n-\odd(\lambda)} q^{n-\ell(\lambda)}\\
&=\sum_{\lambda \vdash n}
|\{\sigma \in \symm{n}\text{ of cycle type }\lambda\}| \cdot 
t^{n-\odd(\lambda)} q^{n-\ell(\lambda)}
=\sum_{\sigma \in \symm{n}} t^{n-\odd(\sigma)} q^{n-\cyc(\sigma)}.
\end{align*}
For \eqref{nonequivariant-bihilb-recursion}, segregate 
summands of \eqref{nonequivariant-bihilb-expression} by whether $n$ lies in a cycle of $\sigma$ of size $1, 2$ or at least $3$:
\begin{align*}
\bihilb_n(t,q)
&=\sum_{\sigma \in \symm{n}} t^{\frac{n-\odd(\sigma)}{2}} q^{n-\cyc(\sigma)}\\
&=\sum_{\substack{\sigma \in \symm{n}:\\ 
\sigma(n)=n}}
t^{\frac{n-\odd(\sigma)}{2}} q^{n-\cyc(\sigma)}
+
\sum_{i=1}^{n-1} 
\sum_{ \substack{ \sigma \in \symm{n}:\\ \sigma(n)=i\\ \sigma(i)=n}}
t^{\frac{n-\odd(\sigma)}{2}} q^{n-\cyc(\sigma)}
+
\sum_{\substack{(i,j):\\1 \leq i,j \leq n-1\\i \neq j}} 
\sum_{ \substack{\sigma \in \symm{n}:
\\ \sigma(n)=i \\ \sigma(i)=j }} 
t^{\frac{n-\odd(\sigma)}{2}} q^{n-\cyc(\sigma)}\\
&=\bihilb_{n-1}(t,q) + 
(n-1) \cdot t^1 q^1 \bihilb_{n-2}(t,q)
+ (n-1)(n-2) \cdot t^1 q^2 \bihilb_{n-2}(t,q)\\
&= \bihilb_{n-1}(t,q) + tq (n-1)  \cdot 
\big( 1 + q(n-2) \big) \cdot \bihilb_{n-2}(t,q).
\end{align*}

For \eqref{nonequivariant-bihilb-gf}, note that
 \eqref{nonequivariant-bihilb-expression} shows 
$$
\sum_{n=0}^\infty \frac{x^n}{n!} 
\bihilb_n(t,q)=
\left[ 
\sum_{n=0}^\infty
\frac{z^n}{n!} 
\sum_{w \in S_n} a^{\odd(w)} b^{\even(w)}
\right]_{\substack{a=q^{-1}\\ b=t^{-1/2}q^{-1} \\z=t^{1/2}qx}}
$$

The sum in the brackets is a specialization of
Touchard's {\it cycle indicator} generating function
\cite{Touchard}
$$
\sum_{n=0}^\infty
\frac{z^n}{n!} 
\sum_{w \in S_n} a_1^{m_1(w)} a_2^{m_2(w)} a_3^{m_3(w)} \cdots 
=\exp
\left( \sum_{m=1}^\infty \frac{a_m z^m}{m}
\right)
$$
with $m_j(w)$ the number of $j$-cycles in $w$. Setting $a_m=a$ for $m$ odd and 
$a_m=b$ for $m$ even gives
$$
\begin{aligned}
\sum_{n=0}^\infty
\frac{z^n}{n!} 
\sum_{w \in S_n} a^{\odd(w)} b^{\even(w)} 
&=\exp
\left(
a \sum_{\substack{m \geq 1\\m\text{ odd}}} \frac{z^m}{m}
+
b \sum_{\substack{m \geq 2\\m\text{ even}}} \frac{z^m}{m}
\right)\\
&=\exp
\left( 
a \frac{-\log(1-z)+
\log(1+z)}{2}
+
b \frac{-\log(1-z)-
\log(1+z)}{2}
\right)\\
&= \left( 1-z \right)^{-\frac{a+b}{2}} \ \left( 1+z \right)^{\frac{a-b}{2}}. \qedhere
\end{aligned} 
$$
\end{proof}

\begin{remark}\rm
It is worth noting how the assertions of Corollary~\ref{cor:nonequivariant-bihilb-facts}
specialize at $t=1$ and $q=1$.  

The isomorphism $H^*\PeakConf{n} \cong 
\TypeACohomology{n} = H^* \TypeAConf{n}$
in Corollary~\ref{Sn-equivariant-surjection-and-iso} shows  $\bihilb_n(1,q)$ is the Poincar\'e series
for $H^* \TypeAConf{n}$:
$$
\bihilb_n(1,q)=\sum_{\sigma \in \symm{n}}
q^{n-\cyc(\sigma)}
=(1+q)(1+2q)(1+3q)\cdots(1+(n-1)q)
$$
that appeared in Theorem~\ref{thm:AB_VGpresentation}(iv).
It is a generating function for
{\it Stirling numbers of the 1st kind}, counting
permutations by their number of cycles.
Note that it implies an obvious recursion \eqref{obvious-recursion} below, whose second iteration \eqref{recursion-iteration}
gives the $t=1$ specialization of \eqref{nonequivariant-bihilb-recursion}:
\begin{align}
\label{obvious-recursion}
\bihilb_n(1,q)
&=(1+q(n-1)) \cdot \bihilb_{n-1}(1,q) \\
\label{recursion-iteration}
&=\bihilb_{n-1}(1,q) + q(n-1) \cdot (1+q(n-2)) \cdot \bihilb_{n-2}(1,q)).
\end{align}
At $t=1$, \eqref{nonequivariant-bihilb-gf} 
is equivalent to another well-known generating function
for Stirling numbers of the 1st kind:
$$
\sum_{n=0}^\infty \frac{x^n}{n!} 
\bihilb_n(1,q)=
\left[ 
\left( 1-z \right)^{-a} 
\right]_{\substack{a=q^{-1}\\ z=t^{1/2}qx}}.
$$

On the other hand, $\bihilb_n(t,1)$
is the Poincar\'e series for
$H^* \PeakConf{n}$.  The $q=1$
specializations of \eqref{nonequivariant-bihilb-expression}, \eqref{nonequivariant-bihilb-recursion}
\begin{align}
\bihilb_n(t,1)
&=\sum_{\sigma \in\symm{n}} t^{\frac{n-\odd(\sigma)}{2}}\\
\label{Sheffer-recursion}
\bihilb_n(t,1)
&=\bihilb_{n-1}(t,1) + (n-1)^2 \cdot \bihilb_{n-2}(t,1),
\end{align}
show that $\{ \bihilb_n(t,1) \}_{n=0,1,2,\ldots}$ are a rescaled version of
a family of orthogonal polynomials $\{ p_n(x) \}_{n=0,1,2,\ldots}$
with three-term recurrence
$p_n= xp_{n-1} + 
(n-1)^2 p_{n-2}$
and initial conditions
$p_{-1}=0,  p_0=1$
known as \emph{Sheffer polynomials}; see \url{https://oeis.org/A060524}.
\end{remark}

\subsection{Equivariant Hilbert series and branching rule}

To track the structures of $\kk\symm{n}$-modules $V$,
with $\kk$ of characteristic zero,
we will use the {\it Frobenius characteristic
map} (see Macdonald \cite[\S I.7]{Macdonald}, Stanley \cite[\S 7.18]{Stanley-EC2}) sending $V \mapsto \fchar(V)$, an element of the ring
of symmetric functions $\Lambda_{\kk}=\kk[p_1,p_2,\ldots]$,
defined by
$$
\fchar(V):= \frac{1}{n!} \sum_{\sigma \in \symm{n}}
\mathrm{trace}(\sigma|_V) \cdot p_{\lambda(\sigma)}
$$
with $p_{\lambda(\sigma)}:=p_1^{m_1(\sigma)} \cdots p_n^{m_n(\sigma)}$
if $\sigma$ has $m_i(\sigma)$ cycles of size $i$.  See Macdonald \cite[Ch. 1]{Macdonald} and Stanley \cite[Ch. 7]{Stanley-EC2}
for background on the ring of symmetric functions, including various $\kk$-bases.  In particular,  the
{\it Schur functions} $\{s_\lambda\}$ are the images
under $\fchar$ of the irreducible $\kk\symm{n}$-modules $\{\chi^\lambda\}$ indexed by partitions $\lambda \vdash n$.

Define the {\it equivariant} bigraded Hilbert series
$$
\symmbihilb_n:=
\symmbihilb_n(t,q):=\sum_{k,\ell} 
t^k q^\ell 
\fchar\left( \gr(\TypeBCohomology{n})^{\Z_2^n}_{2k,\ell} 
\right) \qquad \left( \in \Lambda[t,q] \right).
$$
Theorem~\ref{decomposition-of-peak-idempotent-reps-theorem} then immediately implies
$$
\symmbihilb_n(t^2,q)
=\sum_{\lambda \vdash n} L_\lambda 
\cdot t^{n-\odd(\lambda)} q^{n-\ell(\lambda)}
$$
where $L_\lambda:=\fchar(\Lie{\lambda})$.
It is a well-known problem of Thrall \cite{thrall} (see Schocker \cite{Schocker-higher-lie}) to decompose
$\Lie{\lambda}$ into $\kk\symm{n}$-irreducible modules, or equivalently, express $L_\lambda$ as a sum of Schur functions $s_\mu$.  However, in practice one can compute this for fairly large $n$, as the expression for $\Lie{\lambda}$ as an induced
character mentioned earlier \cite[Thm. 8.24]{reutenauer} leads to symmetric function
expressions for $L_\lambda$ involving power sums and/or plethysms; see, e.g.,
Hersh and Reiner \cite[eqns. (25),(32)]{HershR},
and also \eqref{plethysm-formula-for-Lie-character} below.

\begin{example} \rm
\label{ex:equivariant-data}
The equivariant refinement of \eqref{first-few-bigraded-hilbs} for $n=2,3,4$
is the following:
\begin{equation*}
\begin{tabular}{|c|l|l|}\hline
$n$ & $\bihilb_n(t,q)$ & $\symmbihilb_n(t,q)$ \\ \hline\hline
$2$ & $1$ & $L_{(1,1)}$\\
   &  & $=s_{(2)}$ \\
      &  & \\\hline
$3$ & $1+3tq+2tq^2$ & $L_{(1,1,1)}+tq L_{(2,1)}+tq^2 L_{(3)}$ \\
   &  & $=s_{(3)} + tq (s_{(2,1)} + s_{(1,1,1)}) + tq^2 s_{(2,1)}$\\
        &  & \\\hline
$4$ & $1+6tq+8tq^2+3t^2q^2+6t^2q^3$ & 
$L_{(1,1,1,1)}+tq L_{(2,1,1)}+tq^2 L_{(3,1)}+ 
  t^2q^2 L_{(2,2)}+t^2q^3 L_{(4)}$\\
 &  & $=s_{(4)} + tq (s_{(3,1)}+s_{(2,1,1)}) + tq^2 (s_{(3,1)}+s_{(2,2)}+s_{(2,1,1)})$ \\
 & & $ \qquad +t^2q^2 (s_{(2,2)}+s_{(1,1,1,1)}) 
 +t^2q^3 (s_{(3,1)}+s_{(2,1,1)})$ \\
       &  & \\\hline
\end{tabular}
\end{equation*}
\end{example}

Our next goal is to generalize recursion \eqref{nonequivariant-bihilb-recursion} by lifting it to a 
{\it branching rule}.  Theorem~\ref{restriction-rep-recursion} below describes how
the bigraded $\kk\symm{n}$-character
$\symmbihilb_n(t,q)$ behaves upon restriction from $\kk\symm{n}$ to $\kk\symm{n-1}$.  We will abbreviate the operations of
restriction and induction between $\kk\symm{n}$-modules and $\kk\symm{n-1}$-modules as $V \mapsto V\res$ and $U \mapsto U\ind$, omitting the relevant $n$;  we hope that it is always clear from context.

The Frobenius characteristic map $\fchar$ translates induction and restriction into operations
on symmetric functions expressed as
polynomials $f=f(p_1,p_2,\ldots)$ in the power sums $p_1,p_2,\ldots$, as follows:
\begin{itemize}
\item {\it induction} $(-)\ind$ multiplies dimension by $n$, and corresponds
to $f \mapsto p_1 f$,
\item {\it restriction} $(-)\res$
leaves dimension unchanged, and corresponds to $f \mapsto \frac{\partial}{\partial p_1} f$.
\end{itemize}

\begin{theorem}
\label{restriction-rep-recursion}
In $\Lambda_{\kk}[q,t]$, one has the following branching rule for $\symmbihilb_n:=\symmbihilb_n(t,q)$,
which is an equivariant lift of \eqref{nonequivariant-bihilb-recursion}:
\begin{equation}
\begin{aligned}
\frac{\partial \symmbihilb_n}{\partial p_1} 
  &=\symmbihilb_{n-1} + tq p_1 
     \left(
       1 + q p_1 \frac{\partial}{\partial p_1}
    \right) \symmbihilb_{n-2}.
\end{aligned}
\end{equation}
\end{theorem}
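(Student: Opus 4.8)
The plan is to reduce the branching rule to a companion branching rule for the individual higher Lie characters and then carry out an exponent bookkeeping followed by a one-step telescoping. Throughout, write $L_\lambda:=\fchar(\Lie{\lambda})$ and $D:=\partial/\partial p_1$. Since $n\equiv\odd(\lambda)\pmod 2$ for every $\lambda\vdash n$, each exponent $n-\odd(\lambda)$ is even, so $\symmbihilb_n:=\symmbihilb_n(t,q)$ is a genuine polynomial in $t$, and Theorem~\ref{decomposition-of-peak-idempotent-reps-theorem} reads
\[
\symmbihilb_n(t^2,q)=\sum_{\lambda\vdash n}L_\lambda\,t^{\,n-\odd(\lambda)}\,q^{\,n-\ell(\lambda)}.
\]
Because the ring endomorphism of $\Lambda_{\kk}[t,q]$ sending $t\mapsto t^2$ is injective and commutes with $D$, the asserted identity is equivalent to its image under that endomorphism, namely
\[
D\,\symmbihilb_n(t^2,q)=\symmbihilb_{n-1}(t^2,q)+t^2 q\,p_1\,\symmbihilb_{n-2}(t^2,q)+t^2 q^2 p_1^2\,D\,\symmbihilb_{n-2}(t^2,q),
\]
so it suffices to prove this last relation.

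The key input is the formula
\[
D\,L_\lambda=\sum_{\substack{j\ge 1\\ m_j(\lambda)>0}}p_1^{\,j-1}\,L_{\lambda\setminus\{j\}},
\]
where $m_j(\lambda)$ is the multiplicity of $j$ as a part of $\lambda$ and $\lambda\setminus\{j\}\vdash n-j$ deletes one such part (the part $j$ disappears entirely when $m_j(\lambda)=1$). This is the Frobenius-characteristic translation of restricting $\Lie{\lambda}$ from $\symm{n}$ to $\symm{n-1}$. I would derive it from the plethystic product formula for the higher Lie character (equation~\eqref{plethysm-formula-for-Lie-character}), in the shape $L_\lambda=\prod_{i\ge 1}h_{m_i(\lambda)}[\ell_i]$ with $\ell_i:=\fchar(\Lie{i})=\tfrac1i\sum_{d\mid i}\mu(d)\,p_d^{\,i/d}$: since $D$ is a derivation and the plethystic chain rule gives $D\,(f[g])=(Df)[g]\cdot Dg$ (only the $p_1$-part of a plethystic alphabet depends on $p_1$), combined with $D\,h_m=h_{m-1}$ and $D\,\ell_i=p_1^{\,i-1}$, one obtains $D\,(h_{m_i}[\ell_i])=p_1^{\,i-1}h_{m_i-1}[\ell_i]$, and the product rule over $i$ gives the displayed formula. (Alternatively it follows from Mackey's formula: the double cosets $Z_\lambda\backslash\symm{n}/\symm{n-1}$ are indexed by the distinct part sizes $j$ of $\lambda$, and the $j$-summand of $\Lie{\lambda}\!\downarrow$ is induced from $\Lie{\lambda\setminus\{j\}}\boxtimes\kk\symm{j-1}$, whose characteristic is $p_1^{\,j-1}L_{\lambda\setminus\{j\}}$.)

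Substituting this into $D\,\symmbihilb_n(t^2,q)$ and reindexing the double sum by the bijection $(\lambda,j)\mapsto(\mu,j)$ with $\mu:=\lambda\setminus\{j\}$ — which matches the pairs with $\lambda\vdash n$ and $j$ a part of $\lambda$ against all pairs with $1\le j\le n$ and $\mu\vdash n-j$ — I would track the two exponents: from $\ell(\mu\cup\{j\})=\ell(\mu)+1$ one gets $q^{\,n-\ell(\lambda)}=q^{\,j-1}q^{\,(n-j)-\ell(\mu)}$, and from $\odd(\mu\cup\{j\})=\odd(\mu)+[j\text{ odd}]$ one gets $t^{\,n-\odd(\lambda)}=t^{\,2\lfloor j/2\rfloor}t^{\,(n-j)-\odd(\mu)}$. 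This produces
\[
D\,\symmbihilb_n(t^2,q)=\sum_{j=1}^{n}t^{\,2\lfloor j/2\rfloor}(q p_1)^{\,j-1}\,\symmbihilb_{n-j}(t^2,q).
\]
Peeling off the $j=1$ and $j=2$ terms and substituting $j\mapsto j+2$ in the remaining sum, the identity $\lfloor(j+2)/2\rfloor=\lfloor j/2\rfloor+1$ exhibits the tail as $t^2 q^2 p_1^2$ times the same series computing $D\,\symmbihilb_{n-2}(t^2,q)$, which is exactly the three-term relation above.

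The only genuine obstacle is the branching rule $D\,L_\lambda=\sum_j p_1^{\,j-1}L_{\lambda\setminus\{j\}}$; once \eqref{plethysm-formula-for-Lie-character} is available it is a short computation, but one must get the plethystic chain rule and—more importantly—the $t$-exponent bookkeeping exactly right, since the appearance of $2\lfloor j/2\rfloor$ (rather than $j-1$ or $j$) is precisely what makes the telescoping collapse to a single factor of $t^2$. As a consistency check, applying the dimension specialization $\fchar(V)\mapsto\dim V$ (under which $D$ preserves dimension and multiplication by $p_1$ multiplies it by the new index) to both sides recovers the scalar recursion~\eqref{nonequivariant-bihilb-recursion}.
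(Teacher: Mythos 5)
Your proposal is correct, and it takes a genuinely different route from the paper. The paper packages everything into the generating function $\N(a,b,c)=\sum_\lambda L_\lambda a^{\even(\lambda)}b^{\odd(\lambda)}c^{|\lambda|}$, uses the plethystic formula \eqref{plethysm-formula-for-Lie-character} together with $1+h_1+h_2+\cdots=\exp\bigl(\sum_m p_m/m\bigr)$ to isolate the entire $p_1$-dependence in a closed factor $(1+cp_1)^{(b-a)/2}(1-cp_1)^{-(a+b)/2}$, and then obtains the three-term relation by differentiating that closed form in $p_1$ and translating back. You instead prove the term-wise branching rule $\frac{\partial}{\partial p_1}L_\lambda=\sum_j p_1^{\,j-1}L_{\lambda\setminus\{j\}}$ (your plethystic computation is sound: only the $k=1$ alphabet $g_1=g$ of $f[g]$ involves $p_1$, so $\frac{\partial}{\partial p_1}(h_m[\ell_i])=h_{m-1}[\ell_i]\cdot p_1^{\,i-1}$, using the same M\"obius formula for $\ell_i$ that the paper uses to isolate its $p_1$-part), and then your exponent bookkeeping $q^{\,n-\ell(\lambda)}=q^{\,j-1}q^{\,(n-j)-\ell(\mu)}$, $t^{\,n-\odd(\lambda)}=t^{\,2\lfloor j/2\rfloor}t^{\,(n-j)-\odd(\mu)}$ and the shift $j\mapsto j+2$ correctly telescope the full expansion
\begin{equation*}
\frac{\partial}{\partial p_1}\symmbihilb_n(t^2,q)=\sum_{j=1}^{n}t^{\,2\lfloor j/2\rfloor}(qp_1)^{\,j-1}\,\symmbihilb_{n-j}(t^2,q)
\end{equation*}
into the stated three-term rule; the reduction via the injective substitution $t\mapsto t^2$, which commutes with $\partial/\partial p_1$, is also fine. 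What each approach buys: yours is more elementary (no $\log$/$\exp$ manipulations or formal power series in $x$), makes the representation-theoretic content transparent (it is literally the restriction rule for the higher Lie characters, with the $j$-th term corresponding to removing a part of size $j$), and yields the stronger all-$j$ expansion above as a bonus; the paper's closed product formula for $\N(a,b,c)$ is reused elsewhere, namely for the exponential generating function \eqref{nonequivariant-bihilb-gf} and in the proof of Proposition~\ref{thm:generalization-of-CHS-prop}, which your local argument does not provide.
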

The proof of Theorem~\ref{restriction-rep-recursion} is a technical symmetric function calculation, which we defer to Section~\ref{sec:branching-rule-proof} of the Appendix.  Here we collect a few remarks on consequences of the theorem.

First, note it is
equivalent to this branching rule for the bigraded components $G^n_{k,\ell}:=\gr(\TypeBCohomology{n})^{\Z_2^n}_{k,\ell}$:
\begin{equation}
\label{branching-rephrased-on-bigraded-components}
G^n_{2k,\ell}
\res
= G^{n-1}_{2k,\ell} \,\, \oplus \,\, 
G^{n-2}_{2(k-1),\ell-1}\ind \,\, \oplus \,\, 
 G^{n-2}_{2(k-1),\ell-2} \res \ind \ind.
\end{equation}

Second, Theorem~\ref{restriction-rep-recursion} at $t=1$ lifts \eqref{recursion-iteration} for the Poincar\'e series $\bihilb_n(1,q)$ of $\TypeACohomology{n}=H^* \TypeAConf{n}$
$$
\frac{\partial \symmbihilb_n(1,q)}{\partial p_1} 
=\symmbihilb_{n-1}(1,q) + 
 q p_1 \left( 1 + q p_1 \frac{\partial}{\partial p_1} \right) \symmbihilb_{n-2}(1,q),
$$
agreeing with the iteration of a
branching rule of Sundaram \cite[Thm.~4.10(1)]{Sundaram} that
lifts \eqref{obvious-recursion}:
\begin{align*}
\frac{\partial \symmbihilb_n(1,q)}{\partial p_1} 
=\left( 1+ q p_1 \frac{\partial}{\partial p_1} \right) \symmbihilb_{n-1}(1,q) 
&=\symmbihilb_{n-1}(1,q) +  q p_1 \frac{\partial}{\partial p_1} \symmbihilb_{n-1}(1,q) \\
&=\symmbihilb_{n-1}(1,q) +  q p_1 \left(
 \symmbihilb_{n-2}(1,q) +  q p_1 \frac{\partial}{\partial p_1} \symmbihilb_{n-2}(1,q)
\right).
\end{align*}

Third, Theorem~\ref{restriction-rep-recursion} at $q=1$ simplifies,
giving an equivariant lift of the Sheffer polynomial recursion \eqref{Sheffer-recursion}:
\begin{align*}
\frac{\partial \symmbihilb_n(t,1)}{\partial p_1} 
&=\symmbihilb_{n-1}(t,1) + 
 t p_1 \left( 1 +  p_1 \frac{\partial}{\partial p_1} \right) \symmbihilb_{n-2}(t,1)\\
&=\symmbihilb_{n-1}(t,1) + 
 t p_1 \frac{\partial}{\partial p_1} \left( p_1 \symmbihilb_{n-2}(t,1) \right).
\end{align*}

\section{Simple Jordan elements}
\label{sec:jordanalgebra}
We discuss here an unexpected connection\footnote{The authors thank Sheila Sundaram for bringing this to their attention.} between the bigraded components $G_{k,\ell}:=\gr(\TypeBCohomology{n})^{\Z_2^n}_{k,\ell}$
and the space of {\it simple Jordan elements within the free associative algebra} on $n$ generators studied by Robbins \cite{robbins1971jordan} and by Calderbank, Hanlon and Sundaram \cite{calderbank1994representations}.
Recall from Section~\ref{Thrall-section} that if
$V=\C^n$ with $\C$-basis $x_1,\ldots,x_n$, then one can consider the tensor powers $T^d(V):=\C^{\otimes d}$ and the {\it tensor algebra}
 $$
 T(V)=\bigoplus_{d=0}^\infty T^d(V)
 = \C \oplus V \oplus (V \otimes V) \oplus (V\otimes V \otimes V) \oplus \cdots
 $$
 as a {\it free associative $\C$-algebra} on $x_1,\ldots,x_n$. 
 Consider the deformation of the Lie bracket by $\alpha \in \C$:
\[ [x,y]_{\alpha}:= xy - \alpha yx\]
and the $\C$-linear subspace of $T(V)$ spanned by
($\alpha$-deformed) left-bracketings
$$
 G_\alpha(V) :=V \oplus [V,V]_\alpha \oplus [[V,V]_\alpha,V]_\alpha \oplus \cdots
 $$
which consists of spans of elements such as these, where $v_i \in V$:
 $$
 v_1, \,\, 
 [v_1,v_2]_\alpha, \,\, 
 [[[v_1,v_2]_\alpha,v_3]_\alpha, \,\, 
 [[[[v_1,v_2]_\alpha,v_3]_\alpha,v_4]_\alpha,\ldots
 $$

 When $\alpha=1$, this $G_\alpha(V)$ is the {\it free Lie algebra} $\cLfree(V)$ of Section~\ref{Thrall-section}.  When $\alpha=-1$, the bracket $[-,-]_{-1}$ is the
 {\it Jordan bracket} on $T(V)$, and $G_\alpha(V)$ was called the {\it  space of simple Jordan elements}  by Robbins \cite{robbins1971jordan}. 
 
 \begin{remark} \rm
     We point out two subtleties here.  First, the Jordan bracket is non-associative and does not satisfy the Jacobi identity. As a result, $G_{-1}(V)$ ends up being a proper subset of what Robbins called the {\it space of Jordan elements}, which is closed under all Jordan brackets. Second, even the larger space of Jordan elements differs from the {\it free Jordan algebra} as studied by Kashuba and Mathieu \cite{KashubaMathieu}.
\end{remark}
 
 Following \cite{calderbank1994representations}, define $V_n(\alpha) \subset G_\alpha(V)$ to be the subspace spanned by {\it multilinear} left-bracketings $[ - ,- ]_{\alpha}$ of homogeneous degree $n$ on the generators $x_1, \ldots, x_n$, i.e. the span of all left-bracketings
\[ [[\cdots [ x_{\sigma(1)}, x_{\sigma(2)}], x_{\sigma(3)}], \cdots] x_{\sigma(n)}]\]
for $\sigma \in S_n$. Thus $V_n(1) \cong \lie_{(n)}$. When $\alpha = -1$, the following was proved by Robbins in \cite[\S 6, Thm. 7]{robbins1971jordan} and later in \cite[Thm 2.1]{calderbank1994representations} by Calderbank--Hanlon--Sundaram:

\begin{theorem}
\label{thm:jordanoddlie} 
There is a $\C\symm{n}$-module isomorphism
$
V_n(-1) \cong \bigoplus_{\substack{\lambda \vdash n \\ \lambda \  \odd }} \lie_{\lambda}.
$
\end{theorem}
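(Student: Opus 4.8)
The plan is to realize $V_n(\alpha)$ as a cyclic left $\C\symm{n}$-module generated by a single element of the group algebra, run an induction on $n$, and at $\alpha=-1$ match the resulting recursion with the known Frobenius characteristic of $\bigoplus_{\lambda\ \odd}\lie_\lambda$. Concretely: identify the multilinear component $M_n$ of $T(V)$ in the letters $x_1,\dots,x_n$ with $\C\symm{n}$ via $x_{\sigma(1)}\cdots x_{\sigma(n)}\leftrightarrow\sigma$, so that the permute-the-generators action is left multiplication and $M_n$ is the left regular module. The left-normed bracketing $\sigma\mapsto[\cdots[[x_{\sigma(1)},x_{\sigma(2)}]_\alpha,x_{\sigma(3)}]_\alpha,\dots,x_{\sigma(n)}]_\alpha$ is \emph{right} multiplication by a fixed $\delta_n(\alpha)\in\C\symm{n}$, whence $V_n(\alpha)=(\C\symm{n})\,\delta_n(\alpha)$ as a left module. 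Expanding $[w,x_n]_\alpha=wx_n-\alpha\,x_nw$ gives the telescoping factorization
\begin{equation*}
\delta_n(\alpha)=\delta_{n-1}(\alpha)\bigl(1-\alpha\,c_n^{-1}\bigr)=\prod_{k=2}^{n}\bigl(1-\alpha\,c_k^{-1}\bigr),\qquad c_k:=(1\,2\,\cdots\,k)\in\symm{k}\subset\symm{n}.
\end{equation*}
Since $\delta_{n-1}(\alpha)$ lies in $\C\symm{n-1}$, one has $(\C\symm{n})\,\delta_{n-1}(\alpha)\cong V_{n-1}(\alpha)\ind$, and right multiplication by $1-\alpha\,c_n^{-1}$ presents $V_n(\alpha)$ as the quotient of $V_{n-1}(\alpha)\ind$ by $K_n(\alpha):=V_{n-1}(\alpha)\ind\cap E_n(\alpha^{-1})$, where $E_n(\beta)$ is the $\beta$-eigenspace of right multiplication by $c_n^{-1}$ on $\C\symm{n}$. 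At $\alpha=1$ this recursion recovers the classical $V_n(1)\cong\lie_{(n)}$; the interesting case is $\alpha=-1$.

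Next I would solve the recursion at $\alpha=-1$. Writing $\mathcal J_n:=\fchar(V_n(-1))$ with $\mathcal J_0=1$, $\mathcal J_1=p_1$: the eigenvalues of right multiplication by $c_n^{-1}$ are $n$th roots of unity, so for $n$ odd $E_n(-1)=0$ and $\mathcal J_n=p_1\,\mathcal J_{n-1}$, while for $n$ even one must subtract $\fchar(K_n(-1))$. On the other side, the induced-character description $\lie_\lambda=\mathrm{Ind}_{Z_\lambda}^{\symm{n}}(\psi_\lambda)$, with $Z_\lambda$ the centralizer of a cycle type-$\lambda$ permutation, yields the plethystic formula $L_\lambda=\prod_{d\ge1}h_{m_d(\lambda)}\bigl[L_{(d)}\bigr]$, where $m_d(\lambda)$ is the number of parts equal to $d$ and $L_{(d)}=\tfrac1d\sum_{e\mid d}\mu(e)\,p_e^{d/e}=\fchar(\lie_{(d)})$, and hence the product expansion
\begin{equation*}
\sum_{n\ge0}\ \sum_{\substack{\lambda\vdash n\\ \lambda\ \odd}} L_\lambda \;=\; \prod_{d\ \mathrm{odd}}\ \sum_{m\ge0} h_m\bigl[L_{(d)}\bigr]
\end{equation*}
of degree-graded formal series. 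It remains to check that the right-hand side satisfies the same recursion ($f_n=p_1f_{n-1}$ for $n$ odd, and its corrected form for $n$ even), after which the base cases force $\mathcal J_n=\sum_{\lambda\ \odd}L_\lambda$; a dimension consistency check is the exponential generating function $\sqrt{(1+z)/(1-z)}$ for the number of permutations of $\symm{n}$ with all cycle lengths odd.

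The step I expect to be the main obstacle is the identification of $K_n(-1)=V_{n-1}(-1)\ind\cap E_n(-1)$ for $n$ even. On all of $\C\symm{n}$ one has $E_n(-1)\cong\mathrm{Ind}_{\langle c_n\rangle}^{\symm{n}}(\epsilon)$, with $\epsilon$ the order-two character of the cyclic group $\langle c_n\rangle$ (which exists exactly because $n$ is even); but cutting this down to the induced submodule $V_{n-1}(-1)\ind$ requires tracking isotypic components and is precisely the technical core of \cite{robbins1971jordan, calderbank1994representations}. A more geometric alternative, matching the methods of this paper, would bypass $\delta_n(\alpha)$ altogether: by Theorem~\ref{decomposition-of-peak-idempotent-reps-theorem}, the bigraded piece $\gr(\TypeBCohomology{n})^{\Z_2^n}_{k,\ell}$ with $k=\ell$ forces $\ell(\lambda)=\odd(\lambda)$, i.e.\ $\lambda$ is odd, so the diagonal $\bigoplus_{k\ge0}\gr(\TypeBCohomology{n})^{\Z_2^n}_{k,k}$ equals $\bigoplus_{\lambda\vdash n,\ \lambda\ \odd}\lie_\lambda$, and by Proposition~\ref{prop:bihomogeneous-gr(Z)-basis} together with Corollary~\ref{cor:representation-alignment} this diagonal has the $\kk$-basis of those $\tvbasis$-monomials $\phi(m)$ that the Pairing Lemma~\ref{lem:pairing-bijection} produces using only its Cases~1 and~2 (no $u_i$-factors, i.e.\ the multigraph $G(\phi(m))$ is an unlooped forest). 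One could then try to prove Theorem~\ref{thm:jordanoddlie} by exhibiting an explicit $\symm{n}$-equivariant isomorphism $V_n(-1)\xrightarrow{\ \sim\ }\bigoplus_{k\ge0}\gr(\TypeBCohomology{n})^{\Z_2^n}_{k,k}$ sending a multilinear left-normed Jordan bracket on $x_{\sigma(1)},\dots,x_{\sigma(n)}$ to the product of $\vbasis$-quadratics attached to the underlying tree; there the difficulty migrates to checking that the assignment is well defined (kills the Jacobi-type relations among multilinear Jordan words) and bijective, with injectivity again reducing to the Pairing Lemma bijection and a dimension count.
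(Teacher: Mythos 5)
You have correctly reconstructed the algebraic setup: the identification of the multilinear component with the regular representation, the factorization $\delta_n(\alpha)=\delta_{n-1}(\alpha)\bigl(1-\alpha\,c_n^{-1}\bigr)$, the resulting presentation $V_n(\alpha)\cong V_{n-1}(\alpha)\ind/K_n(\alpha)$, and the eigenvalue argument showing $E_n(-1)=0$ for $n$ odd, which recovers the relation $V_n(-1)\cong V_{n-1}(-1)\ind$ (Proposition~\ref{CHS-parity-relations-for-Jordan-reps}(a)). But what you have written is a plan, not a proof, and the gap sits exactly where the theorem's content lies: for $n$ even you must identify $K_n(-1)=V_{n-1}(-1)\ind\cap E_n(-1)$ (equivalently, compute $\fchar(V_n(-1))$ in the even case), and you explicitly defer this to ``the technical core of'' Robbins and Calderbank--Hanlon--Sundaram. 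The odd-$n$ recursion $\mathcal J_n=p_1\mathcal J_{n-1}$ together with base cases does not pin down $\mathcal J_n$, since it says nothing at even $n$, and the exponential-generating-function dimension count $\sqrt{(1+z)/(1-z)}$ is a consistency check, not an argument. Note also that the paper itself does not prove this statement: it is quoted from Robbins \cite[\S 6, Thm.~7]{robbins1971jordan} and \cite[Thm.~2.1]{calderbank1994representations}, so your task was to supply a complete argument, and the missing even-$n$ step is precisely what those references do.

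Your alternative route through the paper's machinery has the same structural gap. Theorem~\ref{decomposition-of-peak-idempotent-reps-theorem} and Corollary~\ref{cor:representation-alignment} do identify $\bigoplus_{k}\gr(\TypeBCohomology{n})^{\Z_2^n}_{k,k}$ with $\bigoplus_{\lambda\ \odd}\Lie{\lambda}$, with $\kk$-basis the $\tvbasis$-monomials containing no $u_i$-factors; but nothing in the paper relates this space to $V_n(-1)$ -- that relation \emph{is} Theorem~\ref{thm:jordanoddlie}. The proposed assignment sending a multilinear left-normed Jordan bracket to a product of quadratics attached to its underlying tree is only sketched: you would need to (i) show it is well defined, i.e.\ that every linear relation among multilinear iterated Jordan brackets (these are not governed by a Jacobi identity, so the relation space must be described from scratch) maps into the relations of Table~\ref{table:vector-space-relations}, and (ii) show surjectivity/injectivity, which does not follow from the Pairing Lemma~\ref{lem:pairing-bijection} alone because that lemma compares $\tvbasis$ with $\typeabasis$, not with a spanning set of $V_n(-1)$. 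Until either the kernel $K_n(-1)$ is computed or the equivariant map is constructed and verified, the proof is incomplete.
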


Compare this with the assertion of  Theorem~\ref{decomposition-of-peak-idempotent-reps-theorem}, 
$$
G^n_{k,\ell}:=\gr(\TypeBCohomology{n})^{\Z_2^n}_{k,\ell} \cong 
\bigoplus_{\substack{\lambda \vdash n:\\ \ell(\lambda)=n-\ell\\ \odd(\lambda)=n-k }}
\Lie{\lambda}.
$$
This suggests a re-grouping of the
bigraded components of $\gr(\TypeBCohomology{n})^{\Z_2^n}$. Define the subspace $P_m^{(n)} \subset \gr(\TypeBCohomology{n})^{\Z_2^n}$
for $0 \leq m \leq \frac{n}{2}$  corresponding to degree $m$
in the $\{u_i\}$-variables:
  \[ 
  P^{(n)}_m :=
  \bigoplus_{\substack{(k,\ell):\\k-\ell=m}} G^{n}_{k,j} 
  \cong 
  \bigoplus_{\substack{|\lambda|=n \\ \even(\lambda) = \ell}} \lie_{\lambda}. 
  \]
Then Theorem~\ref{thm:jordanoddlie}
asserts that 
$$
V_n(-1)\cong P^{(n)}_0,
$$
which is the subspace involving no $\{u_i\}$,
only the variables $\{u^+_{ij},u^-_{ij}\}$.

Results of Calderbank, Hanlon and Sundaram \cite{calderbank1994representations} specialized to the case $p=2, \alpha=-1$ assert the following.
    \begin{prop}
    \label{CHS-parity-relations-for-Jordan-reps}
    The representations $V_n(-1)$ and $V_{n-1}(-1)$ are related as follows.
    \begin{itemize}
        \item [(a)]\cite[Prop 1.2]{calderbank1994representations}
    For odd $n$, one has 
     $ V_{n}(-1) \cong V_{n-1}(-1) \ind.$
     \item[(b)]\cite[Prop 1.3]{calderbank1994representations} For even $n$, one has  $ V_{n}(-1) \res
    \cong V_{n-1}(-1) \res \ind. $
     \end{itemize}
    \end{prop}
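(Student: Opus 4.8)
The plan is to deduce Proposition~\ref{CHS-parity-relations-for-Jordan-reps} from the equivariant branching rule of Theorem~\ref{restriction-rep-recursion}, specialized to the subspace $P^{(n)}_0$, which by the discussion above equals $V_n(-1)$ and in symmetric-function terms is the coefficient-extraction $[\symmbihilb_n]_{k=\ell}$, i.e.\ the sum of terms $t^k q^\ell L_\lambda$ with $k=\ell$ (equivalently, $\odd(\lambda)=\ell(\lambda)$). First I would introduce the symmetric function $J_n := \fchar(V_n(-1)) = \sum_{\lambda \vdash n:\ \lambda\ \odd} L_\lambda$, and observe that the map $\symmbihilb_n \mapsto J_n$ is obtained by setting $q=t^{-1}$ (so $q^\ell t^k = t^{k-\ell}$) and then extracting the degree-zero part in $t$; alternatively one can isolate the ``$k=\ell$ diagonal'' directly. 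Applying this operation to the recursion
\[
\frac{\partial \symmbihilb_n}{\partial p_1}
= \symmbihilb_{n-1} + tq\,p_1\Bigl(1 + q\,p_1\frac{\partial}{\partial p_1}\Bigr)\symmbihilb_{n-2}
\]
should send the $tq$ prefactor to $1$ (since $tq|_{q=t^{-1}}=1$) but send the extra $q$ inside the parenthesis to $t^{-1}$, which shifts the $t$-degree down by one; so that term contributes only to a \emph{lower} diagonal and drops out when we extract the right diagonal. I expect this to yield, on the diagonal,
\[
\frac{\partial J_n}{\partial p_1} = J_{n-1} + p_1\,J_{n-2}
\]
(after carefully tracking which of the three terms survive on each diagonal — this bookkeeping is the technical heart).

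From this recursion one reads off both parts. For part~(b), $n$ even: then $n-2$ is also even, and I would use a parity fact about $J_m$ — namely $J_m$ is concentrated in symmetric functions of ``the right shape'' forcing, for $m$ even, $\partial J_m/\partial p_1 = p_1 \partial^2 J_m/\partial p_1^2 \cdot(\dots)$? More carefully: the cleanest route is to show that for $n$ even the term $J_{n-2}$ does not appear (because $V_{n-2}(-1)$ for $n-2$ even would need to be ``promoted'' by two inductions, not one), leaving $\partial J_n/\partial p_1 = p_1 J_{n-2}$ plus the $J_{n-1}$ piece, and then match against $V_{n-1}(-1)\res\ind$ via $\fchar(U\res\ind) = p_1\,\partial U/\partial p_1$. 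I would instead organize the argument by splitting $\symmbihilb_n$ according to the parity of $n$ and using that $V_n(-1)$ vanishes in certain $t$-degrees for $n$ odd versus even (indeed $\odd(\lambda)=\ell(\lambda)$ together with $|\lambda|=n$ forces $n\equiv\ell(\lambda)\bmod 2$, so the parity of $n$ pins down the relevant structure). For part~(a), $n$ odd: here $n-1$ is even and $n-2$ is odd; I expect the diagonal recursion to collapse to $J_n = p_1 J_{n-1}$ after integrating the relation $\partial J_n/\partial p_1 = J_{n-1} + p_1 J_{n-2}$ and using $J_{n-2}=0$-type vanishing or, more likely, using that $\partial(p_1 J_{n-1})/\partial p_1 = J_{n-1} + p_1\,\partial J_{n-1}/\partial p_1$ and matching $\partial J_{n-1}/\partial p_1 = J_{n-2}$ inductively. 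Since $\fchar(U\ind)=p_1 U$, this gives exactly $V_n(-1)\cong V_{n-1}(-1)\ind$.

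An alternative, and perhaps more transparent, approach avoids symmetric functions entirely: use the $\kk\symm{n}$-module isomorphism $V_n(-1)\cong P^{(n)}_0 = \bigoplus_{\ell} G^n_{\ell,\ell}$ and apply the module-level branching rule \eqref{branching-rephrased-on-bigraded-components},
\[
G^n_{2k,\ell}\res \cong G^{n-1}_{2k,\ell}\oplus G^{n-2}_{2(k-1),\ell-1}\ind \oplus G^{n-2}_{2(k-1),\ell-2}\res\ind\ind,
\]
restricted to the diagonal $2k=2\ell$ i.e.\ $k=\ell$. Wait — one must be careful: in \eqref{branching-rephrased-on-bigraded-components} the first index is the cohomological-type degree $2k$ and the $\uu$-adic degree is $\ell$, whereas in $P^{(n)}_0$ the diagonal condition is $k-\ell=0$ in the $(k,\ell)=(\text{total deg},\ \{v,w\}\text{-deg})$ bigrading, which corresponds to the $\uu$-degree being $0$; so $P^{(n)}_0$ is the sum of $G^n_{\ell,\ell}$ over $\ell$. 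On the diagonal, $G^{n-2}_{2(k-1),\ell-1}$ has shifted indices $(k-1,\ell-1)$ still on the diagonal, but $G^{n-2}_{2(k-1),\ell-2}$ has $(k-1)-(\ell-2)=1\neq 0$, hence lies \emph{off} the diagonal $P^{(n-2)}_0$ and instead in $P^{(n-2)}_1$; so the third summand does not contribute to $P^{(n)}_0$ and we get $P^{(n)}_0\res \cong P^{(n-1)}_0 \oplus P^{(n-2)}_0\ind$, i.e.\ $V_n(-1)\res \cong V_{n-1}(-1)\oplus V_{n-2}(-1)\ind$. Combined with the parity vanishing of $V_m(-1)$ (nonzero only when $n-m$-type parity works out: $V_m(-1)$ lives where $\odd=\ell$, and the ``diagonal in $P$'' forces constraints) this yields (a) and (b). The main obstacle I anticipate is precisely this careful tracking of which bigraded pieces land on the $P$-diagonal and verifying the parity vanishing statements (that $V_{n-2}(-1)$ either contributes with a single induction, matching (a), or is absorbed appropriately for (b)); these are the steps where an index slip would break the argument, so I would verify them against the $n\le 4$ data in Example~\ref{ex:equivariant-data} before writing the general case. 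I would then relegate the symmetric-function verification that this matches \cite[Props.~1.2,1.3]{calderbank1994representations} to the Appendix (Section~\ref{sec:CHS_generalization-proof}), as the excerpt's organization already indicates.
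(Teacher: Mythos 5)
First, a framing remark: the paper does not prove this proposition at all --- it is quoted from \cite[Props.~1.2, 1.3]{calderbank1994representations}, and part (a) is in fact consumed as an \emph{input} in the paper's proof of Proposition~\ref{thm:generalization-of-CHS-prop}(a). So you are attempting an independent derivation from Theorem~\ref{restriction-rep-recursion}; that is legitimate in principle (the branching rule and Theorem~\ref{decomposition-of-peak-idempotent-reps-theorem} do not rely on these two propositions), but your index bookkeeping is off and the slip is fatal. In $G^n_{2k,\ell}=\gr(\TypeBCohomology{n})^{\Z_2^n}_{2k,\ell}$ the second index $\ell$ is the $\{v_{ij},w_{ij}\}$-degree, so the $\uu$-degree is $2k-\ell$, and the subspace $P^{(n)}_0\cong V_n(-1)$ consists of the components with $\ell=2k$, not $k=\ell$ (for the same reason the substitution $q=t^{-1}$ does not isolate it). Restricting \eqref{branching-rephrased-on-bigraded-components} to $\ell=2k$, it is the \emph{second} summand $G^{n-2}_{2(k-1),2k-1}$ that vanishes (its $\{v,w\}$-degree would exceed its total degree), while the third summand $G^{n-2}_{2(k-1),2k-2}$ does lie on the diagonal. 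The correct consequence is
$V_n(-1)\res \cong V_{n-1}(-1)\oplus V_{n-2}(-1)\res\ind\ind$, i.e.\ $\tfrac{\partial J_n}{\partial p_1}=J_{n-1}+p_1^2\tfrac{\partial J_{n-2}}{\partial p_1}$ with $J_m:=\fchar V_m(-1)$ --- not your $V_n(-1)\res\cong V_{n-1}(-1)\oplus V_{n-2}(-1)\ind$, i.e.\ $\tfrac{\partial J_n}{\partial p_1}=J_{n-1}+p_1J_{n-2}$. Your version already fails at $n=4$: $\dim V_4(-1)=9$ (cycle types $(1^4),(3,1)$), whereas $\dim V_3(-1)+3\dim V_2(-1)=3+3=6$; the corrected relation gives $3+6=9$. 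The $n\le 4$ check you yourself propose would have caught this.

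Second, even after this correction the induction only runs in one direction. Given (a) for the odd number $n-1$, the corrected relation does yield (b) for even $n$ (this mirrors exactly how the paper derives Proposition~\ref{thm:generalization-of-CHS-prop}(b) from (a)). But for odd $n$, combining the corrected relation with (b) for $n-1$ only gives $\tfrac{\partial}{\partial p_1}\bigl(J_n-p_1J_{n-1}\bigr)=0$, and restriction is not injective on class functions of $\symm{n}$ (e.g.\ $s_{(2)}-s_{(1,1)}=p_2$ restricts to zero), so $J_n=p_1J_{n-1}$ does not follow; your ``integrate and use vanishing'' step has no justification. This is precisely why the paper cites CHS for (a) and feeds it into the generating-function argument of Section~\ref{sec:CHS_generalization-proof}. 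A self-contained proof of (a) needs a genuinely new ingredient: for instance, one can show that $\log\sum_n c^nJ_n=\sum_{\ell\ \mathrm{odd}}\sum_{k\ge1}\tfrac{c^{k\ell}}{k\ell}\sum_{d\mid\ell}\mu(d)\,p_{dk}^{\ell/d}$ has, for odd $N$, coefficient of $c^N p_e^{N/e}$ equal to $\tfrac1N\sum_{d\mid e}\mu(d)=0$ for every $e\ge2$, so that $\sum_n c^nJ_n=(1+cp_1)\cdot(\text{a series even in }c)$, which is (a); or simply reproduce the argument of \cite{calderbank1994representations}. As written, your proposal establishes neither (a) nor, consequently, (b).
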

 Here is a generalization of both parts of Proposition~\ref{CHS-parity-relations-for-Jordan-reps}.
   
    \begin{prop}
    \label{thm:generalization-of-CHS-prop}
    Interpreting $P^{(n)}_m:=0$ if $m<0$,
    the representations $\{P^{(n)}_m \}$ satisfy
    the following:
    \begin{itemize}
        \item[(a)]
    For odd $n$, one has
    $
    P_m^{(n)} \cong P_m^{(n-1)} \ind. 
    $
 \item[(b)] For even $n$,  one has
    $
    P_m^{(n)} \res \cong \left( P_m^{(n-1)} \res \ 
    \oplus \  P_{m-1}^{(n-2)} \right) \ind.
    $
   \end{itemize}
\end{prop}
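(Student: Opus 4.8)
The plan is to pass to symmetric functions via the Frobenius characteristic map $\fchar$, exploit the classical plethystic formula for Thrall's higher Lie characters, and reduce both parts of the proposition to a single generating-function identity governed by the parity of $|\lambda|$. The dictionary is as follows. Combining Theorem~\ref{decomposition-of-peak-idempotent-reps-theorem} with the isomorphisms $\lie_{\lambda}\cong\TypeACohomology{\lambda}$ of \eqref{Lie-lambda-isos} gives
$$
\fchar\!\left(P^{(n)}_m\right)=\sum_{\substack{|\lambda|=n\\ \even(\lambda)=m}}L_\lambda,
\qquad
L_\lambda:=\fchar(\lie_{\lambda})=\prod_{i\ge1}h_{m_i(\lambda)}\!\left[L_{(i)}\right],
$$
where $m_i(\lambda)$ is the number of parts of $\lambda$ equal to $i$ and $L_{(i)}:=\fchar(\lie_{(i)})=\tfrac1i\sum_{d\mid i}\mu(d)\,p_d^{\,i/d}$ is the degree-$i$ Lie character. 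Summing over all partitions then factors the bivariate series
$$
\Psi(x,T):=\sum_{n,m\ge0}\fchar\!\left(P^{(n)}_m\right)x^nT^m=O(x)\,E(x,T),\quad
O(x):=\prod_{i\text{ odd}}\sum_{m\ge0}h_m[L_{(i)}]\,x^{im},\quad
E(x,T):=\prod_{i\text{ even}}\sum_{m\ge0}h_m[L_{(i)}]\,(Tx^i)^m.
$$
Since $(-)\ind$ acts on Frobenius characteristics by multiplication with $p_1=h_1$ and $(-)\res$ by $\tfrac{\partial}{\partial p_1}$, assertion (a) reads $\fchar(P^{(n)}_m)=p_1\fchar(P^{(n-1)}_m)$ for odd $n$, and (b) reads $\tfrac{\partial}{\partial p_1}\fchar(P^{(n)}_m)=p_1\big(\tfrac{\partial}{\partial p_1}\fchar(P^{(n-1)}_m)+\fchar(P^{(n-2)}_{m-1})\big)$ for even $n$.

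Multiplying the identity in (a) by $x^nT^m$, summing over odd $n$ and all $m$, and doing the same on the right-hand side, shows that (a) is equivalent to the functional equation $\Psi(x,T)(1-p_1x)=\Psi(-x,T)(1+p_1x)$. Every factor of $E$ involves an even exponent $i$, so $E(-x,T)=E(x,T)$, and this series is a unit; hence (a) collapses to the $T$-free identity
$$
\frac{O(x)}{O(-x)}=\frac{1+p_1x}{1-p_1x}.\qquad(\star)
$$

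The heart of the argument — and the step I expect to be the main obstacle — is proving $(\star)$. I would take logarithms, using $\log\!\big(\sum_m h_m[L_{(i)}]x^{im}\big)=\sum_{k\ge1}\tfrac{x^{ik}}{k}\,p_k[L_{(i)}]$ with $p_k[L_{(i)}]=\tfrac1i\sum_{d\mid i}\mu(d)\,p_{dk}^{\,i/d}$, and reindexing by $e=i/d$ and $j=dk$ (so $i$ odd iff $d,e$ are both odd) to obtain
$$
\log O(x)=\sum_{e\text{ odd}}\sum_{j\ge1}\frac{x^{ej}}{ej}\,p_j^{\,e}\Big(\sum_{\substack{d\mid j\\ d\text{ odd}}}\mu(d)\Big).
$$
The key elementary fact is that $\sum_{d\mid j,\ d\text{ odd}}\mu(d)$ equals $1$ when $j$ is a power of $2$ and $0$ otherwise (write $j=2^a j_0$ with $j_0$ odd and use $\sum_{d\mid j_0}\mu(d)=[j_0=1]$); this collapses the double sum to
$$
\log O(x)=\sum_{a\ge0}\frac{1}{2^{a+1}}\log\frac{1+x^{2^a}p_{2^a}}{1-x^{2^a}p_{2^a}},
\qquad\text{i.e.}\qquad
O(x)=\prod_{a\ge0}\left(\frac{1+x^{2^a}p_{2^a}}{1-x^{2^a}p_{2^a}}\right)^{1/2^{a+1}},
$$
read as a formal power series in $x$ over $\Lambda_\Q$ (the product converges $x$-adically since the $a$-th factor is $1+O(x^{2^a})$). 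Replacing $x$ by $-x$ affects only the $a=0$ factor, whose argument inverts, so $O(x)/O(-x)=(\tfrac{1+p_1x}{1-p_1x})^{1/2}\cdot(\tfrac{1+p_1x}{1-p_1x})^{1/2}$, which is $(\star)$.

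Finally, I would deduce (b) formally from (a) together with the equivariant branching rule of Theorem~\ref{restriction-rep-recursion}. Setting $t=T^2$, $q=T^{-1}$ there and reading off the coefficient of $T^m$ gives, for every $n$,
$$
\frac{\partial}{\partial p_1}\fchar\!\left(P^{(n)}_m\right)=\fchar\!\left(P^{(n-1)}_m\right)+p_1\,\fchar\!\left(P^{(n-2)}_{m-1}\right)+p_1^2\,\frac{\partial}{\partial p_1}\fchar\!\left(P^{(n-2)}_m\right).
$$
When $n$ is even, $n-1$ is odd, so (a) yields $\fchar(P^{(n-1)}_m)=p_1\,\fchar(P^{(n-2)}_m)$; substituting this identity and its $p_1$-derivative into the displayed recursion rearranges exactly into $\tfrac{\partial}{\partial p_1}\fchar(P^{(n)}_m)=p_1\big(\tfrac{\partial}{\partial p_1}\fchar(P^{(n-1)}_m)+\fchar(P^{(n-2)}_{m-1})\big)$, which is the Frobenius-characteristic form of (b). Specializing to $m=0$ recovers Propositions~1.2 and~1.3 of Calderbank--Hanlon--Sundaram \cite{calderbank1994representations}. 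One could instead prove (b) directly by the same generating-function method, but the even-$i$ factor of $\Psi$ now genuinely involves $T$, which makes routing through Theorem~\ref{restriction-rep-recursion} the cleaner path.
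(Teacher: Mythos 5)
Your proof is correct, and part (b) is essentially the paper's own argument: both deduce (b) from (a) together with the branching rule of Theorem~\ref{restriction-rep-recursion} (the paper uses its restatement \eqref{branching-rephrased-on-bigraded-components}, you extract the same three-term recursion by the specialization $t=T^2$, $q=T^{-1}$), followed by the same $p_1$-manipulation. The genuine difference is in part (a). The paper also factors the generating function over odd and even part sizes, but then only observes that the even-part factor $\N(a,0,c)$ involves even powers of $c$, and it imports the needed parity property of the odd-part factor from Calderbank--Hanlon--Sundaram \cite[Prop.~1.2]{calderbank1994representations} (restated as Proposition~\ref{CHS-parity-relations-for-Jordan-reps}(a), i.e.\ the $m=0$ case of the claim). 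You instead prove that parity fact from scratch: using $p_k[L_{(i)}]=\tfrac1i\sum_{d\mid i}\mu(d)p_{dk}^{i/d}$ and the elementary identity $\sum_{d\mid j,\ d\ \mathrm{odd}}\mu(d)=[j\text{ is a power of }2]$, you obtain the closed form $O(x)=\prod_{a\ge0}\bigl(\tfrac{1+x^{2^a}p_{2^a}}{1-x^{2^a}p_{2^a}}\bigr)^{1/2^{a+1}}$, from which $(\star)$, and hence (a), is immediate. What your route buys is self-containedness (the $m=0$ case recovers, rather than assumes, the Calderbank--Hanlon--Sundaram result) plus an explicit and independently interesting product formula for the odd higher-Lie generating function; what the paper's route buys is brevity, since it recycles the factorization $\N(a,b,c)=\N(0,b,c)\,\N(a,0,c)$ already extracted in the proof of Theorem~\ref{restriction-rep-recursion} and outsources the hard parity statement to the literature. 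All the formal-power-series steps you use (fractional exponents of series with constant term $1$ over $\Q$, $x$-adic convergence of the product, cancelling the unit $E(x,T)$) are legitimate in $\Lambda_{\Q}[[x,T]]$, and working at the level of Frobenius characteristics suffices since everything is in characteristic zero.
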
 
The proof, involving symmetric function manipulations, is deferred to Section~\ref{sec:CHS_generalization-proof} of the Appendix.
 
\bibliographystyle{abbrv}
\bibliography{bibliography}
\appendix
\section{Symmetric function proofs of Theorem~\ref{restriction-rep-recursion}
and Proposition~\ref{thm:generalization-of-CHS-prop}} \label{appendix}

\subsection{Proof of Theorem~\ref{restriction-rep-recursion}}
\label{sec:branching-rule-proof}

Recall that Theorem \ref{restriction-rep-recursion} concerns this
{\it equivariant} bigraded Hilbert series
$$
\symmbihilb_n:=
\symmbihilb_n(t,q):=\sum_{k,\ell} 
t^k q^\ell 
\fchar\left( \gr(\TypeBCohomology{n})^{\Z_2^n}_{2k,\ell} 
\right)
=\sum_{\lambda \vdash n} L_\lambda 
\cdot t^{\frac{n-\odd(\lambda)}{2}} q^{n-\ell(\lambda)},
$$
which has been re-expressed on the far right above in terms of $L_\lambda:=\fchar(\Lie{\lambda})$, using
Theorem~\ref{decomposition-of-peak-idempotent-reps-theorem}.

\vskip.1in
\noindent
{\bf Theorem~\ref{restriction-rep-recursion}.}
{\it 
In $\Lambda_{\kk}[q,t]$, one has this branching rule for $\symmbihilb_n:=\symmbihilb_n(t,q)$
equivariantly lifting \eqref{nonequivariant-bihilb-recursion}:
\begin{equation*}
\frac{\partial \symmbihilb_n}{\partial p_1} 
=\symmbihilb_{n-1} + tq p_1 
     \left(
       1 + q p_1 \frac{\partial}{\partial p_1}
    \right) \symmbihilb_{n-2}.
\end{equation*}
}

\begin{proof}
Introduce this generating function  in $\Lambda[t,q][[x]]$, compiling the $\{ \symmbihilb_n(t,q)\}_{n=0,1,2,\ldots}$:
$$
\symmbihilb(x):=\sum_{n=0}^\infty x^n \symmbihilb_n(t,q)
= \sum_{\lambda} L_\lambda 
\cdot t^{\frac{n-\odd(\lambda)}{2}} q^{n-\ell(\lambda)} x^{|\lambda|}.
$$
Then one can rephrase the theorem as asserting that  
\begin{equation}
    \label{desired-partial-derivative-recurrence-on-gf}
\frac{\partial}{\partial p_1} \symmbihilb(x)
= x \symmbihilb(x) + 
  x^2 t q p_1 \left( 1 + 
               q p_1 \frac{\partial}{\partial p_1} 
            \right) \symmbihilb(x).
\end{equation}
Instead of working with $\symmbihilb(x)$, we will work with a rescaled version that is more convenient:
\begin{equation}
\label{rescaling-of-gf}
\begin{aligned}
\N(a,b,c)&:=\sum_{\lambda} L_\lambda \cdot a^{\even(\lambda)} b^{\odd(\lambda)} c^{|\lambda|},\\
\text{ so that }\quad\symmbihilb(x)&=\left[ \N(a,b,c)\right]_{\substack{a=q^{-1}\\ b=t^{-1/2}q^{-1}\\ c=t^{1/2}qx}}.
\end{aligned}
\end{equation}
We first work on deriving an expression for $\N(a,b,c)$ 
in terms of power sum symmetric functions $\{p_n\}$.
The expression of $\Lie{\lambda}$ as an induced character mentioned earlier \cite[Thm.~8.24]{reutenauer} gives this plethystic expression for $L_\lambda$ (see, e.g., \cite[eqns. (25)]{HershR}): if $\lambda=1^{m_1} 2^{m_2} \cdots$ with $m_i$ parts of size $i$, then
\begin{equation}
\label{plethysm-formula-for-Lie-character}
L_\lambda:=\fchar(\Lie{\lambda})
= h_{m_1}[\Lie{1}] \cdot h_{m_2}[\Lie{2}]\cdot h_{m_3}[\Lie{3}] \cdots
\end{equation}
where $\Lie{n}:=\Lie{(n)}$, with $\{ h_n\}$ the
{\it complete homogeneous} symmetric functions,
and $f \mapsto f[g]$ the {\it plethystic composition} \cite[\S I.8]{Macdonald} of symmetric functions.  Then one can compute as follows:
$$
\begin{aligned}
\N(a,b,c)
&=\sum_{\lambda} 
L_\lambda \cdot a^{\even(\lambda)} b^{\odd(\lambda)} c^{|\lambda|}\\
&=\sum_{\lambda=1^{m_1} 2^{m_2} \cdots} 
\left(
\prod_{\ell \geq 2 \text{ even}} 
  h_{m_\ell}[L_{\ell}] \cdot a^{m_\ell} c^{\ell m_\ell}
\prod_{\ell \geq 1 \text{ odd}} 
  h_{m_\ell}[L_{\ell}] \cdot b^{m_\ell} c^{\ell m_\ell}
\right)\\
&=(1+h_1+h_2+h_3+ \cdots)[bc L_1+ ac^2 L_2+bc^3 L_3 + ac^4 L_4 + \cdots]
\end{aligned}
$$
where the last equality used the identity
$$
h_n\left[\sum_{i \geq 1} f_i\right]=\sum_{m_1+m_2+\cdots=n} h_{m_1}[f_1] \cdot h_{m_2}[f_2] \cdots .
$$
Using the identity $1+h_1+h_2+\cdots = \exp\left(\sum_{m=1}^\infty \frac{p_m}{m}\right)$ one can continue the rewriting as
$$
\N(a,b,c)=\exp\left(\sum_{m=1}^\infty \frac{p_m}{m}\right)
\left[ b \sum_{\ell \geq 1 \text{ odd}} c^\ell L_\ell + a \sum_{\ell \geq 2 \text{ even}} c^\ell L_\ell \right].
$$
Taking $\log(-)$
of both sides gives
\begin{equation}
\label{first-log-of-N-manipulation}
\begin{aligned}
\log \N(a,b,c) &= 
\left(\sum_{m=1}^\infty \frac{p_m}{m}\right)
\left[ b \sum_{\ell \geq 1 \text{ odd}} c^\ell L_\ell + a \sum_{\ell \geq 2 \text{ even}} c^\ell L_\ell \right]\\
&=b \sum_{\ell \geq 1 \text{ odd}} \quad
\sum_{m=1}^\infty \frac{p_m[c^\ell L_\ell]}{m}
\quad + \quad 
a \sum_{\ell \geq 2 \text{ even}} \quad
\sum_{m=1}^\infty \frac{p_m[c^\ell L_\ell]}{m}.
\end{aligned}
\end{equation}
Recalling this expression \cite[Thm. 8.3]{reutenauer} for $L_\ell$
$$
L_\ell=\frac{1}{\ell} \sum_{d: d | \ell} \mu(d) \,\, p_d^{\ell/d},
$$
one can see that each inner sum on $m$ above looks as follows:
$$
\sum_{m=1}^\infty \frac{p_m[c^\ell L_\ell]}{m}
=\sum_{m=1}^\infty 
\frac{c^{\ell m}}{m \ell} \sum_{d: d | \ell} \mu(d) \,\, p_m\left[ p_d^{\ell/d}\right] 
=\sum_{m=1}^\infty 
\frac{c^{\ell m}}{m \ell} \sum_{d: d | \ell} \mu(d) \,\,  p_{dm}^{\ell/d} 
=\frac{c^\ell p_1^\ell}{\ell} + f(p_2,p_3,\ldots) .
$$
Here we have isolated the $d=m=1$ summand as
the only one involving $p_1$; all other summands with either $d$ or $m \geq 2$ sum to give a function $f(p_2,p_3,\ldots)$ purely in the power sums $p_2,p_3,\ldots$.
Consequently, 
$$
\begin{aligned}
\log \N(a,b,c) 
&=
b \sum_{\ell \geq 1 \text{ odd}} \frac{c^\ell p_1^\ell}{\ell}
\quad + \quad 
a \sum_{\ell \geq 2 \text{ even}} \frac{c^\ell p_1^\ell}{\ell}
\quad + g(p_2,p_3,\ldots)\\
&=
\frac{b}{2} \left( -\log(1-cp_1)+\log(1+cp_1) \right) +
\frac{a}{2} \left( -\log(1-cp_1)-\log(1+cp_1) \right) 
+ g(p_2,p_3,\ldots).\\
\end{aligned}
$$
Hence upon exponentiation one obtains an expression of this form:
\begin{equation}
\label{N-gf-with_p1s-separated-out}
\N(a,b,c) =
\left( 1+cp_1\right)^{\frac{b-a}{2}}
\left( 1-cp_1\right)^{\frac{-(b+a)}{2}}
G(p_2,p_3,\ldots).
\end{equation}
Now one can take a partial derivative with respect to $p_1$, and use product rule:
$$
\begin{aligned}
\frac{\partial}{\partial p_1}\N(a,b,c) 
&=
\frac{c(b-a)}{2}
\left( 1+cp_1\right)^{\frac{b-a}{2}-1}
\left( 1-cp_1\right)^{\frac{-(b+a)}{2}}
G(p_2,p_3,\ldots)\\
&\qquad +
\frac{c(b+a)}{2}
\left( 1+cp_1\right)^{\frac{b-a}{2}}
\left( 1-cp_1\right)^{\frac{-(b+a)}{2}-1}
G(p_2,p_3,\ldots)\\
&=\frac{c}{2}\left( \frac{b-a}{1+cp_1}
+ \frac{b+a}{1-cp_1}\right) \N(a,b,c)\\
&=\frac{c(b+acp_1)}{1-c^2p_1^2} \N(a,b,c).
\end{aligned}
$$
Translating back to $\symmbihilb(x)$ via  \eqref{rescaling-of-gf}, this gives
$$
\begin{aligned}
\frac{\partial}{\partial p_1} \symmbihilb(x)
&= \frac{x(1+t q p_1 x)}{1-t q^2 p_1^2 x^2} \symmbihilb(x)\\
(1-t q^2 p_1^2 x^2) \frac{\partial}{\partial p_1} \symmbihilb(x)
&= x(1+t q p_1 x) \symmbihilb(x)\\
\end{aligned}
$$
which after some algebra steps to isolate the
$x^0$ term on left and $x^1, x^2$ terms on right, leads to
$$
\frac{\partial}{\partial p_1} \symmbihilb(x)
= x \symmbihilb(x) + t q p_1 x^2 
\left(
 1+ q p_1 \frac{\partial}{\partial p_1}
\right) \symmbihilb(x),
$$
matching the desired recurrence \eqref{desired-partial-derivative-recurrence-on-gf}.
\end{proof}

\subsection{Proof of Proposition~\ref{thm:generalization-of-CHS-prop}}
\label{sec:CHS_generalization-proof}

Recall the definition of the $\kk \symm{n}$-modules $P^{(n)}_m$ for $0 \leq m \leq \frac{n}{2}$:
 \[ 
  P^{(n)}_m =
  \bigoplus_{\substack{(k,\ell):\\k-\ell=m}} G^{n}_{k,j} 
  \cong 
  \bigoplus_{\substack{|\lambda|=n \\ \even(\lambda) = \ell}} \lie_{\lambda}. 
  \]
\vskip.1in
\noindent
{\bf Proposition~\ref{thm:generalization-of-CHS-prop}.}
{\it
The representations $\{P^{(n)}_m \}$ satisfy
    the following.
    \begin{itemize}
        \item[(a)]
    For odd $n$, one has
    $
    P_m^n \cong P_m^{n-1} \ind. 
    $
 \item[(b)] For even $n$,  one has
    $
    P_m^{n} \res \cong \left( P_m^{n-1} \res \ 
    \oplus \  P_{m-1}^{n-2} \right) \ind.
    $
   \end{itemize}
}
\vskip.1in

\begin{proof}
For the sake of symmetric function manipulations, we abuse notation, and identify
$P^{(n)}_m$ with its Frobenius characteristic
image $\fchar(P^{(n)}_m)$ in $\Lambda_{\C}$.  The statement of the proposition then becomes
  \begin{itemize}
        \item[(a)]
    For odd $n$, one has
    $
    P_m^n = p_1 \cdot P_m^{n-1}. 
    $
 \item[(b)] For even $n$,  one has
    $
    \frac{\partial}{\partial p_1}P_m^{n} = p_1 \left( \frac{\partial}{\partial p_1} P_m^{n-1} +
     P_{m-1}^{n-2} \right).
    $
\end{itemize}

    First we show that (a) implies (b),   
    using \eqref{branching-rephrased-on-bigraded-components}:
$$
G^n_{2k,\ell}
\res
= G^{n-1}_{2k,\ell} \,\, \oplus \,\,  
G^{n-2}_{2(k-1),\ell-1}\ind \,\, \oplus \,\, 
 G^{n-2}_{2(k-1),\ell-2} \res \ind \ind.
$$
Taking the direct sum over all pairs $(k,\ell)$
with $2k-\ell=m$ gives this $\kk\symm{n}$-module isomorphism:
$$
P^{(n)}_m \res 
\cong
P^{(n-1)}_m \,\, \oplus \,\,
P^{(n-2)}_{m-1} \ind \,\, \oplus \,\,
P^{(n-2)}_m \res\ind\ind .
$$
Rephrasing this in terms of symmetric functions
gives the starting point for this derivation of (b):
\begin{align*}
\frac{\partial}{\partial p_1} P^{(n)}_m 
&= P^{(n-1)}_m + 
p_1 P^{(n-2)}_{m-1} + 
p_1 \cdot p_1 \frac{\partial}{\partial p_1} P^{(n-2)}_m & \\
&= P^{(n-1)}_m + 
p_1 P^{(n-2)}_{m-1} + 
p_1 \left( \frac{\partial}{\partial p_1}( p_1 P^{(n-2)}_m) -P^{(n-2)}_m\right) & \text{ using } 
p_1 \frac{\partial f}{\partial p_1}  = \frac{\partial}{\partial p_1} p_1 f-f\\
&= P^{(n-1)}_m + 
p_1 P^{(n-2)}_{m-1} + 
p_1 \frac{\partial}{\partial p_1}( p_1 P^{(n-2)}_m) -p_1 P^{(n-2)}_m  
& \\
&= P^{(n-1)}_m + 
p_1 P^{(n-2)}_{m-1} + 
p_1 \frac{\partial}{\partial p_1} P^{(n-1)}_m - P^{(n-1)}_m  
& \text{ using part }(a)\text{ twice}\\
&= 
p_1 \left( \frac{\partial}{\partial p_1} P^{(n-1)}_m + P^{(n-2)}_{m-1} \right).
&\\
\end{align*}
 
  For (a), to show $n$ odd implies $P^{(n)}_m=p_1 \cdot P^{(n-1)}_m$,
  we rephrase it using this generating function
  $$
\PPP(a,c):=\left[\N(a,b,c)\right]_{b=1}=\sum_\lambda L_\lambda \cdot a^{\even(\lambda)} c^{|\lambda|}
  =\sum_{n=0}^\infty c^n (P^{(n)}_0 + a P^{(n)}_1+ a^2 P^{(n)}_2+\cdots).
  $$
 Then the assertion of (a) is equivalent to showing that one has a factorization
 $$
 \PPP(a,c)=(1+cp_1) \sum_{\substack{n \geq 0\\\text{even}}} c^n (P^{(n)}_0 + a P^{(n)}_1+ a^2 P^{(n)}_2+\cdots),
 $$
 or equivalently, within the ring $\Lambda[[a,c]]$, the quotient $\frac{\PPP(a,c)}{(1+cp_1)}$ lies
 in $\Lambda[[a,c^2]]$, meaning that it only contains {\it even powers of $c$}.  By the result of Calderbank, Hanlon, Sundaram stated as Proposition~\ref{CHS-parity-relations-for-Jordan-reps}(a) above, we know this holds upon setting $a=0$, since their result shows 
 $P^{(n)}_0=p_1 \cdot P^{(n-1)}_0$ for $n$ even, and hence
 $$
 \PPP(0,c)=\sum_{n=0}^\infty c^n P^{(n)}_0 
 =
 (1+cp_1) \sum_{\substack{n \geq 0\\\text{even}}}^\infty c^{n} P^{(n)}_0.
 $$
 On the other hand, exponentiating
 \eqref{first-log-of-N-manipulation} shows that
 \begin{equation}
 \label{exponential-with-separated-a-b}
 \begin{aligned}
 \N(a,b,c)&=
 \exp\left(
 b \sum_{\substack{\ell \geq 1\\ \text{odd}}} \quad
\sum_{m=1}^\infty \frac{p_m[c^\ell L_\ell]}{m}
 \right)
\exp\left(
 a \sum_{\substack{\ell \geq 2\\\text{even}}} \quad
\sum_{m=1}^\infty \frac{p_m[c^\ell L_\ell]}{m}
 \right)\\
 &=
 \N(0,b,c) \cdot \N(a,0,c)
 \end{aligned}
\end{equation}
where we note that $\N(a,0,c)$ lies in $\Lambda[[a,c^2]]$.  Now setting
setting $b=1$ in \eqref{exponential-with-separated-a-b} gives
$$
\begin{aligned}
\PPP(a,c)&=\PPP(0,c) \cdot \N(a,0,c)\\
&= (1+cp_1) \cdot \left( \sum_{m=0}^\infty  c^{2m}  P^{(2m)}_0 \right) \cdot \N(a,0,c)
\end{aligned}
$$ 
which achieves the goal of exhibiting 
$\frac{\PPP(a,c)}{(1+cp_1)}$ as an element of
 in $\Lambda[[a,c^2]]$.
\end{proof}


\end{document}